\documentclass[12pt,twoside]{amsart}
\usepackage{pstricks,pst-plot,}
\usepackage{amssymb}
\usepackage{tikz}
\usepackage{float}
\usepackage{verbatim}
\usepackage{amsmath}
\usepackage{bm}
\usepackage[margin=2.2cm, top=3.0cm, bottom=3.5cm]{geometry}
\usepackage[T1]{fontenc}
\usepackage{times}
\usepackage{amssymb,latexsym}
\usepackage{enumerate}
\usepackage[stable]{footmisc}
\usepackage{color}
\usepackage[colorlinks,linkcolor=black,citecolor=black,urlcolor=black]{hyperref}
\usepackage{amsthm}
\usepackage{mathrsfs}
\usepackage{geometry}
\usepackage{enumitem}
\usepackage{cases}
\usepackage{xcolor} 
\usepackage{appendix} 
\usepackage[utf8]{inputenc}

 \usepackage{fancyhdr}
\usepackage{titlesec}
\usepackage{titletoc}
\titleformat{\section}
    {\normalfont\large\bfseries}  
    {\thesection}                 
    {0.5em}                       
    {}
    
\titleformat{\subsection}
    {\normalfont\bfseries}        
    {\thesubsection}              
    {0.5em}                       
    {}
\titleformat{\subsubsection}
    {\normalfont\bfseries}        
    {\thesubsubsection}              
    {0.5em}                       
    {}

\titlespacing*{\section}
    {0pt}                         
    {4ex plus 1ex minus 0.2ex}    
    {2.5ex plus 0.2ex}            

\titlespacing*{\subsection}
    {0pt}                         
    {3.5ex plus 1ex minus 0.2ex}  
    {2ex plus 0.2ex}              

\titlecontents{section}
    [0pt]                          
    {\vspace{0.8ex}\normalfont}    
    {\contentslabel{2.0em}}        
    {\hspace*{-2.0em}}             
    {\titlerule*[0.5pc]{.}\contentspage}  
    
\titlecontents{subsection}
    [3.5em]                        
    {\vspace{0.5ex}\normalfont}    
    {\contentslabel{2.8em}}        
    {\hspace*{-2.8em}}             
    {\titlerule*[0.5pc]{.}\contentspage}  

\makeatletter
\def\@afterheading{%
  \@nobreaktrue
  \everypar{%
    \if@nobreak
      \@nobreakfalse
      \clubpenalty\@M
    \else
      \clubpenalty\@clubpenalty
      \everypar{}%
    \fi}}
\makeatother

\newcommand{\R}{\ensuremath{\mathbb{R}}}

\newcommand{\C}{\ensuremath{\mathbb{C}}}

\newcommand{\bea}{\begin{eqnarray*}}
\newcommand{\eea}{\end{eqnarray*}}

\makeatletter
\newcommand{\sumprime}{\if@display\sideset{}{'}\sum%
            \else\sum'\fi}
\makeatother

\numberwithin{equation}{section}

\newtheorem{theorem}{Theorem}[section]
\newtheorem{proposition}[theorem]{Proposition}

\newtheorem{example}[theorem]{Example}

\newtheorem{corollary}[theorem]{Corollary}
\newtheorem{lemma}[theorem]{Lemma}

\numberwithin{definition}{section} 
\newtheorem{remark}[theorem]{Remark}

\title[Energy estimates and counterexamples to Calder\'on-Zygmund theory]{Energy estimates for level sets of holomorphic functions and  universal counterexamples to Calder\'on-Zygmund theory}
\subjclass[2020]{32A10, 42B37, 42B20}

 \author{Yifei  Pan}
\address{Department of Mathematical Sciences, Purdue University Fort Wayne, Fort Wayne, IN 46805-1499, USA}
 \email{pan1@pfw.edu}

 \author{Guokuan  Shao}
\address{School of Mathematics (Zhuhai), Sun Yat-Sen University, Zhuhai, Guangdong 519082, China}
 \email{shaogk@mail.sysu.edu.cn}

\author{ Jianfei Wang }
\address{School of Mathematical Sciences, Huaqiao University, Quanzhou, Fujian 362021, China}
\email{wangjf@mail.ustc.edu.cn}

 \author{Jujie Wu}
\address{School of Mathematics (Zhuhai), Sun Yat-Sen University, Zhuhai, Guangdong 519082, China}
\email{wujj86@mail.sysu.edu.cn}

\thanks{Jujie Wu is the corresponding author.}

\begin{document}

\begin{abstract}
We demonstrate that the failure of $L^1$ regularity in Calder\'on-Zygmund theory is a universal phenomenon: 
every non-constant holomorphic function in $\C^n$ generates a counterexample to the Poisson equation. 
 In order to achieve this goal, we shall establish sharp level-set estimates that link harmonic analysis to the geometry of complex structure through  Hironaka's resolution of singularities and the \L{}ojasiewicz gradient inequality.

\smallskip
  
  \noindent{{\sc Keywords}: Holomorphic function, Calder\'on-Zygmund theory, level set, coarea formula, \L{}ojasiewicz inequality, Hironaka's theorem, complex singularity exponent } 

\end{abstract}

\maketitle

\tableofcontents

\section{Introduction}
 
The Calder\'on-Zygmund theory \cite{GT} is a cornerstone of modern harmonic analysis and the theory of partial differential equations. Central to this theory is   the systematic study of singular integral operators, which are fundamental to understanding the regularity of solutions to elliptic equations. A classic application is the Poisson equation
\[
\Delta u = g
\]
on a domain \(\Omega \subset \mathbb{R}^{n}\). In the classical smooth setting, if \(g\in C^k (\Omega)\), one expects \(u\in C^{k + 2}(\Omega)\).  More generally, within the Sobolev framework, the theory guarantees that if $g\in L_{\mathrm{loc}}^{p} (\Omega)$, for $1<p<\infty$, the second weak derivative of $u$ lies in $ L^p_{\mathrm{loc}} (\Omega)$,  i.e., $u\in W^{2,p}_{\mathrm{loc}}(\Omega)$. At $p=1$, however, this regularity breaks down: there exist $g\in L_{\mathrm{loc}}^{1} (\Omega) $ such that $u\notin W^{2,1}_{\mathrm{loc}}(\Omega)$. 


A standard textbook counterexample is the radial function
\[
u(x,y) = \log (-\log( x^2+y^2))
\]
defined near the origin in \(\mathbb{R}^{2}\). While \(\Delta u\) is integrable in the sense of distributions, its second derivatives fail to be integrable near the origin. Although instructive, such radial examples have long fostered the impression that the \(L^{1}\) failure is a peculiarity of isolated point singularities.

The central thesis of this paper is that the failure of \(L^{1}\) regularity is a far more widespread phenomenon, deeply rooted in the geometry of complex varieties. We demonstrate that every nonconstant holomorphic function \(f(z)\) on a domain \(\Omega \subset \mathbb{C}^{n} \cong \mathbb{R}^{2n}\) yields, via a specific iterated logarithmic transformation, a counterexample to the Calder\'on-Zygmund framework. Concretely, given \(z_{0} \in \Omega\), we define near \(z_{0}\)
\[
u(z) = \log (-\log |f(z) - f(z_{0})|^{2}).
\]
We prove that \(u\) is a distributional solution to \(\Delta u = g\) with \(g\in L_{\mathrm{loc}}^{1}(\Omega)\), yet \(u\notin W_{\mathrm{loc}}^{2,1}(\Omega)\). In these examples, the singular locus is not merely a point, but a complex subvariety of codimension one, the zero set of $f(z)-f(z_0)$.  This greatly expands the catalogue of known counterexamples, progressing from simple radial functions to entire families of holomorphic functions, including those with cusps, nodes, and higher-order vanishing singularities.  Thus, the breakdown of Calder\'on-Zygmund theory at $p=1$ is not an isolated phenomenon but a geometric one, intimately tied to the underlying complex analytic structure. 

To establish the distributional validity of these counterexamples, a detailed analysis of the behavior of \(u\) near the zero set \(Z (f)= \{f(z) = f(z_0)\}\) is required. Without loss of generality, we assume \(f(z_{0}) = 0\). This analysis hinges on precise geometric estimates for the level sets \(\{|f| = \varepsilon\}\) and the sublevel sets \(\{|f| < \varepsilon\}\). The \L{}ojasiewicz gradient inequality provides control on $|\partial f| $ near the singular set, the coarea formula connects volume and surface integral, and Hironaka's resolution of singularities reduces the general one to explicit monomial calculations. 

A substantial portion of the paper is devoted to deriving sharp energy estimates alongside  volume estimates for these sets, the results that are of independent interest in algebraic geometry and complex analysis.

In this paper, we will denote 
$
\partial f = \left( \frac{\partial f}{\partial z_1}, \frac{\partial f}{\partial z_2}, \cdots, \frac{\partial f}{\partial z_n} \right),$
$\nabla = \left( \frac{\partial}{\partial x_1}, \frac{\partial}{\partial y_1}, \frac{\partial}{\partial x_2}, \cdots, \frac{\partial}{\partial x_n}, \frac{\partial}{\partial y_n} \right),
$
where \( z_j = x_j + iy_j \). The Laplacian of \( u \) in \( \mathbb{C}^n \) is given by
$
\Delta u := 4 \sum_{i=1}^n \frac{\partial^2 u}{\partial z_i \partial \bar{z}_i}.
$
Recall that if $f$ is holomorphic then we have the identity 
$
|\nabla|f|| = |\partial f|,
$
which will be used throughout the paper.

Our first main result consists of the following sharp energy estimates:
\begin{theorem}\label{t02}
Let \(U\) be the unit polydisc in \(\mathbb{C}^{n}\) ($n\geq 2$) and \(f(z)\) a holomorphic function in a neighborhood of \(\overline{U}\) with a non empty zero set \(Z(f)\). Then
\begin{enumerate}
    \item
    \[
    \int_{\{z\in U:|f|=\varepsilon \}}|\partial f|\,dS = O(\varepsilon),\qquad \varepsilon \to 0^{+},
    \]
    \item
    \[
    \int_{\{z\in U:|f|< \varepsilon \}}|\partial f|^{2}\,dV = O(\varepsilon^{2}),\qquad \varepsilon \to 0^{+},
    \]
\end{enumerate}
where \(dS\) is the surface measure on the level set and \(dV\) is the Lebesgue measure.
\end{theorem}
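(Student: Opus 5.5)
The proof reduces both estimates to explicit monomial computations via Hironaka's theorem, then handles the monomial case by integrating in one variable at a time. The two quantities are linked by the coarea formula. Since $|\nabla|f||=|\partial f|$,
\[
J(\varepsilon):=\int_{\{z\in U:|f|<\varepsilon\}}|\partial f|^2\,dV=\int_0^{\varepsilon}\Big(\int_{\{z\in U:|f|=t\}}|\partial f|\,dS\Big)dt=\int_0^\varepsilon I(t)\,dt .
\]
So (2) follows at once from (1): integrate $I(t)\le Ct$. I will prove (1) directly, but structure the argument so that the same local computation gives both. In fact I will prove (1) for $\int_{\{|f|=t\}}\chi|\partial f|\,dS$, where $\chi\ge 0$ is a smooth cutoff with $\chi=1$ on $\overline U$ and support inside a slightly larger polydisc $U'$ on which $f$ is holomorphic. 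This removes all boundary effects from $\partial U$.

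\textbf{Step 1 (resolution).} Hironaka's theorem gives a proper holomorphic map $\pi:M\to U'$, a modification that is biholomorphic off $Z(f)$. Near each point of $\pi^{-1}(\operatorname{supp}\chi)$, there are local coordinates $w$ on a polydisc chart such that $g:=f\circ\pi=v(w)\,w^{a}$, with $v$ a nonvanishing holomorphic unit and $a\in\mathbb{N}^n$. After replacing $w_j$ by $v^{1/a_j}w_j$ for some $j$ with $a_j\ge1$ (and shrinking the chart), we may take $g=w^a$ exactly. Compactness of $\pi^{-1}(\operatorname{supp}\chi)$ and a partition of unity $\{\rho_\alpha\}$ reduce everything to finitely many such charts.

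Next I write the integrands as pullback-invariant forms:
\[
|\partial f|^2\,dV=c_n\, i\partial f\wedge\bar\partial f\wedge\omega^{n-1},
\]
with $\omega$ the Euclidean Kähler form. Correspondingly, $|\partial f|\,dS$ on $\{|f|=t\}$ is the coarea slice of this form by $|f|$. Pulling back, each chart contributes
\[
\int_{\{|w^a|<\varepsilon\}}\rho_\alpha\,(\pi^*\chi)\; i\,\partial g\wedge\bar\partial g\wedge\Phi ,\qquad \Phi=\pi^*\omega^{n-1},
\]
where $\Phi$ is a smooth form with bounded coefficients. The form $\Phi$ degenerates along the exceptional divisor, but it only needs to be bounded above.

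\textbf{Step 2 (monomial computation).} Here $\partial g=g\sum_j a_j\,dw_j/w_j$. By positivity and $|xy|\le\frac12(|x|^2+|y|^2)$, the integrand is dominated by
\[
C\sum_{a_j\ge1}\frac{|w^a|^2}{|w_j|^2}\,dV(w).
\]
For the $j$-th term, write $w^a=w_j^{a_j}m$, where $m=m(w')$ involves the remaining variables, and integrate in $w_j$ first over $|w_j|<r$ with $r=\min\big(1,(\varepsilon/|m|)^{1/a_j}\big)$. This gives
\[
|m|^2\int_{|w_j|<r}|w_j|^{2a_j-2}\,dA(w_j)\le C\,|m|^2r^{2a_j}\le C\varepsilon^2 .
\]
Integrating over the bounded $w'$-polydisc then gives $O(\varepsilon^2)$, which is (2). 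For (1), I do the same computation at the level of slices. By coarea in the chart, the $t$-slice of the $j$-th term is
\[
\int_{w'}|m|^2\,\frac{d}{dt}\int_{\{|w_j|^{a_j}|m|<t,\ |w_j|<1\}}|w_j|^{2a_j-2}\,dA(w_j)\,dV(w').
\]
The inner $t$-derivative is explicit: it equals $2\pi a_j^{-1}t\,|m|^{-2}$ while $t<|m|$, and $0$ after. Hence the slice is bounded by $Ct$ uniformly, giving $I(t)=O(t)$. The cross terms and the bounded non-constant weight $\rho_\alpha\pi^*\chi\,\Phi$ are handled by the same domination, since the slice measures are positive.

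\textbf{Main obstacle.} I expect the main difficulty to be making Step 1 rigorous for the surface integral in (1) rather than the volume integral. The coarea slice of a pulled-back form across the exceptional set, where $\pi$ is not a local isometry and $d|g|$ may vanish, must be justified. My first attempt is to avoid slicing in $M$ altogether. I would prove the sharper monotone statement that
\[
\varepsilon\mapsto J_\chi(\varepsilon)-\tfrac{C}{2}\varepsilon^2
\]
is nonincreasing, by applying the chart computation to $J_\chi(\varepsilon_2)-J_\chi(\varepsilon_1)$ for $\varepsilon_1<\varepsilon_2$ (a difference of volume integrals, where pullback is legitimate). This yields $J_\chi(\varepsilon_2)-J_\chi(\varepsilon_1)\le C(\varepsilon_2^2-\varepsilon_1^2)$. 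Dividing by $\varepsilon_2-\varepsilon_1$ and using the coarea identity $J_\chi'=I_\chi$ for a.e. $t$, then the continuity of $t\mapsto I_\chi(t)$ for regular values (all but finitely many small $t$, by the Łojasiewicz gradient inequality and analyticity), gives $I(t)\le I_\chi(t)\le 2Ct$.
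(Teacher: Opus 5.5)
Your argument is correct, and it takes a genuinely different route from the paper's.

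\textbf{The paper's route.} For (2) it uses the complex coarea formula with $J_2 f=|\partial f|^2$. This turns $J(\varepsilon)$ into $\int_{|w|<\varepsilon}\mathrm{Vol}_{2n-2}(f^{-1}(w)\cap U)\,dA(w)$. It then invokes a separate theorem: the fiber volumes are uniformly bounded, proved by an explicit fiber-volume computation for monomials plus Hironaka. Part (1) follows from three ingredients: $J=\int_0^\varepsilon I$, a gradient-flow argument for continuity of $I$, and the a.e.\ identity $I(\varepsilon)=\varepsilon\int_0^{2\pi}M(\varepsilon e^{i\theta})\,d\theta$.

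\textbf{Your route.} You pull back the $(n,n)$-form $i\partial f\wedge\bar\partial f\wedge\omega^{n-1}$ itself and dominate $\pi^*\omega^{n-1}$ by $C\omega_w^{n-1}$ using positivity. This reduces to $\int|w^a|^2|w_j|^{-2}\,dV$ over $\{\varepsilon_1\le|w^a|<\varepsilon_2\}$. One integration in $w_j$ bounds this by $C(\varepsilon_2^2-\varepsilon_1^2)$, since $x\mapsto\min(|m|^2,x)$ is $1$-Lipschitz. This increment bound is exactly the annular average of the paper's pointwise fiber bound, because $J(\varepsilon_2)-J(\varepsilon_1)=\int_{\varepsilon_1\le|w|<\varepsilon_2}M\,dA$. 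It is all the differentiation step requires. It also disposes of the paper's own caveat that $J=O(\varepsilon^2)$ alone does not control $J'$.

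\textbf{What each approach buys.} You avoid computing fiber volumes of monomials, which in the paper needs case distinctions and side assumptions on the exponents. Your cutoff $\chi$ also makes continuity of $I_\chi$ on $(0,\varepsilon_0)$ automatic: $I_\chi$ is the fiber integral of the smooth, compactly supported density $\chi|\partial f|^2\,dV$ along the submersion $|f|$. The paper's flow argument, by contrast, must handle level sets meeting $\partial U$, and it glosses over the fact that the flow need not preserve $U$. The paper's route yields the stronger pointwise statement of independent interest: all fibers $f^{-1}(w)\cap U$ have uniformly bounded area.

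\textbf{Two small points to tidy up.} First, charts where $a=0$ (so $g$ is a unit) do not meet $\{|g|<\varepsilon\}$ for small $\varepsilon$ and can be discarded. Second, applying Lemma~\ref{l2201} with $K=\operatorname{supp}\chi$ shows that every $t\in(0,\varepsilon_0)$ is a regular value, so you need more than your ``all but finitely many'' claim and in fact have it.
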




We emphasize that the order $O(\varepsilon)$ and $O(\varepsilon^2)$  above are sharp. Furthermore, we obtain estimates below for the measures of the level and sublevel sets, where the orders are sharp as well:
\begin{theorem}\label{tC}
Under the same hypotheses as in Theorem \ref{t02} , there exist constants \(\varepsilon_0 > 0\), \(\gamma \in (0,1]\), and \(\tau \in (0,2]\) such that
\[
\mathcal{H}^{2n - 1}\big(\{z\in U:|f| = \varepsilon \} \big) = O(\varepsilon^{\gamma}),\quad 
\mathrm{Vol}\big(\{z\in U:|f| < \varepsilon \} \big) = O(\varepsilon^{\tau}),\qquad 0<\varepsilon < \varepsilon_0.
\]
Moreover, the condition $\gamma=1$ or $\tau=2$, i.e., that the exponent attains its maximal value, holds if and only if the zero set of the holomorphic function is a  complex submanifold of codimension one. Namely, the following conditions are equivalent:

 1.  \(\gamma = 1\);

2. \(Z(f)\) is a smooth complex submanifold of codimension one;

3. \(\tau = 2\);

 4. \(f\) is locally biholomorphic to a linear function at each point of \(Z(f)\).
\end{theorem}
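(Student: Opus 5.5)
The plan has two parts. First we produce some positive exponents $\gamma,\tau$. Then we prove the equivalences through the chain $4\Rightarrow 2$, $2\Rightarrow 1,3$ (more precisely, smoothness gives the maximal exponents), and conversely $1\Rightarrow 2$ and $3\Rightarrow 2$ (a singular point forces a strictly smaller exponent).

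\textbf{Existence of the exponents.} For the volume, we use the \L{}ojasiewicz inequality $|f(z)|\ge c\,\mathrm{dist}(z,Z(f))^{N}$ on $\overline U$. This gives $\{|f|<\varepsilon\}\subset\{\mathrm{dist}(z,Z(f))<(\varepsilon/c)^{1/N}\}$. The tube around the real-codimension-two analytic set $Z(f)\cap\overline U$ has volume $O(r^{2})$, so we obtain $\tau=2/N>0$ after capping $\tau\le 2$. Alternatively, Hironaka's resolution $\pi$ makes $f\circ\pi$ locally a monomial times a unit, and $\mathrm{Vol}\{|f|<\varepsilon\}=O(\varepsilon^{2c}|\log\varepsilon|^{n-1})$, where $c$ is the complex singularity exponent. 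Any $\tau<2c$ then works, and $\tau=2c$ works when there is no log term.

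For the level set, we apply the coarea formula with $|\nabla|f||=|\partial f|$:
\[\mathrm{Vol}\{\varepsilon<|f|<2\varepsilon\}=\int_\varepsilon^{2\varepsilon}\int_{\{|f|=t\}}|\partial f|^{-1}dS\,dt.\]
This relates area to volume only with the weight $|\partial f|^{-1}$. Instead we use the \L{}ojasiewicz gradient inequality $|\partial f|\ge c|f|^{\theta}$ with $\theta<1$ near $Z(f)$, together with Theorem \ref{t02}(1):
\[\mathcal H^{2n-1}\{|f|=\varepsilon\}\le c^{-1}\varepsilon^{-\theta}\int_{\{|f|=\varepsilon\}}|\partial f|\,dS=O(\varepsilon^{1-\theta}).\]
So we take $\gamma=1-\theta\in(0,1]$. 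The maximal possible exponents are $\gamma=1$, since $\mathcal H^{2n-1}\{|f|=\varepsilon\}\gtrsim\varepsilon$ near a regular zero, and $\tau=2$, by comparing with a linear function. We interpret $\gamma,\tau$ as the optimal exponents.

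\textbf{Smooth implies maximal.} The implication $4\Rightarrow2$ is trivial. For $2\Rightarrow4$: if $Z(f)$ is smooth then $f=h^{k}u$ locally, with $h$ a defining function and $u$ a unit. The conditions must exclude $k\ge2$; otherwise $f=z_1^2$ has a smooth zero set but $\tau=1$. We therefore read condition 2 as saying the reduced divisor equals the divisor, i.e.\ $df\ne0$ on $Z(f)$, and $4\Leftrightarrow2$ becomes the implicit function theorem. If $df\neq0$ on $Z(f)\cap\overline U$, then $|\partial f|\ge c>0$ on $\{|f|<\varepsilon_0\}$. Hence $\theta=0$, so $\gamma=1$, and the coarea formula gives
\[\mathrm{Vol}\{|f|<\varepsilon\}\le c^{-1}\int_0^\varepsilon\mathcal H^{2n-1}\{|f|=t\}\,dt=O(\varepsilon^2),\]
so $\tau=2$.

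\textbf{Maximal implies smooth.} Suppose some $p\in Z(f)\cap U$ has $df(p)=0$. We bound the volume from below by choosing local coordinates with $f=\sum_{|\alpha|\ge m}a_\alpha z^\alpha$, where $m\ge2$. On a ball of radius $r=\varepsilon^{1/m}$ around $p$ we have $|f|\le C r^{m}$, hence
\[\mathrm{Vol}\{|f|<C\varepsilon\}\ge c\,\varepsilon^{2n/m}.\]
This shows $\tau\le 2n/m$. However, that is not below $2$ when $n\ge m$, so this crude estimate is insufficient. The refined tool is the log canonical threshold: $\tau=2\,\mathrm{lct}_p(f)$ locally, and $\mathrm{lct}_p(f)=1$ iff $f$ is reduced with log-canonical singularity. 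That alone fails as well; for example, normal crossings $z_1z_2$ have lct $1$.

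The real discriminant is therefore the log term: for $z_1z_2$ we get $\mathrm{Vol}\sim\varepsilon^2|\log\varepsilon|$, which is not $O(\varepsilon^{2})$. The main obstacle is showing that at a singular point of a reduced $f$, either $\mathrm{lct}<1$ or the resolution has at least two exceptional/strict divisors achieving the threshold that meet, which produces a $|\log\varepsilon|$ factor. I would attempt this with a direct slicing argument instead. Near a singular point $p$, by the coarea formula,
\[\mathrm{Vol}\{|f|<\varepsilon\}=\int_0^\varepsilon\int_{\{|f|=t\}}|\partial f|^{-1}\,dS\,dt.\]
The Milnor fibre $\{f=t\}$ passes within distance $O(|t|^{1/m})$ of $p$, where $|\partial f|$ is small. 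Through a curve-selection argument, the integral of $|\partial f|^{-1}$ over the fibre is shown to be unbounded as $t\to0$. This contradicts $\tau=2$.

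For $\gamma$: $\gamma=1$ combined with the coarea formula and $|\partial f|$ bounded above gives $\tau=2$ only up to the weight, so instead I would show $1\Rightarrow3$ via $\mathcal H^{2n-1}\{|f|=t\}\ge c$ times the fibre integral of $|\partial f|^{-1}$ near points where $|\partial f|$ is bounded, and argue $\gamma=1\Rightarrow\theta$ effectively $0$ along the fibres. I expect this step to be the main obstacle.
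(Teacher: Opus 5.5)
\textbf{Verdict: the existence part and the forward direction are correct, but the converse has a genuine gap.} Under your reading of $\gamma,\tau$ the converse is false, and under the paper's reading it is immediate.

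\textbf{What works.} Your existence step and the direction ``$df\neq0$ on $Z(f)$ $\Rightarrow$ maximal exponents'' are sound. For $\gamma$ this is exactly the paper's argument: Lemma~\ref{lm:actuallyused} combined with Theorem~\ref{t02}(1). For $\tau$, your distance-\L{}ojasiewicz/tube bound is a valid alternative to the paper's coarea estimate $\int_0^\varepsilon t^{-\alpha}\,t^{1-\alpha}\,dt$. Your $f=z_1^2$ caveat is also correct: the paper's $2\Rightarrow1$ step tacitly assumes $f$ vanishes to first order on $Z(f)$.

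\textbf{The gap: the converse.} You read $\gamma,\tau$ as the optimal decay rates of $\mathcal H^{2n-1}\{|f|=\varepsilon\}$ and $\mathrm{Vol}\{|f|<\varepsilon\}$, and then hope to show that a singular point forces a worse rate. Under that reading, $1\Rightarrow2$ and $3\Rightarrow2$ are false. So the step you flag as the main obstacle cannot be carried out.

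Your own example already refutes $1\Rightarrow2$, and also $1\Rightarrow3$ given your $\varepsilon^2|\log\varepsilon|$ volume computation. On the bidisc,
\[
\mathcal H^{3}\bigl(\{z\in U:|z_1z_2|=\varepsilon\}\bigr)=4\pi^2\varepsilon\int_\varepsilon^1\sqrt{1+\varepsilon^2r^{-4}}\,dr\le 8\pi^2\varepsilon ,
\]
so the optimal $\gamma$ is $1$ even though $Z(f)$ is singular.

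For $\tau$, take the ordinary double point $f=z_1^2+z_2^2+z_3^2$ in $\mathbb C^3$, and let $B_\rho\supset U$ be a ball about $0$. By homogeneity,
\[
\int_{f^{-1}(w)\cap B_\rho}\frac{dS}{|\partial f|^{2}}=|w|\int_{f^{-1}(1)\cap B_{\rho|w|^{-1/2}}}\frac{dS}{4|\zeta|^{2}}=O(1).
\]
This holds because $|\zeta|\ge1$ on $f^{-1}(1)$ and $f^{-1}(1)\cap B_R$ has volume $O(R^4)$. The complex coarea formula then gives $\mathrm{Vol}\{|f|<\varepsilon\}=O(\varepsilon^2)$ at an isolated singular point, and likewise $\mathcal H^5\{|f|=\varepsilon\}=O(\varepsilon)$. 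So the blow-up of the fibre integral of $|\partial f|^{-1}$ that you hope to get from curve selection does not hold in general.

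\textbf{The missing idea: the paper's convention.} The paper does not treat $\gamma$ and $\tau$ as the true optimal rates. It \emph{defines} $\gamma=1-\alpha$ and $\tau=2-2\alpha$, where $\alpha$ is the semi-global \L{}ojasiewicz gradient exponent of Lemma~\ref{lm:actuallyused}. These are the exponents produced by Theorems~\ref{th:levelsurface} and~\ref{th:volume_sublevel}, and the remark after the statement refers to them. With this definition the converse is short:
\begin{itemize}
\item $\gamma=1\iff\tau=2\iff\alpha=0$.
\item $\alpha=0$ means $|\partial f|\ge C>0$ on $\{z\in U:|f|<\varepsilon_0\}$, so $df\neq0$ on $Z(f)$. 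This is condition 2 in the reduced sense, and by the inverse function theorem it gives condition 4.
\item Conversely, $df\neq0$ on $Z(f)\cap\overline U$ gives $\alpha=0$ by compactness.
\end{itemize}
With this reading, your ``maximal implies smooth'' section reduces to these few lines and needs no log canonical thresholds, resolution, or curve selection.

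Your computations also show that the paper's sharpness claims only concern these $\alpha$-based exponents. Corollary~\ref{cor:monomial-exponents} assigns $\gamma=\tfrac12$ to $z_1z_2$, whose level sets in fact have area $O(\varepsilon)$.
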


\begin{remark} We  note that 
\begin{enumerate}
    \item $\gamma$ and $\tau$ are geometric invariants which can be computed via semi-global  \L{}ojasiewicz exponent as defined in Lemma \ref{lm:actuallyused}.
    \item $\gamma<1$ or $\tau<2$ if and only if the singular locus of the variety set $Z(f)$ is non-empty.
\end{enumerate}

\end{remark}

Equipped with these estimates, we construct universal counterexamples to the Calder\'on-Zygmund theory:

\begin{theorem}\label{t01}
Let \(f\) be a nonconstant holomorphic function on a domain \(\Omega \subset \mathbb{C}^n\) ($n  \geq 1$), and let \(z_0 \in \Omega\). Define, in a small neighborhood $U$ of $z_0$,
\[
u(z) = \log\left(-\log |f(z) - f(z_0)|^{2}\right),
\]
and 
\begin{numcases}{g(z)=}
\Delta u (z), &  $z\notin Z(f),$  \\  \nonumber 
  0, & $z\in Z(f)$,
\end{numcases}
where \(Z(f) = \{z\in U:f(z) = f(z_0)\}\). Then

 1.  \(u\in W_{\mathrm{loc}}^{1,2}(U)\) but \(u\notin W_{\mathrm{loc}}^{1,p}(U)\) for any \(p > 2\) near \(Z(f)\);
 
2. \(g\in L_{\mathrm{loc}}^{1}(U)\) but \(g\notin L_{\mathrm{loc}}^{p}(U)\) for any \(p > 1\) near \(Z(f)\);

3.  \(\Delta u = g\) holds in the sense of distributions in $U$;

4.  \(u\notin W_{\mathrm{loc}}^{2,1}(U)\) near \(Z(f)\).\\
Thus, every nonconstant holomorphic function generates, via an iterated logarithm, a counterexample to the \(L^1\)-regularity of the Poisson equation. The singular set is a complex hypersurface, illustrating that the failure of the Calder\'on-Zygmund theory at \(p = 1\) is inherently linked to complex-analytic geometry.
\end{theorem}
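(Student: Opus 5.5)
Assume $f(z_0)=0$, shrink $U$ to a polydisc on which $|f|<1/2$, and write $L=-\log|f|^2>0$. Off $Z(f)$, $\log|f|^2$ is pluriharmonic, so the calculus is explicit. We have $\partial u = -\frac{\partial\log|f|^2}{L} = -\frac{\partial f}{fL}$, hence
\[
|\nabla u| \simeq \frac{|\partial f|}{|f|L}.
\]
Since $\partial\bar\partial\log|f|^2=0$ off $Z(f)$,
\[
\frac{\partial^2 u}{\partial z_i\partial\bar z_j} = -\frac{\partial_i f\,\overline{\partial_j f}}{|f|^2L^2},
\qquad
g=\Delta u=-\frac{4|\partial f|^2}{|f|^2L^2}.
\]
The holomorphic second derivatives are
\[
\frac{\partial^2u}{\partial z_i\partial z_j}=-\frac{\partial_{ij}f}{fL}+\frac{\partial_if\partial_jf}{f^2L}-\frac{\partial_if\partial_jf}{f^2L^2}.
\]
So every quantity in the theorem is a function of $|f|$ weighted by powers of $|\partial f|$. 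I would reduce all integrability questions to one-dimensional integrals in $\varepsilon=|f|$ by the coarea formula, using $|\nabla|f||=|\partial f|$ and Theorem \ref{t02}(1). For any $\phi\ge0$,
\[
\int_U \phi(|f|)|\partial f|^2\,dV=\int_0^{1/2}\phi(\varepsilon)\int_{\{|f|=\varepsilon\}}|\partial f|\,dS\,d\varepsilon\le C\int_0^{1/2}\phi(\varepsilon)\,\varepsilon\,d\varepsilon.
\]
For $n=1$ the same bound holds trivially, since the level sets are finitely many curves near $z_0$. For $n\ge2$, Theorem \ref{t02} is applied on a polydisc around $z_0$.

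\emph{Positive statements.} Take $\phi=\varepsilon^{-2}(\log\varepsilon^{-2})^{-2}$. This gives $\int|\nabla u|^2\lesssim\int_0\frac{d\varepsilon}{\varepsilon\log^2\varepsilon}<\infty$, and the same integral bounds $\int|g|$. Also $u\in L^2$ because $\log L$ is integrable, since $\log|f|$ is. So $u\in W^{1,2}$ on $U\setminus Z(f)$ and $g\in L^1(U)$.

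To show that the pointwise gradient is the weak gradient across $Z(f)$ and that $\Delta u=g$ distributionally, I would use truncations. Let $u_k=\min(u,k)$, or better $\chi_k(u)$ with $\chi_k$ a smooth cutoff. Each $u_k$ is Lipschitz, because $u_k$ is constant near $Z(f)$. Then $u_k\to u$ in $W^{1,2}$, which settles item 1. For item 3, compute $\int u\Delta\varphi=\lim\int u_k\Delta\varphi=\lim-\int\nabla u_k\cdot\nabla\varphi$. On $\{u<k\}$ integrate by parts, using $\chi_k$ smooth: the error term is $\int\chi_k''(u)|\nabla u|^2\varphi$, supported where $u\in[k,k+1]$, and it tends to $0$ because $|\nabla u|^2\in L^1$. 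The main term $\int\chi_k'(u)g\varphi$ tends to $\int g\varphi$ by dominated convergence.

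\emph{Negative statements.} I would localize at a regular point $p\in Z(f)$, which exists since the singular locus has positive codimension in $Z(f)$. Near $p$, choose coordinates with $f=w_1$. Then $|\nabla u|\sim\frac{1}{|w_1|\log(1/|w_1|)}$, which is not in $L^p$ for $p>2$, and $|g|\sim\frac{1}{|w_1|^2\log^2}$, which is not in $L^p$ for $p>1$. For item 4, the component $\partial^2u/\partial w_1^2=\frac{1}{w_1^2}(L^{-1}-L^{-2})$ has modulus $\sim\frac{1}{|w_1|^2\log(1/|w_1|)}$. This is not integrable in $w_1$ near $0$, because $\int_0 \frac{dr}{r\log(1/r)}=\infty$. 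The weak second derivative, if it existed, would coincide with this pointwise one off the null set $Z(f)$, so $u\notin W^{2,1}_{\mathrm{loc}}$ near any point of $Z(f)$.

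For points of $Z(f)$ that are singular, every neighborhood contains regular points, so the failure propagates. This is exactly where the statement says ``near $Z(f)$''.

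\emph{Main obstacle.} I expect the main obstacle to be the distributional identity at singular points of $Z(f)$. The truncation argument handles it uniformly, but it requires the global $L^1$ bound on $|\nabla u|^2$ near singular points. That bound is precisely where the sharp energy estimate Theorem \ref{t02}(1) is essential; it is the real input rather than anything local.
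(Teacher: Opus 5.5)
Your proposal is correct, and its negative half (failure of $L^p$ integrability for $\nabla u$ and $g$, and item 4) is the paper's argument: localize at a regular point of $Z(f)$ via Lemma~\ref{lem:local_monomial} and integrate the explicit one-variable singularity. One small fix is needed there. At a regular point of the reduced set $Z(f)$ you only get $f=w_1^m$, not $f=w_1$ (e.g.\ $f=z_1^2$ has $\partial f\equiv 0$ on $Z(f)$). This just adds the constant $\log m$ to $u$, so your asymptotics are unchanged.

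You genuinely differ from the paper in how the classical identities on $U\setminus Z(f)$ are promoted to weak ones on $U$.

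\emph{The paper's route.} It excises $\{|f|\le\varepsilon\}$ and applies Green's theorem on $U_\varepsilon=\{|f|>\varepsilon\}$. It then kills the boundary fluxes with two estimates: $\int_{\{|f|=\varepsilon\}}|u|\,dS\lesssim\varepsilon^{\gamma}\log(-\log\varepsilon)$, which comes from Theorem~\ref{th:levelsurface} and hence the \L{}ojasiewicz inequality; and $\int_{\{|f|=\varepsilon\}}|\nabla u|\,dS\lesssim 1/|\log\varepsilon|$, which comes from Theorem~\ref{t02}(1). Making Green's theorem legitimate on $U_\varepsilon$ also costs Lemma~\ref{l2201}, the transversality Lemma~\ref{lm:Trans}, and the finiteness/Lipschitz Lemma~\ref{l311}.

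\emph{Your route.} Your cutoff $u_k=\chi_k(u)$ is smooth on all of $U$, since it is constant on a neighborhood $\{|f|<\delta\}$ of $Z(f)$. So integration by parts is global and there is no boundary at all. The only error term, $\int\chi_k''(u)|\nabla u|^2\varphi$, is controlled by $\int_{\{u\ge k\}}|\nabla u|^2\to 0$.

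In fact $g=-|\nabla u|^2$ pointwise off $Z(f)$, because $e^{u}=-\log|f|^2$ is pluriharmonic there. Hence all of items 1--3 rest on the single estimate $\int_U|\partial f|^2|f|^{-2}(\log|f|)^{-2}\,dV<\infty$, which comes from Theorem~\ref{t02}. Also, $u\in L^2$ follows from $(\log L)^2\lesssim L$ (as $L\ge\log 4$) together with $\log|f|\in L^1_{\mathrm{loc}}$, without Lemma~\ref{l08}.

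\emph{Comparison.} The paper's approach gives explicit decay rates for the fluxes and puts its level-set area estimates to work. Yours is shorter, needs no information about the boundary geometry of $U_\varepsilon$, and uses only the energy estimate.
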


As a complementary result, we also show that a modified construction yields functions that do belong to \(W^{2,1}(\Omega)\), demonstrating that the failure at \(p = 1\) is not universal for all singular data.

\begin{theorem}\label{th:maintheorem2}
Under the same setup, define
\begin{numcases}{v(z)=}
\frac{1} {\log ( - \log |f(z)-f(z_0)|^2)}, & $z \notin Z(f);$  \\  \nonumber 
  0, & $z \in Z(f)$.
\end{numcases}
Then

 1. The function $v$ is continuous on $U$ and  \(v\in W_{\mathrm{loc}}^{1,2}(U)\) but \(v\notin W_{\mathrm{loc}}^{1,p}(U)\) for any \(p > 2\) near \(Z(f)\);
 
2. \(t\in L_{\mathrm{loc}}^{1}(U)\) but \(t\notin L_{\mathrm{loc}}^{p}(U)\) for any \(p > 1\) near \(Z(f)\), where 
\[
t(z) = 
\begin{cases}
    \Delta v, & z\notin Z(f),\\
    0,  & z\in Z(f);
\end{cases}
\]

3. The Poisson equation  \(\Delta v = t\) holds in the sense of distributions in $U$;

4. More importantly,  \(v\in W_{\mathrm{loc}}^{2,1}(U)\), but \(v\notin W_{\mathrm{loc}}^{2,p}(U)\) for $p>1$,  near \(Z(f)\).\\

\end{theorem}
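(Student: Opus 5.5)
We assume $f(z_0)=0$ and shrink $U$ to a polydisc on which $|f|<e^{-1}$, so that $L:=-\log|f|^2>2$ and $\log L>0$ there. Set $w=\log|f|^2$, which is pluriharmonic off $Z(f)$, and $h=\log L$, so that $v=1/h$.

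\textbf{Derivatives off $Z(f)$.} We have $\partial_j w=f_j/f$ and $\partial_j h = -f_j/(fL)$, hence
\[
|\nabla v|\asymp \frac{|\partial f|}{|f|\,L\,h^{2}}.
\]
Since $w$ is pluriharmonic, $\partial_j\bar\partial_k h= -f_j\bar f_k/(|f|^2L^2)$. Consequently
\[
\partial_j\bar\partial_k v = -\frac{\partial_j\bar\partial_k h}{h^2}+\frac{2\,\partial_j h\,\bar\partial_k h}{h^3}
=\frac{f_j\bar f_k}{|f|^2L^2}\Big(\frac1{h^2}+\frac{2}{h^3}\Big),
\]
and therefore $t\asymp |\partial f|^2/(|f|^2L^2h^2)$ is positive. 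For the remaining real second derivatives we also need $\partial_j\partial_k v$. Using $\partial_j\partial_k h = -f_{jk}/(fL)+f_jf_k/(f^2L)-f_jf_k/(f^2L^2)$, we get
\[
|D^2v|\lesssim \frac{|\partial f|^2}{|f|^2Lh^2}+\frac{|D^2f|}{|f|Lh^2}.
\]
The $1/L$ factor in the first term, rather than $1/L^2$, is the term that matters.

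\textbf{Integrability via the coarea formula.} For any $\Phi\ge0$ and $a\in\{1,2\}$, the coarea formula with $|\nabla|f||=|\partial f|$ gives
\[
\int_{\{|f|<\varepsilon_0\}}\Phi(|f|)\,|\partial f|^a\,dV=\int_0^{\varepsilon_0}\Phi(s)\int_{\{|f|=s\}}|\partial f|^{a-1}\,dS\,ds .
\]
\begin{itemize}
\item For $a=2$, Theorem \ref{t02}(1) bounds the inner integral by $Cs$.
\item For $a=1$, the inner integral is $\mathcal H^{2n-1}(\{|f|=s\})=O(s^\gamma)$ by Theorem \ref{tC}.
\end{itemize}
We then read off the following.
\begin{itemize}
\item $\nabla v\in L^2$, because $\int_0 s\,s^{-2}L^{-2}h^{-4}\,ds<\infty$.
\item $t\in L^1$ and the first part of $D^2v$ is in $L^1$, because $\int_0 ds/(s\,L\,h^2)$ with $L\sim 2\log(1/s)$ converges; indeed $\int dx/(x\log^2x)<\infty$ after substituting $x=\log(1/s)$.
\item The term $|D^2f|/(|f|Lh^2)$ is bounded by $C/|f|^{1-\delta}$ near $Z(f)$. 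This is integrable by the volume estimate of Theorem \ref{tC} (layer cake with $\tau>0$, taking $\delta$ small relative to $\tau$), or simply by local integrability of $|f|^{-1+\delta}$, which follows from Hironaka or the log-canonical threshold being positive. I expect this to be the one delicate place, and I would fall back on the standard fact that $|f|^{-c}\in L^1_{\mathrm{loc}}$ for small $c>0$.
\end{itemize}
Continuity of $v$ holds because $v\to0$ as $|f|\to0$.

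\textbf{Weak derivatives across $Z(f)$.} Since $Z(f)$ is a closed set of measure zero, we must show that the pointwise derivatives are the distributional ones. We use a cutoff $\chi_\varepsilon=\eta(\log L/\log L_\varepsilon)$ or, more simply, $\chi_\varepsilon(|f|/\varepsilon)$ with $\chi=0$ on $[0,1]$ and $\chi=1$ on $[2,\infty)$. For a test function $\varphi$:
\begin{itemize}
\item For first derivatives, the error $\int v\varphi\,\partial\chi_\varepsilon$ is $O(\varepsilon^{-1}\cdot\varepsilon\cdot\sup_{|f|<2\varepsilon}|v|)\to0$, using Theorem \ref{t02} in its $a=1$ form, i.e.\ $\int_{\{|f|<2\varepsilon\}}|\partial f|\,dV=O(\varepsilon^{1+\gamma})$, which is even better than needed.
\item For second derivatives, the error is $\int \partial v\,\varphi\,\partial\chi_\varepsilon$, bounded by
\[
\varepsilon^{-1}\int_{\{\varepsilon<|f|<2\varepsilon\}}\frac{|\partial f|^2}{|f|\,L\,h^2}\lesssim \frac{1}{L_\varepsilon h_\varepsilon^2}\cdot\varepsilon^{-2}\cdot O(\varepsilon^2)\to0
\]
by Theorem \ref{t02}(2).
\end{itemize}
This gives $v\in W^{2,1}_{\mathrm{loc}}$ and $\Delta v=t$ distributionally, proving items 3 and the first half of item 4.

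\textbf{Sharpness.} Pick a regular point $p\in Z(f)$ with $\partial f(p)\neq0$; regular points are dense in $Z(f)$. Near $p$, after a holomorphic change of coordinates $f=z_1$, and the integrals reduce to one-variable radial integrals in $r=|z_1|$:
\begin{itemize}
\item $\int|\nabla v|^p\sim\int_0 r^{1-p}(\log(1/r))^{-p}\cdots\,dr=\infty$ for $p>2$;
\item $\int t^p\sim\int_0 r^{1-2p}\cdots\,dr=\infty$ for $p>1$;
\item $|\partial_{z_1}\bar\partial_{z_1}v|\ge c\,t$, so $D^2v\notin L^p$ for $p>1$.
\end{itemize}
Logarithmic factors cannot rescue power divergences, which gives the negative halves of items 1, 2 and 4.
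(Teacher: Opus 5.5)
\textbf{Main gap: the Hessian term.} Most of your plan works. The estimates for $\nabla v$, for $t$, and for the $\frac{|\partial f|^2}{|f|^2Lh^2}$ part of $D^2v$ are right. Your cutoff $\chi(|f|/\varepsilon)$ is a valid and lighter substitute for the paper's Green's theorem on $U_\varepsilon=\{|f|>\varepsilon\}$, which needs the transversality and Lipschitz-boundary lemmas.

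The genuine gap is the step you flagged, the term $\frac{|D^2f|}{|f|Lh^2}$, and it is the whole difficulty of the first half of item 4.
\begin{itemize}
\item Discarding $D^2f$ gives only the majorant $C/(|f|Lh^2)$. The factor $Lh^2$ grows logarithmically in $1/|f|$, so it cannot be traded for any power $|f|^{\delta}$. Hence your bound $C|f|^{-1+\delta}$ does not follow; it is false, e.g., near the origin for the cusp $z_1^2-z_2^3$, where $|D^2f|$ is bounded below.
\item The majorant $C/(|f|Lh^2)$ is itself not integrable in general. For $f=z_1^3$,
$\int\frac{dV}{|f|Lh^2}\approx\int_0\frac{dr}{r^2\log(1/r)\,(\log\log(1/r))^2}=\infty$.
\item Your fallbacks cannot repair this. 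Positivity of the log canonical threshold gives $|f|^{-c}\in L^1_{\mathrm{loc}}$ only for $c<2\,\mathrm{lct}(f)$, which is $2/3$ for $z_1^3$. The layer-cake argument with Theorem~\ref{tC} gives it only for $c<\tau=2-2\alpha$. You need $c$ essentially equal to $1$, so ``$\delta$ small relative to $\tau$'' points the wrong way.
\end{itemize}
This is the naive estimate discussed in the remark preceding Lemma~\ref{lem:monomial_A1}, which is inconclusive once $\alpha\ge\frac12$. The example $f=z_1^3$, with $\alpha=\frac23$, shows it genuinely fails.

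\textbf{What is missing.} $D^2f$ itself vanishes where $f$ vanishes to high order, and this must be played against $|f|$. For $f=z_1^3$ one has $|D^2f|/|f|=6/|z_1|^2$, which is integrable precisely because of the single factor $1/(Lh^2)$. The paper extracts this cancellation from Hironaka's theorem (Proposition~\ref{prop:general_A1}):
\begin{itemize}
\item it writes $H_z(f)=(D\pi)^{-T}\bigl[H_w(\tilde f)-\sum_k(\partial_{z_k}f\circ\pi)H_w(\pi_k)\bigr](D\pi)^{-1}$;
\item it absorbs $\|(D\pi)^{-1}\|^2\lesssim|J_\pi|^{-2}$ into the volume factor $|J_\pi|^2$;
\item it reduces to the monomial bound $|D^2\tilde f|/|\tilde f|\lesssim\sum_{i,j}|w_i|^{-1}|w_j|^{-1}$ of Lemma~\ref{lem:monomial_A1}.
\end{itemize}
If you follow this route, be careful with the correction term. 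The paper bounds it via $|z_k\partial_{z_k}f|\lesssim|f|$, which fails near $Z(f)$: for the cusp, $z_1\partial_{z_1}f=2z_1^2$ does not vanish on $Z(f)\setminus\{0\}$. A safer route:
\begin{itemize}
\item write $\frac{f_{jk}}{f}=\partial_j\partial_k\log f+\frac{f_jf_k}{f^2}$, where the second piece is your coarea term;
\item lift $\partial_{z_i}$ to $J_\pi^{-1}\eta_i$ with $\eta_i$ holomorphic.
\end{itemize}
Since $\tilde f$ and $J_\pi$ are both monomials times units in a log-resolution chart, this should give $|J_\pi|^2|\partial_{z_i}\partial_{z_j}\log f\circ\pi|\lesssim\sum_{a,b}|w_a|^{-1}|w_b|^{-1}$. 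Diagonal terms occur only for coordinates dividing $\tilde f$, which is exactly the setting of Lemma~\ref{lem:monomial_A1}. Until this term is handled, $v\in W^{2,1}_{\mathrm{loc}}$ is not proved.

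\textbf{Two smaller points.}
\begin{itemize}
\item Derive item 3 directly rather than from $W^{2,1}$. Apply your cutoff argument to $\bar\partial_j(\partial_jv)$, using $0\le\partial_j\bar\partial_jv\le t$; no $D^2f$ enters.
\item In the sharpness part, you may normalize to $f=z_1$ only where $\partial f\neq0$. This fails along all of $Z(f)$ for, e.g., $f=z_1^2$. Use $f=w_1^m$ as in Lemma~\ref{lem:local_monomial}, which only changes constants. Since $\partial_{w_1}\bar\partial_{w_1}$ transforms tensorially under holomorphic changes of variables, your lower bound for $D^2v$ transfers.
\end{itemize}
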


\begin{remark}
We   emphasize that Theorem \ref{th:maintheorem2}, in particular, allows one to construct ample examples of $W_{\mathrm{loc}}^{2,1}$ functions whose singular sets have complex codimension $1$. For example, in $\mathbb{C}^2$, the function
\[
    v(z) = \frac{1}{ \log (-\log |z_1^2-z_2^3|^2 )}
\]
belongs to $W_{\mathrm{loc}}^{2,1}$ near the origin. A direct verification of this fact would be highly challenging, if not impossible.
\end{remark}

Our approach synthesizes tools from real algebraic geometry (the \L{}ojasiewicz inequality), geometric measure theory (the coarea formula), and complex geometry (Hironaka's resolution). The explicit calculations for monomials and families such as \(z_{1}^{p} - z_{2}^{q}\) provide concrete examples where the exponents \(\gamma\) and \(\tau\) attain a full range of values, reflecting the geometry of the singularities.


The paper is organized as follows: Section 2 develops the energy and volume estimates for level sets. Section 3 constructs the counterexamples and proves Theorems 1.3 and 1.4. Section 4 discusses the related weighted singular exponents and their sharpness. Appendices provide a volume formula for graphs of holomorphic functions, review Hironaka's theorem and illustrate the resolution space.

\section{Energy and volume estimates for (sub)level sets of holomorphic functions}
 In this section, we present several technical lemmas and establish refined energy estimates for the special case of monomials, which will serve as the foundation for proving Theorems \ref{t02} and  \ref{tC}.
\subsection{Regularity of small level sets} 
In this subsection, we establish a regularity result for the level sets of holomorphic functions. We begin by recalling the \L{}ojasiewicz inequality \cite{Fee}.
\begin{lemma}\label{l2203}
If $\phi:U\rightarrow\mathbb{R}$ is a real analytic function with $\phi(0)=0$, $\nabla \phi(0)=0$,  where $0\in U$ is a domain in $\mathbb{R}^n$, then there exist a constant $C>0$ and an exponent $\beta\in [\frac{1}{2},1)$ such that
\begin{equation*}
|\nabla \phi|\geq C|\phi|^\beta,
\end{equation*}
in a neighborhood of $0$.
\end{lemma}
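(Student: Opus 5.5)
The plan is the classical semianalytic argument of \L{}ojasiewicz: reduce the inequality to the growth of a one-variable function of the level $t=|\phi|$, and read off that growth along an analytic curve supplied by the curve selection lemma. First, two normalizations. Since the inequality only gets weaker as $\beta$ increases when $|\phi|<1$, it suffices to find \emph{some} $\beta<1$ and then replace it by $\max(\beta,\tfrac12)$. Second, we may assume $\phi\not\equiv 0$, since otherwise there is nothing to prove.

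\emph{Step 1: the critical set.} We show that $\nabla\phi(x)=0$ forces $\phi(x)=0$ for $x$ near $0$. The set $\mathrm{Crit}=\{\nabla\phi=0\}$ is real analytic and contains $0$. By the curve selection lemma, any critical point $x$ near $0$ is joined to $0$ by a piecewise analytic arc $\sigma$ inside $\mathrm{Crit}$, taken within a finite union of such arcs through $0$. Along $\sigma$ we have $\tfrac{d}{ds}\phi(\sigma(s))=\nabla\phi\cdot\sigma'=0$, so $\phi$ is constant on $\sigma$ and equals $\phi(0)=0$. Consequently the inequality holds trivially on $\mathrm{Crit}\cap B_\varepsilon$, and we may assume $\nabla\phi(x)\neq 0$.

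\emph{Step 2: reduction to one variable.} For small $t>0$ define
\[
\psi(t)=\inf\{|\nabla\phi(x)| : |x|\le\varepsilon,\ |\phi(x)|=t\}.
\]
This infimum is attained, by compactness, and it is positive by Step 1. The function $\psi$ is subanalytic in $t$, being defined from semianalytic data by projection and infimum. Hence it admits a Puiseux-type asymptotic $\psi(t)\sim a\,t^{\theta}$ as $t\to0^+$, with $a>0$ and $\theta\in\mathbb{Q}$. The goal is then to show $\theta<1$, after which $\beta=\theta$ (or anything slightly larger) works.

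\emph{Step 3: the exponent bound via curve selection.} To bound $\theta$, apply curve selection to the semianalytic set of minimizers $\{x : |\phi(x)|=t,\ |\nabla\phi(x)|=\psi(t)\}$. This gives an analytic arc $x(s)$, $s\in(0,\delta)$, with $x(s)\to0$, along which $|\nabla\phi(x(s))|=\psi(|\phi(x(s))|)$. Expand $x(s)$, $\phi(x(s))$ and $\phi'(s)$ in powers of $s$: we have $\phi(x(s))=c\,s^{k}+\cdots$ with $c\ne0$ and $k\ge1$ (since $\phi\ne0$ off $\mathrm{Crit}$), and $|x'(s)|\le C s^{m-1}$ for some $m\ge1$. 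Because $\phi$ vanishes to order $\ge2$ at $0$, $k>m$. The chain rule gives
\[
|c|k\,s^{k-1}\approx\Big|\tfrac{d}{ds}\phi(x(s))\Big|\le|\nabla\phi(x(s))|\,|x'(s)|\le C|\nabla\phi(x(s))|\,s^{m-1}.
\]
Hence $|\nabla\phi|\gtrsim s^{k-m}\approx|\phi|^{(k-m)/k}$ along the minimizing curve. Since $\psi$ is realized on this curve, $\theta\le (k-m)/k<1$. This yields $|\nabla\phi|\ge\psi(|\phi|)\ge C|\phi|^{\beta}$ on $B_\varepsilon$, with $\beta=(k-m)/k$ and then adjusted upward to lie in $[\tfrac12,1)$.

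The main obstacle is Step 2: justifying that $\psi$ is subanalytic and has a power-type asymptotic. Here I would invoke Hironaka's resolution, or Tarski--Seidenberg in the subanalytic category, to get that one-variable subanalytic functions have Puiseux expansions. Alternatively, one could avoid $\psi$ entirely with a contradiction argument: assume there are points $x_j\to0$ with $|\nabla\phi(x_j)|<|\phi(x_j)|^{1-1/j}$, then use curve selection on the set $\{|\nabla\phi|^{N}<|\phi|^{N-1}\}$ for large $N$, which is again semianalytic, and run the Step 3 computation on the resulting arc to reach a contradiction.
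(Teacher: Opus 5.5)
The paper does not prove this lemma; it imports \L{}ojasiewicz's gradient inequality from the literature and attributes the range $\beta\in[\frac12,1)$ to Feehan. Your proposal reproduces the classical curve-selection proof instead, and its main line is sound.

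\textbf{Comparison with the paper.} Your normalization is correct and worth keeping. Since $|\phi|<1$ near $0$, enlarging $\beta$ only weakens the inequality, so the requirement $\beta\ge\frac12$ adds nothing to an existence statement; Feehan's result concerns the optimal exponent. Your arc computation even produces the bound directly. At a base point $p$ where $\phi$ vanishes to second order, $|\phi(x(s))|\lesssim|x(s)-p|^2\lesssim s^{2m}$, so $k\ge 2m$ and $1-m/k\in[\frac12,1)$. Citing gives the paper brevity and sharper information about the optimal exponent. Your route gives a self-contained account of where $\beta$ comes from. The price is reliance on subanalytic geometry: $\psi$ must be subanalytic, which needs Gabrielov's complement theorem or o-minimality of $\mathbb{R}_{\mathrm{an}}$ for the infimum, and you need curve selection for subanalytic sets.

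\textbf{Step 1 needs repair.} Curve selection only produces one arc ending at $0$ inside a given set. It does not join an arbitrary critical point to $0$ inside $\mathrm{Crit}$. Apply it instead to the semianalytic set $\{\nabla\phi=0,\ \phi\neq0\}$. If $0$ were in its closure, you would get an analytic arc $\gamma$ with $\gamma(0)=0$ and $\gamma(s)$ in that set for $s>0$. Then $\phi\circ\gamma$ is constant on $(0,\delta)$, hence equal to $\phi(0)=0$, a contradiction.

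\textbf{Step 3 needs repair.} The minimizer set $M=\{x\in\overline{B_\varepsilon}:\phi(x)\neq0,\ |\nabla\phi(x)|=\psi(|\phi(x)|)\}$ is subanalytic, not semianalytic. More seriously, it need not accumulate at $0$. For $\phi=x_2^2(1+x_1)$ on $\mathbb{R}^2$, one has $|\nabla\phi|^2=t^2(1+x_1)^{-2}+4t(1+x_1)$ on $\{|\phi|=t\}$, so the minimizers tend to $(-\varepsilon,0)$. The fix is to base the arc at any accumulation point $p$ of $M$ as $t\to0^+$. Since the level sets meet every neighborhood of $0$, $\psi(t)\to0$, and continuity then gives $\phi(p)=0$ and $\nabla\phi(p)=0$. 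Your computation then runs verbatim at $p$.

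Two smaller points in Step 3:
\begin{itemize}
\item $\phi(x(s))\not\equiv0$ holds because $M\subset\{\phi\neq0\}$, not because ``$\phi\neq0$ off $\mathrm{Crit}$''; regular zeros exist.
\item Once the arc is in hand, the Puiseux expansion in Step~2 is superfluous. The values $|\phi(x(s))|$ sweep out an interval $(0,t_0)$, so the arc gives $\psi(t)\gtrsim t^{1-m/k}$ for every small $t$ directly. Step~2 is needed only to make $M$ subanalytic.
\end{itemize}

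\textbf{The alternative argument does not close.} On an arc inside $\{|\nabla\phi|^N<|\phi|^{N-1}\}$, the chain-rule computation gives only $s^{k-m}\lesssim s^{k-k/N}$, that is, $k\ge Nm$. This is not a contradiction, because vanishing orders along arcs are unbounded. One arc per $N$ cannot replace a single arc that realizes the worst case for all $t$ at once, and that uniformity is exactly what $\psi$ and the minimizer set supply.
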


A brief remark is in order regarding the \L{}ojasiewicz inequality.  In the classical literature, the exponent was frequently cited in the range   $\beta \in (0,1)$, until Feehan \cite{Fee} rigorously proved that $\beta$ must actually satisfy $\beta\geq \frac 1 2 $. The value $\beta = \frac 1 2 $ is   sharp and corresponds to the case of a Morse function. As will be seen later, the lower bound $\beta \geq \frac 12$ turns out to be crucial in our holomorphic setting, without which our work would not proceed as of now.

As a direct consequence of Lemma \ref{l2203}, we obtain a version of the \L{}ojasiewicz inequality specifically tailored for holomorphic functions. We will use $\partial f$ instead of $\nabla f$ to reflect the complex analytic context.
\begin{lemma}\label{l2204}
Let $f:U\rightarrow\mathbb{C}$ be a nonconstant holomorphic function with $f(0)=0$, where $0\in U$ is a domain in $\mathbb{C}^n$. Then, in a neighborhood of $0$, we have
\begin{equation*}
|\partial f|\geq C|f|^{\alpha},
\end{equation*}
for some constant $C>0$, where $0\leq \alpha<1$.
\end{lemma}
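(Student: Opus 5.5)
Apply Lemma \ref{l2203} to the real analytic function $\phi=|f|^2=f\bar f$ on $U\subset\mathbb{C}^n\cong\mathbb{R}^{2n}$, after disposing of the trivial case. If $\partial f(0)\neq 0$, then $|\partial f|\geq |\partial f(0)|/2>0$ near $0$ by continuity, and $|f|\le 1$ near $0$. Hence $|\partial f|\geq C|f|^{0}$ holds with $\alpha=0$ and $C=|\partial f(0)|/2$.

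So assume $\partial f(0)=0$. Then $\phi(0)=0$ and $\nabla\phi(0)=0$ (indeed $\nabla \phi$ vanishes at any zero of $f$). Lemma \ref{l2203} gives $C_0>0$ and $\beta\in[\tfrac12,1)$ with $|\nabla\phi|\geq C_0|\phi|^\beta=C_0|f|^{2\beta}$ near $0$.

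Next compute $|\nabla\phi|$ in terms of $f$ and $\partial f$. Since $\phi$ is real, $\frac{\partial\phi}{\partial\bar z_j}=\overline{\frac{\partial\phi}{\partial z_j}}$, and with $\nabla=(\partial_{x_j},\partial_{y_j})$ one has $|\nabla\phi|=2\big|\big(\frac{\partial\phi}{\partial \bar z_1},\dots,\frac{\partial\phi}{\partial\bar z_n}\big)\big|$. Holomorphy of $f$ gives $\frac{\partial \phi}{\partial \bar z_j}=f\,\overline{\partial_j f}$, so
\[
|\nabla \phi| = 2|f|\,|\partial f|.
\]
Equivalently, away from $Z(f)$ this is $|\nabla|f|^2|=2|f|\,|\nabla|f||$ combined with the paper's identity $|\nabla|f||=|\partial f|$. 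Combining with the Łojasiewicz bound gives
\[
2|f|\,|\partial f|\geq C_0|f|^{2\beta},
\]
so wherever $f\ne0$ we get $|\partial f|\geq \tfrac{C_0}{2}|f|^{2\beta-1}$.

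Set $\alpha=2\beta-1$. Since $\beta\in[\tfrac12,1)$, we get $\alpha\in[0,1)$, and this is exactly where Feehan's lower bound $\beta\geq\tfrac12$ matters: it guarantees $\alpha\ge 0$. On $Z(f)$ the inequality reads $|\partial f|\geq 0$ (since $\alpha\geq0$, interpreting $0^0$ only when $\alpha=0$). If $\alpha=0$ and $f(p)=0$, the bound $|\partial f(p)|\ge C$ is still needed at zeros, which follows from continuity because zeros of $f$ are limits of points where $f\neq0$ (the zero set of a nonconstant $f$ has empty interior). Shrinking the neighborhood finishes the proof. The only real step is the gradient identity; everything else is bookkeeping.
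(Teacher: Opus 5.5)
Your proof is correct and is essentially the paper's argument: apply Lemma \ref{l2203} to $\phi=|f|^2$, use $|\nabla\phi|=2|f|\,|\partial f|$, divide by $|f|$ off $Z(f)$, set $\alpha=2\beta-1$, and handle the zeros of $f$ by continuity. Your preliminary case $\partial f(0)\neq0$ is unnecessary, since, as you note yourself, $\nabla\phi(0)=0$ whenever $f(0)=0$. Your treatment of points of $Z(f)$ is a bit more careful than the paper's one-line appeal to continuity.
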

Note that, unlike in Lemma \ref{l2203},   we do not assume $\partial f(0)=0$ here.
\begin{proof}
    Consider the function defined by  
\[
\phi(z) = |f(z)|^2.
\]  
Applying Lemma \ref{l2203} to \(\phi(z)\), since $\phi(0)=0$ and $\nabla \phi(0) =0$, we obtain the following gradient estimate in a neighborhood of $0$ 
\[
|\nabla \phi(z)| \geq C|\phi(z)|^{\beta}, 
\]  
where $\frac{1}{2} \leq \beta < 1.$
Consequently,  
\[
|f(z)|   |\nabla |f(z)|| \geq C|f(z)|^{2\beta}.
\]  
If \(|f(z)| \neq  0\), then dividing by  $|f(z)|$ yields
\[
\bigl| \nabla |f(z)| \bigr|  \geq C|f(z)|^{2\beta-1}.
\]  
Since $\bigl| \nabla |f(z)| \bigr| = |\partial f(z)|$ when $f(z)\neq 0$, it follows that  

\[
|\partial f(z)| \geq C|f(z)|^{\alpha}, \quad \text{with } 0 \leq \alpha=2\beta-1 < 1.
\]
For $f(z)=0$, the proof is complete by continuity.
\end{proof}
Note that in the one dimensional case,   Lemma \ref{l2204} can be proven directly without invoking Lemma \ref{l2203}.
Indeed,  if $k$ is the vanishing order of $f$ at $z=0$, then  $|f'| \geq C|f|^{\alpha}$ near $0$ with $\alpha=\frac{k-1}{k}$.

The following lemma is fundamental for  studying   the  level sets defined by the modulus of a holomorphic function. It provides a   stronger conclusion  than Sard's theorem, as the latter only guarantees    for almost every $\varepsilon$.

\begin{lemma}[Regularity of small level sets]\label{l2201}
Let $ K \subset \mathbb{C}^n $ be a compact subset and $ U $ be an open neighborhood of $K$. Let $ f: U \to \mathbb{C} $ be a nonconstant holomorphic function. Let $ Z (f)= \{z \in K: f(z) = 0\} \neq \emptyset$.  Then, there exists a constant $\varepsilon_0 > 0$ such that for all $ z \in K$, if  
$$
0 < |f(z)| < \varepsilon_0,
$$  
then $z$ is not a critical point of the function $|f|^2$.  
Specifically, $ \nabla |f|^2 \neq 0 $ on $\{z \in K: 0 < |f(z)| < \varepsilon_0\}$, and its level set $\{z \in K: |f(z)| = \varepsilon\}$ is a smooth real analytic submanifold for all $ 0 < \varepsilon < \varepsilon_0 $.  
\end{lemma}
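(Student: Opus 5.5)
The plan is to cover $K$ by finitely many small balls centred at zeros of $f$, apply Lemma \ref{l2204} on each, and handle the part of $K$ away from $Z(f)$ by compactness.

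Begin with the local step. For each $p\in Z(f)$ we want to apply Lemma \ref{l2204} to $f$ at $p$. If $f$ vanishes identically on the component of $U$ containing $p$, the lemma does not apply; in that case the points of that component near $p$ have $|f|=0$ and never satisfy $0<|f|$. So we may assume $f$ is nonconstant near $p$; otherwise $p$ is covered trivially. Lemma \ref{l2204}, translated to $p$, gives a radius $r_p>0$, a constant $C_p>0$ and $\alpha_p\in[0,1)$ with
\[
|\partial f(z)|\ge C_p|f(z)|^{\alpha_p}\quad\text{on } B(p,r_p).
\]
Hence $|\partial f(z)|>0$ whenever $z\in B(p,r_p)$ and $f(z)\neq0$. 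Since
\[
\nabla|f|^2 = 2|f|\,\nabla|f| \quad\text{and}\quad |\nabla|f||=|\partial f| \text{ off } Z(f),
\]
it follows that $\nabla|f|^2\neq 0$ at every point of $B(p,r_p)$ with $0<|f|$.

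Next comes compactness. The set $Z(f)$ is closed in $K$, hence compact, so finitely many balls $B(p_i,r_{p_i})$ cover it. Let $W$ be the union of these balls. Then $K\setminus W$ is compact and $|f|>0$ on it, so $m:=\min_{K\setminus W}|f|>0$. Set $\varepsilon_0=m$. If $z\in K$ and $0<|f(z)|<\varepsilon_0$, then $z\notin K\setminus W$, so $z$ lies in some $B(p_i,r_{p_i})$, and by the local step $\nabla|f|^2(z)\neq0$.

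Finally, regularity of the level sets. Since $|f|^2$ is real analytic, for $0<\varepsilon<\varepsilon_0$ the level set $\{|f|^2=\varepsilon^2\}$ has nonvanishing gradient at every point of $K$. The real-analytic implicit function theorem then makes it a smooth real-analytic hypersurface near each such point, which gives the claim for $\{z\in K:|f(z)|=\varepsilon\}$, understood as the intersection with $K$ of a hypersurface of the open set.

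The only delicate point is choosing $\varepsilon_0$ uniformly. The step that makes this work is the reduction to a compact neighbourhood of $Z(f)$ together with the positive minimum of $|f|$ outside it, and this is why compactness of $K$ is essential. The nonconstancy caveat on individual components is minor and is handled as described above.
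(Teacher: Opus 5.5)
Your proof is correct and is essentially the paper's argument. It has the same steps: a local \L{}ojasiewicz bound near each point of $Z(f)$, a finite subcover of the compact set $Z(f)$ by such neighbourhoods with union $W$, $\varepsilon_0$ taken as the positive minimum of $|f|$ on the compact set $K\setminus W$, and then the implicit function theorem. The differences are cosmetic. You quote Lemma \ref{l2204} instead of applying Lemma \ref{l2203} directly to $|f|^2$, which is exactly how Lemma \ref{l2204} is proved. You also explicitly dispose of components of $U$ on which $f\equiv 0$, a case the paper passes over.
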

\begin{proof}

Define the function $\phi: U \to \mathbb{R}$ by  
$$\phi(z) = |f(z)|^2 = f(z)\overline{f(z)},$$  
since $ f$ is holomorphic, $\phi$ is real analytic. We compute the gradient of $\phi$, identifying the vanishing of the real gradient with the vanishing of the complex derivative 
$$
\nabla \phi(z) = 0 \iff \frac{\partial \phi}{\partial z_j}(z) = 0, \ \   \forall j = 1 \cdots n.
$$  
Using the chain rule, we have  
$$
\frac{\partial}{\partial z_j}(f \bar{f}) = \overline{f(z)} \frac{\partial f}{\partial z_j}(z).
$$  
Thus, $ z $ is a critical point of $\phi$ if and only if  
$$
\overline{f(z)}  \partial f (z) =0.
$$
This occurs in two cases:

1.  $ f(z) = 0 $ (i.e.,  $ z \in Z (f) $);

2.  $ \partial f = 0 $ (i.e., $ z $ is a critical point of the holomorphic map $ f $ with a non-zero value).\\
We must show that  case $2$ cannot occur arbitrarily close to the zero set $ Z (f) $. To this end, we need to  apply the \L{}ojasiewicz inequality locally.

Let $ p \in Z (f) $. Since $\phi$ is real analytic and $ \phi(p) = 0 $, By Lemma \ref{l2203} there exists an open neighborhood of $ W_p \subset U$ of $p$, a constant $ C_p > 0 $,  and $ Q_p \in [\frac{1}{2}, 1)$ such that for all $ z \in W_p $ 
$$|\nabla\phi(z)| \geq C_p |\phi(z) - \phi(p)|^{Q_p} = C_p |\phi(z)|^{Q_p}.$$
Substituting $ \phi = |f(z)|^2 $ we get
$$|f(z)|   |\nabla |f(z)||   \geq C_p |f(z)|^{2Q_p}.$$
For any $ z \in W_p $, if $ f(z) \neq 0 $, the right hand side is strictly positive. Consequently, $ |\nabla\phi(z)| > 0 $. Namely, there are no critical points of $ |f|^2 $ in the punctured subset  $ W_p\backslash Z (f) $. Since the zero set $ Z (f) $ is closed in $ K $, it is compact. The family of open sets $ \{W_p\}_{p \in Z (f)} $ covers $ Z(f)$. By compactness, extracting a finite subcover, there exists an open set $ W = \bigcup_{j=1}^m W_j $ such that $Z (f)\subset W\subset U$.
In particular, we have established  
$$ \nabla |f|^2 \neq 0,  \quad \text{for any}  \quad   z \in W \backslash Z (f).$$  
In order to avoid possible enthalpies outside $W$,  consider the set $K \backslash W$. Since $W$ is open, $K \backslash W$ is compact. Furthermore, because  $Z (f)\subset W$,  $f$ has no zero in $K \backslash W$. Therefore, $|f|$ attains a strictly positive minimum on this set:  
$$\delta = \min_{z \in K \backslash W} |f(z)| > 0.$$  
Choose $\varepsilon_0 = \delta$. Let $z \in K$ be any point such that 
$$0 < |f(z)| < \varepsilon_0.$$  
Suppose $z \in K \backslash W$. Then $|f(z)| \geq \delta = \varepsilon_0$. This contradicts the assumption $|f(z)| < \varepsilon_0$. Therefore, it must be that $z \in W$. Since $|f(z)| > 0$, we conclude that  
$$z \in W \backslash Z (f).$$ 
Based on our previous analysis, 
$$
\nabla |f|^2 \neq 0 \quad \text{on} \quad  W \backslash Z (f).
$$  
Finally, for any $\varepsilon\in (0, \varepsilon_0)$, the level set $\{z \in K : |f| = \varepsilon\}$ contains no critical points. By the implicit function theorem, the level set is a smooth submanifold.

\end{proof}
It follows from Lemma \ref{l2201} that the first integral in Theorem \ref{t02} is well defined, at least for $0<\varepsilon<\varepsilon_0$.

We stress that the constant $\varepsilon_0$ is absolutely necessary in Lemma \ref{l2201}. For example, consider a function $f(z_1,z_2)=z_1^2-z_1+z_2^2$, then $f_{z_1}(z_1,z_2)=f_{z_2}(z_1,z_2)=0$ at $p=(\frac{1}{2},0)$ and $f(p)=-\frac{1}{4}$. The level set $\{|f(z)|=\frac{1}{4}\}$ contains the critical point $p$, which implies the level set is not smooth; hence, for this function,   $\varepsilon_0<\frac{1}{4}$.

Based on the proof of Lemma \ref{l2201}, setting $\alpha= \max\{ \alpha_1, \alpha_2, \cdots, \alpha_m\}$, the following semi-global \L{}ojasiewicz inequality can be obtained directly. This specific inequality will be frequently utilized in the subsequent results of this paper. 

\begin{lemma}\label{lm:actuallyused}
    Let $f$ be a nonconstant holomorphic function in a neighborhood of the unit polydisc $\overline{U}$ in $\C^n$ with $f(0)=0$. Then there exist an exponent $0\leq \alpha <1$, a constant $C>0$ and a radius $\varepsilon_0>0$ such that 
\begin{equation*}
|\partial f|\geq C|f|^{\alpha},  \ \ \text{whenever}  \  z\in U  \ \text{with} \  \ |f(z)| <\varepsilon_0.
\end{equation*}
\end{lemma}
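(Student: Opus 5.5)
We obtain the inequality by patching together the local inequalities of Lemma \ref{l2204} with a compactness argument on $\overline{U}$, as in the proof of Lemma \ref{l2201}. Since $f$ is holomorphic near $\overline{U}$, fix a slightly larger polydisc $U' \Supset \overline{U}$ on which $f$ is holomorphic. Take $K=\overline{U}$ and set $Z=\{z\in K: f(z)=0\}$. This set contains $0$ and is compact.

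\textbf{Local step.} For each $p\in Z$, apply Lemma \ref{l2204} to the translate $f(\cdot+p)$. If $\partial f(p)\neq 0$, the inequality holds trivially with $\alpha_p=0$ on a small ball, by continuity of $\partial f$. In either case we get a neighborhood $W_p$ of $p$, a constant $C_p>0$ and an exponent $\alpha_p\in[0,1)$ such that
\[
|\partial f|\ge C_p|f|^{\alpha_p}\quad\text{on } W_p .
\]
Shrinking $W_p$, we may assume $|f|\le 1$ on $W_p$.

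\textbf{Covering step.} By compactness, extract finitely many $W_{p_1},\dots,W_{p_m}$ covering $Z$, and put $W=\bigcup_j W_{p_j}$. Set
\[
\alpha=\max_j\alpha_{p_j}<1,\qquad C=\min_j C_{p_j}>0 .
\]
Because $|f|\le 1$ on each $W_{p_j}$ and $\alpha_{p_j}\le\alpha$, we have $|f|^{\alpha_{p_j}}\ge |f|^{\alpha}$ there. Hence $|\partial f|\ge C|f|^{\alpha}$ holds on all of $W$. The normalization $|f|\le 1$ is exactly what lets a single exponent serve every patch, since raising the exponent only weakens the bound when $|f|\le 1$.

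\textbf{Separation step.} As in Lemma \ref{l2201}, the set $K\setminus W$ is compact and contains no zeros of $f$. Let
\[
\delta=\min_{K\setminus W}|f|>0 .
\]
If $Z$ happens to cover all of $K$ this step is vacuous, but $f$ is nonconstant, so that cannot happen. Put $\varepsilon_0=\min(\delta,1)$. Any $z\in U$ with $|f(z)|<\varepsilon_0$ then cannot lie in $K\setminus W$, so $z\in W$, and the inequality holds there.

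The main delicacy is the uniformity across patches, which the normalization $|f|\le 1$ resolves. One must also note that the points counted by $|f|<\varepsilon_0$ are taken in $U\subset K$, so the compactness argument applies. Beyond that, the argument is a routine consequence of Lemma \ref{l2204} and the proof of Lemma \ref{l2201}.
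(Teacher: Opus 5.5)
Your argument is correct and is essentially the paper's own: cover the compact zero set in $\overline{U}$ by finitely many neighborhoods on which Lemma \ref{l2204} holds, take $\alpha$ to be the maximum of the local exponents, and use the separation step from the proof of Lemma \ref{l2201} to produce $\varepsilon_0$. Your normalization $|f|\le 1$ on each patch, together with $C=\min_j C_{p_j}$, makes explicit why a single maximal exponent works, which the paper leaves implicit; one small slip is that the case to worry about in the separation step is $K\setminus W=\emptyset$ (not $Z=K$), and your choice $\varepsilon_0=\min(\delta,1)$ with $\delta=+\infty$ already handles it.
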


\subsection{The coarea formula and Dirac delta function}
In this subsection, we establish the tools necessary to analyze the energy integrals of holomorphic functions over sublevel sets.

The standard coarea formula states that for a Lipschitz function $ u: \mathbb{R}^n \to \mathbb{R} $ and an integrable function $ f: \mathbb{R}^n \to \mathbb{R} $, the following identity holds
\begin{equation}\label{e241}
\int_{\mathbb{R}^n} f(x)  dx = \int_{-\infty}^{\infty} \left( \int_{u^{-1}(t)} \frac{f(x)}{|\nabla u(x)|}dS_x\right) dt,
\end{equation}
where $ u^{-1}(t) = \{ x \in \mathbb{R}^n : u(x) = t \} $ is the level set of $ u $, and $ dS$ denotes the $(n-1)$-dimensional Hausdorff measure.

We can reinterpret this formula using the Dirac delta function $\delta$, which is not a function but a distribution characterized by the sifting property
$$\int_{-\infty}^{\infty} f(t) \delta(t-a) dt = f(a).$$

A heuristic, yet highly intuitive and computationally effective formulation of the coarea formula \cite{Ho90} is given by
\begin{align}\label{eq:norcoarea}
\int_{\mathbb{R}^n} f(x) \delta(u(x)-t) dx = \int_{u^{-1}(t)} \frac{f(x)}{|\nabla u(x)|} dS_x.
\end{align}
We refer to this as the "Dirac delta  version" of the coarea formula. To unpack this: on the left hand side,  the distribution $\delta(u(x)-t)$  essentially "picks out" the level set $ u(x) = t $,  evaluating to zero everywhere else. Naturally, this integral over $\mathbb{R}^n$ must be understood in the sense of distributions. The right hand side represents the classical integral of $f$ over the level set $u^{-1}(t)$, weighted by the crucial Jacobian factor $\frac{1}{|\nabla u(x)|}$.


To facilitate the practical application of this delta version,  consider a simple example: computing the surface area of a level set $u(x)=t$. One would compute 
$$\text{Surface Area } (u^{-1} (t) )= \int_{\mathbb{R}^n} \delta(u(x)-t) |\nabla u(x)| \, dx,\qquad (\text{area }(u^{-1}(t))).$$
Indeed, returning to the coarea interpretation \ref{eq:norcoarea},  
If we choose $ f(x) = |\nabla u(x)| $, we find 
\begin{align*}
	\int_{\mathbb{R}^n} |\nabla u(x)| \delta (u(x)-t) \, dx& = \int_{u^{-1}(t)} \frac{|\nabla u(x)|}{|\nabla u(x)|} \, dS_x\\
&= \int_{u^{-1}(t)} dS_x.
\end{align*}
The right hand side is exactly the $(n-1)$-dimensional measure of the level set $u^{-1}(t)$, which perfectly corresponds to its surface area. 


\subsection{Energy estimates for the case of monomials} 
We now consider $ U = \{ z \in \C^n : |z_i|<1, i=1,2,\cdots, n \} $ and 
$f(z)= z^A = z_1^{k_1} \cdots z_n^{k_n}$, where $A = (k_1, \cdots, k_n),\ k_j \geq 0$ being non-negative integers. We will prove stronger results for the case of monomials than the general  $O(\varepsilon)$ or $O(\varepsilon^2)$ bounds stated  in Theorem \ref{t02}. 
\begin{proposition}\label{p01}
When $f$ is a monomial,   we obtain the following asymptotic formula as $\varepsilon \to 0$,
\begin{equation*}
\begin{split}
I(\varepsilon) &= \int_{\{z\in U: |f| = \varepsilon\} } |\partial f| dS \sim c \varepsilon,\\
J(\varepsilon) &= \int_{\{z\in U:|f| < \varepsilon\} } |\partial f|^2 dV \sim \frac{c}{2} \varepsilon^2.
\end{split}
\end{equation*}

Here we use $ f \sim g $ as $ x \to 0 $ if $\lim_{x \to 0} \frac{f(x)}{g(x)} = 1$, and $c>0$ is a constant.
\end{proposition}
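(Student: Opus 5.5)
The two asymptotics are linked by the coarea formula, so I will compute $J(\varepsilon)$ exactly and read off $I(\varepsilon)$. Apply \eqref{e241} with $u=|f|$ on $U$, using $|\nabla |f||=|\partial f|$ off $Z(f)$:
\[
J(\varepsilon)=\int_{\{|f|<\varepsilon\}}|\partial f|^2\,dV=\int_0^\varepsilon\Big(\int_{\{|f|=t\}}\frac{|\partial f|^2}{|\partial f|}\,dS\Big)dt=\int_0^\varepsilon I(t)\,dt .
\]
So it suffices to show $I(t)=c\,t$ exactly, or at least $I(t)\sim ct$; then $J(\varepsilon)\sim \frac c2\varepsilon^2$ follows by integrating. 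For a monomial, Lemma \ref{l2201} is automatic, since every level set $\{|f|=t\}$ with $0<t<1$ is smooth.

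To compute $J$ directly, I pass to polar coordinates $z_j=r_je^{i\theta_j}$. The integrand is $|\partial f|^2=|f|^2\sum_{j}k_j^2/|z_j|^2$, and the angles integrate out to a factor $(2\pi)^n$. Setting $s_j=-\log r_j\in(0,\infty)$ (after dropping the variables with $k_j=0$, which contribute a harmless factor $\pi^{\#}$), one has $r_j\,dr_j=e^{-2s_j}ds_j$, $|f|=e^{-\langle A,s\rangle}$, and $|z_j|^{-2}=e^{2s_j}$. Each term of the sum then becomes
\[
(2\pi)^n k_j^2\int_{\{\langle A,s\rangle>\log(1/\varepsilon)\}} e^{-2\langle A,s\rangle}\,e^{-2\sum_{i\ne j}s_i}\,ds .
\]
This is a Laplace-type integral over the region beyond the hyperplane $\langle A,s\rangle=L$, $L=\log(1/\varepsilon)$.

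For the one-variable case, $f=z_1^k$, the computation is explicit. With $\{|z_1|^k<\varepsilon\}=\{r<\varepsilon^{1/k}\}$,
\[
J=2\pi k^2\int_0^{\varepsilon^{1/k}}r^{2k-1}\,dr=\pi k\,\varepsilon^2,
\]
which is exact. In several variables, the terms with $i\neq j$ carry the extra weight $e^{-2s_i}$. Changing variables $s_j=(L'-\sum_{i\ne j}k_is_i)/k_j$ along the fibres, the factor $e^{-2\langle A,s\rangle}$ integrates to $\varepsilon^2/(2k_j)\cdot(\text{const})$. The remaining integral in $s_i$, for $i\ne j$, is over $s_i>0$ subject to $\sum_{i\neq j}k_is_i<L$. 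This region exhausts $(0,\infty)^{n-1}$ as $\varepsilon\to0$, where the weights $e^{-2s_i}$ are integrable, so monotone convergence gives the limit $\prod_{i\ne j}\tfrac12$. Summing over $j$ yields $J(\varepsilon)\sim \frac c2\varepsilon^2$ with $c$ explicit, namely $c=\pi^n\sum_j k_j$ up to the bookkeeping of the $k_j=0$ variables.

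For $I$, I plan to do the analogous computation directly on the level set rather than differentiating an asymptotic. Parametrize $\{|f|=t\}$ by the free moduli $r_i$ for $i\ne j$ and all angles, solving for $r_j$. Equivalently, use $I(t)=J'(t)$ where the formula for $J$ is an explicit integral in $L$ whose derivative can be taken before passing to the limit. The main obstacle is justifying this: $J\sim\frac c2\varepsilon^2$ alone does not imply $J'\sim c\varepsilon$. I will therefore write $J(\varepsilon)=\varepsilon^2\Phi(\log(1/\varepsilon))$ with $\Phi$ explicit, smooth, and monotone to a limit. Then $I(\varepsilon)=J'(\varepsilon)=2\varepsilon\Phi-\varepsilon\Phi'$. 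It remains to check $\Phi'(L)\to0$, which holds because $\Phi'(L)$ is the boundary integral of $e^{-2\sum s_i}$ over the simplex slice $\sum_{i\neq j}k_is_i=L$, and this decays exponentially in $L$.
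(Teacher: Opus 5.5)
Your argument is correct in substance, but it runs in the opposite direction from the paper. The paper computes $I(\varepsilon)$ directly. It writes $I(\varepsilon)=\int_U|\partial f|^2\,\delta(|f|-\varepsilon)\,dV$, substitutes $u_j=r_j^{k_j}$, and integrates out $u_j$ against the delta (Lemma \ref{l1}). This gives the exact representation $I_j=C_j\,\varepsilon\int_{\prod_{i\ne j}u_i\ge\varepsilon}\prod_{i\ne j}u_i^{2/k_i-1}\,du$. Lemma \ref{l02} then supplies the limit, using the same logarithmic substitution $u_i=e^{-x_i}$ you use plus monotone convergence. The bound $J\sim\frac c2\varepsilon^2$ is then the easy Abelian consequence of $J=\int_0^\varepsilon I$.

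You instead compute $J$ as an honest Lebesgue integral, which avoids the heuristic delta-function calculus. The price is that you must differentiate an asymptotic, which is a Tauberian step. You correctly spot this and handle it by writing $J=\varepsilon^2\Phi(L)$ with $\Phi$ monotone and $\Phi'\to0$, so that $I=\varepsilon(2\Phi-\Phi')\sim 2\Phi(\infty)\,\varepsilon$. Your first paragraph already observes that $I\sim ct$ suffices. The direct level-set parametrization you mention (solving for $r_j$) is exactly the paper's computation and is the shorter path.

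Three details need repair.

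\textbf{The constant.} Each $j$-term of $J$ is asymptotic to $(2\pi)^n k_j^2\cdot\frac{1}{2k_j}\cdot 2^{-(n-1)}\varepsilon^2=\pi^n k_j\varepsilon^2$. Hence $J\sim \pi^n\sum_j k_j\,\varepsilon^2$ and $c=2\pi^n\sum_j k_j$, not $\pi^n\sum_j k_j$. This matches your own $n=1$ value $J=\pi k\varepsilon^2$ and the paper's $I\sim 2\pi^2(k+l)\varepsilon$ for $n=2$.

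\textbf{The fibre integration.} You only treat the region $\sigma_j:=\sum_{i\ne j}k_is_i<L$. On $\{\sigma_j\ge L\}$ the $s_j$-integral runs over all of $(0,\infty)$ and contributes $\frac{(2\pi)^n k_j}{2}\int_{\sigma_j\ge L}e^{-2\sigma_j}\prod_{i\ne j}e^{-2s_i}\,ds'=o(\varepsilon^2)$. Once this region is included, the slice terms cancel and the true $\Phi_j$ satisfies $\Phi_j'(L)=(2\pi)^n k_j\int_{\sigma_j\ge L}e^{-2(\sigma_j-L)}\prod_{i\ne j}e^{-2s_i}\,ds'$. This is a tail integral, not the slice integral you describe, but it is still nonnegative and exponentially small in $L$, so your conclusion stands. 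For $n=2$ this reproduces exactly the paper's $I(\varepsilon)=2\pi^2\varepsilon\bigl[k(1-\varepsilon^{2/l})+l(1-\varepsilon^{2/k})\bigr]$.

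\textbf{The identification $I=J'$.} The relation $J=\int_0^\varepsilon I$ gives $J'=I$ only almost everywhere. To use it at every $\varepsilon$ you need continuity of $I$, as in Lemma \ref{l03} combined with Lemma \ref{l271}, or simply read off from the explicit monomial formula.
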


Before present the full proof of this proposition, we carry out specific computations for low dimensional cases  to build intuition.

\subsubsection{Case $ n = 2$  for $I(\varepsilon) $ }
  By applying the Dirac delta version of the coarea formula \eqref{eq:norcoarea} over bounded domains, we can rewrite $ I(\varepsilon) $ as
$$
I(\varepsilon) = \int_{  \{ z\in U:|f| = \varepsilon \}} |\partial f| dS= \int_{U} \big |\nabla |f|\big |^2 \delta (|f| - \varepsilon) dV.
$$
Since $ f $ is holomorphic, we have $ \big|\nabla |f| \big| =|\partial f| $. Then the formula becomes
$$
I(\varepsilon) = \int_{U} |\partial f|^2 \delta (|f| - \varepsilon) dV.
$$
Using polar coordinates $z_1 = r_1 e^{i \theta_1},   z_2 = r_2 e^{i \theta_2}$,
if $f(z)=z_1^kz_2^l$, then $ |f| = r_1^{k} r_2^{l} $, and
$$
\frac{\partial f}{\partial z_1} = k z_1^{k-1} z_2^l, \quad \frac{\partial f}{\partial z_2} = l z_1^{k} z_2^{l-1}.
$$
Consequently, we obtain
\begin{align*}
	|\partial f|^2 &= \left| \frac{\partial f}{\partial z_1} \right|^2 + \left| \frac{\partial f}{\partial z_2} \right|^2 = k^2r_1^{2k-2}r_2^{2l}+l^2r_1^{2k}r_2^{2l-2}\\
&= |f|^2 \left( \frac{k^2}{r_1^2} + \frac{l^2}{r_2^2} \right).
\end{align*}
Substituting these expressions into $I(\varepsilon)$, we have
\begin{align*}
	I(\varepsilon) &= (2\pi)^2 \int_0^1 \int_0^1 r_1 r_2 |\partial f|^2 \delta(r_1^k r_2^l - \varepsilon) dr_1 dr_2\\
&= (2\pi)^2 \int_0^1 \int_0^1 [k^2 r_1^{2k-1} r_2^{2l+1} + l^2 r_1^{2k+1} r_2^{2l-1}] \delta(r_1^k r_2^l - \varepsilon) dr_1 dr_2.
\end{align*}
We perform a further change of variables: Let $u = r_1^k$ and $ v = r_2^l.$ Then $r_1 = u^{1/k}, r_2 = v^{1/l},$ and the associated Jacobian is
$$
dr_1 dr_2 = \frac{1}{kl}u^{1/k-1} v^{1/l-1} dudv.
$$
The integral $I(\varepsilon)$ then becomes
\begin{align} \label{eq:ivar}
I(\varepsilon) = (2\pi)^2 \int_0^1 \int_0^1 \left[ \frac{k}{l} uv^{1+\frac{2}{l}} + \frac{l}{k} u^{1+\frac{2}{k}} v \right] \delta(uv - \varepsilon) dv du.
\end{align}

To rigorously evaluate this delta integral, we rely on a general property of the Dirac delta distribution.
\begin{lemma}\label{l1}
If $g : \mathbb{R} \to \mathbb{R}$ is continuously differentiable  function with a unique root $v_0$, and $g'(v_0) \neq 0$, then
$$
\delta(g(v)) = \frac{1}{|g'(v_0)|} \delta(v-v_0)
$$
in the sense of distributions.
\end{lemma}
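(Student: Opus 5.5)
The plan is to prove the identity by testing it against an arbitrary $\varphi\in C_c^\infty(\mathbb{R})$, which is how $\delta(g(v))$ is defined as a distribution. Concretely, we must show
\[
\int_{\mathbb{R}} \varphi(v)\,\delta(g(v))\,dv=\frac{\varphi(v_0)}{|g'(v_0)|}.
\]
We take the left side to mean the pullback $g^*\delta$. Equivalently, it is the limit of $\int \varphi(v)\,\eta_k(g(v))\,dv$, where $\eta_k$ is a standard mollifier sequence converging to $\delta$ (nonnegative, integral one, support in $[-1/k,1/k]$). The argument is a one-dimensional change of variables localized near the root.

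First, localize. Since $g'(v_0)\neq 0$ and $g'$ is continuous, there is an interval $I=(v_0-r,v_0+r)$ on which $g'$ has a constant sign and $|g'|\ge \frac12|g'(v_0)|$. Hence $g$ is a $C^1$ diffeomorphism of $I$ onto an open interval containing $0$.

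Outside $I$, intersected with the compact support of $\varphi$, the function $g$ has no zero because $v_0$ is its unique root. So $|g|\ge m>0$ there by compactness. Consequently $\eta_k(g(v))=0$ on that set once $1/k<m$, and only the contribution from $I$ survives.

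On $I$, substitute $t=g(v)$, so $dv=dt/|g'(g^{-1}(t))|$. The integral becomes
\[
\int \eta_k(t)\,\frac{\varphi(g^{-1}(t))}{|g'(g^{-1}(t))|}\,dt.
\]
The function $t\mapsto \varphi(g^{-1}(t))/|g'(g^{-1}(t))|$ is continuous near $t=0$. Therefore, by the approximate-identity property, the integral tends to $\varphi(v_0)/|g'(v_0)|$, which equals $\frac{1}{|g'(v_0)|}\int\varphi(v)\,\delta(v-v_0)\,dv$.

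The only delicate point is the global part: we must make sure that no mass escapes from regions away from $v_0$. This is exactly where uniqueness of the root and the compact support of the test function enter, and we handle it by the positive lower bound $|g|\ge m$ on the compact set. A related subtlety is that the limit must not depend on the choice of mollifier, and the computation above shows this directly.

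In the application to \eqref{eq:ivar}, the function is $g(v)=uv-\varepsilon$ on $(0,1)$ for fixed $u$. Its root is $v_0=\varepsilon/u$, which lies in $(0,1)$ only when $u>\varepsilon$. So one should note the variant on a bounded interval: when the root lies in the interior of the interval, the same localization argument applies verbatim, and when there is no root in the closed interval, the contribution is zero.
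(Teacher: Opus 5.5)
Your proposal is correct and follows the same route as the paper: localize near the simple root, substitute $t=g(v)$ with $dv=dt/|g'(g^{-1}(t))|$, and use the sifting property to obtain $\varphi(v_0)/|g'(v_0)|$. Your version is more careful than the paper's, which makes the substitution with only a local inverse and never justifies discarding the region away from $v_0$; your mollifier definition of $\delta(g(v))$, together with the bound $|g|\ge m>0$ on $\operatorname{supp}\varphi\setminus I$, supplies exactly that missing step.
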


This lemma means that its delta function "picks out" the value $ v=v_0 $ , the factor $\frac{1}{|g'(v_0)|}$ accounts for the change in scale due to the transformation $ u=g(v) $.

\begin{proof}[Proof of Lemma \ref{l1}]
Let $ \varphi(v) $ be a test function compactly supported on $\mathbb{R}$ ( i.e., $ \varphi \in C_0^\infty(\mathbb{R}) $ ) . We evaluate the integral 
\begin{align}\label{eq:shfte1}
I = \int_{-\infty}^{\infty} \delta(g(v)) \varphi(v) dv 
\end{align}
Since $ g $ has a unique root $ v_0 $ with $ g'(v_0) \neq 0 $, $ g $ is invertible in a neighborhood of $v_0$. Let $ v=h(u) $ be the local inverse of $ u=g(v) $.   Then
$$
dv = \frac{du}{|g'(h(u))|},
$$
and $ g(v_0)=0$ corresponds exactly to $u=0$. Substituting this into the integral, we obtain
\begin{align}\label{eq:shfte2}
I = \int_{-\infty}^{\infty} \delta(u) \varphi(h(u)) \frac{1}{|g'(h(u))|} du.
\end{align}
Now, combining $(\ref{eq:shfte1})$ and (\ref{eq:shfte2}), and using the sifting property of the delta function, we find
$$
I = \frac{\varphi(h(0))}{|g'(h(0))|} = \frac{\varphi(v_0)}{|g'(v_0)|}.
$$
Conversely, evaluating the right-hand side distribution against the same test function yields 
\begin{align}\label{eq:shift3}
\int_{-\infty}^{\infty} \frac{1}{|g'(v_0)|} \delta(v-v_0) \varphi(v) dv = \frac{\varphi(v_0)}{|g'(v_0)|},
\end{align}
Because both (\ref{eq:shfte1}) and (\ref{eq:shift3}) are equal to $\frac{\varphi(v_0)}{|g'(v_0)|}$ for an arbitrary test functions $\varphi$, we conclude that the distributions are identical
$$
\delta(g(v)) = \frac{1}{|g'(v_0)|} \delta(v-v_0).
$$
\end{proof}

Now we come back to the evaluation using Lemma \ref{l1}. For a fixed $u$, let $g(v) = uv - \varepsilon$, so $v_0 = \frac{\varepsilon}{u}$ is the unique root and $g'(v) = u$. Thus,
$$
\delta(uv - \varepsilon) = \frac{1}{u} \delta\left(v - \frac{\varepsilon}{u}\right).
$$  
Substituting this back into formula (\ref{eq:ivar}), we get 
$$
I(\varepsilon) = (2\pi)^2 \int_0^1 \int_0^1 \left[ \frac{k}{l} u v^{1+ \frac{2}{l}} + \frac{l}{k} u^{1+ \frac{2}{k}} v \right] \frac{1}{u} \delta\left(v - \frac{\varepsilon}{u}\right) dvdu.
$$  
By the sifting property of $\delta(v - \frac{\varepsilon}{u})$, the delta function forces $v = \frac{\varepsilon}{u}$, and the limit  $v\leq 1$, forces $u \geq \varepsilon$. Therefore, we arrive at 
\begin{align*}
I(\varepsilon)& = (2\pi)^2 \int_\varepsilon^1 \left[\frac{k}{l} u \left(\frac{\varepsilon}{u}\right ) ^{1+ \frac{2}{l}} + \frac{l}{k} u^{1+ \frac{2}{k}}  \left(\frac{\varepsilon}{u}\right) \right]\frac{1}{u} du\\
&= (2\pi)^2 \int_\varepsilon^1 \left[ \frac{k}{l} \varepsilon^{1+ \frac{2}{l}} u^{-1- \frac{2}{l}} + \frac{l}{k} \varepsilon u^{\frac{2}{k}-1} \right] du\\
&= 2\pi^2 \varepsilon \left[ k\left (1 - \varepsilon^\frac{2}{l}\right) + l\left (1 - \varepsilon^\frac{2}{k}\right ) \right].\\
&\sim 2\pi^2 (k+l)\varepsilon, \quad \text{as }\varepsilon\to0.
\end{align*}

\begin{remark}
    
We observe, in particular, $I(\varepsilon)$ is differentiable at $\varepsilon = 0$, and its derivative $I'(\varepsilon)$ is H\"{o}lder continuous over $[0,1]$. Namely,
$I(\varepsilon)$ is not $C^2$ at $\varepsilon = 0$ if $k,l \ge 3$.
Surprisingly, when $n=3$, there is a possible $\log \varepsilon$ in $I(\varepsilon)$ as shown in the following case.

\end{remark}

\subsubsection{Case $n=3$  for $I(\varepsilon)$}
Let 
$f (z)= z_1^{k_1} z_2^{k_2} z_3^{k_3},  k_i \geq 1,\ i=1,2,3.$
Using polar coordinates
$$	z_1 = r_1 e^{i\theta_1},
	z_2 = r_2 e^{i\theta_2},
  	z_3 = r_3 e^{i\theta_3},$$
we get
\begin{align*}
	|\partial f|^2
	&= \left| \frac{\partial f}{\partial z_1} \right|^2
	+ \left| \frac{\partial f}{\partial z_2} \right|^2
	+ \left| \frac{\partial f}{\partial z_3} \right|^2\\
	&= k_1^2 r_1^{2k_1-2} r_2^{2k_2} r_3^{2k_3}
	+ k_2^2 r_1^{2k_1} r_2^{2k_2-2} r_3^{2k_3}
	+ k_3^2 r_1^{2k_1} r_2^{2k_2} r_3^{2k_3-2},
\end{align*}
and by substituting into formula (\ref{eq:norcoarea}) and simplifying, we have
\begin{align*}
	I(\varepsilon)
	&= \int_{\{z\in U: |f|=\varepsilon\} } |\partial f|\, dS 
	= \int_U |\partial f|^2\, \delta(|f(z)| - \varepsilon)\, dV\\
	&= (2\pi)^3 \int_0^1\int_0^1\int_0^1 r_1 r_2 r_3 |\partial f|^2\,
	\delta(r_1^{k_1} r_2^{k_2} r_3^{k_3} - \varepsilon)\,
	dr_1 dr_2 dr_3\\
	&= (2\pi)^3 \int_0^1\int_0^1\int_0^1 \Bigl[
	k_1^2 r_1^{2k_1-1} r_2^{2k_2+1} r_3^{2k_3+1}
	+ k_2^2 r_1^{2k_1+1} r_2^{2k_2-1} r_3^{2k_3+1} \\
	&\qquad\qquad + k_3^2 r_1^{2k_1+1} r_2^{2k_2+1} r_3^{2k_3-1}\Bigr]\cdot \delta(r_1^{k_1} r_2^{k_2} r_3^{k_3} - \varepsilon)\,
	dr_1 dr_2 dr_3.
\end{align*}
Using a change of variables $u = r_1^{k_1},\ v = r_2^{k_2},\ w = r_3^{k_3},$ we deduce that
\begin{align*}
	I(\varepsilon)
	&= (2\pi)^3 \frac{1}{k_1 k_2 k_3}
	\int_0^1\int_0^1\int_0^1 \Bigl[
	k_1^2 u v^{1 + \frac{2}{k_2}} w^{1 + \frac{2}{k_3}}
	+ k_2^2 u^{1 + \frac{2}{k_1}} v w^{1 + \frac{2}{k_3}} \\
	&\qquad\qquad\qquad\qquad
	+ k_3^2 u^{1 + \frac{2}{k_1}} v^{1 + \frac{2}{k_2}} w
	\Bigr]\,
	\delta(uvw - \varepsilon)\, du\, dv\, dw.
\end{align*}

Now we are applying Lemma \ref{l1} to compute the above integral. Since 
$$ uvw = \varepsilon \Leftrightarrow w = \frac{\varepsilon}{uv} ,\ w \in [0,1] \Leftrightarrow 0 \leq \frac{\varepsilon}{uv} \leq 1\Leftrightarrow uv \geq \varepsilon,$$ given $ u, v $ fixed,  
we have
$$ g(w) = uvw - \varepsilon,\  g'(w) = uv, \ w_0 = \frac{\varepsilon}{uv},\  g(w_0) = 0. $$  
By Lemma \ref{l1},  
$$
\delta (g(w)) = \frac{1}{|g'(w_0)|} \delta (w-w_0) = \frac{1}{uv} \delta \left (w - \frac{\varepsilon}{uv}\right).
$$
Substituting into $ I(\varepsilon) ,$  we get
\begin{align*}
	I(\varepsilon)
	&= (2\pi)^3 \frac{1}{k_1 k_2 k_3}
	\iint_{uv \geq \varepsilon,\ 0 \le u,v \le 1}
	\Bigl[
	k_1^2\, u v^{1+\frac{2}{k_2}} \left(\frac{\varepsilon}{uv}\right)^{1+\frac{2}{k_3}}
	\\
	&\qquad+ k_2^2\, u^{1+\frac{2}{k_1}} v \left(\frac{\varepsilon}{uv}\right)^{1+\frac{2}{k_3}}+ k_3^2\, u^{1+\frac{2}{k_1}} v^{1+\frac{2}{k_2}} \frac{\varepsilon}{uv}
	\Bigr]
	\frac{1}{uv}\, du\, dv
	\\[0.5em]
	&= \frac{(2\pi)^3}{k_1k_2k_3} \, \Bigl[
	 k_1^2 \varepsilon^{1+\frac{2}{k_3}}\iint_{uv \ge \varepsilon}
	u^{-1-\frac{2}{k_3}}\, v^{\frac{2}{k_2}-\frac{2}{k_3}-1}
	\, du\, dv
	\\
	&\qquad
	+ k_2^2\, \varepsilon^{1+\frac{2}{k_3}}
	\iint_{uv \ge \varepsilon}
	u^{\frac{2}{k_1}-\frac{2}{k_3}-1}\, v^{-1-\frac{2}{k_3}}
	\, du\, dv
	\\
	&\qquad
	+ k_3^2\, \varepsilon
	\iint_{uv \ge \varepsilon}
	u^{\frac{2}{k_1}-1}\, v^{\frac{2}{k_2}-1}
	\, du\, dv \Bigr].
\end{align*}
A direct computation gives
\begin{align*}
	\int_{uv \ge \varepsilon}&
u^{-1-\frac{2}{k_3}}\, v^{\frac{2}{k_2}-\frac{2}{k_3}-1}\, du\, dv\\
&=
\begin{cases}
	\dfrac{k_2 k_3}{4}
	\left[
	\varepsilon^{-\frac{2}{k_3}}
	-\varepsilon^{-\frac{2}{k_2}}
	+\dfrac{k_3}{k_2 k_3 - k_2}\,
	\varepsilon^{\frac{2}{k_2}-\frac{2}{k_3}}
	\right], & \text{if } k_2 \ne k_3, \\[1em]
	\dfrac{k_2^{2}}{4}
	\left( \varepsilon^{-\frac{2}{k_3}} - 1 \right)
	+ \dfrac{k_3}{2} \ln \varepsilon, & \text{if } k_2 = k_3 .
\end{cases}
\end{align*}
By symmetry,
\begin{align*}
\int_{uv \ge \varepsilon}&
u^{-1-\frac{2}{k_3}}\, v^{\frac{2}{k_1}-\frac{2}{k_3}-1}\, du\, dv\\
&=
\begin{cases}
	\dfrac{k_1 k_3}{4}
	\left[
	\varepsilon^{-\frac{2}{k_3}}
	-\varepsilon^{-\frac{2}{k_1}}
	+\dfrac{k_1}{k_3 k_1 - k_1}\,
	\varepsilon^{\frac{2}{k_1}-\frac{2}{k_3}}
	\right], & \text{if } k_1 \ne k_3, \\[1em]
	\dfrac{k_3^{2}}{4}
	\left( \varepsilon^{-\frac{2}{k_3}} - 1 \right)
	+ \dfrac{k_3}{2} \ln \varepsilon, & \text{if } k_1 = k_3 .
\end{cases}
\end{align*}
Notice the presence of $\log \varepsilon$ here.
\begin{align*}
\int_{uv \ge \varepsilon}&
u^{\frac{2}{k_1}-1}\, v^{\frac{2}{k_2}-1}\, du\, dv\\
&=
\begin{cases}
	\dfrac{k_1 k_2}{4(k_2 - k_1)}
	\left( k_2 - k_1 + k_1 \varepsilon^{\frac{2}{k_1}} - k_2 \varepsilon^{\frac{2}{k_2}} \right),
	& \text{if } k_1 \ne k_2, \\[1em]
	\dfrac{k_1^{2}}{4}
	\left( 1 - \varepsilon^{\frac{2}{k_1}} \right)
	+ \dfrac{k_1}{2}\left( \varepsilon^{\frac{2}{k_1}} \ln \varepsilon \right),
	& \text{if } k_1 = k_2 .
\end{cases}
\end{align*}
Putting all together
$$
I(\varepsilon) \sim C\,\varepsilon \quad \text{as } \varepsilon \to 0.
$$
The significance of the above is that $I(\varepsilon)$ has a limited differentiability at $\varepsilon = 0$ again, at best $C^{1,\alpha}$ only. 

\subsubsection{Case $n=2$ for $J(\varepsilon)$}
To get a feel of the volume case, let's compute  
out $ J(\varepsilon) $ directly when $ n=2 $. Let $ f(z) = z_1^k z_2^l $ in $ \mathbb{C}^2 $, $ k,l \geq 1 $.  
$$
|\partial f|^2 = k^2 |z_1|^{2(k-1)} |z_2|^{2l} + l^2 |z_1|^{2k} |z_2|^{2(l-1)}.
$$
Therefore ,  
$$
J(\varepsilon) = \int_{|z_1|^k|z_2|^l<\varepsilon} k^2 |z_1|^{2(k-1)} |z_2|^{2l} + l^2 |z_1|^{2k} |z_2|^{2(l-1)} \, dV.
$$
By symmetry, it suffices to compute  
$$
J_1(\varepsilon) = \int_{|z_1|^k|z_2|^l<\varepsilon} k^2 |z_1|^{2(k-1)} |z_2|^{2l} \, dV.
$$
Using polar coordinates, we have
\begin{align*}
J_1(\varepsilon) &= (2\pi)^2 \int_{ r_1^k r_2^l < \varepsilon} \left( k^2r_1^{2(k-1)}r_2^{2l}\right)r_1r_2dr_1dr_2\\
&=(2\pi)^2k^2\int_{ r_1^k r_2^l < \varepsilon}r_1^{2k-1}r_2^{2l+1} dr_1dr_2.
\end{align*}
After a change of variables $ u_1 = r_1^k, \, u_2 = r_2^l $, we get
\begin{align*}
J_1(\varepsilon) &= (2\pi)^2\frac{k}{l}\int_{u_1u_2<\varepsilon}u_1u_2^{1+\frac{2}{l}}du_1du_2\\
&= (2\pi)^2 \frac{k}{l} \left( \int_0^\varepsilon\left( \int_0^1u_2^{1+\frac{2}{l}}du_2\right) u_1du_1+\int_\varepsilon^1\left( \int_0^{\frac{\varepsilon}{u_1}}u_2^{1+\frac{2}{l}}du_2\right) u_1du_1 \right)\\
&=(2\pi)^2\frac{k}{l}\left( \frac{l}{4}\varepsilon^2-\frac{l^2}{4(l+1)}\varepsilon^{2+\frac{2}{l}}\right) \\
&=(2\pi)^2\left( \frac{k}{4}\varepsilon^2-\frac{kl}{4(l+1)}\varepsilon^{2+\frac{2}{l}}\right).
\end{align*}
Therefore for
$$
J(\varepsilon) = (2\pi)^2 \left( \frac{k + l}{4} \varepsilon^2 - \frac{k l}{4(l+1)} \varepsilon^{2+\frac{2}{l}} - \frac{k l}{4(k+1)} \varepsilon^{2+\frac{2}{k}} \right)\sim\pi^2(k+l)\varepsilon^2.
$$
We notice again $ J(\varepsilon) $ is only $ C^2 $ at $0 $, but not smoother.


\begin{proof}[Proof of Proposition \ref{p01}]
We first assume that $k_1,\cdots,k_n\geq 1$.
	Using polar coordinates $z_j = r_j e^{i\theta_j}$, $|f|=r_1^{k_1}r_2^{k_2}\cdots r_n^{k_n}$, we get 
	$$\left| \frac{\partial f}{\partial z_j} \right|^2
		= k_j^2\, r_1^{2k_1} \cdots r_j^{2k_j-2} \cdots r_n^{2k_n}
		= |f|^2 \frac{k_j^2}{r_j^2}, $$
and
	$$|\partial f|^2
		= \sum_{j=1}^n \left| \frac{\partial f}{\partial z_j} \right|^2
		= |f|^2 \sum_{j=1}^n \frac{k_j^2}{r_j^2}.$$
	It follows that
	\begin{align*}
		I(\varepsilon)
		&= (2\pi)^n \int_0^1 \cdots \int_0^1 
		\prod_{j=1}^n r_j\, |\partial f|^2\,
		\delta\!\left( \prod_{j=1}^n r_j^{k_j} - \varepsilon \right)
		dr_1\cdots dr_n \\
		&= (2\pi)^n \int_{[0,1]^n}
		\left( \sum_{j=1}^n k_j^2 r_j^{2k_j-1} \prod_{i\neq j,i=1}^n r_i^{2k_i+1} \right)
		\delta\!\left( \prod_{i=1}^n r_i^{k_i} - \varepsilon \right)
		dr_1\cdots dr_n.
	\end{align*}
	A change of variables leads
    \begin{equation*}
        u_j = r_j^{k_j},\qquad 
	r_j = u_j^{1/k_j},\qquad 
	dr_j = \frac1{k_j} u_j^{1/k_j - 1}\, du_j.
    \end{equation*}
	Then the Jacobian is
	$$\prod_{j=1}^n dr_j
	=
	\left( \prod_{j=1}^n \frac{1}{k_j} \right)
	\left( \prod_{j=1}^n u_j^{1/k_j - 1} \right)
	du_1\cdots du_n.$$
	Let
	\begin{align*}
		I_j 
		&= k_j^2 \left ( \prod_{j=1}^n\frac{1}{k_j} \right) \int_{[0,1]^n}u_j^{2 - 1/k_j}\left( \prod_{j=1}^n u_j^{1/k_j - 1} \right)
		\left( \prod_{i\neq j,i=1} u_i^{2 + \frac{1}{k_i}} \right)
		\delta\!\left( \prod_{i=1}^n u_i - \varepsilon \right)
		du_1\cdots du_n \\
		&= C_j \int_{[0,1]^n}u_j\prod_{i\neq j,i=1}^n u_i^{1 + \frac{2}{k_i}}
		\delta\!\left( \prod_{i=1}^n u_i - \varepsilon \right)
		du_1\cdots du_n,
	\end{align*}
	where $C_j=k_j^2\prod_{i=1}^n\frac{1}{k_i}$.
	Let's compute $I_j$ using $\delta$ function in variable $u_j$. Indeed, letting $$g(u_j)=\prod_{i=1}^n u_i - \varepsilon,$$ then
	\[
	g'(u_j)=\prod_{i\neq j,i=1}^n u_i,
	\qquad
	g(u_j)=0 \iff u_j=\frac{\varepsilon}{\prod_{i\neq j} u_i}.
	\]
	By Lemma \ref{l1},
	\[
	\delta(g(u_j))
	= \frac{1}{\prod_{i\neq j} u_i}
	\delta\!\left( u_j - \frac{\varepsilon}{\prod_{i\neq j} u_i} \right).
	\]
	Therefore
	\begin{align*}
		I_j
		&= C_j \int_{\prod_{i\neq j} u_i \ge \varepsilon}
		\frac{\varepsilon}{\prod_{i\neq j} u_i}
		\prod_{i\neq j} u_i^{2+ \frac{2}{k_i}}
		\frac{1}{\prod_{i\neq j} u_i} 
		du_1\cdots \widehat{du_j}\cdots du_n \\
		&= C_j\, \varepsilon
		\int_{\prod_{i\neq j} u_i \ge \varepsilon}
		\prod_{i\neq j} u_i^{\frac{2}{k_i} - 1}
		du_1\cdots \widehat{du_j}\cdots du_n,
	\end{align*}
	here $ \widehat{du_j}$ means that $du_j$ is removed. We conclude that $$I(\varepsilon) = (2\pi)^n \sum_{j=1}^n I_j.$$
    The proof is completed by applying Lemma \ref{l02} below, which establishes the asymptotics of $I_j$.
    The general case where some $k_j=0$ follows from the similar arguments, while dealing with lower dimensional cases.
    \end{proof}

\begin{lemma}\label{l02}
Consider $$\mathscr{I} = \int_{0 \le u_i \le 1, u_1\cdots u_m\geq\varepsilon} u_1^{\alpha_1} u_2^{\alpha_2} \dotsm u_m^{\alpha_m} \, du_1 du_2 \dotsm du_m ,$$
where $\alpha_j>-1$ for all $j=1,\cdots,m$.
Then $$\mathscr{I}  \sim \frac{1}{(\alpha_1 + 1) \dotsm (\alpha_m + 1)},   \ \ \ \  \text{ as }\varepsilon \to 0.$$
\end{lemma}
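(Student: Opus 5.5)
The natural approach is monotone convergence. Write $\mathscr{I}(\varepsilon)=\int_{[0,1]^m}\chi_{\{u_1\cdots u_m\ge\varepsilon\}}\,u_1^{\alpha_1}\cdots u_m^{\alpha_m}\,du$. As $\varepsilon\downarrow 0$, the indicator $\chi_{\{u_1\cdots u_m\ge\varepsilon\}}$ increases pointwise to the indicator of $\{u_1\cdots u_m>0\}$. That set differs from $[0,1]^m$ only by the null set where some $u_i=0$. Since every $\alpha_j>-1$, the integrand is nonnegative and integrable on the cube, and Fubini factorizes the full integral:
\[
\int_{[0,1]^m}u_1^{\alpha_1}\cdots u_m^{\alpha_m}\,du=\prod_{j=1}^m\int_0^1u_j^{\alpha_j}\,du_j=\prod_{j=1}^m\frac{1}{\alpha_j+1}.
\]
The monotone convergence theorem then gives $\mathscr{I}(\varepsilon)\to\prod_j(\alpha_j+1)^{-1}$. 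Because this limit is a finite positive number, this is exactly the claimed relation $\mathscr{I}\sim 1/((\alpha_1+1)\cdots(\alpha_m+1))$ in the sense defined after Proposition \ref{p01}.

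I would also give a quantitative version of the error, since this clarifies the $\log\varepsilon$ phenomena observed in the case $n=3$. The complement $\{u_1\cdots u_m<\varepsilon\}\cap[0,1]^m$ can be estimated by induction on $m$. For $m=1$ the missing piece is exactly $\int_0^\varepsilon u^{\alpha_1}du=\varepsilon^{\alpha_1+1}/(\alpha_1+1)$. For the inductive step, fix $u_m$ and integrate over the slice $u_1\cdots u_{m-1}<\varepsilon/u_m$. On the range $u_m\le\varepsilon$ the slice is the whole cube, which contributes $O(\varepsilon^{\alpha_m+1})$. On the range $u_m>\varepsilon$ one applies the inductive bound with $\varepsilon$ replaced by $\varepsilon/u_m$. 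The result is an error of the form $O(\varepsilon^{\min_j(\alpha_j+1)}|\log\varepsilon|^{m-1})$, which tends to $0$. This is optional, however; the monotone convergence argument alone suffices for the stated asymptotics.

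The only point needing care is the hypothesis $\alpha_j>-1$, which guarantees integrability of the full product. This matters for the application: in the proof of Proposition \ref{p01} the exponents are $\alpha_i=\frac{2}{k_i}-1>-1$, so the lemma applies directly to each $I_j$. Combining, $I_j\sim C_j\varepsilon\prod_{i\ne j}\frac{k_i}{2}$, hence $I(\varepsilon)\sim c\varepsilon$. I expect no real obstacle here: the lemma is essentially a limit of an increasing family of integrals, and the main thing to remember is that the dominating integral is finite.
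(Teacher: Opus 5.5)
Your argument is correct and is essentially the paper's. The paper first substitutes $u_i=e^{-x_i}$, which turns the region into the simplices $\{x_i\ge 0,\ x_1+\dots+x_m\le -\log\varepsilon\}$ exhausting the orthant, and then passes to the limit to obtain $\prod_j 1/(\alpha_j+1)$; you run the same monotone-exhaustion argument directly on the cube, and you name the convergence theorem explicitly and check that the exponents $\alpha_i=\frac{2}{k_i}-1$ arising in Proposition \ref{p01} satisfy the hypothesis, both of which the paper leaves implicit.
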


\begin{proof}
We first make the substitution $u_i = e^{-x_i}$.
Then $du_i = -e^{-x_i} dx_i$,
so 
$$
du_1 \cdots du_m = e^{-(x_1 + \dotsb + x_m)} dx_1 \cdots dx_m, \ \  u_i^{\alpha_i} = e^{-\alpha_ix_i}.
$$
The condition $u_1 \dotsm u_m \ge \varepsilon$ becomes
$$
x_1 + \dotsb + x_m \le L, \quad \text{where } L = -\log \varepsilon .
$$
Since $u_i \in [0,1]$, we have $x_i \in [0,\infty)$, and 
$$
\mathscr{I} = \int_{x_1,\cdots,x_m\geq0,\ x_1 + \dotsb + x_m \le L} e^{-(\beta_1 x_1 + \dotsb + \beta_m x_m)} dx_1 \dotsm dx_m,
$$
where $\beta_i = \alpha_i + 1>0$. Then 
\begin{align*}
\lim_{L\to\infty} \mathscr{I}&= \int_{x_1,\dotsc,x_m \ge 0} e^{-(\beta_1 x_1 + \dotsb + \beta_m x_m)} dx_1 \dotsm dx_m\\
&=   \int_0^\infty e^{-\beta_1 x_1} dx_1 \dotsm \int_0^\infty e^{-\beta_m x_m} dx_m  \\
&=    \frac{1}{\beta_1 \beta_2 \dotsm \beta_m} = \frac{1}{(\alpha_1 + 1) \dotsm (\alpha_m + 1)}.
\end{align*}
The proof is complete.
\end{proof}


\subsection {Continuity of $I(\varepsilon)$}
In this subsection, we demonstrate the continuity of  $I(\varepsilon)$ for $\varepsilon>0$, whose importance will be seen later. Note that we do not claim $I(\varepsilon)$ is continuous at $\varepsilon=0$, even though one could define $I(0)=0$. This is because the set $\{f=0\}$ has dimension $2n-2$ and hence surface measure zero despite possible singularity in the set.

\begin{lemma}\label{l03} 
Let 
$$I(\varepsilon) = \int_{\{z\in U : |f|=\varepsilon\}} |\partial f| \, dS$$
for $0<\varepsilon<\varepsilon_0$.  Then $I(\varepsilon)$ is continuous for $0<\varepsilon<\varepsilon_0$, 
where $U$ is the unit ball, $f$ is holomorphic in a neighborhood of $\overline U$ with $f(0)=0$. 
\end{lemma}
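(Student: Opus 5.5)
The plan is to realize all level sets $\{|f|=t\}$, for $t$ near a fixed $\varepsilon_1\in(0,\varepsilon_0)$, as images of the single level set $\{|f|=\varepsilon_1\}$ under a smooth flow. This turns $I(t)$ into an integral over a fixed manifold, and continuity then follows from dominated convergence.

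\textbf{Setting up the flow.} Fix $\varepsilon_1\in(0,\varepsilon_0)$ and choose $0<a<\varepsilon_1<b<\varepsilon_0$. Apply Lemma \ref{l2201} with $K$ a slightly larger closed ball $\overline{B(0,1+\eta)}$ (possibly after shrinking $\varepsilon_0$). Then $\nabla|f|$ has no zeros on the compact set $\{z\in K: a\le |f|\le b\}$. On a neighborhood of this set define the smooth vector field
\[
X=\frac{\nabla|f|}{|\nabla|f||^2}=\frac{\nabla|f|}{|\partial f|^2},
\]
and let $\Phi_s$ be its flow. Since $X(|f|)=1$, we have $|f|(\Phi_s(z))=|f|(z)+s$. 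So for $|s|$ small, $\Phi_s$ is a real-analytic diffeomorphism carrying $\Sigma:=\{z\in K':|f|=\varepsilon_1\}$ onto $\{|f|=\varepsilon_1+s\}$ near $\overline U$. Here $K'$ is a compact piece of the full level set, containing $\Phi_{-s}(\overline U)\cap\{|f|=\varepsilon_1+s\}$ for all small $s$. By the change of variables formula,
\[
I(\varepsilon_1+s)=\int_{\Sigma}\mathbf 1_U(\Phi_s(w))\,|\partial f|(\Phi_s(w))\,J_s(w)\,dS(w),
\]
where $J_s$ is the tangential Jacobian of $\Phi_s|_\Sigma$. Both $|\partial f|\circ\Phi_s$ and $J_s$ are continuous in $(s,w)$ and uniformly bounded on $[-s_0,s_0]\times K'$, and $\Sigma\cap K'$ has finite area because it is a compact piece of a smooth hypersurface.

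\textbf{Dominated convergence.} As $s\to0$, $\mathbf 1_U(\Phi_s(w))\to\mathbf 1_U(w)$ for every $w\notin\partial U$. Dominated convergence therefore gives $I(\varepsilon_1+s)\to I(\varepsilon_1)$, provided the exceptional set $\Sigma\cap\partial U$ has surface measure zero in $\Sigma$.

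\textbf{The main obstacle: $\Sigma\cap\partial U$ is null.} This is where $U$ being the unit ball enters. The set $\Sigma\cap\partial U$ is a real-analytic subset of the connected components of $\Sigma$. Hence either it has measure zero, or some component of $\Sigma$ coincides with an open piece of the sphere. In the second case $|f|^2\equiv\varepsilon_1^2$ on an open subset of $\partial U$, and by real analyticity of $|f|^2$ restricted to the sphere, $|f|\equiv\varepsilon_1$ on all of $\partial U$.

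I would rule this out in two cases.
\begin{itemize}
\item For $n\ge2$: restrict $f$ to complex lines through $0$. Each restriction is a holomorphic function on a neighborhood of the closed disc, with modulus $\varepsilon_1$ on the circle and a zero at the origin, hence a finite Blaschke product times $\varepsilon_1$. These assemble into a nonconstant holomorphic function on a neighborhood of $\overline{B}$ with constant modulus on the sphere. That is impossible: for instance, $\log|f|$ would be pluriharmonic near a generic boundary point, forcing $f$ to have no zeros near $\partial U$. One can instead argue directly with the Hopf lemma and the strict pseudoconvexity of the sphere, since a holomorphic function with $|f|$ constant on a strictly pseudoconvex hypersurface is constant.
\item For $n=1$: this case can only occur at the single value $\varepsilon_1=\max_{\partial U}|f|$. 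We avoid it by shrinking $\varepsilon_0$ below that value when $f$ has no zeros on $\partial U$; if $f$ does vanish on $\partial U$, $|f|$ is not constant there and the issue does not arise.
\end{itemize}

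This step, together with checking that $K'$ can be chosen uniformly in $s$ (which follows from compactness of $\overline U$ and the flow being defined on a neighborhood), is the only delicate point. Everything else is the standard transport of surface integrals along a nonsingular gradient flow.
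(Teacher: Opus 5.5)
Your proof is correct and uses the same mechanism as the paper's: the flow of $\nabla|f|/|\partial f|^2$, which carries $\{|f|=t_0\}$ to $\{|f|=t_0+\tau\}$ in time $\tau$, followed by dominated convergence. The difference is at the boundary of $U$, which is exactly where the paper's argument is incomplete.

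The paper claims that the flow maps $\{z\in U:|f|=t_0\}$ onto $\{z\in U:|f|=t_0+\tau\}$, so the domain of integration never changes. This ignores that flow lines enter and leave $U$ through $\partial U$; moreover, the paper's vector field is only defined inside $\overline U$.

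You instead do the following:
\begin{itemize}
\item extend the flow past $\overline U$;
\item keep the cutoff $\mathbf 1_U\circ\Phi_s$;
\item reduce continuity to $\mathcal H^{2n-1}(\Sigma\cap\partial U)=0$, which you get from real-analyticity plus the fact that $|f|$ cannot be constant on the sphere.
\end{itemize}
This is what makes the argument complete. The paper's later Transversality Lemma \ref{lm:Trans} would also give the null set, but only after shrinking $\varepsilon_0$ below the smallest positive critical value of $|f|^2|_{\partial U}$. Your argument covers every $\varepsilon_1\in(0,\varepsilon_0)$ when $n\ge 2$.

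Your $n=1$ caveat is genuinely needed. For $f(z)=cz$ on the unit disc, $I(t)=2\pi t$ for $t<c$ but $I(c)=0$, while the conclusion of Lemma \ref{l2201} holds for every $\varepsilon_0$. This example also shows that the paper's surjectivity claim cannot hold as stated.

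Two small repairs.
\begin{itemize}
\item Your first justification of the rigidity step (Blaschke products on lines, pluriharmonicity of $\log|f|$) stops short of a contradiction. The missing sentence: if $|f|\equiv\varepsilon_1>0$ on $\partial U$, then $f\neq 0$ near $\partial U$. Hence $Z(f)\cap U$ is a compact analytic subset of $\C^n$ of pure dimension $n-1\ge 1$ containing $0$, which is impossible. This is the cleanest route, and it makes the Hopf lemma and pseudoconvexity remarks unnecessary.
\item You need not shrink $\varepsilon_0$ to set up the flow. Some neighborhood of the compact set $\{z\in\overline U: a\le |f|\le b\}$ is already free of critical points of $|f|$, which is all the construction uses.
\end{itemize}
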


\noindent
 \begin{proof}
	 For every $\varepsilon\in(0,\varepsilon_0)$,  the level set
$S_\varepsilon = \{z\in U : |f|=\varepsilon\}$ is a smooth, compact real analytic submanifold of codimension $1$.
We can construct a smooth diffeomorphism between nearby level sets. This allows us to prove that $I(\varepsilon)$ is a continuous function for $\varepsilon>0$.

First we construct the normal flow. We define a vector field $V$ in the shell region
$U_0=\{z\in \overline{U} : 0<|f|<\varepsilon_0\}$ that flows from one level set to the next at unit rate. Let
$$
V(z) = \frac{\nabla |f|}{|\nabla |f||^2}.
$$
Since $|\nabla |f|| = |\partial f|$ is non-zero in this region, this vector field is well-defined and smooth (real analytic).

Let $\gamma(\tau)$ be an integral curve of $V$, i.e.,
$\dot\gamma(\tau)=V(\gamma(\tau))$.  Compute the rate of change of $|f|$ along the curve 
$$
\frac{d}{d\tau}|f(\gamma(\tau))| = \nabla|f|\cdot\dot\gamma
= \nabla|f|\cdot\frac{\nabla|f|}{|\nabla|f||^2}=1 .
$$
Let $\Phi(z, \tau)$ be the flow generated by $V$.
This is the unique solution to the ordinary differential equation 
\[
\frac{\partial}{\partial \tau} \Phi(z, \tau) = V(\Phi(z, \tau))
\]
with initial condition $\Phi(z, 0) = z$.
$\Phi(z, \tau)$ is called the flow map of $V$.

We claim that the flow moves a point from the level set $S_t$ to $S_{t + \tau}$ in exactly time $\tau$.
Consider the scalar function $h(\tau) = u(\Phi(z, \tau)) = |f(\Phi(z, \tau))|$, where $u(z) = |f(z)|$ for short.
Differentiate with respect to $\tau$:
\[
h'(\tau) = \nabla u(\Phi(z, \tau)) \cdot \frac{\partial \Phi}{\partial \tau}.
\]
Substituting the ordinary differential equation we get that 
\[
h'(\tau) = \nabla u \cdot V = 1.
\]
Integrating this simply yields 
\[
h(\tau) = h(0) + \tau \quad \Longrightarrow \quad |f(\Phi(z, \tau))| = |f(z)| + \tau.
\]
Therefore, if $z \in S_{t_0}$,  then $|f(z)| = t_0$,  and further for any $\tau$, the point $\Phi(z, \tau)$ lies on the level set $S_{t_0 + \tau}$.
This implies the flow maps level sets to level sets linearly with time.
If $z \in S_{t_0}$, then the flow for time $\tau$ moves the point to that in $S_{t_0 + \tau}$.

In order to make change of variables for $I(t)$, we introduce a diffeomorphism according to the flow map $\Phi(z, \tau)$.
Indeed, for a fixed small $\tau$, define the map $\Psi_\tau : S_{t_0} \to S_{t_0 + \tau}$ by
\[
\Psi_\tau(z) = \Phi(z, \tau).
\]
This map has the following properties:

1.  Inverse: the inverse map is simply flowing backwards  $$\Psi_{-\tau}(w) = \Phi(w, -\tau),$$
    by using the semi-group property of flow
    $$
    \Psi _{\tau+s} = \Psi_\tau \circ  \Psi_s.
    $$
    
2. Smoothness: since the vector field $V$ is smooth, the dependence of the solution $\Phi$ on the initial data $z$ is smooth by the standard ordinary differential equation theory.

3.  Bijective: uniqueness of the solutions of ordinary differential equations guarantees this is a bijection.\\
Thus, $\Psi_\tau$ is a diffeomorphism between neighbourhoods of $S_{t_0}$ and $S_{t_0 + \tau}$, viewed as subsets of the open shell in $\mathbb{C}^n$.

Now we are prepared to prove the continuity of $I(t)$.
We want to compute $\lim_{t \to t_0} I(t)$. Let $\tau=t-t_0$,
\[
I(t) = \int_{S_t\cap U} |\partial f(w)| \, dS_t(w).
\]
We parametrize $S_t$ using the map $\Psi_\tau$ from the fixed domain $S_{t_0}$.
Let $w = \Psi_\tau(z)$ for $z \in S_{t_0}$.
The surface measure transforms as $dS_t(w) = J_\tau(z) \, dS_{t_0}(z)$, where $J_\tau(z)$ is the Jacobian determinant of the restriction of $\Psi_\tau$ to the tangent space of $S_{t_0}$.
\[
I(t) = \int_{z \in \Psi_\tau^{-1}(S_t \cap U)} |\partial f(\Psi_\tau(z))| \cdot J_\tau(z) \, dS_{t_0}(z).
\]
We analyze the terms as $\tau \to 0$:

1.  Integrand: $|\partial f(\Psi_\tau(z))| \to |\partial f(z)|$ uniformly as $f$ and $\Psi_\tau$ are smooth.  $J_\tau(z) \to 1$ uniformly (as $\tau = 0$, $\Psi_\tau$ is the identity map, so the determinant of the Jacobian is 1).

2.  Domain of integration: it becomes $D_\tau=\{z\in S_{t_0}:\Psi_\tau(z) \in U\cap S_t \}$. We claim that $D_\tau=S_{t_0}$. It suffices to show that $\Psi_\tau:S_{t_0}\rightarrow S_{t_0+\tau}$ is onto. Given any $y\in S_{t_0+\tau}$, $|f(y)|=t_0+\tau$. Set $z=\Psi_{-\tau}(y)$, then $|f(z)|=|f(\Psi_{-\tau}(y))|=|f(y)|-\tau=t_0$. So $z\in S_{t_0}$ and $\Psi_\tau(z)=\Psi_\tau\bigl(\Psi_{-\tau}(y)\bigr)=y$. Thus $y$ is in the image of $\Psi_\tau$.\\

So
	\[
	I(t_0 + \tau) = \int_{S_{t_0}}  |\partial f(\Psi_\tau(z))| \, J_\tau(z) \, dS_{t_0}(z).
	\]
    

Now we look at the full integrand 
\[
G_\tau(z) =|\partial f(\Psi_\tau(z))| \cdot J_\tau(z).
\]

1.  Pointwise convergence: for a.e. $z$, $G_\tau(z) \to |\partial f(z)|$.

2.  Boundedness: since $f$ is smooth and the domain is compact, $|\partial f|$ and $J_\tau$ are uniformly bounded. Thus $|G_\tau(z)| \le C$.\\
By the Lebesgue dominated convergence theorem,
\[
\lim_{\tau \to 0} \int_{S_{t_0}} G_\tau(z) \, dS_{t_0}(z) = \int_{S_{t_0}} \lim_{\tau\to0}G_\tau(z)dS_{t_0},
\]
then
\[
\lim_{t\to t_0}I(t)=\int_{S_{t_0}\cap U}|\partial f(z)|dS_{t_0}=I (t_0).
\]
Thus,  $I(\varepsilon)$ is continuous for $\varepsilon > 0$.
\end{proof}






\subsection{Proof of Theorem \ref{t02}}
Based on Lemma \ref{l03},  we will combine uniform boundedness on fiber volumes and a modified fundamental theorem of calculus (see Lemma \ref{l271}) to prove Theorem \ref{t02}. First we need the following (cf. Appendix \ref{C}).
\begin{lemma}\label{le:graphvol}
For a holomorphic function \(\psi(z')\) defined in a domain \(D \subset \mathbb{C}^{n-1}\), the graph \(\{(z', \psi(z')) : z' \in D\}\) has \((2n-2)\)-dimensional volume, induced from \(\mathbb{C}^n\),  given by
\[
\operatorname{Vol}(\operatorname{graph} \psi) = \int_D \left(1 + |\nabla_{z'} \psi(z')|^2 \right) dV_{\mathbb{C}^{n-1}}(z').
\]

\end{lemma}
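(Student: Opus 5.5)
The plan is to compute the volume straight from the definition of the induced Riemannian volume on a parametrized submanifold. Parametrize the graph by $F : D \to \mathbb{C}^n$, $F(z') = (z', \psi(z'))$. Write $z' = (z_1,\dots,z_{n-1})$ with $z_j = x_j + i y_j$ and regard $F$ as a map from $\mathbb{R}^{2n-2}$ to $\mathbb{R}^{2n}$. The $(2n-2)$-dimensional volume is then $\int_D \sqrt{\det G}\, dV$, where $G = (DF)^T DF$ is the Gram matrix of the real differential. So the statement reduces to the pointwise identity
\[
\sqrt{\det G} = 1 + |\nabla_{z'}\psi|^2 ,
\]
where I read $|\nabla_{z'}\psi|^2$ as $\sum_j |\partial \psi/\partial z_j|^2$, in accordance with the identity $|\nabla |f|| = |\partial f|$ used throughout the paper.

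\textbf{Step 1: reduce the Gram determinant to a complex determinant.} Because $\psi$ is holomorphic, $F$ is a holomorphic map. Its real differential is therefore the realification of the complex $n\times(n-1)$ matrix
\[
A = \begin{pmatrix} I_{n-1} \\ \partial\psi \end{pmatrix}, \qquad \partial\psi = (\psi_{z_1},\dots,\psi_{z_{n-1}}).
\]
For a complex matrix $A$, the realified Gram matrix $(A_{\mathbb{R}})^T A_{\mathbb{R}}$ is the realification of the Hermitian matrix $A^*A$. Since $\det(M_{\mathbb{R}}) = |\det M|^2$ for complex $M$, and $A^*A$ is Hermitian positive definite, we get $\det G = (\det A^*A)^2$. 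Hence $\sqrt{\det G} = \det(A^*A)$. This step is the conceptual heart of the argument and the only place where holomorphy is used; for a general smooth map the result would be a different, non-squared expression.

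\textbf{Step 2: compute $\det(A^*A)$.} Directly, $A^*A = I_{n-1} + v v^*$ with $v = \overline{\partial\psi}^{\,T}$. By the matrix determinant lemma (rank-one update), $\det(I + vv^*) = 1 + |v|^2 = 1 + \sum_j |\psi_{z_j}|^2$.

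\textbf{Step 3: integrate.} Integrating $\sqrt{\det G}$ over $D$ then gives the formula. Alternatively, one can confirm the result by Wirtinger's theorem: the volume of a complex submanifold equals $\int F^*\omega^{n-1}/(n-1)!$, and $F^*\omega = \omega' + \frac{i}{2}\partial\psi\wedge\overline{\partial\psi}$. Expanding the power leaves exactly $(1 + |\partial\psi|^2)\,dV$, since $(\frac{i}{2}\partial\psi\wedge\overline{\partial\psi})^2 = 0$.

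I expect the main obstacle to be Step 1, specifically justifying $\det(M_{\mathbb{R}}) = |\det M|^2$ and the compatibility of realification with adjoints. I would handle this by conjugating $M_{\mathbb{R}}$ to the block-diagonal matrix $\mathrm{diag}(M, \overline{M})$ via the standard unitary change of basis $(z, \bar z)$. I would also state explicitly the normalization $|\nabla_{z'}\psi| = |\partial \psi|$, so that the formula does not pick up a spurious factor of $2$.
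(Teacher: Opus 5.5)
Your proof is correct, but it reaches the pointwise identity $\sqrt{\det G}=1+\sum_j|\partial\psi/\partial z_j|^2$ by a different mechanism than the paper.

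The paper stays in real coordinates. It writes $\psi=u+iv$ and lets $J$ be the $2\times(2n-2)$ real Jacobian of $(u,v)$. It uses the Cauchy--Riemann equations only to get $\nabla u\cdot\nabla v=0$ and $|\nabla u|=|\nabla v|$, so that $JJ^T=|\nabla u|^2 I_2$. The identity $\det(I_{2n-2}+J^TJ)=\det(I_2+JJ^T)=(1+|\nabla u|^2)^2$ then collapses everything to a $2\times 2$ determinant, and it finishes with $|\nabla u|^2=\sum_j|\partial\psi/\partial z_j|^2$.

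You instead use holomorphy once, structurally: the real Gram matrix is the realification of the Hermitian matrix $A^*A$, so $\sqrt{\det G}=\det(A^*A)$. The rank-one determinant lemma in $\mathbb{C}^{n-1}$ then plays the role of the paper's $2\times 2$ reduction.

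The paper's route is more elementary and self-contained, but it exploits the fact that $JJ^T$ is scalar, which is special to a graph of a single function. Your route needs the realification facts you list: multiplicativity, $(A_{\mathbb{R}})^T=(A^*)_{\mathbb{R}}$, and $\det M_{\mathbb{R}}=|\det M|^2$. In exchange it applies verbatim to any holomorphic parametrization of a complex submanifold of any codimension. It is also the same mechanism behind the relation $dV_z=|J_\pi|^2\,dV_w$ that the paper uses later for the resolution map.

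Your Wirtinger cross-check is worthwhile too. In the Hironaka step of Theorem~\ref{l07}, the paper computes fiber volumes as $\int\omega_0^{n-1}/(n-1)!$ rather than via the graph formula, and your computation shows directly that the two agree.

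Your normalization $|\nabla_{z'}\psi|^2=\sum_j|\partial\psi/\partial z_j|^2$ is exactly what the paper's proof ends with and what its applications use, so there is no factor-of-two discrepancy.
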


The following is considered of independent interest where Hironaka's theorem is invoked.
\begin{theorem}[Uniform boundedness on fiber volumes]\label{l07}	
Let \(f\) be a nonconstant holomorphic function on a neighborhood of the closed unit ball or  polydisc  $\overline{U}$  in $ \mathbb{C}^n$. Then there exists \(\varepsilon_0 > 0\) such that the $(2n-2)$-dimensional volumes of the fibers \(f^{-1}(w) \cap U\) are uniformly bounded for all \(|w| < \varepsilon_0\), i.e., there exists a constant \(M > 0\) such that
\[
\operatorname{Vol}_{2n-2} (f^{-1}(w) \cap U) \leq M \quad \text{for all}  \ \  |w| < \varepsilon_0.
\]
\end{theorem}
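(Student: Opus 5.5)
I would prove Theorem \ref{l07} by a local-to-global compactness argument, with the local step handled by projecting the fiber onto coordinate hyperplanes. The model is Lemma \ref{le:graphvol}: where a fiber is a graph over a domain in $\mathbb{C}^{n-1}$, its volume is an integral over that domain.

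The key tool is the Wirtinger formula for complex analytic sets. For a complex hypersurface $X=f^{-1}(w)$, the $(2n-2)$-volume of $X\cap B$ equals $\int_{X\cap B}\omega^{n-1}/(n-1)!$ with $\omega=\frac{i}{2}\sum dz_j\wedge d\bar z_j$. Expanding $\omega^{n-1}$ gives
\[
\operatorname{Vol}_{2n-2}(X\cap B)=\sum_{j=1}^n \int_{X\cap B}\pi_j^*\,dV_{\mathbb{C}^{n-1}},
\]
where $\pi_j$ forgets the coordinate $z_j$. Each term is at most $N_j\cdot \operatorname{Vol}(\pi_j(B))$, where $N_j$ bounds the number of points of $X\cap B\cap\pi_j^{-1}(z')$ counted generically. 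This reduces the volume bound to a uniform bound on the number of preimages under coordinate projections.

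\textbf{Step 1 (local bound).} Fix $p\in\overline U$ with $|f(p)|$ small and, for the moment, $f(p)=0$. After a linear change of coordinates, $z\mapsto f(z)$ restricted to the $z_n$-line through $p$ is not identically zero, say of vanishing order $m_p$. By the Weierstrass preparation theorem applied to $f(z)-w$ in $(z,w)$ near $(p,0)$, there is a polydisc $P_p=\Delta'\times\Delta_n$ and $\delta_p>0$ such that for $|w|<\delta_p$ and each $z'\in\Delta'$, the equation $f(z',z_n)=w$ has at most $m_p$ roots in $\Delta_n$.

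Choosing generic linear coordinates makes this hold simultaneously for all $n$ projection directions, since the good directions form a Zariski-open set. The Wirtinger/projection formula then gives
\[
\operatorname{Vol}(f^{-1}(w)\cap P_p)\le n\,m_p\operatorname{Vol}(\Delta')=:M_p
\]
for all $|w|<\delta_p$. The bound is in the rotated coordinates, but volume is unitarily invariant.

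\textbf{Step 2 (points not on the zero set).} For points $p\in\overline U$ with $f(p)\neq 0$, there is nothing to do. If $|f(p)|>0$, then a neighborhood of $p$ misses $f^{-1}(w)$ for $|w|<|f(p)|/2$.

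\textbf{Step 3 (compactness).} Cover the compact set $\overline U$ by finitely many such neighborhoods: polydiscs $P_{p_1},\dots,P_{p_k}$ around zeros of $f$, and zero-free neighborhoods elsewhere. Take $\varepsilon_0$ to be the minimum of the $\delta_{p_i}$ and the $|f(p)|/2$ values, and set $M=\sum_i M_{p_i}$.

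\textbf{Main obstacle.} The delicate point is Step 1: making the root count uniform in $w$ and in $z'$ near singular points of $Z(f)$, including points where $f$ vanishes identically on a line through $p$. I would handle this with the generic-direction choice and by applying Weierstrass preparation to $F(z,w)=f(z)-w$, which is regular in $z_n$ of order $m_p$ at $(p,0)$. The preparation theorem gives a monic polynomial in $z_n$ of degree $m_p$ whose coefficients depend holomorphically on $(z',w)$, so the bound of $m_p$ roots holds for all nearby $w$ at once.

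Hironaka's theorem could replace this step by pulling back to normal crossings, where the fibers are locally monomial level sets of bounded volume. However, the projection-counting route seems more direct, and I would try it first.
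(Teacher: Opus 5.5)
Your argument is correct, and it takes a genuinely different route from the paper.

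\textbf{The paper's route.} It first computes fiber volumes explicitly for monomials $z^A$ in a polydisc, using the graph-volume formula of Lemma \ref{le:graphvol}. It then invokes Hironaka's resolution: it pulls $\omega_0^{n-1}/(n-1)!$ back to the resolved manifold and monomializes $f\circ\pi$ in finitely many charts covering the compact set $\pi^{-1}(\overline U)$. In each chart it dominates $\pi^*\omega_0$ by a multiple of the Euclidean form and rescales to the monomial computation.

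\textbf{Your route.} You bypass both the monomial integrals and the resolution. Wirtinger's formula in unitary coordinates gives $\operatorname{Vol}_{2n-2}(X\cap P)=\sum_j\int_{\pi_j(P)}N_j\,dV$, via the area formula on $X_{\rm reg}$. The singular locus is $\mathcal H^{2n-2}$-null, and the base points $z'$ over which the fiber is a whole disc form a null set. Rouch\'e/Weierstrass then bounds each $N_j$ by a multiplicity, uniformly for all small $w$ including $w=0$.

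\textbf{What each buys.} Yours is essentially the classical proof of local finiteness of volume of analytic sets, made uniform in $w$. It is more elementary and works verbatim for the ball or the polydisc. It also gives the explicit bound $M=\sum_i n\,m_{p_i}\operatorname{Vol}(\Delta'_i)$ in terms of multiplicities at finitely many points. The paper's route is heavier, but it meshes with the Hironaka machinery reused later (Proposition \ref{prop:general_A1}) and yields explicit asymptotics of the fiber volume in the monomial case.

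\textbf{What to make explicit.} A Weierstrass/Rouch\'e polydisc is adapted to a single direction: the base $\Delta'$ must be shrunk after the fiber disc $\Delta_n$ is fixed. So you cannot literally use one $\Delta'\times\Delta_n$ for all $j$. Fix this as follows.
\begin{enumerate}
\item Choose a unitary frame with $f_m(e_j)\neq 0$ for all $j$, where $f_m$ is the lowest-order homogeneous part of $f$ at $p$. Generic frames work, since the bad directions form the proper cone $\{f_m=0\}$.
\item For each $j$, take a radius $r_j$ with $f(p+te_j)\neq 0$ for $0<|t|\le r_j$. Let $c_j=\min_{|t|=r_j}|f(p+te_j)|$.
\item Choose $\rho_j>0$ so that $|f(z)-f(p+(z_j-p_j)e_j)|<c_j/2$ whenever $|z_j-p_j|=r_j$ and $|z_k-p_k|<\rho_j$ for $k\neq j$. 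This gives a Rouch\'e polydisc $Q_j$ on which the count in direction $e_j$ equals $m_j$ for $|w|<c_j/2$.
\item Let $P_p$ be an equal-radius polydisc contained in $\bigcap_j Q_j$.
\end{enumerate}
Since the number of roots in a smaller disc is at most the number in the larger one, $N_j\le m_j$ on $P_p$ for every $j$ simultaneously, with $\delta_p=\min_j c_j/2$.
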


\begin{proof} 
The proof consists of two steps. The first step, the monomial case, and the second step applies Hironaka's resolution of singularity to the general one. We first consider the case for the fiber of a monomial function.
Let 
\[
f (z)= z_1^{a_1} z_2^{a_2} \cdots z_n^{a_n}, 
\]
where $U = \Delta^n$ is a polydisc.We want to estimate the volume of the fiber 
\[
V_w = \{z \in \Delta^n : f(z) = w\}.
\]
Here we assume \(w \neq 0\), whose modulus will be small such that $f^{-1} (w) \cap U\neq \emptyset$.

First we see the intuitive case: \(w = 0\). The set \(f^{-1}(0)\) is exactly the union of the coordinate hyperplanes
\[
\{z_1 = 0\} \cup \{z_2 = 0\} \cup \cdots \cup \{z_{n} = 0\}.
\]
The volume of a coordinate hyperplane restricted to a polydisc is finite. Specifically, it is the sum of the volumes of \((n-1)\)-dimensional polydiscs.

Since volume is continuous with respect to the parameter \(w\) in the sense of geometric measure theory, if the volume is finite at \(w = 0\), then it will remain bounded for small perturbations \(w \neq 0\).
To show this, we start with the simplest case \(n = 2\), $f(z) = z_1 z_2 = w$
inside the bidisc \(\{ |z_1|<1, |z_2|<1 \}\) to illustrate the computation.

We can parametrize the curve \(z_1 z_2 = w\) by the variable \(z_1\).
Since \(z_2 = \frac{w}{z_1}\), and we require \(|z_2|<1\), we must have 
$\left|\frac{w}{z_1}\right| < 1$, which implies $|z_1| > |w|$.
So the fiber is the graph of the function \(g(z_1) = \frac{w}{z_1}\) over the annulus \(|w| < |z_1| < 1\).
By Lemma \ref{le:graphvol}, the volume is given by the integral of 
\begin{equation*}
\begin{split}
\text{Volume} &= \int_{|w| < |z_1| < 1} \left(1 + \left|\frac{dg}{dz_1}\right|^2 \right) dx\,dy \\
&=\int_{|w| < |z_1| < 1} \left(1 +\frac{|w|^2}{|z_1|^4} \right) dx\,dy\\
&=2\pi \int_{|w|}^1 \left(r + \frac{|w|^2}{r^3}\right) dr\\
&=2\pi (1-|w|^2),
\end{split}
\end{equation*}
which is obviously bounded as \(w \to 0\).

Now we are ready to do the general case computation. Without loss of generality, assume \(a_n \geq 1\). We will view the fiber as a graph where \(z_n\) is a function of the other variables \(z' = (z_1, \ldots, z_{n-1})\).

The equation 
\[
z_n^{a_n} = \frac{w}{z_1^{a_1} \cdots z_{n-1}^{a_{n-1}}}
\]
defines a multi-valued function. Let \(P(z') = z_1^{a_1} \cdots z_{n-1}^{a_{n-1}}\). Then
\[
|z_n| = \left| \frac{w}{P(z')} \right|^{1/a_n}.
\]
The fiber consists of \(a_n\) sheets. We can calculate the volume of one sheet and multiply by \(a_n\). The condition that \(z \in \Delta^n\) imposes two constraints on the base variables \(z'\):
\[
\begin{aligned}
& |z_j| < 1 \quad \text{for all } j = 1, \ldots, n-1, \\
& |z_n| < 1 \implies \left| \frac{w}{P(z')} \right| < 1 \implies |P(z')| > |w|.
\end{aligned}
\]
Let
\[
\Omega_w = \{ z' \in \Delta^{n-1} : |z_1|^{a_1} \cdots |z_{n-1}|^{a_{n-1}} > |w| \}.
\]
For a holomorphic graph given by \(z_n = g(z_1, \ldots, z_{n-1})\), Lemma \ref{le:graphvol} states that its volume is  
\[
\text{Vol}(V_w) = \int_{\Omega_w} \left( 1 + \sum_{j=1}^{n-1} \left| \frac{\partial g}{\partial z_j} \right|^2 \right) dV_{z'}.
\]
Differentiate the relation $z_1^{a_1} \cdots z_n^{a_n} = w$
implicitly with respect to \(z_j\)
\[
z_1^{a_1}\cdots a_j z_j^{a_j-1} \cdots z_n^{a_n} + a_n z_n^{a_n-1} \frac{\partial z_n}{\partial z_j} \cdot z_1^{a_1} \cdots z_{n-1}^{a_{n-1}} = 0.
\]
Dividing by the original function \(f(z)=w\), this simplifies neatly to logarithmic derivatives
\[
\frac{a_j}{z_j} + \frac{a_n}{z_n} \frac{\partial z_n}{\partial z_j} = 0,
\]
so
\[
\frac{\partial z_n}{\partial z_j} = -\frac{a_j}{a_n} \frac{z_n}{z_j}.
\]
Thus, the total volume is
\[
\text{Vol}(V_w) = a_n \int_{\Omega_w} \left( 1 + \sum_{j=1}^{n-1} \frac{a_j^2}{a_n^2} \frac{|z_n|^2}{|z_j|^2} \right) dV_{z'}.
\]
Substituting
\[
|z_n|^2 = |w|^{2/a_n} \prod_{k=1}^{n-1} |z_k|^{-2a_k/a_n},
\]
we obtain
\begin{align*}
\text{Vol}(V_w) &= a_n \int_{\Omega_w} dV_{z'} + \sum_{j=1}^{n-1} \frac{a_j^2}{a_n} |w|^{2/a_n} \int_{\Omega_w} \frac{1}{|z_j|^2} \left( \prod_{k=1}^{n-1} |z_k|^{-2a_k/a_n} \right) dV_{z'}\\ \nonumber 
& := I + II
\end{align*}

As \(w \to 0\), the domain \(\Omega_w\) approaches the full polydisc \(\Delta^{n-1}\). Since term \(I\) is increasing in \(|w|\) when $|w|\rightarrow 0$, the monotone convergence theorem implies that \(I\) converges to \(a_n\int_{\Delta^{n-1}} dv_z\), a constant times the volume of \(\Delta^{n-1}\). Consequently, \(I\) is bounded by this volume. For the singular part term II,
we switch to polar coordinates \(r_k = |z_k|\). The angular integrals contribute a factor \((2\pi)^{n-1}\). The condition \(|z_1|^{a_1} \cdots |z_{n-1}|^{a_{n-1}} > |w|\) becomes
$\prod_{k=1}^{n-1} r_k^{a_k} > |w|.$

We consider integrals of the form
\[
I_j = |w|^{2/a_n} \int \cdots \int \frac{1}{r_j^2} \left( \prod_{k=1}^{n-1} r_k^{-2a_k/a_n} \right) r_1 \cdots r_{n-1} \, dr_1 \cdots dr_{n-1},
\]
where the integration domain is defined by $0 < r_k \le 1$ ($k=1,\dots,n-1$) and $\prod_{k=1}^{n-1} r_k^{a_k} > |w|$. We assume $a_j > 0$, $a_n > 0$, and $a_k < a_n$ for all $k$ (to ensure convergence near $r_k = 0$ as $|w| \to 0$).

First, simplify the integrand:
\begin{align*}
\frac{1}{r_j^2} \left( \prod_{k=1}^{n-1} r_k^{-2a_k/a_n} \right) r_1 \cdots r_{n-1}
&= r_j^{-2} \cdot r_j^{-2a_j/a_n} \prod_{k \neq j} r_k^{-2a_k/a_n} \cdot r_j \prod_{k \neq j} r_k \\
&= r_j^{-1 - \frac{2a_j}{a_n}} \prod_{k \neq j} r_k^{1 - \frac{2a_k}{a_n}}.
\end{align*}
Thus,
\[
I_j = |w|^{2/a_n} \int r_j^{-1 - \frac{2a_j}{a_n}} \prod_{k \neq j} r_k^{1 - \frac{2a_k}{a_n}} \, dr_1 \cdots dr_{n-1}.
\]

We integrate first with respect to $r_j$, keeping the other $r_k$ fixed. From the constraint $\prod_{k=1}^{n-1} r_k^{a_k} > |w|$, we have
\[
r_j^{a_j} > \frac{|w|}{\prod_{k \neq j} r_k^{a_k}} \quad \Longrightarrow \quad r_j > \left( \frac{|w|}{\prod_{k \neq j} r_k^{a_k}} \right)^{1/a_j} : =  R_{\text{min}}.
\]
Since also $r_j \le 1$, the inner integral is
\[
\int_{R_{\text{min}}}^1 r_j^{-1 - \frac{2a_j}{a_n}} \, dr_j.
\]
Compute:
\[
\int r_j^{-1 - \frac{2a_j}{a_n}} \, dr_j = -\frac{a_n}{2a_j} r_j^{-\frac{2a_j}{a_n}},
\]
so
\[
\int_{R_{\text{min}}}^1 r_j^{-1 - \frac{2a_j}{a_n}} \, dr_j = \left[ -\frac{a_n}{2a_j} r_j^{-\frac{2a_j}{a_n}} \right]_{R_{\text{min}}}^1 = \frac{a_n}{2a_j} \left( R_{\text{min}}^{-\frac{2a_j}{a_n}} - 1 \right).
\]

Now,
\[
R_{\text{min}}^{-\frac{2a_j}{a_n}} = \left( |w|^{1/a_j} \prod_{k \neq j} r_k^{-a_k/a_j} \right)^{-\frac{2a_j}{a_n}} = |w|^{-2/a_n} \prod_{k \neq j} r_k^{2a_k/a_n}.
\]
Substituting back into $I_j$ gives
\begin{align*}
I_j &= |w|^{2/a_n} \int \frac{a_n}{2a_j} \left( |w|^{-2/a_n} \prod_{k \neq j} r_k^{2a_k/a_n} - 1 \right) \prod_{k \neq j} r_k^{1 - \frac{2a_k}{a_n}} \, dr_k \\
&= \frac{a_n}{2a_j} \int \left( \prod_{k \neq j} r_k - |w|^{2/a_n} \prod_{k \neq j} r_k^{1 - \frac{2a_k}{a_n}} \right) dr_k,
\end{align*}
where the integration over $r_k$ ($k \neq j$) is over the region $0 < r_k \le 1$ and $\prod_{k \neq j} r_k^{a_k} \ge |w|$ (since $R_{\text{min}} \le 1$).

As $|w| \to 0$, the second term inside the integral is of order $|w|^{2/a_n}$ and tends to zero. The first term, $\prod_{k \neq j} r_k$, integrated over the region $\prod_{k \neq j} r_k^{a_k} \ge |w|$, approaches the full cube $(0,1]^{n-2}$ (there are $n-2$ variables $r_k$ with $k \neq j$). Thus,
\[
\lim_{|w| \to 0} I_j = \frac{a_n}{2a_j} \int_0^1 \cdots \int_0^1 \prod_{k \neq j} r_k \, dr_k = \frac{a_n}{2a_j} \prod_{k \neq j} \int_0^1 r_k \, dr_k = \frac{a_n}{2a_j} \left( \frac{1}{2} \right)^{n-2}.
\]
Hence $I_j$ is bounded, and for small $|w|$ it is approximately constant.

We need to bound integrals of the form
\[
I_j = |w|^{2/a_n} \int \cdots \int \frac{1}{r_j^2} \left( \prod_{k=1}^{n-1} r_k^{-2a_k/a_n} \right) r_1 \cdots r_{n-1} \, dr_1 \cdots dr_{n-1}.
\]
Consider the exponents for each \(r_k\): for the specific variable \(r_j\), the exponent is \(-1 - \frac{2a_j}{a_n}\); for other variables \(r_k\) (\(k \neq j\)), the exponent is \(1 - \frac{2a_k}{a_n}\).
So we are integrating
\[
I_j = |w|^{2/a_n} \int \left( r_j^{-1-\frac{2a_j}{a_n}} \right) \prod_{k \neq j} \left( r_k^{1-\frac{2a_k}{a_n}} \right) \, dr.
\]
We first integrate \(r_j\) , the ``bad variable'', while keeping all other \(r_k\) fixed. From the domain constraint $\prod r_k^{a_k}>|w|$, we isolate $r_j$
\[
r_j^{a_j} > \frac{|w|}{\prod_{k \neq j} r_k^{a_k}} \implies r_j > \left( \frac{|w|}{\prod_{k \neq j} r_k^{a_k}} \right)^{1/a_j}.
\]
Let 
\[
R_{\text{min}} = |w|^{1/a_j} \prod_{k \neq j} r_k^{-a_k/a_j}.
\]
The inner integral is
\[
\int_{R_{\text{min}}}^1 r_j^{-1-\frac{2a_j}{a_n}} \, dr_j=\frac{a_n}{2a_j} \left( R_{\text{min}}^{-2a_j/a_n} - 1 \right),
\]
if $a_j\neq 0$. The dominant term is \(R_{\text{min}}^{-2a_j/a_n}\). If $a_j=0$, it reduces to one dimension lower case.

Let us substitute \(R_{\text{min}}\) back in
\[
R_{\text{min}}^{-2a_j/a_n} = \left( |w|^{1/a_j} \prod_{k \neq j} r_k^{-a_k/a_j} \right)^{-2a_j/a_n} = |w|^{-2/a_n} \prod_{k \neq j} r_k^{2a_k/a_n}.
\]
We will see the miracle cancellation. Now we substitute this result back into the full integral \(I_j\) as follows
\begin{equation*}
\begin{split}
I_j &= |w|^{2/a_n} \int \cdots\int \left( |w|^{-2/a_n} \prod_{k \neq j} r_k^{2a_k/a_n} \right) \prod_{k \neq j} r_k^{1-\frac{2a_k}{a_n}} \, dr_{k \neq j}\\
&=\int_0^1 \cdots\int_0^1 \prod_{k \neq j} r_k \, dr_{k \neq j}\\
&=C\prod_{k \neq j} \left[ \frac{r_k^2}{2} \right]_0^1,
\end{split}
\end{equation*}
where $C$ is a constant.

The conclusion is that each \(I_j\) is bounded by a constant independent of \(w\). Since the total volume is a sum of such integrals (plus the non-singular part), the total volume of the fiber \(f^{-1}(w) \cap U\) is uniformly bounded.

Finally,  we apply Hironaka’s resolution of singularities to reduce the general case to the monomial case. We begin by  lifting to the resolved manifold.  To estimate the volume of the fiber $V_w= \{ z\in U: f(z)=w\}$, we consider the standard K$\rm{\ddot{a}}$hler form on $U$, $w_0= \frac{i}{2} \sum_{j=1}^n dz_j \wedge d \overline{z}_j$. The $(2n-2)$-dimensional volume is given by 
\[
\text{Vol}_{2n-2}(V_w) = \int_{V_w} \frac{\omega_{0}^{n-1}} {(n-1)!} .
\]
Let $\widetilde{U}$ be a neighborhood of $\overline{U}$ when $f$ is holomorphic.  By Hironaka’s theorem (see Appendix \ref{Hi}), there exists a complex manifold \(X\) and a proper holomorphic map \(\pi: X \to \widetilde{U}\) such that \(\pi\) is a biholomorphism from $X \setminus \pi^{-1} (Z(f))$ to $\widetilde{U}\setminus Z(f)$, where $Z(f) = \{z\in \widetilde{U}:f(z ) =0\}$. For any $w\neq 0$, the fiber $V_w$ does not intersect $Z(f)$. Thus, $\pi$ induces a biholomorphism between the fiber in the resolution $\widetilde{V}_w = \pi^{-1}(V_w) $ and the fiber in the  base $\overline{V}_w$.
By the change of variables formula, 
\[
\text{Vol}_{2n-2}(V_w) =   \int_{\widetilde{V}_w} \pi^* \left(   \frac{\omega_{0}^{n-1}} {(n-1)!}   \right).
\]

For any point $p$ in the exceptional divisor $E=\pi^{-1}(0)$, Hironaka's theorem provides a coordinate chart $(V,u)$ centered at $p$ such that 
\[
(f \circ \pi)(u) = E(u)\cdot  u_1^{a_1} \cdots u_n^{a_n},
\]
where $E(u)$ is a non-vanishing holomorphic unit (so $E(p) \neq 0$).
To apply the pure monomial results from Section 2.3, we perform a local holomorphic change of coordinate to absorb this unit. Specifically, if $a_1>0$, we define $v_1= u_1 [E(u)]^{\frac{1}{a_1}}$ and $v_j = u_j$ for $j>1$. Because $E$ is non-vanishing, this changes is well-defined in a sufficiently small neighborhood $V_p \subset V$. In these new coordinates $v$, the function becomes a pure monomial 
\begin{eqnarray*}
    (f \circ \pi) (v)  = v_1 ^{a_1}v_2 ^{a_2}\cdots v_n ^{a_n} =: v^A.
\end{eqnarray*}
Since the monomial volume estimates in Section 2.3  were derived  specifically for the unit polydisc $\Delta^n = \{ z\in \C^n: |z_i|<1, i =1,2, \cdots,n \}$. A general coordinate chart $V_p$ on the manifold $X$ is typically not a polydisc. However, we can use an induction and scaling argument to bridge this gap.\\

 \textbf{Containment}.  Since $V_p$ is an open neighborhood of the origin in the $v$-coordinates, there exists a radius $R>0$  such that the polydisc $\Delta_{R}^n = \{z\in \C^n: |z_i|<R, i =1,2,\cdots, n  \}$ is contained in $V_p$. Moreover, because the exceptional divisor $E$ is compact, due to the properness of $\pi$, we can cover $E$ with a finite number of such charts $\{V_\alpha\} _\alpha ^N$. In each chart, the image $\phi_\alpha (V_\alpha)$ in $\C^n$ is bounded and thus contained some polydisc $\Delta_{R_\alpha}^n $ of (possibly  large) radius $R_\alpha$.

\textbf{Comparison of forms}. Since $\pi$ is a smooth holomorphic map, the pull back $\pi^*\omega_0$ is smooth on $\overline{V} _\alpha$. On the compact closure of each local chart $\overline{V} _\alpha$, its coefficients are bounded. Thus, there exists a constant $C_\alpha$ such that $\pi^*\omega_0 \leq C_\alpha \omega_{\rm{Eucl}}$ on $\overline{V} _\alpha$, where $\omega_{\rm{Eucl}}$ is the standard Euclidean metric in the $v$-coordinates.

 \textbf{Monotonicity of volume}. We then estimate 
 \begin{eqnarray}
\int _{\widetilde{V}_w \cap V_\alpha} \pi^*\omega_0^{n-1} &\leq &C_\alpha ^{n-1}  \int _ {\phi_\alpha (V_\alpha) \cap \{ v^A=w\}}\omega_{\rm{Eucl}}^{n-1} \\ \nonumber
& \leq & C_\alpha ^{n-1}    \int _ {\Delta_{R_\alpha}^n \cap \{ v^A=w\}}\omega_{\rm{Eucl}}^{n-1} \\ \nonumber
\end{eqnarray}

Now the volume of a monomial fiber in a polydisc of radius $R$ is equivalent to a rescaled fiber in the unit polydisc.  Letting $v_i=R\widetilde{v}_i$, the equation $v^A=w$ becomes $\widetilde{v}^A = \frac{w}{R^{|A|}}$. As $w\rightarrow 0$, the rescaled target value $\frac{w}{R^{|A|}}$ also tends to zero. The monomial proof earlier shows that the volume remains uniformly bounded by a constant $M(R, A)$ as the target value approaches zero. Hence, for each chart $\alpha$, there exists a constant $M_\alpha$ such that the local volume contribution is bounded for all sufficiently small $w$. \\

Finally, we can assemble the pieces. Because $\pi$ is proper map, the preimage of the closed unit ball $\pi^{-1}(\overline{U})$ is compact in $X$, thanks to $f$ being holomorphic in a neighborhood of $\overline{U}$. For sufficiently small $w$, the fiber $\widetilde{V}_w$ is entirely  contained in  the union of the finitely many  coordinate charts $\cup _{\alpha =1}^N \overline{V}_\alpha$. The total volume is therefore bounded by a finite sum of uniform bounds:
\begin{eqnarray}
\text{Vol}_{2n-2}(V_w) &\leq& \sum _{\alpha =1} ^N \int  _{\widetilde{V}_w \cap V_\alpha}  \pi^*\omega_0^{n-1} \\ \nonumber
& \leq & \sum _{\alpha =1} ^N C_\alpha ^{n-1}   M( R_\alpha, A_\alpha) \leq M.
\end{eqnarray}
The proof is complete.
\end{proof}

Next we need to apply the coarea formula from the image side of this form \cite{EG}.
\begin{theorem}[The general coarea formula]\label{t05}
Let $\Phi : \mathbb{R}^m \to \mathbb{R}^k$ (where $m\ge k$) be a smooth map.  
Let $J_k \Phi(x)$ denote the $k$-dimensional Jacobian of $\Phi$ at $x$.  
For any integrable function $g$ on $\mathbb{R}^m$,  
the coarea formula states:
\[
\int_{\mathbb{R}^m} g(x)\, J_k \Phi(x)\, dV_m(x)
= \int_{\mathbb{R}^k} 
\left( \int_{\Phi^{-1}(y)} g(x)\, dS_x \right) dV_k(y),
\]
where $dV_m$ is the volume measure on the domain,  
$dV_k$ is the volume measure on the image,  
and $dS_x$ is the Hausdorff measure (volume) on the fibers (level sets).
Furthermore, the $k$-dimensional Jacobian is defined as:
\[
J_k \Phi(x)
= \sqrt{ \det \left( D\Phi(x)(D\Phi(x))^T \right) },
\]
where $D\Phi$ is the derivative (or Jacobian matrix) of $\Phi$  
at the point $x$, which is a $k\times m$ matrix.
\end{theorem}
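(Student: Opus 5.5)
This is the classical coarea formula for smooth maps (Federer), and the plan is to reduce it to the change of variables formula together with Fubini's theorem.

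\emph{Step 1 (reductions).} By linearity and monotone convergence it suffices to treat nonnegative measurable $g$; the integrable case then follows by splitting $g=g^+-g^-$. Set $C=\{x: J_k\Phi(x)=0\}$, the critical set where $D\Phi$ has rank $<k$. The left side gets no contribution from $C$. By Sard's theorem, which applies since $\Phi$ is smooth, $\Phi(C)$ has $dV_k$-measure zero, so $\Phi^{-1}(y)\cap C=\emptyset$ for a.e. $y$. Hence the right side also gets no contribution from $C$, and we may work on the open set $\mathbb{R}^m\setminus C$.

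\emph{Step 2 (local straightening).} Near a regular point $x_0$, after a rotation we may assume the $k\times k$ block $\partial\Phi/\partial x''$ is invertible, where $x=(x',x'')\in\mathbb{R}^{m-k}\times\mathbb{R}^k$. The implicit function theorem gives a local diffeomorphism $F(x)=(x',\Phi(x))$ with inverse $G(x',y)$. For fixed $y$, the map $x'\mapsto G(x',y)$ parametrizes the fiber $\Phi^{-1}(y)$ near $x_0$. Using a countable cover of $\mathbb{R}^m\setminus C$ by such charts and a partition of unity (or a disjoint Borel decomposition), it suffices to prove the identity for $g$ supported in one chart.

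\emph{Step 3 (change of variables and Fubini).} Substituting $x=G(x',y)$ gives
\[
\int g\,J_k\Phi\,dV_m=\int\!\!\int g(G)\,J_k\Phi(G)\,|\det DG|\,dx'\,dy .
\]
The fiber area element is $dS=\sqrt{\det(A^TA)}\,dx'$, where $A=\partial_{x'}G$ is an $m\times(m-k)$ matrix. The whole formula therefore reduces to the pointwise linear algebra identity
\[
J_k\Phi\cdot|\det DG|=\sqrt{\det(A^TA)} .
\]
Since $\Phi\circ G(x',y)=y$, we have $D\Phi\,A=0$, so the columns of $A$ span $\ker D\Phi$. Also $D\Phi\,\partial_yG=I_k$.

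To verify the identity, decompose $\mathbb{R}^m=\ker D\Phi\oplus(\ker D\Phi)^\perp$. Then $|\det DG|$ equals the volume spanned by the columns of $A$ times the volume spanned by the projection of $\partial_yG$ onto $(\ker D\Phi)^\perp$. The first factor is $\sqrt{\det(A^TA)}$. The second factor is $1/\sqrt{\det(D\Phi D\Phi^T)}$, because $D\Phi$ restricted to $(\ker D\Phi)^\perp$ has determinant $J_k\Phi$ and maps the projected $\partial_yG$ to the identity. Multiplying by $J_k\Phi$ gives the claim.

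The main obstacle is this Jacobian identity in Step 3, together with the measure-theoretic bookkeeping for the a.e. fibers over critical values in Step 1. The remaining steps are routine.
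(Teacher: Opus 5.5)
Your proof is correct, and it takes a different route from the one the paper relies on. The paper gives no argument for this statement; it imports it from Evans--Gariepy \cite{EG}, where the coarea formula is proved for Lipschitz maps.

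In that generality neither of your two main tools is available: Sard's theorem requires $\Phi\in C^{m-k+1}$, and there are no implicit-function charts. So the proof there is measure-theoretic:
\begin{itemize}
\item Rademacher's theorem and the area formula, applied on countably many Borel pieces, handle $\{J_k\Phi>0\}$ in place of your charts.
\item The contribution of $\{J_k\Phi=0\}$ is shown to vanish by perturbing to $(x,w)\mapsto\Phi(x)+\epsilon w$ on $\mathbb{R}^m\times\mathbb{R}^k$. One then uses an Eilenberg-type inequality $\int^*\mathcal{H}^{m-k}(A\cap\Phi^{-1}(y))\,dy\lesssim(\mathrm{Lip}\,\Phi)^k\,\mathcal{L}^m(A)$ and lets $\epsilon\to0$.
\end{itemize}
Your version trades that generality for transparency. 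Sard disposes of the critical set in one line. On the regular set everything reduces to Fubini plus the pointwise identity $J_k\Phi\,|\det DG|=\sqrt{\det(A^TA)}$, which you verify correctly via the orthogonal splitting $\ker D\Phi\oplus(\ker D\Phi)^\perp$. This matches the smoothness hypothesis of the statement as written. It also covers the paper's use of the theorem, namely $\Phi=f:\mathbb{R}^{2n}\to\mathbb{R}^2$ holomorphic, hence real analytic, on a bounded domain.

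One small caveat concerns your final splitting $g=g^+-g^-$. It needs $g\,J_k\Phi\in L^1(\mathbb{R}^m)$, not merely $g\in L^1$. Otherwise both sides can be of the form $\infty-\infty$: take $m=k=1$, $\Phi(x)=x^3$, and $g$ odd with $|g(x)|=(1+x^2)^{-1}$. The condition is automatic when $J_k\Phi$ is bounded, as for Lipschitz $\Phi$ or on the paper's bounded domains. So this is an imprecision in the statement rather than a gap in your argument.
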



To apply the formula, we must compute the Jacobian factor $J_z f$. View $f$ as a map from $\mathbb{R}^{2n} \to \mathbb{R}^2$, and let $f(z) = u(z) + i v(z)$.
The derivative matrix $Df$ is a $2 \times 2n$ real matrix with rows given by the gradients of the real and imaginary parts:
\[
Df = \begin{pmatrix}
	\nabla u \\
	\nabla v
\end{pmatrix}.
\]
The Jacobian determinant $J_2 f$ is defined as
\[
J_2 f = \sqrt{\det(Df Df^T)}.
\]
Compute the product $Df Df^T$:
\[
Df  Df^T = \begin{pmatrix}
	\nabla u \\
	\nabla v
\end{pmatrix}
\begin{pmatrix}
	\nabla u^T & \nabla v^T
\end{pmatrix}
= \begin{pmatrix}
	\nabla u \cdot \nabla u & \nabla u \cdot \nabla v \\
	\nabla v \cdot \nabla u & \nabla v \cdot \nabla v
\end{pmatrix},
\]
where $\nabla u \nabla u^T = \nabla u \cdot \nabla u$. 

Because $f$ is holomorphic, the Cauchy-Riemann equations imply two crucial geometric facts about the gradients of $u$ and $v$: 

1.  Orthogonality $\nabla u \cdot \nabla v = 0$;
    
2. Equal length $|\nabla u|^2 = |\nabla v|^2 = \frac{1}{2} |\nabla f|_{\mathbb{R}^{2n}}^2 = |\partial f|_{\mathbb{C}^n}^2$.

Substituting these into the matrix:
\[
Df Df^T = \begin{pmatrix}
	|\partial f|^2 & 0 \\
	0 & |\partial f|^2
\end{pmatrix}.
\]
The determinant is $(|\partial f|^2)^2 = |\partial f|^4$. Taking the square root, we have
\[
J_2 f = |\partial f|^2.
\]

We are now in a position to prove Theorem \ref{t02}.
\begin{proof}[Proof of Theorem \ref{t02} \, 2]
The quantity we want to compute is the energy integral
\[
J(\varepsilon) = \int_{V_\varepsilon} |\partial f|^2 \, dV,
\]
where $V_\varepsilon = \{z\in U: |f|<\varepsilon\}$.
Notice that the term $|\partial f|^2$ is precisely the Jacobian $J_2 f$. So we are integrating
\[
\int_U \mathbf{1}_{V_\varepsilon}(z) \cdot J_2 f \, dV(z).
\]
Applying the coarea formula, Theorem \ref{t05} for bounded domain, we transform  into an integral over the image space $\mathbb{C}$ with variable  $w$,
\[
= \int_{\mathbb{C}} \left( \int_{f^{-1}(w) \cap U} \mathbf{1}_{V_\varepsilon}(z) \, dS_w\right) dA(w).
\]

To simplify,  note that the characteristic function $\mathbf{1}_{V_\varepsilon}(z)$ equals $1$ if $|f(z)| < \varepsilon$ and $0$ otherwise. 
On the fiber $f(z)=w$, we have $|f(z)|=|w|$. Consequently, the inner integral equals $1$ if $|w|<\varepsilon$ and $0$ otherwise. The outer integral therefore reduces to an integral over the disc $\{w\in\mathbb{C}:|w|<\varepsilon\}$ 
\begin{equation*}
\begin{split}
J(\varepsilon) &= \int_{ \{ w \in \C: |w| < \varepsilon \} } \left( \int_{f^{-1}(w) \cap U} 1 \, dS_w \right) dA(w)\\
&= \int_{\{w \in \C: |w| < \varepsilon\} } \operatorname{Vol}_{2n-2}(f^{-1}(w) \cap U) \, dA(w).
\end{split}
\end{equation*}


Denote $M(w) = \operatorname{Vol}_{2n-2}(f^{-1}(w)\cap U)$, the $(2n-2)$-dimensional volume of the fiber over $w$.  
By Theorem \ref{l07}, the function $M(w)$ is uniformly bounded for sufficiently small $|w|$; that is, there exists a constant $C>0$ such that $M(w)\le C$.

Now evaluate the integral over the disc $|w|<\varepsilon$ using polar coordinates $w=re^{i\theta}$, so that $dA(w)=r\,dr\,d\theta$:
\begin{align} \label{eq:J_polar}
J(\varepsilon) 
= \int_0^\varepsilon \int_0^{2\pi} M(re^{i\theta}) \, r\, d\theta\,dr 
= \int_0^\varepsilon r \left( \int_0^{2\pi} M(re^{i\theta}) \, d\theta \right) dr.
\end{align}
Immediately we obtain the bound
\[
J(\varepsilon) \le C \int_{|w|<\varepsilon} dA(w) = C\pi\varepsilon^2 = O(\varepsilon^2),
\]
which completes the energy estimate for the volume.
\end{proof}
\begin{remark}
      We stress that despite $J(\varepsilon)=O(\varepsilon^2)$, we can not conclude $J'(\varepsilon)=O(\varepsilon)$ in general.
\end{remark} 



We first apply the coarea formula to the function $u(z)=|f(z)|$.  
Since $f$ is holomorphic, we have $|\nabla u| = |\partial f|$. Hence
\begin{align*}
J(\varepsilon)
&= \int_0^\varepsilon \left( \int_{\{z\in U : |f|=t\}} 
\frac{|\partial f|^2}{|\nabla u|} \, dS \right) dt \\
&= \int_0^\varepsilon \left( \int_{\{z\in U : |f|=t\}} 
|\partial f| \, dS \right) dt,
\end{align*}
where $dS$ denotes the Euclidean surface measure on the level set $S_t = \{z\in U : |f|=t\}$. Writing
\[
I(t) = \int_{\{z\in U : |f|=t\}} |\partial f| \, dS,
\]
we obtain the relation
\begin{align} \label{eq:J_I}
J(\varepsilon) = \int_0^\varepsilon I(t)\, dt.
\end{align}

For the proof of part (1) of Theorem \ref{t02}, we also require the following elementary lemma, which is simply a modified version of the fundamental theorem of calculus.

\begin{lemma}\label{l271}
Let $f \in L^1(0,1)$ and suppose $f$ is continuous on $(0,1]$.  
Define
\[
F(x) = \int_0^x f(t) \, dt, \qquad 0 < x \le 1.
\]
Then $F$ is differentiable on $(0,1]$ and
\[
F'(x) = f(x) \quad \text{for all } x\in(0,1].
\]
(No statement is made about differentiability at $x=0$.)
\end{lemma}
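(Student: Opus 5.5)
The plan is to reduce the statement to the classical fundamental theorem of calculus on compact subintervals bounded away from $0$. Fix $x_0\in(0,1]$ and choose $a$ with $0<a<x_0$; when $x_0=1$ we only consider one-sided (left) differentiability. Since $f\in L^1(0,1)$, additivity of the Lebesgue integral gives, for every $x\in[a,1]$,
\[
F(x)=\int_0^a f(t)\,dt+\int_a^x f(t)\,dt = F(a)+G(x),\qquad G(x):=\int_a^x f(t)\,dt .
\]
Here $F(a)$ is a finite constant precisely because $f$ is integrable near $0$. This is the only place the $L^1$ hypothesis is used, and it is what makes $F$ well defined in the first place.

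Next we treat $G$. On the compact interval $[a,1]$, $f$ is continuous, hence bounded and uniformly continuous. Therefore the Lebesgue integral defining $G$ coincides with the Riemann integral, and the classical fundamental theorem of calculus applies. Concretely, for $h\neq 0$ with $x_0+h\in[a,1]$ we have
\[
\Bigl|\frac{G(x_0+h)-G(x_0)}{h}-f(x_0)\Bigr|\le \sup_{|t-x_0|\le|h|,\ t\in[a,1]}|f(t)-f(x_0)| .
\]
The right-hand side tends to $0$ as $h\to 0$ by continuity of $f$ at $x_0$. Hence $G'(x_0)=f(x_0)$, and it follows that $F'(x_0)=f(x_0)$.

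There is no real obstacle in this argument. The only point needing care is to keep the behavior of $f$ near $0$ out of the difference quotient by isolating the constant $F(a)$. No statement is made at $x=0$, where $f$ may be unbounded.

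In the sequel this lemma will be applied with $f=I$ on $(0,\varepsilon_0)$, after rescaling the interval. Continuity of $I$ there is supplied by Lemma \ref{l03}, and integrability of $I$ near $0$ follows from \eqref{eq:J_I} together with part (2) of Theorem \ref{t02}.
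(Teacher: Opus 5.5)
Your proof is correct and takes the same approach as the paper. Both split off the constant $\int_0^\delta f(t)\,dt$ using integrability near $0$, then apply the classical fundamental theorem of calculus on $[\delta,x]$, where $f$ is continuous; your explicit difference-quotient bound and your note about one-sided differentiability at $x_0=1$ are minor refinements that the paper leaves implicit.
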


\begin{proof}

Fix $x_0\in(0,1]$ and choose $\delta$ with $0<\delta<x_0$.  
Using the additivity of the Lebesgue integral, we write
\[
F(x) = \int_0^\delta f(t)\,dt + \int_\delta^{x} f(t)\,dt, \qquad x>\delta.
\]
The first term is constant with respect to $x$, while the second term is an integral over a compact interval on which $f$ is continuous. By the classical fundamental theorem of calculus,
\[
\frac{d}{dx}\int_\delta^{x} f(t)\,dt = f(x) \quad \text{for } x>\delta.
\]
Since $\delta$ can be taken arbitrarily small, the equality $F'(x)=f(x)$ holds for every $x\in(0,1]$.
\end{proof}



\begin{proof}[Proof of Theorem \ref{t02} \, (1)]
We now have two representations of $J(\varepsilon)$:
\begin{equation*}
\begin{split}
J(\varepsilon) &= \int_0^\varepsilon I(t)\, dt, \\[1mm]
J(\varepsilon) &= \int_0^\varepsilon r \left( \int_0^{2\pi} M(re^{i\theta})\, d\theta \right) dr .
\end{split}
\end{equation*}

From Theorem \ref{l07}, $M(re^{i\theta})$ is bounded, hence $J(\varepsilon)<\infty$ and consequently $I(\varepsilon)\in L^1(0,1)$.  
Since $I(\varepsilon)$ is continuous on $(0,\varepsilon_0)$ for some $\varepsilon_0>0$, Lemma \ref{l271} gives
\begin{equation} \label{eq:J_prime_I}
J'(\varepsilon) = I(\varepsilon) \qquad \text{for every } 0<\varepsilon<\varepsilon_0.
\end{equation}
This is the first relation. 

On the other hand, the function
\[
g(r) = r\int_0^{2\pi} M(re^{i\theta})\,d\theta
\]
belongs to $L^1(0,1)$ because $M$ is bounded. By the fundamental theorem of calculus for absolutely integrable functions,
\begin{equation} \label{eq:J_prime_M}
J'(\varepsilon) = \varepsilon \int_0^{2\pi} M(\varepsilon e^{i\theta})\, d\theta \qquad \text{for almost every } \varepsilon>0.
\end{equation}
Combining \eqref{eq:J_prime_I} and \eqref{eq:J_prime_M}, we obtain
\begin{eqnarray}\label{eq:ivarespilon}
I(\varepsilon) = \varepsilon \int_0^{2\pi} M(\varepsilon e^{i\theta})\, d\theta \quad \text{for almost every } \varepsilon>0.
\end{eqnarray}
On the other hand, the boundedness of $M$ implies that 
\[
I(\varepsilon) \le C \cdot 2\pi \varepsilon = O(\varepsilon) \quad \text{for almost every} \ \  \varepsilon>0.
\]
Thus, we get that
\[
I(\varepsilon) \le C \cdot 2\pi \varepsilon = O(\varepsilon) \quad \text{for } 0<\varepsilon<\varepsilon_0
\]
by the continuity of $I(\varepsilon)$, which completes the proof of part (1).


\end{proof}

\begin{remark}
 We emphasize that \eqref{eq:J_prime_I} is valid for all $\varepsilon>0$. At $\varepsilon=0$ the statement may not hold, as Lemma \ref{l271} does not provide differentiability at the left endpoint, and indeed $I(\varepsilon)$ could be unbounded as $\varepsilon\to0^+$. Moreover, The estimates $O(\varepsilon)$ and $O(\varepsilon^2)$ in Theorem \ref{t02} remain valid when polydiscs are replaced by balls. We will apply this result later.
\end{remark}

\subsection{Proof of Theorem \ref{tC}}
Since we know by Theorem \ref{t02} that if $f$ is holomorphic in a neighborhood of the polydisc $\overline{U}$, then 
\[
\int_{\{  z\in U: |f|=\varepsilon \}} |\partial f|\, dS = O(\varepsilon)
\qquad 0<\varepsilon<\varepsilon_0.
\]
We will apply this to study the volume estimates of sublevel sets, which gives the proof of Theorem \ref{tC} based on the \L{}ojasiewicz inequality.
 
\begin{theorem}[Surface area estimate of level sets]\label{th:levelsurface}
Let $f$ be a holomorphic function defined in a neighborhood of the polydisc $U \subset \mathbb{C}^n$ centered at the origin, with $f(0) = 0$. Then there exists an exponent $0 < \gamma \leq 1$ such that the $(2n-1)$-dimensional Hausdorff measure of its level sets satisfies
\[
\mathcal{H}^{2n-1}\bigl(\{ z \in U : |f(z)| = \varepsilon \}\bigr) = O(\varepsilon^{\gamma}) \quad \text{as } \varepsilon \to 0^+.
\]
\end{theorem}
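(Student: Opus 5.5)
The plan is to combine the energy estimate of Theorem \ref{t02}(1) with the semi-global \L{}ojasiewicz inequality of Lemma \ref{lm:actuallyused}. By Lemma \ref{lm:actuallyused} there are $0\le\alpha<1$, $C>0$ and $\varepsilon_0>0$ such that $|\partial f|\ge C|f|^{\alpha}$ at every $z\in U$ with $|f(z)|<\varepsilon_0$. After shrinking $\varepsilon_0$, Lemma \ref{l2201} guarantees that for $0<\varepsilon<\varepsilon_0$ the level set $S_\varepsilon=\{z\in U:|f|=\varepsilon\}$ is a smooth real analytic hypersurface. Consequently its $(2n-1)$-dimensional Hausdorff measure coincides with the surface measure $dS$ used in the definition of $I(\varepsilon)$.

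On $S_\varepsilon$ we have $|f|=\varepsilon$, so the \L{}ojasiewicz bound gives the pointwise lower estimate $|\partial f|\ge C\varepsilon^{\alpha}$ everywhere on the level set. Integrating this over $S_\varepsilon$ yields
\[
C\varepsilon^{\alpha}\,\mathcal{H}^{2n-1}(S_\varepsilon)\le \int_{S_\varepsilon}|\partial f|\,dS=I(\varepsilon).
\]
By Theorem \ref{t02}(1), $I(\varepsilon)\le C'\varepsilon$ for $0<\varepsilon<\varepsilon_0$. Dividing through gives
\[
\mathcal{H}^{2n-1}(S_\varepsilon)\le \frac{C'}{C}\,\varepsilon^{1-\alpha}.
\]
We therefore set $\gamma=1-\alpha$. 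Since $\alpha\in[0,1)$, this gives $\gamma\in(0,1]$, as the statement requires.

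There are two small points to check. First, if $\partial f(0)\neq 0$, one can take $\alpha=0$ directly, which gives $\gamma=1$; this is consistent with the lemma, since $\alpha=0$ is allowed there. Second, Theorem \ref{t02} is stated for a polydisc with nonempty zero set, which holds here because $f(0)=0$. If the polydisc $U$ is not the unit polydisc, we rescale the coordinates; this changes the estimates only by constants.

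The genuinely hard ingredient, the uniform bound $I(\varepsilon)=O(\varepsilon)$, is already supplied by Theorem \ref{t02}, so I expect no serious obstacle. The only care needed is to make sure the \L{}ojasiewicz inequality holds uniformly on all of $U\cap\{|f|<\varepsilon_0\}$, including near $\partial U$, and not merely near the origin. This is exactly the semi-global content of Lemma \ref{lm:actuallyused}: it comes from covering the compact set $Z(f)\cap\overline U$ by finitely many charts and taking the largest exponent among them. Sharpness of $\gamma$ and the characterization of the case $\gamma=1$ are not part of this statement, and I would treat them separately.
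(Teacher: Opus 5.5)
Your proof is the paper's proof. You evaluate the semi-global \L{}ojasiewicz bound of Lemma \ref{lm:actuallyused} on the level set to get $|\partial f|\ge C\varepsilon^{\alpha}$ there, compare with $I(\varepsilon)=O(\varepsilon)$ from Theorem \ref{t02}(1), and divide to obtain $\gamma=1-\alpha$.

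One side remark is wrong, though the main argument does not depend on it. The condition $\partial f(0)\neq 0$ does not allow $\alpha=0$, because the inequality must hold near all of $Z(f)\cap\overline{U}$, which can have singular points away from the origin. For example, $f=z_1(z_1-\frac12)^2$ has $\partial f(0)\neq 0$, yet near $z_1=\frac12$ its level sets have measure of exact order $\varepsilon^{1/2}$.
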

\begin{proof}
 Using Lemma \ref{lm:actuallyused}, there exist  \(0 \leq \alpha < 1\)  and $\varepsilon_0$ such that  
\begin{equation}\label{eq:gradient_lower_bound}
|\partial f(z)| \geq C |f(z)|^{\alpha}.
\end{equation}
for \(z \in U\) with \(|f(z)| < \varepsilon_0\).



Applying statement (1) of Theorem \ref{t02}, which provides an $L^1$-norm estimate of $\partial f$ on level sets, we obtain for $0 < \varepsilon < \varepsilon_0$ 
\begin{equation}\label{eq:L1_estimate}
\int_{\{z \in U : |f| = \varepsilon\}} |\partial f(z)| \, d S = O(\varepsilon).
\end{equation}

Combining \eqref{eq:gradient_lower_bound} and \eqref{eq:L1_estimate} yields the chain of inequalities valid for all $0 < \varepsilon < \varepsilon_0$ and $C>0$,  
\begin{align*}
O(\varepsilon) 
&= \int_{\{ z \in U : |f| = \varepsilon \}} |\partial f| \, dS\\
&\gtrsim \int_{\{ z \in U : |f| = \varepsilon \}}   |f|^{\alpha} \, dS \\
&\gtrsim \varepsilon^{\alpha} \int_{\{ z \in U : |f| = \varepsilon \}} dS \\
&\gtrsim \varepsilon^{\alpha} \cdot \mathcal{H}^{2n-1}\bigl( \{ z \in U : |f| = \varepsilon \}\bigr).
\end{align*}

Dividing by $  \varepsilon^{\alpha}$ (for $0 < \varepsilon < \varepsilon_0$) gives
\[
\mathcal{H}^{2n-1}\bigl(\{z \in U: |f| = \varepsilon \}  \bigr) \leq C'_1 \cdot \varepsilon^{1-\alpha}
\]
for some constant $C'_1 > 0$. Setting
\[
\gamma := 1 - \alpha,
\]
we observe that $0 < \gamma \leq 1$ since $0 \leq \alpha < 1$. Therefore, for all sufficiently small $\varepsilon > 0$,
\[
\mathcal{H}^{2n-1}\bigl(\{ z \in U : |f| = \varepsilon \}\bigr) = O(\varepsilon^{\gamma}),
\]
which completes the proof.
\end{proof}

With application to level subsets and level sets of any holomorphic function, we get the following

\begin{theorem}[Volume estimate of sublevel sets]\label{th:volume_sublevel}
Let $f$ be a holomorphic function defined in a neighborhood of the polydisc $\overline{U} \subset \mathbb{C}^n$ centered at the origin, with $f(0) = 0$. Then there exists an exponent $0 < \tau \leq 2$ such that the Euclidean volume of its sublevel sets satisfies
\[
\operatorname{Vol}\bigl(\{ z \in U : |f(z)| < \varepsilon \}\bigr) = O(\varepsilon^{\tau}) \quad \text{as } \varepsilon \to 0^+.
\]
\end{theorem}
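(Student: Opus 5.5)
The plan is to combine the energy estimate of Theorem \ref{t02}(2) with the semi-global \L{}ojasiewicz inequality of Lemma \ref{lm:actuallyused}, in parallel with the proof of Theorem \ref{th:levelsurface}. Lemma \ref{lm:actuallyused} gives $0\le\alpha<1$, $C>0$ and $\varepsilon_0>0$ such that $|\partial f|\ge C|f|^{\alpha}$ on $\{z\in U: |f|<\varepsilon_0\}$. On $\{|f|<\varepsilon\}$ the factor $|f|^{2\alpha}$ can vanish, so we cannot bound the volume by dividing the energy by $\varepsilon^{2\alpha}$ directly. Two routes avoid this, and I would write the one via the coarea formula.

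\textbf{Route via the coarea formula.} Apply the coarea formula \eqref{e241} to $u=|f|$, for which $|\nabla u|=|\partial f|$. Since $Z(f)$ has measure zero, for $0<\varepsilon<\varepsilon_0$ this gives
\[
\mathrm{Vol}\bigl(\{z\in U:|f|<\varepsilon\}\bigr)=\int_0^{\varepsilon}\int_{\{z\in U:|f|=t\}}\frac{dS}{|\partial f|}\,dt .
\]
On the level set $\{|f|=t\}$ the \L{}ojasiewicz bound gives $1/|\partial f|\le C^{-1}t^{-\alpha}$. Theorem \ref{th:levelsurface} gives $\mathcal H^{2n-1}(\{|f|=t\})\le C' t^{1-\alpha}$. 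Hence the inner integral is at most $C'' t^{1-2\alpha}$. This is integrable near $0$ because $1-2\alpha>-1$, which holds since $\alpha<1$. Integrating yields
\[
\mathrm{Vol}\bigl(\{|f|<\varepsilon\}\bigr)=O(\varepsilon^{2-2\alpha}).
\]
Take $\tau:=2-2\alpha=2\gamma\in(0,2]$.

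\textbf{Route via a dyadic decomposition.} Split $\{|f|<\varepsilon\}$ into the shells $\{2^{-k-1}\varepsilon\le|f|<2^{-k}\varepsilon\}$. On the $k$-th shell, $|\partial f|^2\ge C^2 (2^{-k-1}\varepsilon)^{2\alpha}$, while Theorem \ref{t02}(2) bounds the energy of the shell by $O(4^{-k}\varepsilon^2)$. So the $k$-th shell has volume $O\bigl((2^{-k}\varepsilon)^{2-2\alpha}\bigr)$. This is a geometric series summing to $O(\varepsilon^{2-2\alpha})$. This route avoids needing pointwise information on level sets, and I would mention it as a check.

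\textbf{Main obstacle.} The step needing the most care is justifying the coarea identity on the bounded domain $U$, including near $Z(f)$, where $1/|\partial f|$ may blow up on the critical part of $Z(f)$. Two facts handle this: the integrand is nonnegative, so monotone convergence over $\{t>\delta\}$ followed by $\delta\to0$ is legitimate, and $Z(f)$ is null. Lemma \ref{l2201} guarantees that the level sets for $0<t<\varepsilon_0$ are smooth, so Theorem \ref{th:levelsurface} applies for every such $t$. The resulting constants depend only on $f$ and $U$, so $\tau=2(1-\alpha)$ is expressed through the same semi-global \L{}ojasiewicz exponent as $\gamma$, consistent with the remark after Theorem \ref{tC}.
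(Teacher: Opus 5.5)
Your main route is the paper's own proof. It applies the coarea formula to $|f|$, uses the semi-global \L{}ojasiewicz bound $1/|\partial f|\lesssim t^{-\alpha}$ on $\{|f|=t\}$ together with the area bound $O(t^{1-\alpha})$ from Theorem~\ref{th:levelsurface}, and integrates $t^{1-2\alpha}$ to obtain $\tau=2-2\alpha$. Your dyadic check is also correct, because $J(s)\le C\pi s^2$ holds uniformly for small $s$; it is slightly more economical, since it needs only Theorem~\ref{t02}(2) and the \L{}ojasiewicz inequality, not the level-set area estimate, which in the paper depends on part (1), the continuity of $I(\varepsilon)$, and the a.e.\ identity for $J'$.
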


\begin{proof}

Again with Lemma \ref{lm:actuallyused}, we choose some  \(0 \leq \alpha < 1\) and $\varepsilon_0$ such that  
\begin{equation}\label{eq:gradient_lower_bound2}
|\partial f(z)| \geq C |f(z)|^{\alpha}.
\end{equation}
for \(z \in U\) with \(|f(z)| < \varepsilon_0\).



We now apply the coarea formula to the real-valued function $|f|$. Since $|f|$ is real analytic away from the zero set of $f$ and its gradient satisfies $|\nabla |f|| = |\partial f|$ almost everywhere, we obtain for $0 < \varepsilon < \varepsilon_0$, 
\begin{align*}
\operatorname{Vol}\bigl(\{ z \in U : |f| < \varepsilon \}\bigr) 
&= \int_{\{ |f| < \varepsilon \} \cap U} dV \\
&= \int_0^\varepsilon \left( \int_{\{ z \in U : |f| = t \}} \frac{1}{|\nabla |f||} \, dS\right) dt \\
&= \int_0^\varepsilon \left( \int_{\{ z \in U : |f| = t \}} \frac{1}{|\partial f|} \, dS \right) dt.
\end{align*}

Using the gradient estimate \eqref{eq:gradient_lower_bound2}, which is valid on each level set $\{|f| = t\}$ for $0 < t < \varepsilon_0$, we have the pointwise bound $1/|\partial f| \lesssim 1/|f|^{\alpha} = t^{-\alpha}$ on these level sets. Therefore,
\begin{align*}
\operatorname{Vol}\bigl(\{ z \in U : |f| < \varepsilon \}\bigr) 
&\lesssim \int_0^\varepsilon \left( \int_{\{ z \in U : |f| = t \}} \frac{1}{t^{\alpha}} \, dS \right) dt \\
&\lesssim   \int_0^\varepsilon \frac{1}{t^{\alpha}} \left( \int_{\{ z \in U : |f| = t \}} dS \right) dt.
\end{align*}
Now apply Theorem~\ref{th:levelsurface}, which gives the area estimate
\[
\mathcal{H}^{2n-1}(\{z \in U : |f| = t\}) = O(t^{1-\alpha}), \qquad t \to 0^+.
\]
Inserting this estimate into the co-area formula yields
\begin{align*}
\operatorname{Vol}\bigl(\{ z \in U : |f| < \varepsilon \}\bigr) 
&\lesssim \int_0^\varepsilon \frac{1}{t^{\alpha}} \; O(t^{1-\alpha}) \, dt \\
&\lesssim O\!\left( \int_0^\varepsilon t^{1-2\alpha} \, dt \right).
\end{align*}
Because $0 \le \alpha < 1$, the exponent satisfies $1-2\alpha > -1$; hence the integral converges.  Evaluating it gives
\[
\int_0^\varepsilon t^{1-2\alpha} \, dt = \frac{\varepsilon^{2-2\alpha}}{2-2\alpha},
\]
and therefore
\begin{equation}\label{eq:volume-est}
\operatorname{Vol}\bigl(\{ z \in U : |f| < \varepsilon \}\bigr) 
= O\bigl(\varepsilon^{2-2\alpha}\bigr) \qquad (\varepsilon \to 0^+).
\end{equation}
 \end{proof}


\begin{proof}[Proof of the equivalent part of Theorem \ref{tC} ]
\noindent
\text{1 $\Rightarrow$ 2 and 4.}  
If $\gamma = 1$, then $\alpha = 0$ and the Łojasiewicz inequality becomes
$|\partial f| \geq C$ on a neighborhood of $Z(f)$.  Since $f$ is holomorphic,
$|\partial f| = |\nabla f|$ (up to a multiplicative constant); therefore
$\nabla f$ never vanishes on $Z(f)$.  Consequently $Z(f)$ is a smooth complex
hypersurface, i.e., a complex submanifold of codimension one.  
Moreover, because $\nabla f \neq 0$, the holomorphic inverse function theorem
provides, near any $p \in Z(f)$, local biholomorphic coordinates 
$(w_{1},\dots ,w_{n})$ such that $f = w_{1}$.  This gives 4.

\text{2 $\Rightarrow$ 1.}  
Assume $Z(f)$ is a complex submanifold of codimension one.  As $f$ defines $Z(f)$
and $Z(f)$ is smooth, the differential $df$ does not vanish at any point of $Z(f)$.
By continuity there exist a neighborhood $V$ of $Z(f)$ and a constant $C>0$
such that $|\partial f| \geq C$ on $V$.  Hence $\alpha = 0$, i.e., $\gamma = 1$.

\text{4 $\Rightarrow$ 1.}  
If locally $f = w_{1}$, then $|\partial f| = 1$ identically, whence $\alpha = 0$
and $\gamma = 1$.

Thus conditions $1-4$ are equivalent.
\end{proof}

\begin{remark}\label{r01}
The condition $f$ being holomorphic in a neighborhood of the unit polydisc (or the ball) $U$ can not be dropped. Consider the Blaschke product in $\mathbb{C}^2$
\begin{equation*}
   f(z_1,z_2)=\prod_{j=1}^\infty\frac{z_1-a_j}{1-\overline{a}_jz_1},
\end{equation*}
where $a_j=1-\frac{1}{j^2}$.
Then $f^{-1}(0)=\cup_{j=1}^\infty\{ (z_1,z_2)\in U: z_1=a_j\}  $. $\{ (z_1,z_2)\in U: |f (z_1,z_2)| = \varepsilon\}$, $\{ (z_1,z_2)\in U: |f (z_1,z_2)|< \varepsilon\}$ could not be determined effectively.
\end{remark}

\section{Counterexamples to Calder\'on-Zygmund theory}
In this section, we construct universal counterexamples to Calder\'on-Zygmund theory, which is the main goal of the paper.
\subsection[Elementary computations for $u$ and $v$]{Elementary computations for $u$ and $v$}
In order to apply our results above to construct counter-examples to Calder\'on-Zygmund theory, we are in needs of elementary computations as follows.

Given a domain $U$ in $\C^n$, $n\geq 1$, denote by $W^{k,p}(U) $ ( resp.  $W_{{\rm loc}}^{k,p}(U) $ ) the Sobolev space of all functions on  $U  $ whose weak derivatives of order $\leq k$ exist and belong to $L^p(U)$ (  resp. $L_{{\rm loc}}^p(U))$, for some $k\in \mathbb{Z}^+,  p\geq 1.$
  
Let $f$ be a holomorphic function in a neighborhood of $U$  and $Z(f)= \{z \in U:  f(z) =0 \}$ be the zero set of $f$. Denote $ \partial _{ z _j } f=   \frac{\partial f}{ \partial   {z _j } }$,  $ \partial _{ \overline{z} _j } f=   \frac{\partial f}{ \partial   {\overline{z} _j } }$. We assume $Z(f) \neq \emptyset$, since our interest is around zeros of $f$.

\smallskip
Let $u=\log ( - \log |f|^2)$. If $n\geq 2$, for $1 \leq i \leq n, 1\leq j \leq n , $ we have when $f\neq 0$
\begin{align*} 
\begin{split}
\frac{\partial u }{\partial z_i }  &= \frac{\partial_{z_i} f}{ f} \frac{1}{ \log (|f| ^2 )  },  \ \ \  \frac{\partial u }{\partial  \overline{z}_i }  = \frac{ \overline{\partial_{z_i} f}}{ \overline{f} }  \frac{1}{ \log (|f| ^2 )  } . \\
 \frac{\partial^2 u  }{\partial z_i \partial z_j  }  &= \frac{\partial_{ z_j}( \partial_{z_i} f)}{ f  \log  |f| ^2    } - \frac{\partial_{z_i}  f  \partial_{z_j} f }{f ^2    \log  |f| ^2  }  -  \frac{\partial_{z_i}  f  \partial_{z_j} f }{f ^2  (  \log  |f| ^2 )^2 }  ;\\
\frac{\partial ^2 u}{ \partial z_i \partial \overline{z}_j}  &= -   \frac{   \partial_{z_i}  f   \overline{\partial_{z_j}  f }}{|f|^2} \frac{1}{(\log|f(z)|^2)^2}; \ \ \  \ \frac{\partial^2 u  }{\partial \overline{z}_i \partial \overline{z}_j  }  =\overline{\frac{\partial^2 u  }{\partial z_i \partial z_j  } }.\\
 \Delta u &=4  \sum_{i=1}^n \frac{\partial^2 u  }{\partial z_i \partial  \overline{z} _i  }  = -4 \frac{\sum_{i=1}^n |\frac{\partial f}{\partial z_i}|^2}{|f|^2( \log |f|^2)^2} = -4\Big|\frac{\nabla |f|}{f}\Big|^2\frac{1}{(\log |f|^2)^2}.
 \end{split}
\end{align*}

Let 
\[
v(z)=
\begin{cases}
\displaystyle \frac{1} {\log ( - \log |f|^2)}, &  z \notin Z(f);   \\  \nonumber 
  0, &  z \in Z(f) .
\end{cases}
\]

Then we have
 
\begin{align*}
\frac{\partial v}{\partial z_i}
    &= -\frac{\partial _{z_i} f}{f} \frac{1}{ \log |f|^2  (\log ( - \log |f|^2))^2}, \\
\frac{\partial v}{\partial \overline{z}_i}
    &= -\frac{\overline{\partial _{ z_i } f}}{\overline{f}} \frac{1}{ \log |f|^2  (\log ( - \log |f|^2))^2};
\end{align*}
and
\begin{align*}
\frac{\partial ^2 v}{\partial z_i \partial z_j}
    &= - \frac{\partial_{ z_j} (\partial_{ z_i} f)}  {f \log (|f| ^2 )}  \frac{1}{  (\log (-\log |f|^2))^2} \\
    &\quad + \frac{\partial _{z_i} f   \partial _{z_j}f}  {f^2} \frac{1}{ \log (|f| ^2 )  (\log (-\log |f|^2))^2} \\
    &\quad +  \frac{\partial _{z_i} f  \partial _{z_j}f}  { f ^2}  \frac{1}{( \log (|f| ^2 ))^2 (\log (-\log |f|^2))^2} \\
    &\quad +2 \frac{\partial _{z_i} f   \partial _{z_j}f}  { f ^2}  \frac{1}{( \log (|f| ^2 ))^2  (\log (-\log |f|^2))^3}; \\[6pt]
\frac{\partial ^2 v}{\partial z_i \partial \overline{z}_j}
    &=  \frac{\partial_{ z_i}f     \overline{ \partial_{ z_j} f} }  {|f|^2 (\log   |f| ^2  )^2}  \frac{1}{  (\log (-\log |f|^2))^2} \\
    &\quad + 2 \frac{\partial_{ z_i} f   \overline{ \partial_{ z_j} f} }  {|f|^2 ( \log (|f| ^2 ))^2  (\log (-\log |f|^2))^3}; \\[6pt]
\frac{\partial ^2 v}{\partial \overline{z}_i \partial \overline{z}_j}
    &= \overline{\frac{\partial ^2 v}{\partial z_i \partial z_j}}; \\[6pt]
\Delta v
    &=   4  \frac{ |\nabla |f| |^2}{|f|^2 (\log   |f| ^2  )^2}  \frac{1}{  (\log (-\log |f|^2))^2}
       + 8 \frac{ |\nabla |f| |^2}{|f|^2 ( \log (|f| ^2 ))^2  (\log (-\log |f|^2))^3}.
\end{align*}

\subsection{Proof of Theorem \ref{t01}}

In this subsection, we are in a position to prove Theorem \ref{t01}. Theorem \ref{t02} and Theorem \ref{tC} will play crucial roles, without which our arguments would fail. 

Before embarking on the rigorous analytic proof, it is highly instructive to provide a brief heuristic observation regarding the one-dimensional case ($n=1$). When the spatial dimension is one, any nonconstant holomorphic function $f$ can be locally reduced to a monomial $w^m$ (where $m \ge 1$ is the multiplicity) near its zero via a suitable local biholomorphic coordinate transformation. Under this local chart, the potential function $u = \log(-\log|f|^2)$ takes the form $\log(-m\log|w|^2) = \log(-\log|w|^2) + \log m$. This implies that, up to a smooth additive constant, the one-dimensional case naturally degenerates to the classical radial counterexample discussed in the introduction. This observation not only confirms the geometric naturality of our construction but also profoundly reveals the fundamental difficulty in the higher-dimensional setting ($n \ge 2$): the intricate geometric and topological structures of analytic singular sets strictly obstruct such a simple local reduction. This is precisely why we must rely heavily on Theorem \ref{t02} and Theorem \ref{tC} to resolve the singularities and establish the refined geometric estimates in higher dimensions.
Since weak derivative problem is local, and to avoid the nonsmooth boundary of a polydisc, we instead take  $U = \{ z\in\mathbb{C}^n : |z|<1 \}$, the unit ball in $\mathbb{C}^n$ now.  Let $f$ be holomorphic in a neighborhood of $\overline{U}$, and assume without loss of generality that $f(0)=0$. 
Let $Z(f) = \{z \in U : f(z) = 0\}$. Note by assumption $0 \in Z(f)$ (i.e., $f(0)=0$).
Since $u = \log(-\log|f(z)|^2)$ and $g \in C^{\infty}(U \setminus Z(f))$, so on $U\setminus Z(f)$,
\[
\Delta u=g
\]
holds obviously. Our goal is to extend the   formulation to be over across $Z(f)$: namely, we want to show  $\Delta u = g$ holds in the sense of distributions on the whole $U$.
 
It is worth mentioning that a well-known Harvey-Polking Lemma \cite{HP70} (also \cite{SZ25}) does not apply here, which would demand $u$ to be continuous on $U$, that is obviously not the case for us. Indeed, $u$ is unbounded due to the singularity (despite rather mild ) at $Z(f)$.
Therefore, the only way we can do is via the verification of the definition of distributions.

The goal is to show for every $\varphi \in C_0^\infty(U)$, we want to verify
\[
\int_U \Delta \varphi\, u = \int_U \varphi\, g.
\]
In order to apply Green's theorem, we excise the singularity set $Z(f)$ by defining the domain.
\begin{equation} \label{eq:dfue}
U_\varepsilon=\{z\in U:|f|>\varepsilon\},\qquad\text{for }\varepsilon < \varepsilon_0,
\end{equation}
where $\varepsilon_0$ is given by Lemma \ref{l2201}.

We need some technical lemmas on $U_{\varepsilon}$ which are pertinent to applying Green's theorem and integration by parts throughout properly.

\begin{lemma}\label{l04}
The level set $\{|f|=\varepsilon\}$ is tangent to $\partial U$ at a point $p\in\partial U$  
if and only if $p$ is a critical point of the restricted function $u\big|_{\partial U}$.
\end{lemma}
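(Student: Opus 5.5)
The plan is to reduce the tangency condition to a Lagrange multiplier condition comparing two normal vectors at $p$. Throughout, $u$ in the statement is read as any function of the form $h(|f|)$ with $h$ smooth and $h'\neq 0$ on $(0,\varepsilon_0)$. This covers $|f|$, $|f|^2$, and the potential $\log(-\log|f|^2)$. By the chain rule, $\nabla u = h'(|f|)\,\nabla|f|$ with $h'(|f|)\neq 0$, so on the shell $\{0<|f|<\varepsilon_0\}$ the gradients $\nabla u$ and $\nabla|f|$ are nonzero multiples of one another. It therefore suffices to prove the statement for $|f|$ itself.

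\textbf{Step 1 (normals).} Fix $p\in\partial U$ with $|f(p)|=\varepsilon<\varepsilon_0$. By Lemma \ref{l2201}, applied on a compact neighbourhood of $\overline U$ (where $f$ is holomorphic), $\nabla|f|(p)\neq 0$. Hence near $p$ the level set $\{|f|=\varepsilon\}$ is a smooth real hypersurface with normal line $\mathbb{R}\,\nabla|f|(p)$. The sphere $\partial U=\{|z|^2=1\}$ is a smooth hypersurface with normal line $\mathbb{R}\,p$, since $\nabla|z|^2=2z$. Two hypersurfaces through $p$ are tangent at $p$ exactly when their tangent spaces coincide, that is, when their normal lines coincide. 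So tangency at $p$ is equivalent to
\[
\nabla|f|(p)=\lambda p \quad \text{for some } \lambda\in\mathbb{R}.
\]

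\textbf{Step 2 (critical points of the restriction).} The differential of $|f|\big|_{\partial U}$ at $p$ is the restriction of $d|f|(p)$ to $T_p\partial U=p^{\perp}$. Equivalently, the tangential gradient is the orthogonal projection of $\nabla|f|(p)$ onto $p^\perp$:
\[
\nabla|f|(p)-\langle \nabla|f|(p),p\rangle\, p .
\]
This vanishes if and only if $\nabla|f|(p)$ is a real multiple of $p$, which is the Lagrange multiplier condition. Combined with Step 1, this gives the equivalence for $|f|$. The reduction in the first paragraph then transfers it to $u$, since multiplying the gradient by the nonzero scalar $h'(|f(p)|)$ does not affect whether it is parallel to $p$.

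\textbf{Main obstacle.} The argument is elementary, and the only real care needed is well-definedness. One must ensure that $p\notin Z(f)$ and $\varepsilon<\varepsilon_0$, so that both the level set and the function $u$ are smooth near $p$; this is exactly where Lemma \ref{l2201} is needed. One should also check that Lemma \ref{l2201} applies up to the boundary. This holds because $f$ is holomorphic on a neighbourhood of $\overline U$, so the lemma can be applied with $K=\overline U$.
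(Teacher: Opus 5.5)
Your proposal is correct and follows essentially the paper's approach: the paper's proof is a one-line Lagrange multiplier argument, namely that $p$ is critical for $u|_{\partial U}$ iff $\nabla u(p)=\lambda\nabla(|z|^2)$, which is the tangency (parallel-normals) condition. Your version spells out what the paper leaves implicit. You use Lemma \ref{l2201} on $K=\overline U$ to guarantee $\nabla|f|(p)\neq 0$, so the level set is a genuine hypersurface near $p$. You also note that any $u=h(|f|)$ with $h'\neq 0$ has gradient parallel to $\nabla|f|$, so the ambiguity in which $u$ is meant does not matter.
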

\begin{proof}
By the method of Lagrange multipliers, $p$ is a critical point of $u(z)$  
subject to the constraint $|z|^2=1$  if and only if  there exist $\lambda \in \R$ such that
\[
\nabla u(p) = \lambda\, \nabla(|z|^2),
\]
which is exactly the tangency equation defined above.
\end{proof}
\begin{lemma}\label{l04finit}
For a real analytic function on a compact real analytic manifold,  
the set of critical values $V_{\mathrm{crit}}$ is a finite set.
\end{lemma}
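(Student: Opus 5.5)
The plan is to combine two classical facts about real analytic functions: the critical set of a real analytic function on a compact manifold is a real analytic subset with finitely many connected components, and the function is constant on each connected component of its critical set. Let $M$ be the compact real analytic manifold, $\phi: M\to\mathbb{R}$ real analytic, and $C=\{p\in M: d\phi(p)=0\}$. In local analytic charts $C$ is the common zero set of finitely many analytic functions, namely the partial derivatives of $\phi$. So $C$ is a closed real analytic subset of $M$, and hence compact.

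\textbf{Step 1 (finitely many components).} I will invoke the Łojasiewicz structure theorem for real analytic sets. Every point of $M$ has a neighbourhood in which an analytic set has only finitely many connected components, and in fact admits a finite stratification into connected analytic submanifolds. Covering the compact set $C$ by finitely many such neighbourhoods shows that $C$ is a finite union of connected analytic strata $S_1,\dots,S_N$, each a connected smooth submanifold, possibly of dimension zero.

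\textbf{Step 2 (constancy on each stratum).} On each stratum $S_k$ the restriction $\phi|_{S_k}$ is smooth. For any tangent vector $v\in T_pS_k$ we have $d(\phi|_{S_k})_p(v)=d\phi_p(v)=0$, since $p\in C$. A smooth function with vanishing differential on a connected manifold is constant, so $\phi$ takes a single value $c_k$ on $S_k$. Hence $V_{\mathrm{crit}}=\phi(C)=\{c_1,\dots,c_N\}$ is finite.

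\textbf{Main obstacle.} The delicate point is Step 1: we need a stratification of $C$ into \emph{finitely many connected smooth} pieces, not merely local finiteness of components. If one prefers to avoid full stratification theory, an alternative is to use the fact that real analytic sets are locally arcwise connected by piecewise analytic (semi-analytic) arcs. Along an analytic arc $\sigma(t)\subset C$ we get $\frac{d}{dt}\phi(\sigma(t))=d\phi(\sigma'(t))=0$, so $\phi$ is constant on each path component of $C$. Local finiteness of path components plus compactness then gives finitely many components, and again finitely many values. In either route the structural input is the Łojasiewicz theory of semi-analytic sets, which I would quote rather than reprove.

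Finally, in the intended application $M=\partial U$ is the sphere and $\phi=|f|$ (or $|f|^2$) restricted to $\partial U$. Together with Lemma \ref{l04}, this shows that for all but finitely many $\varepsilon$, and in particular for all sufficiently small $\varepsilon>0$ avoiding these values, the level set $\{|f|=\varepsilon\}$ meets $\partial U$ transversally.
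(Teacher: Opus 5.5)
Your proof is correct, but its main line differs from the paper's. The paper works with connected components of the critical set. It shows $u$ is constant along any smooth path inside a component, since $\frac{d}{dt}u(\gamma(t))=\nabla u\cdot\gamma'(t)=0$. It then quotes \L{}ojasiewicz's theorem that a real analytic subvariety of a compact manifold has finitely many connected components. Your Steps 1--2 instead cover $C$ by finitely many connected analytic submanifolds lying in $C$, using \L{}ojasiewicz's local decomposition of semianalytic sets plus compactness. You then observe that $d(\phi|_{S})=d\phi|_{TS}=0$ on each piece, so $\phi$ is constant there. This needs a stronger structural input: a decomposition into manifolds rather than mere finiteness of components. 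In exchange, the constancy step becomes airtight. The paper's argument tacitly assumes that any two points of a connected component can be joined by a smooth path inside it. That is not automatic for a singular analytic set; it is exactly the arcwise connectedness by piecewise analytic arcs that you invoke in your alternative route, which is the paper's argument with this gap filled. Finally, your pieces from different charts may overlap and so need not form a global stratification. That is harmless, because you only use a finite cover of $C$ by connected submanifolds contained in $C$.
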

\begin{proof}
Let $M$ be any real analytic manifold.  
Let $u : M \to \mathbb{R}$ be a real-analytic function.  
The set of critical points is defined by
\[
\mathcal C = \{\, z\in M : \nabla u(z)=0 \,\}.
\]

Since $\nabla u$ is real analytic, $\mathcal C$  is a real analytic subvariety of $M$. 
We claim that $u$ is constant on connected components of $\mathcal C$.

Let $Z\subset \mathcal C$ be any connected component of the critical set.  
Let $\gamma(t)$ be any smooth path lying entirely inside $Z$.  
Compute the change of $u$ along the path:
\[
\frac{d}{dt} u(\gamma(t)) 
= \nabla u(\gamma(t))\cdot \gamma'(t).
\]
Since $\gamma(t)$ is in the critical set, $\nabla u(\gamma(t))=0$,  by definition
\[
\frac{d}{dt} u(\gamma(t)) = 0.
\]
Therefore, $u$ is constant on every connected component of the critical set. Finally, we have to use a deep geometric step. A fundamental property of real analytic varieties on a compact manifold is that they have a finite number of connected components due to \L{}ojasiewicz \cite{Lo}.

Since $\mathcal C$ has finitely many connected components,  
and $u$ takes a single constant value on each component,  
the set of critical values  
\[
V_{\mathrm{crit}} = u(\mathcal C)
\]
is a finite set.
\end{proof}
Note that without compactness, it is easy to have examples showing otherwise, e.g. $u=\sin x$ on $\mathbb{R}$.

\begin{lemma} [Transversality Lemma]\label{lm:Trans}
   There exists $\varepsilon_0>0$, such that for  all $t \in (0, \varepsilon_0)$, the level set $S_t = \{ z \in U : |f(z)| = t \}$ is transversal to $\partial U$, the unit sphere in $\mathbb{C}^n$. Consequently, the intersection $\Gamma_t = S_t \cap \partial U$ is a smooth submanifold of real dimension $2n-2$, which has measure zero with respect to the $(2n-1)$-dimensional surface measure on $S_t$.
\end{lemma}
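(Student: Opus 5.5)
The plan is to reduce transversality to a statement about critical values of a real analytic function on the compact sphere, and then invoke Lemma \ref{l04} and Lemma \ref{l04finit}. Set $\phi(z)=|f(z)|^2$, which is real analytic on a neighborhood of $\overline U$. Its restriction $\phi|_{\partial U}$ to the unit sphere $\partial U=S^{2n-1}$ is a real analytic function on a compact real analytic manifold. By Lemma \ref{l04finit}, its set of critical values $V_{\mathrm{crit}}\subset[0,\infty)$ is finite. Choose $\varepsilon_0>0$ so that $(0,\varepsilon_0^2)\cap V_{\mathrm{crit}}=\emptyset$. Also shrink $\varepsilon_0$ to be no larger than the constant of Lemma \ref{l2201}, applied with $K=\overline U$, so that $\nabla\phi\neq0$ on $\{z\in\overline U:0<|f|<\varepsilon_0\}$.

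Now fix $t\in(0,\varepsilon_0)$ and a point $p\in \overline{S_t}\cap\partial U$. Since $t^2$ is not a critical value of $\phi|_{\partial U}$, the point $p$ is not a critical point of the restriction. By Lemma \ref{l04}, stated with $u=\phi$ or equivalently $|f|$, this means there is no $\lambda$ with $\nabla\phi(p)=\lambda\nabla|z|^2$. Hence $\nabla\phi(p)$ and the normal $2p$ of the sphere are linearly independent. Because $\nabla\phi(p)\neq0$ by the choice of $\varepsilon_0$, both hypersurfaces $\{\phi=t^2\}$ and $\partial U$ are smooth near $p$. Linear independence of their normals is exactly transversality at $p$.

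The consequence then follows from the implicit function theorem. The map $z\mapsto(\phi(z),|z|^2)$ has rank $2$ at every point of $\Gamma_t$, so $\Gamma_t$ is a smooth (indeed real analytic) submanifold of real dimension $2n-2$. A submanifold of dimension $2n-2$ inside the $(2n-1)$-dimensional manifold $\overline{S_t}$ has $\mathcal H^{2n-1}$-measure zero, since it is locally a graph of codimension one in $S_t$.

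The main subtlety is the first step. Lemma \ref{l04finit} relies on the finiteness of connected components of the analytic critical set, and on $\phi$ being constant on each component. One needs that constancy even though components may be singular varieties. The argument via paths is justified because real analytic sets are locally path connected by piecewise analytic arcs (curve selection), and I would cite this alongside \L{}ojasiewicz. A minor point is that $0$ may itself be a critical value of $\phi|_{\partial U}$, for instance where $Z(f)$ meets the sphere non-transversally. This is harmless, since we only exclude the open interval $(0,\varepsilon_0^2)$, which avoids the finite set $V_{\mathrm{crit}}\setminus\{0\}$ once $\varepsilon_0^2$ is below its smallest positive element.
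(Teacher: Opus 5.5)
Your proposal is correct and follows the paper's proof essentially step for step: restrict $|f|^2$ to the sphere, use Lemma \ref{l04finit} to get finitely many critical values, take $\varepsilon_0$ below the square root of the smallest positive one, and use Lemma \ref{l04} to turn non-criticality into non-parallel normals. Your extra shrinking of $\varepsilon_0$ via Lemma \ref{l2201} is harmless but redundant, since a point that is not critical for $\phi|_{\partial U}$ automatically has $\nabla\phi(p)\neq 0$ (the case $\lambda=0$ of the Lagrange condition).
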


\begin{proof} [Proof of Transversality Lemma \ref{lm:Trans}] 
Let $\rho = |z|^2 - 1$ be the defining function of the unit ball $U$.
The level set $S_\varepsilon = \{ z \in U : |f(z)| = \varepsilon \}$ is defined by $\phi(z) = |f|^2 - \varepsilon^2$.
The hypersurfaces $\partial U$ and $S_\varepsilon$ are tangent at a point $p$ if and only if their real normals are parallel at $p$. Note that we need to consider $S_\varepsilon$ extended slightly outside of $\overline{U}$ a little bit, as shown in the picture below, since $f$ is holomorphic in a neighborhood of $\overline{U}$.

\begin{figure}[H]
    \centering
    \includegraphics[width=0.5\textwidth]{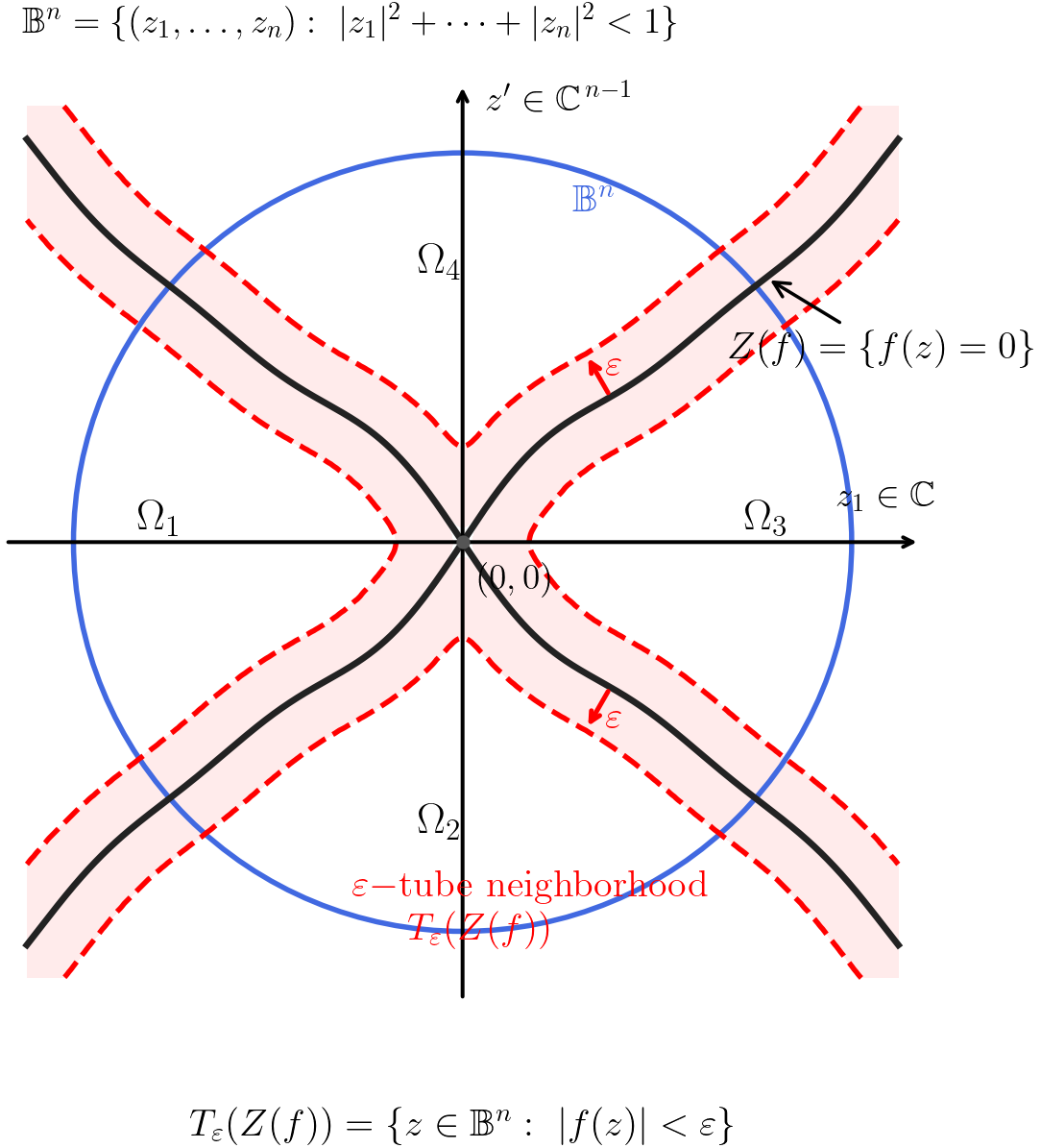} 
    \caption{Tubular neighborhood in the unit ball $\mathbb{B}^n$ }
    \label{test_ball}
\end{figure}


Let us look at the tangency condition. The normal vector field to $\partial U$ is
$ \nu_U = z $  (identifying tangent space with ambient space).
The normal vector field to $S_\varepsilon$ is
\[
\nu_f = \nabla(|f|^2)= f(z)\, \overline{\partial f(z)}.
\]
So $S_\varepsilon$ is tangent to $\partial U$ at $p$ if there exists a real scalar $\lambda \in \mathbb{R}$ such that
\[
\overline{f(p)}\, \frac{\partial f}{\partial z_j}(p) = \lambda\, \overline{p_j},
\qquad \text{for all } j=1,\cdots,n.
\]

Consider the restriction of the function $u(z)=|f(z)|^2$  
to the boundary $\partial U$.
\[
u\big|_{\partial U} : S^{2n-1} \to \mathbb{R}_{\geq0}.\]
Since $f$ is holomorphic in a larger ball, $u$ is real-analytic function on the compact manifold $\partial U (=S^{2n-1})$. 

In order to conclude the transversality, we need to ensure that $\varepsilon^2$ is not a critical value of $u\big|_{\partial U}$. Let $\mathcal C_{\mathrm{crit}}$ be the set of critical points of $u\big|_{\partial U}$. Let $V_{\mathrm{crit}} = u(\mathcal C_{\mathrm{crit}}) \subset \mathbb{R}$ be the set of critical values, which is finite by Lemma \ref{l04finit}. So we set $V_{\mathrm{crit}} = \{c_1, c_2, \dots, c_k\}$.
Hence the bad values of $\varepsilon$ are exactly those where $\varepsilon^2 \in V_{\mathrm{crit}}$.
Since $V_{\mathrm{crit}}$ is finite, there exists a smallest non-zero critical value. Let $\delta = \min\{\, c \in V_{\mathrm{crit}} : c>0 \,\}.$ If no non-zero critical value exists, set $\delta=\infty$.

Define $\varepsilon_0 = \sqrt{\delta}$. For any $\varepsilon$ such that $0<\varepsilon<\varepsilon_0$,
$\varepsilon^2$ is not a critical value of $u\big|_{\partial U}$. Therefore, $u\big|_{\partial U}$ has no critical points on the level set $u=\varepsilon^2$.
This implies the gradient $\nabla|f|^2$ and $\nabla|z|^2$ are never parallel on the intersection due to Lemma \ref{l04}, which means the transversality holds.
\end{proof}


\begin{lemma}\label{l311}
The domain $U_\varepsilon$ consists of finitely many components, each of which has Lipschitz boundary.
Moreover, the boundary of each component must have a part of the boundary of the unit ball.
\end{lemma}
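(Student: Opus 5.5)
We will prove Lemma \ref{l311} for a fixed $\varepsilon\in(0,\varepsilon_0)$, taking $\varepsilon_0$ smaller than both the constant of Lemma \ref{l2201} (with $K=\overline{U}$) and the constant of the Transversality Lemma \ref{lm:Trans}. The boundary of $U_\varepsilon=\{z\in U:|f|>\varepsilon\}$ lies in $S_\varepsilon\cup\partial U$, where $S_\varepsilon$ is the level set extended slightly beyond $\overline U$. By Lemma \ref{l2201}, $\nabla|f|^2\neq 0$ on a neighborhood of $S_\varepsilon\cap\overline U$, so $S_\varepsilon$ is a real analytic hypersurface there.

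\textbf{Lipschitz boundary.} We split boundary points into three kinds.
\begin{enumerate}
\item A point $p\in\partial U$ with $|f(p)|>\varepsilon$. Near $p$, $U_\varepsilon$ coincides with $U$, so the boundary is locally the smooth sphere.
\item A point $p\in S_\varepsilon\cap U$. Near $p$, $U_\varepsilon$ is the smooth superlevel side $\{|f|^2-\varepsilon^2>0\}$, and its boundary is the smooth graph $S_\varepsilon$ because the gradient does not vanish.
\item A point $p\in\Gamma_\varepsilon=S_\varepsilon\cap\partial U$. By Lemma \ref{lm:Trans}, the gradients $\nabla\rho$ (with $\rho=|z|^2-1$) and $\nabla\phi$ (with $\phi=|f|^2-\varepsilon^2$) are linearly independent at $p$.
\end{enumerate}
In the third case we complete $(\rho,-\phi)$ to a local $C^\infty$ coordinate system $(y_1,\dots,y_{2n})$ with $y_1=\rho$ and $y_2=-\phi$. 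In these coordinates $U_\varepsilon$ is locally the quadrant $\{y_1<0,\ y_2<0\}$, and the boundary of a quadrant is Lipschitz: it is the graph of $\max$-type functions over a transverse direction, e.g. the direction $-\partial_{y_1}-\partial_{y_2}$. A diffeomorphism preserves the Lipschitz graph property after shrinking the neighbourhood, so $\partial U_\varepsilon$ is Lipschitz near $p$. We expect this corner analysis to be the main technical point. The quadrant picture handles it cleanly, but one must check that the uniform cone condition survives the change of coordinates; we will use the standard fact that bi-Lipschitz $C^1$ diffeomorphisms preserve Lipschitz domains.

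\textbf{Finitely many components.} $\overline{U_\varepsilon}$ is compact and its boundary is a compact Lipschitz set. By the local descriptions above, every boundary point has a neighbourhood meeting at most two components: in the quadrant model the intersection with $U_\varepsilon$ is connected, so in fact exactly one component. Covering $\partial U_\varepsilon$ by finitely many such neighbourhoods gives only finitely many components that meet the boundary. To finish, we must show that every component has nonempty boundary meeting $\partial U$, which also proves the second claim.

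\textbf{Every component reaches $\partial U$.} Let $W$ be a component of $U_\varepsilon$ and suppose $\overline W\cap\partial U=\emptyset$. Then $\overline W\subset U$, and $\partial W\subset S_\varepsilon$, so $|f|=\varepsilon$ on $\partial W$ while $|f|>\varepsilon$ in $W$. Hence $|f|$ attains an interior maximum on the compact set $\overline W$ at some point of $W$. By the maximum modulus principle applied to $f$ on the connected open set $W$, $f$ is then constant on $W$, hence constant on the ball $U$ by the identity theorem. This contradicts $f$ being nonconstant.

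Thus every component's boundary contains a part of $\partial U$. Moreover, the boundary piece $\overline W\cap\partial U$ contains an open subset of the sphere: it contains a point of the first kind or is adjacent to one via the quadrant picture. Since $\partial U\cap\{|f|\ge\varepsilon\}$ is compact, only finitely many components can occur, because each component's trace on the sphere contains an open set of $\partial U$ bounded away from the others.

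Alternatively, finiteness follows directly from \L{}ojasiewicz's theorem that the semianalytic set $U_\varepsilon$ has finitely many connected components, which is the same ingredient already used in Lemma \ref{l04finit}. We will use this as the cleaner route and keep the covering argument as backup.
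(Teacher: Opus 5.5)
Your proof is correct and follows essentially the same route as the paper. The maximum modulus principle forces every component to reach $\partial U$. Finiteness comes from \L{}ojasiewicz's theorem that the relatively compact semianalytic set $U_\varepsilon$ has finitely many components. Lipschitz regularity at the corner set $\Gamma_\varepsilon=S_\varepsilon\cap\partial U$ comes from the transversality lemma, which the paper only asserts and you make explicit through the quadrant model (near such a point $U_\varepsilon$ lies below the minimum of two smooth graphs over a direction transverse to both hypersurfaces).

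Your covering argument is a complete and more elementary substitute for the semianalytic finiteness theorem, since every component, being a nonempty bounded open set, automatically has a boundary point in $\partial U_\varepsilon$. By contrast, your closing remark that the components' traces on the sphere are open sets ``bounded away from the others'' does not by itself give finiteness, because a compact sphere can contain infinitely many disjoint open sets; you can simply drop it.
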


\begin{proof}
Let $C$ be a connected component of $U_{\varepsilon}$. So, by definition, $C$ is an open and connected set. We first want to show that the boundary of $C$, denoted $\partial C$, must intersect the boundary of the unit ball $\partial U$. We will prove this by contradiction. Assume that the boundary of $C$ is entirely contained within $U$. This means that the closure of $C$ is a compact subset of $U$.

For any point $z$ inside the component $C$, we have $|f(z)| > \varepsilon$. By continuity of $f$, for any point $w$ on the boundary $\partial C$, we must have $|f(w)| \le \varepsilon$. Moreover, because $C$ is a connected component of the set where $|f(z)| > \varepsilon$, its boundary within $U$ must be precisely where $|f(z)| = \varepsilon$. Since we assumed $\partial C $ is entirely within $U$, it follows that $|f(z)| = \varepsilon$ for all $z \in \partial C$. By the maximum principle, this is a contradiction, since $f$ would be a constant, and  $f(0)=0$, we would have $f \equiv 0$.

Second, we want to show that $U_{\varepsilon}$ has at most a finite number of connected components. On $U_{\varepsilon}$, define $g(z) = |f(z)|^2 - \varepsilon^2 > 0$. Since $f$ is holomorphic in a neighborhood of $U$, $|f|^2$ is a real-analytic function in $\overline{U}$. Therefore, $U_\varepsilon$ is a subanalytic set and $\overline{U_\varepsilon}$ can be seen as a relative compact subset of a neighborhood of $\overline{U}$. A fundamental theorem in real-analytic geometry states that any relatively compact subanalytic set has a finite number of connected components. Indeed, we can prove it as follows.

Assume that $U_{\varepsilon}$ has an infinite number of connected components, let us  call them $\{C_j\}_{j=1}^{\infty}$.
A key property of subanalytic sets is that they are "locally" finite. This means that for any point $p \in U$, there exists a neighborhood $N$ of $p$ such that $N \cap U_{\varepsilon}$ has only a finite number of connected components.

Let us choose a point $z_j$ from each distinct component $C_j$. This gives us an infinite sequence $\{z_j\}$ within the bounded set $U$. Hence, this sequence would have a convergent subsequence.  Without loss of generality, we assume that $z_j $ converges to a point $ z_0$, which must lie in the closure of $U$, $\overline{U}$.

Case 1. The limit point $z_0$ is inside $U$. If $z_0$ is in $U$, then by the local finiteness property mentioned above, there is a neighborhood of $z_0$ that intersects only a finite number of components $C_j$. But since the sequence $\{z_j\}$ converges to $z_0$, this neighborhood must contain points from infinitely many different $C_j$. This is a contradiction.

Case 2. The limit point $z_0$ is on the boundary $\partial U$. Since $f$ is holomorphic in a neighborhood of $\overline{U}$, therefore this case reduces to Case 1  arriving a contradiction again.

Finally, we conclude that the boundary of $U_\varepsilon$ consists of two parts: 
 \begin{align*}
  \partial U_\varepsilon = \{z\in U:  |f|=\varepsilon \}  \cup  ( \partial U  \backslash  (\partial U \cap \{ |f|\leq \varepsilon \})), 
 \end{align*}
 as shown in the picture.
Since there are only finitely many connected components in $U_\varepsilon$, whose boundaries are Lipschitz due to the fact that $\partial U\cap \{z\in U: |f|=\varepsilon\}$ is transversal by Lemma \ref{lm:Trans}. The proof is completed.
\end{proof}

We can apply the Green's theorem on each component, see Figure $1$, and sum them up to get the conclusion. Note that the boundaries $\partial \Omega_j$ are never smooth, as they contain parts of the zero set of a holomorphic function. However, they are at least Lipschitz, which is sufficient for the theorem.
The condition $f$ being holomorphic near a neighborhood of $U$ is also crucial here, otherwise, it is possible to have infinitely many components.

Recall that the coarea formula for bounded domains with Lipschitz boundary states that, given $\phi\in L^1 (U)$ and a real-valued Lipschitz function $f$ on $U$,
\begin{align}
    \int_U \phi(x)|\nabla f(x)|dV_x=\int_{-\infty} ^{\infty} \int_{\{x\in U:f(x)=t \} }\phi(x) dS_x dt.
\end{align}
or 
\begin{align} \label{eq:coarea-ti}
    \int_U \phi(x)dV_x=\int_{-\infty} ^{\infty} \int_{\{x\in U:f(x) =t \}} \frac{\phi(x)}{|\nabla f(x)|} dS_x dt.
\end{align}
Here given $t\in \R$, $S_x$ is the $(n-1)$-dimensional Hausdorff measure of the level set $\{x\in U: f(x) =t\}$.

\begin{lemma}\label{l08}
Let $f$ be a nonconstant holomorphic function in a neighborhood of the unit polydisc $U$ such that $|f|<\frac{1}{2}$ on $\overline{U}$. If $f(0)=0$, then $\log(-\log|f|^2)\in L^p$ for every $p\geq 1$.
\end{lemma}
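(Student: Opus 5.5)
The plan is to reduce the $L^p$ integrability of $u=\log(-\log|f|^2)$ to the sublevel-set volume estimate of Theorem \ref{th:volume_sublevel}, via the layer-cake (distribution function) formula. Since $|f|<\frac12$ on $\overline U$, we have $-\log|f|^2>\log 4>1$, so $u$ is well defined, positive and real analytic on $U\setminus Z(f)$. Moreover $Z(f)$ is a proper analytic subvariety, hence of Lebesgue measure zero, so $u$ is defined almost everywhere. The only issue is the blow-up of $u$ near $Z(f)$. The key observation is that $u$ grows only like $\log\log(1/|f|)$, which is far too slow to destroy integrability once the sublevel sets of $|f|$ shrink polynomially.

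The first step is to write, for $p\ge1$,
\[
\int_U |u|^p\,dV=\int_0^\infty p\lambda^{p-1}\,\mathrm{Vol}\bigl(\{z\in U: u(z)>\lambda\}\bigr)\,d\lambda .
\]
Next, observe that $u>\lambda$ is equivalent to $-\log|f|^2>e^{\lambda}$, that is, $|f|<\exp(-e^{\lambda}/2)$. Take $\varepsilon_0$ and $\tau\in(0,2]$ from Theorem \ref{th:volume_sublevel} (equivalently from Theorem \ref{tC}), so that $\mathrm{Vol}(\{|f|<\varepsilon\})\le C\varepsilon^{\tau}$ for $0<\varepsilon<\varepsilon_0$. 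Choose $\lambda_0$ large enough that $\exp(-e^{\lambda_0}/2)<\varepsilon_0$.

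Now split the $\lambda$-integral at $\lambda_0$. On $[0,\lambda_0]$ the volume is at most $\mathrm{Vol}(U)$, so that part is finite. On $[\lambda_0,\infty)$ we get the bound
\[
\int_{\lambda_0}^\infty p\lambda^{p-1}C\exp\!\Bigl(-\tfrac{\tau}{2}e^{\lambda}\Bigr)\,d\lambda<\infty,
\]
because the double-exponential decay beats any polynomial. This gives $u\in L^p(U)$ for every $p\ge1$.

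An alternative for this step is to bound $u^p\le C_\delta |f|^{-\delta}$ for small $\delta$ and integrate $|f|^{-\delta}$ via the coarea formula together with the level-set estimate of Theorem \ref{th:levelsurface}. The layer-cake route is cleaner, so I would use it.

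The one point needing care is that Theorem \ref{th:volume_sublevel} is stated for $f$ with $f(0)=0$ on the polydisc, which is exactly our hypothesis here, so it applies directly. If one prefers the ball, the earlier remark that the estimates persist for balls covers it. I expect no real obstacle beyond this bookkeeping; the substantive input is entirely the volume estimate with some positive exponent $\tau$, and any positive $\tau$ suffices.
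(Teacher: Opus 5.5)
Your proof is correct, but it is organized differently from the paper's.

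The paper applies the coarea formula to $|f|$ with integrand $|\log(-\log|f|^2)|^p$. It bounds $1/|\partial f|\lesssim t^{-\alpha}$ on each level set by the semi-global \L{}ojasiewicz inequality, and inserts the area bound $\mathcal{H}^{2n-1}(\{|f|=t\})=O(t^{1-\alpha})$ of Theorem \ref{th:levelsurface}. This reduces everything to $\int_0^{1/2}|\log(-\log t)|^p\,t^{1-2\alpha}\,dt<\infty$.

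You instead use the layer-cake formula in the values of $u$, and need only the sublevel-volume bound of Theorem \ref{th:volume_sublevel}. That bound is itself derived from exactly those three ingredients, so the two computations carry the same information integrated in a different order.

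Your version is cleaner about the range $\varepsilon_0\le |f|<\frac12$. There the paper's chain of inequalities over all $t\in(0,\frac12)$ is not literally justified, since the \L{}ojasiewicz and area bounds are only available for $t<\varepsilon_0$. Nothing is lost, because $u$ is bounded there.

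Your version also shows how little is actually needed. Because of the double exponential, any decay $\mathrm{Vol}(\{|f|<\varepsilon\})=O(|\log\varepsilon|^{-s})$ with $s>0$ already suffices. The case $s=1$ follows from Chebyshev's inequality and $\log|f|\in L^1(U)$, which holds by plurisubharmonicity. Equivalently, $0<u\le -2\log|f|$ and $\log|f|\in L^p_{\mathrm{loc}}$ for all $p<\infty$. So this lemma does not really require the resolution/\L{}ojasiewicz machinery.

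What the paper's coarea formulation buys is a template that keeps track of the weight $|\partial f|$ on level sets. That is indispensable for the later estimates of $\nabla u$ and $\Delta u$, where the numerator $|\partial f|^2$ must be matched against the energy bound of Theorem \ref{t02}. An unweighted volume bound like yours would not suffice there.
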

\begin{proof}

Applying the coarea formula,
\[
\begin{aligned}
\int_{U} \bigl|\log(-\log|f|^2)\bigr|^{p}\, dV_{2n}
&\approx \int_{0}^{1/2} \Bigl( \int_{\{z\in U:|f|=t\}} 
\frac{|\log(-\log|f|^2)|^{p}}{|\partial f|}\, dS \Bigr) dt \\
&\lesssim \int_{0}^{1/2} \Bigl( \int_{\{z\in U:|f|=t\}} 
\frac{|\log(-\log|f|^2)|^{p}}{|f|^{\alpha}}\, dS \Bigr) dt, \qquad  0<\alpha<1  \\
&\lesssim \int_{0}^{1/2} \frac{|\log(-\log t)|^{p}}{t^{\alpha}}\,
\bigl|\{z\in U: |f|=t \}\bigr|\, dt \\
&\lesssim \int_{0}^{1/2} \frac{|\log(-\log t)|^{p}}{t^{\alpha}}\, t^{1-\alpha}\, dt,
\qquad \text{ by Theorem \ref{th:levelsurface},} \\
&\lesssim \int_{0}^{1/2} \frac{|\log(-\log t)|^{p}}{t^{2\alpha-1}}\, dt .
\end{aligned}
\]
Since \( 0<\alpha<1 \), we have \( 2\alpha-1<1 \). Consequently the last integral is finite, which proves the lemma. 
\end{proof}

Before proceeding to the main theorem of this section, we shall establish a crucial geometric lemma. To analyze the precise integrability of singular weights near the analytic variety $Z(f)$, it is highly advantageous to perform a local reduction. The following lemma guarantees that, locally around a generic point of the singular locus, any holomorphic function can be trivialized into a pure monomial via a biholomorphic change of coordinates.

\begin{lemma}[Local monomialization at regular points]\label{lem:local_monomial}
Let $f \in \mathcal{O}_{\mathbb{C}^n,0}$ be a holomorphic germ at the origin with $f(0) = 0$. For any sufficiently small ball $B_r(0) \subset \mathbb{C}^n$, there exists a regular point $p \in Z(f) \cap B_r(0)$ and a local biholomorphism $\Phi: V \to W$, where $V \subset B_r(0)$ is an open neighborhood of $p$ and $W \subset \mathbb{C}^n$ is an open neighborhood of $0$, satisfying the following properties:
\begin{enumerate}
\item $\Phi(p) = 0$;
\item In the new holomorphic coordinates $w = (w_1, \dots, w_n) = \Phi(z)$, the analytic subset $Z(f) \cap V$ is given by the hyperplane $\{w \in W \mid w_1 = 0\}$;
\item The function $f$ takes the local monomial form $f \circ \Phi^{-1}(w) = w_1^m$, where the integer $m \ge 1$ is the algebraic multiplicity of $f$ along the chosen irreducible branch.  
\end{enumerate}
\end{lemma}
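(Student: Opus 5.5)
We will prove the lemma by working at a generic point of one irreducible branch of the reduced zero set. We use the local factorization of $f$, then the density of regular points of an analytic hypersurface, and then the holomorphic inverse function theorem.

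\textbf{Step 1 (factorization).} Fix a small ball $B_r(0)$ on which $f$ is holomorphic and not identically zero; if $f\equiv 0$ there is nothing to prove. By the Weierstrass preparation theorem and the unique factorization of $\mathcal{O}_{\mathbb{C}^n,0}$, after shrinking $r$ we can write
\[
f=h\, f_1^{m_1}\cdots f_k^{m_k}.
\]
Here $h$ is a nonvanishing unit, the $f_j$ are pairwise non-associate irreducible germs, and each $f_j$ is defined on $B_r(0)$ with $f_j(0)=0$. Then $Z(f)\cap B_r(0)=\bigcup_j Z(f_j)$, and the reduced defining function $f_{\mathrm{red}}=f_1\cdots f_k$ generates the ideal of $Z(f)$ near $0$. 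The \emph{multiplicity along the branch} in item 3 is defined as $m:=m_1$, the exponent of the branch we choose, namely $Z(f_1)$.

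\textbf{Step 2 (choosing $p$).} This is the main obstacle. We need a point $p\in Z(f_1)\cap B_r(0)$ with two properties: $p\notin Z(f_j)$ for $j\neq 1$, and $\partial f_1(p)\neq 0$. Both exceptional sets are proper analytic subsets of the $(n-1)$-dimensional set $Z(f_1)$, so their union is nowhere dense in $Z(f_1)$.
\begin{enumerate}
\item[(a)] $Z(f_1)\cap Z(f_j)$ has dimension $\le n-2$, because $f_1$ and $f_j$ are coprime.
\item[(b)] Since $f_1$ is irreducible, hence square-free, $f_1$ and $\partial_{z_1} f_1$ have no common factor, after a linear change of coordinates making $f_1$ regular in $z_1$. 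Hence $Z(f_1)\cap\{\partial f_1=0\}$ also has dimension $\le n-2$.
\end{enumerate}
Any $p$ outside both sets works. Since $Z(f_1)$ has points arbitrarily close to $0$, such a $p$ exists in every $B_r(0)$.

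\textbf{Step 3 (coordinates).} Choose a small neighborhood $V$ of $p$ with three properties: $V$ does not meet $\bigcup_{j\ge 2} Z(f_j)$, $\partial f_1\neq 0$ on $V$, and the function $g:=h\prod_{j\ge2}f_j^{m_j}$, which is nonvanishing on $V$, has a holomorphic $m$-th root $g^{1/m}$ there (shrink $V$ to be simply connected). Put
\[
w_1:=f_1\, g^{1/m}.
\]
Then $f=w_1^m$ on $V$. Moreover
\[
dw_1(p)=g^{1/m}(p)\, df_1(p)\neq 0,
\]
because $f_1(p)=0$. Complete $w_1$ with linear coordinates $w_2,\dots,w_n$, chosen transversally so that the Jacobian at $p$ is invertible, and translate so that the new coordinates vanish at $p$. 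Call the resulting map $\Phi$.

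By the holomorphic inverse function theorem, $\Phi$ is a biholomorphism from a possibly smaller $V$ onto a neighborhood $W$ of $0$. This gives items 1 and 3 directly. Item 2 follows because on $V$ we have $Z(f)=Z(f_1)=\{w_1=0\}$. Note that $p$ is a regular point of the reduced variety $Z(f)$, which is the sense intended in the statement.
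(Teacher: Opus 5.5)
Your proof is correct and follows the paper's argument essentially step for step. Both factor $f$ in the UFD $\mathcal{O}_{\mathbb{C}^n,0}$, pick a point on one branch $Z(f_1)$ away from the other branches, take an $m$-th root of the nonvanishing cofactor, set $w_1=f_1 g^{1/m}$, complete to coordinates, and apply the inverse function theorem.

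Your Step 2(b) is actually more careful than the paper. You choose $p$ outside $Z(f_1)\cap\{\partial f_1=0\}$ using coprimality of $f_1$ and $\partial_{z_1}f_1$, which establishes $\partial f_1(p)\neq 0$; the paper simply infers $\nabla f_1(p)\neq 0$ from $p$ being a regular point of $Z(f)$. The one slip is your remark on $f\equiv 0$: that case must be excluded by hypothesis, since the conclusion fails for it, rather than being trivial.
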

\begin{proof}
Since the local ring of holomorphic germs $\mathcal{O}_{\mathbb{C}^n,0}$ is a unique factorization domain, $f$ admits a factorization 
$$
f(z) = u(z) f_1(z)^{m_1} \cdots f_k(z)^{m_k},
$$
where $u(z)$ is a non-vanishing holomorphic unit, the $f_j$ are distinct irreducible holomorphic germs, and $m_j \ge 1$ are their respective multiplicities. Geometrically, the zero locus decomposes into irreducible branches as $Z(f) = \bigcup_{j=1}^k Z(f_j)$. The singular locus of $Z(f)$, which necessarily contains the intersection of distinct branches $\bigcup_{i \neq j} (Z(f_i) \cap Z(f_j))$, has complex codimension at least $1$ in $Z(f)$. Consequently, the regular points are dense in $Z(f)$. This allows us to choose a regular point $p \in Z(f_1) \cap B_r(0)$ arbitrarily close to the origin such that $p \notin Z(f_j)$ for all $j \ge 2$. Because $p$ is strictly bounded away from the other branches, we can choose a sufficiently small, simply connected open neighborhood $V \subset B_r(0)$ of $p$ such that the function $G(z) = u(z) \prod_{j=2}^k f_j(z)^{m_j}$ is nowhere zero on $V$. Setting $m = m_1$, we can rewrite the original function on $V$ as $f(z) = f_1(z)^m G(z)$. Since $V$ is simply connected and $G(z)$ is non-vanishing, there exists a single-valued holomorphic $m$-th root of $G(z)$ on $V$, denoted by $H(z)$, satisfying $H(z)^m = G(z)$ and $H(p) \neq 0$.We now define a new holomorphic function on $V$ by $w_1(z) = f_1(z) H(z)$, which immediately gives $f(z) = w_1(z)^m$. Evaluating the complex gradient of $w_1$ at $p$ yields 
$$
\nabla w_1(p) = H(p)\nabla f_1(p) + f_1(p) \nabla H(p).
$$

Since $p \in Z(f_1)$ is a regular point, we have $f_1(p) = 0$ and $\nabla f_1(p) \neq 0$. Combined with $H(p) \neq 0$, it follows that $\nabla w_1(p) = H(p)\nabla f_1(p) \neq 0$.This non-vanishing gradient guarantees that $\frac{\partial w_1}{\partial z_j}(p) \neq 0$ for at least one coordinate index $j \in \{1, \dots, n\}$. Up to a permutation of the original coordinates $(z_1, \dots, z_n)$, we may assume without loss of generality that $\frac{\partial w_1}{\partial z_1}(p) \neq 0$. We can then explicitly construct a holomorphic mapping $\Phi: V \to \mathbb{C}^n$ defined by$$\Phi(z) = \begin{pmatrix} 
w_1(z) \\ 
z_2 - p_2 \\ 
\vdots \\ 
z_n - p_n 
\end{pmatrix}.$$By construction, $\Phi(p) = 0$. The Jacobian matrix of $\Phi$ evaluated at $p$ is$$J_\Phi(p) = \begin{pmatrix}
\frac{\partial w_1}{\partial z_1}(p) & \frac{\partial w_1}{\partial z_2}(p) & \cdots & \frac{\partial w_1}{\partial z_n}(p) \\
0 & 1 & \cdots & 0 \\
\vdots & \vdots & \ddots & \vdots \\
0 & 0 & \cdots & 1
\end{pmatrix}.$$The determinant of this matrix is $\det J_\Phi(p) = \frac{\partial w_1}{\partial z_1}(p) \neq 0$. By the holomorphic inverse function theorem, upon shrinking $V$ if necessary, $\Phi$ is a biholomorphism from $V$ onto an open neighborhood $W$ of the origin. In this newly constructed coordinate system $w = \Phi(z)$, the analytic set $Z(f) \cap V$ is flattened to the hyperplane $\{w_1 = 0\}$, and the function takes the monomial form $f \circ \Phi^{-1}(w) = w_1^m$.
\end{proof}

With this geometric reduction at our disposal, we are now positioned to restate and rigorously prove the first assertion of Theorem \ref{t01}, characterizing the exact Sobolev regularity of the logarithmic potential.

\begin{theorem}\label{l10} 
Given a nonconstant holomorphic function $f$ defined in a neighborhood of the unit polydisc $U$ such that $|f|<\frac{1}{2}$ on $\overline{U}$. Set
\[
u = \log\!\bigl(-\log |f|^2\bigr), \qquad 
Z(f) = \{ z\in U : f(z)=0 \} \neq \emptyset,
\]
and define
\[
 h(z)  =  \label{df:h1}
 \begin{cases}
\displaystyle \nabla u= \frac{2\nabla |f|}{|f| \log |f|^2}, & z \notin Z(f), \\[4pt] 
  0, &  z \in Z(f).
\end{cases}
\]
Then \( h \in L^2_{\mathrm{loc}}(U) \), but for any \( p>2 \) we have \( h \notin L^p_{\mathrm{loc}}(U) \) near \( Z(f) \).  
Moreover, the weak derivative \( \nabla u \) exists and equals \( h \) on \( U \). Consequently,
\[
u \in W^{1,2}_{\mathrm{loc}}(U), \quad \text{but} \quad u \notin W^{1,p}_{\mathrm{loc}}(U)
\]
for any \( p>2 \), near \( Z(f) \).
\end{theorem}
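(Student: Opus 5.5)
The argument has three parts: $L^2$ integrability of $h$ via the coarea formula, failure of $L^p$ for $p>2$ via local monomialization, and identification of $h$ as the weak gradient by a cutoff argument.

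\textbf{$L^2$ bound.} Off $Z(f)$ we have $|h| = 2|\partial f|/(|f|\,|\log|f|^2|)$, using $|\nabla|f||=|\partial f|$. Apply the coarea formula \eqref{eq:coarea-ti} to the function $|f|$. Since $|f|<\frac12$, only levels $0<t<\frac12$ occur, and
\[
\int_U |h|^2\,dV=\int_0^{1/2}\int_{\{|f|=t\}}\frac{4|\partial f|^2}{t^2(\log t^2)^2}\,\frac{dS}{|\partial f|}\,dt=\int_0^{1/2}\frac{4\,I(t)}{t^2(\log t^2)^2}\,dt .
\]
Near $t=0$, Theorem \ref{t02}(1) gives $I(t)=O(t)$, so the integrand is $O\big(1/(t\log^2 t)\big)$, which is integrable at $0$. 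On $[\varepsilon_0,\frac12]$, $I$ is bounded: the $L^1$ energy estimate there follows from $J(\frac12)\le\int_U|\partial f|^2<\infty$, or one can pass to a slightly smaller domain, since the statement is only local. Hence $h\in L^2_{\rm loc}$.

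\textbf{Failure for $p>2$.} Apply Lemma \ref{lem:local_monomial} in any neighborhood of a point of $Z(f)$ to get a regular point $p$ and coordinates $w$ near $p$ in which $f=w_1^m$. There
\[
|h|\approx \frac{1}{|w_1|\,\bigl|\log|w_1|\bigr|},
\]
and the Jacobian of $\Phi$ is bounded above and below. Integrating in $w_1$ over a small disc gives $\int_0^{\delta} r^{1-p}|\log r|^{-p}\,dr=\infty$ for $p>2$. So $h\notin L^p$ on any neighborhood of any point of $Z(f)$.

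\textbf{Weak derivative.} This is the main obstacle, because $u$ is unbounded, so Harvey--Polking does not apply. For $\varphi\in C_0^\infty(U)$, I would use a cutoff rather than Green's formula on $U_\varepsilon$, which avoids boundary terms and Lemma \ref{l311} entirely. Let $\chi_\varepsilon=\eta(|f|/\varepsilon)$ with $\eta=0$ on $[0,1]$, $\eta=1$ on $[2,\infty)$, $|\eta'|\le 2$. Then $u\chi_\varepsilon\varphi$ is smooth and compactly supported in $U\setminus Z(f)$, and
\[
\int u\chi_\varepsilon\,\partial_j\varphi=-\int h_j\chi_\varepsilon\varphi-\int u\,\varphi\,\partial_j\chi_\varepsilon .
\]
As $\varepsilon\to 0$, the first two terms converge by dominated convergence: $u\in L^1$ by Lemma \ref{l08}, and $h\in L^1_{\rm loc}$ by the $L^2$ bound. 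The error term is bounded by
\[
\frac{C}{\varepsilon}\,\log\bigl(-\log\varepsilon^2\bigr)\int_{\{\varepsilon<|f|<2\varepsilon\}}|\partial f|\,dV .
\]
By Cauchy--Schwarz, Theorem \ref{t02}(2) and Theorem \ref{tC},
\[
\int_{\{|f|<2\varepsilon\}}|\partial f|\,dV\le J(2\varepsilon)^{1/2}\,\mathrm{Vol}(\{|f|<2\varepsilon\})^{1/2}=O(\varepsilon^{1+\tau/2}).
\]
The error is therefore $O\big(\varepsilon^{\tau/2}\log(-\log\varepsilon)\big)\to0$ because $\tau>0$. This is exactly where the geometric volume estimate $\tau>0$ is essential; a bound on the volume alone would not suffice.

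Hence $\nabla u=h$ weakly. Combined with $u\in L^p$ from Lemma \ref{l08}, this gives $u\in W^{1,2}_{\rm loc}$ and $u\notin W^{1,p}_{\rm loc}$ for $p>2$ near $Z(f)$.
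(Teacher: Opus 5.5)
Your proof is correct. The first two parts are essentially the paper's: coarea with respect to $|f|$ combined with Theorem \ref{t02}(1) for the $L^2$ bound, and local monomialization at a regular point via Lemma \ref{lem:local_monomial} for the failure of $L^p$, $p>2$. Your separate handling of the levels $t\in[\varepsilon_0,\tfrac12]$, where Theorem \ref{t02}(1) is not available, covers a point the paper glosses over. What you actually show there is $\int_{\varepsilon_0}^{1/2}I(t)\,dt\le J(\tfrac12)<\infty$, not that $I$ is bounded, but that is all you need. More simply, $|h|^2\le C\varepsilon_0^{-2}|\partial f|^2$ on $\{|f|\ge\varepsilon_0\}$.

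The real difference is in identifying the weak gradient. The paper integrates by parts on $U_\varepsilon=\{|f|>\varepsilon\}$. This needs Lemmas \ref{l2201}, \ref{lm:Trans} and \ref{l311}: smooth small level sets, transversality, and finitely many Lipschitz components. Those lemmas are proved for the ball, although the statement is on the polydisc. The paper then discards the boundary term via $\log(-\log\varepsilon)\,\mathcal{H}^{2n-1}(\{|f|=\varepsilon\})=O(\varepsilon^{\gamma}\log(-\log\varepsilon))$ from Theorem \ref{th:levelsurface}.

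Your cutoff $\chi_\varepsilon=\eta(|f|/\varepsilon)$ keeps everything compactly supported in $U\setminus Z(f)$, so none of that boundary regularity is needed. The price is a volume integral over the shell $\{\varepsilon<|f|<2\varepsilon\}$. By the coarea formula this is just the average over $t\in(\varepsilon,2\varepsilon)$ of the paper's boundary integrals. Indeed $\int_{\{\varepsilon<|f|<2\varepsilon\}}|\partial f|\,dV=\int_\varepsilon^{2\varepsilon}\mathcal{H}^{2n-1}(\{|f|=t\})\,dt=O(\varepsilon^{1+\gamma})$. This is the same rate as your Cauchy--Schwarz bound, since $\tau=2\gamma$ in the paper's construction. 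So you could invoke Theorem \ref{th:levelsurface} directly and drop Theorem \ref{t02}(2) and Theorem \ref{tC}.

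Your route is the more robust one: it is insensitive to the shape of $U$ and adapts directly to the identity $\Delta u=g$. The paper's route makes the discarded boundary flux explicit and geometric.
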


\begin{proof}
    First, we show that $h \in L^2_{\rm{loc}} (U)$. Since $f$ is holomorphic in a neighborhood of $U$, whose boundary is Lipschitz, we can extend $|f|$ to be globally Lipschitz in $\mathbb{C}^n$ by the Kirszbraun theorem. Now as in $(\ref{eq:dfue})$ we let 
    \begin{align*}
       U_\varepsilon=\{z\in U:|f|>\varepsilon\},\qquad\text{for }\varepsilon < \varepsilon_0.
    \end{align*}
    Making use of the coarea formula (\ref{eq:coarea-ti}) applied to $|h|^2$ on $U_\varepsilon$, one gets 
    \begin{align*}
    \int _{U_\varepsilon}|h|^2 & \approx  \int _\varepsilon ^{\frac{1}{2}} \left ( \int_{ \{ z\in U: |f|=t  \}}  \frac{ |h|^2 }{  |\nabla |f||}dS\right) dt \\\nonumber 
    &\approx \int _\varepsilon ^{\frac{1}{2}}  \left ( \int_{ \{ z\in U: |f|=t  \}}  \frac{ |\nabla |f||^2}{ |f|^2 (\log |f|^2)^2 } \cdot \frac{1} {|\nabla |f||}dS \right)   dt \\\nonumber 
    & \approx\int _\varepsilon^{\frac{1}{2}}  \frac{1}{t^2 (\log t^2)^2} \left( \int_{ \{ z\in U: |f|=t  \}}  |\nabla |f|| dS\right) dt  \\\nonumber 
    & \lesssim \int _\varepsilon ^{\frac{1}{2}} \frac{1}{t^2 (\log t^2)^2} \left( \int_{ \{ z\in U: |f|=t  \}}  |\partial  f |  dS \right) dt\\\nonumber 
     &\lesssim \int _{\varepsilon} ^{\frac{1}{2}} \frac{1}{t(\log t)^2} dt, \qquad  \ \text{by $(1)$ of Theorem \ref{t02}}, \\ \nonumber 
     & <\infty.
    \end{align*}
    Let \( \chi_{U_\varepsilon} \) be the characteristic function of \( U_\varepsilon \), which is $1$ on $U_\varepsilon$ and $0$ outside of $U_\varepsilon$. Letting $\varepsilon \rightarrow 0$, we obtain by the monotone convergence theorem
    \begin{align*}
    \int _{U}|h|^2 &  = \lim_{\varepsilon \rightarrow 0}     \int _{U} \chi_{U_\varepsilon} |h|^2 \\\nonumber 
     & \lesssim  \lim_{\varepsilon \rightarrow 0}    \int _{\varepsilon} ^{\frac{1}{2}} \frac{1}{t(\log t)^2} dt  \\  \nonumber 
     & \lesssim \int _{0} ^{\frac{1}{2}} \frac{1}{t(\log t)^2} dt  \\  \nonumber 
     & <\infty.
    \end{align*}
    Thus, $h\in L^2_{\rm{loc}} (U)$. 

    Next, we prove that \( h \notin L^{p}_{\mathrm{loc}}(U) \) for any \( p>2 \) near $Z(f)$. It suffices to assume without loss of generality that $0 \in Z(f)$ and to show that $\int_{B_r(0)} |h|^p \, d\lambda_z = \infty$ for any sufficiently small ball $B_r(0) \subset U$.

    By Lemma \ref{lem:local_monomial}, there exists a regular point $p \in Z(f) \cap B_r(0)$ and a local biholomorphism $\Phi: V \to W$, where $V \subset B_r(0)$ is an open neighborhood of $p$ and $W \subset \mathbb{C}^n$ is an open neighborhood of $0$, such that in the new holomorphic coordinates $w = (w_1, \dots, w_n) = \Phi(z)$, the function takes the simplest possible form
    \[
    f \circ \Phi^{-1}(w) = w_1^m, \quad \text{where } m \ge 1.
    \]

    We want to test if the weak derivative \( h = \nabla_z u \) is in \( L^p(V) \). Under the biholomorphism $\Phi$, the length of the gradient and the volume measure are distorted by bounded amounts on compact subsets. Therefore, on the sufficiently small neighborhood $V$, we have the equivalence
    \[
    |\nabla_z u| \approx |\nabla_w u| \quad \text{and} \quad d\lambda_z \approx d\lambda_w.
    \]
    Let us compute \( |\nabla_w u| \) directly using the monomial form \( f(w) = w_1^m \),
    \[
    u(w) = \log\bigl(-\log|f|^2\bigr) = \log\bigl(-\log|w_1|^{2m}\bigr) = \log\bigl(-2m \log|w_1|\bigr).
    \]
    Taking the gradient with respect to $w$, this implies 
    \[
    |\nabla_w u| \approx \frac{1}{|w_1| \, \bigl|\log |w_1|\bigr|}.
    \]

    Now we check the \( L^p \) integrability over the neighborhood \( V \). By the equivalence of norms and measures, we obtain
    \[
    \int_V |h|^p \, d\lambda_z = \int_V |\nabla_z u|^p \, d\lambda_z \approx \int_{\Phi(V)} \frac{1}{|w_1|^p \, \bigl|\log |w_1|\bigr|^p} \, d\lambda_w.
    \]
    By Fubini's theorem, we can separate the integral over the first coordinate \( w_1 \) from the remaining variables \( w_2, \dots, w_n \). The integral over \( w_1 \) is performed over a small disk of radius \( \varepsilon \). Switching to polar coordinates \( w_1 = r e^{i\theta} \), we have
    \[
    \int_{|w_1| < \varepsilon} \frac{1}{|w_1|^p \, \bigl|\log |w_1|\bigr|^p} \, d\lambda_{w_1} = \int_{0}^{\varepsilon} \int_{0}^{2\pi} \frac{1}{r^p \, |\log r|^p} \, r \, d\theta \, dr = 2\pi \int_{0}^{\varepsilon} \frac{dr}{r^{p-1} \, |\log r|^p}.
    \]
    For any \( p > 2 \), the exponent in the denominator satisfies \( p-1 > 1 \). Consequently, the integral strongly diverges at $r=0$.

    Since $\int_V |h|^p \, d\lambda_z = \infty$ and $V \subset B_r(0)$, it immediately follows that the integral over the entire ball $B_r(0)$ diverges as well. This concludes the proof that \( h \notin L^p_{\mathrm{loc}}(U) \) for any \( p>2 \) near $Z(f)$, and thus \( u \notin W^{1,p}_{\mathrm{loc}}(U) \).

We now verify that \( u \) is a weak solution of \( \nabla u = h \); i.e., for every test function \( \varphi \in C^{\infty}_{0}(U) \),
\begin{eqnarray}\label{eq:theo1-3h}
- \int_U u \nabla \varphi   = \int_U h \varphi.
\end{eqnarray}
Equivalently, for each component \( j \) we must check  
\begin{eqnarray}\label{eq:theo1-3hdengjia}
- \int_U u \frac{\partial  \varphi }{\partial \overline{z}_j}  = \int_U \widetilde{h}_j \varphi.
\end{eqnarray}
where \( \tilde h = (\tilde h_{1},\dots ,\tilde h_{n}) \) is defined by  
\[
\tilde h_{j}(z) = 
\begin{cases}
\displaystyle \frac{1}{\bar f(z) \log |f(z)|^{2}} \, \frac{\partial \bar f}{\partial \bar z_{j}}(z), & z \notin Z(f), \\[4pt]
0, & z \in Z(f) .
\end{cases}
\]
Again set \( U_{\varepsilon} = \{ z \in U : |f(z)| > \varepsilon \} \). By Lemma \ref{l311}, \( \partial U_{\varepsilon} \) is Lipschitz. Applying the divergence theorem to each connected component of \( U_{\varepsilon} \) and summing up, we obtain for fixed \( j \)
\begin{eqnarray}\label{eq:theo1-3huepsilonr}
  \int_{U_\varepsilon}  \frac{\partial u} {\partial \overline{z}_j} \varphi  d\lambda = -\int_{U_\varepsilon} u \frac{\partial \varphi } {\partial \overline{z}_j}   d\lambda + \int_{\{ z\in U: |f| = \varepsilon \}} u \varphi \frac{\partial \rho }{\partial \overline{z} _j} \frac{dS}{| \partial \rho|},
\end{eqnarray}
where \( \rho = \varepsilon^{2} - |f|^{2} \) is a defining function of \( U_{\varepsilon} \).  The outer boundary contribution vanishes because \( \varphi \) has compact support inside \( U \).   
 On \( U_{\varepsilon} \) the classical derivative satisfies \( \frac{\partial u}{\partial \bar z_{j}} = \tilde h_{j} \), substituting this into (\ref{eq:theo1-3huepsilonr}) gives  
 \begin{eqnarray*}\label{eq:theo1-3huepsilonrh}
  \int_{U_\varepsilon} \widetilde{h}_j \varphi  d\lambda = -\int_{U_\varepsilon} u \frac{\partial \varphi } {\partial \overline{z}_j}   d\lambda + \int_{\{ z\in U: |f| = \varepsilon \}} u \varphi \frac{\partial \rho }{\partial \overline{z} _j} \frac{dS}{| \partial \rho|}.
\end{eqnarray*}
Rearranging,  
\begin{eqnarray}\label{eq:theo1-3huepsilonrhe}
 \int_{U_\varepsilon} u \frac{\partial \varphi } {\partial \overline{z}_j}   d\lambda= - \int_{U_\varepsilon} \widetilde{h}_j \varphi  d\lambda  + \int_{\{ z\in U: |f| = \varepsilon \}} u \varphi \frac{\partial \rho }{\partial \overline{z} _j} \frac{dS}{| \partial \rho|}.
\end{eqnarray}
We claim that the boundary integral tends to zero as \( \varepsilon \to 0 \):
\begin{align} \label{eq:yaoz1}
   \lim _{\varepsilon \rightarrow 0}\int_{\{ z\in U: |f| = \varepsilon \}} u \varphi \frac{\partial \rho }{\partial \overline{z} _j} \frac{dS}{| \partial \rho|}= 0.
\end{align}
If so, then passing $\varepsilon \rightarrow 0$ in (\ref{eq:theo1-3huepsilonrhe}), we obtain the desired equality (\ref{eq:theo1-3hdengjia}) as a consequence of Lebesgue's dominated convergence theorem. Therefore, \( \nabla u = h \) weakly on \( U \).

To prove (\ref{eq:yaoz1}),  
\begin{align*} \label{eq:guji1rho}
\left|\int_{ \{ z \in U: |f|=\varepsilon \} } u \varphi \frac{\partial \rho }{\partial \overline{z} _j} \frac{dS}{| \partial \rho|}  \right| &  \lesssim   \int _{ \{ z \in U: |f|=\varepsilon \} }   |u| \\
& \lesssim \log (-\log \varepsilon) | \{ z\in U: |f| =  \varepsilon  \}| \\
& \lesssim  \varepsilon ^\gamma  \log (-\log \varepsilon), \qquad  \text{by Theorem \ref{th:levelsurface} for some $0< \gamma\leq 1$},\\
& \rightarrow 0,  (\varepsilon \rightarrow 0).
\end{align*}

Hence, combining this with Lemma \ref{l08}, we conclude that $u$ belongs to $W^{1,2}_{\mathrm{loc}}(U)$ but fails to be in $W^{1,p}_{\mathrm{loc}}(U)$ for any $p > 2$.

\end{proof}

We restate $2$ of Theorem \ref{t01} as the following.
\begin{theorem}\label{lm:lapla}
Let \( f \) be a nonconstant holomorphic function defined in a neighborhood of the unit polydisc \( U \) with \( |f|<\frac12 \) on \( \overline{U} \). Set  
\[
u = \log\!\bigl(-\log|f|^2\bigr), \qquad 
Z(f)\neq \emptyset,
\]
and define
\[
  g(z) = 
\begin{cases}
\Delta u =-4\,\dfrac{|\nabla|f||^{2}}{|f|^{2}\,\bigl(\log|f|^{2}\bigr)^{2}}, &   z\notin Z(f), \\[4pt]
0, &   z\in Z(f). 
\end{cases}
\]
Then \( g\in L^{1}_{\mathrm{loc}}(U) \), but for every \( p>1 \) we have \( g\notin L^{p}_{\mathrm{loc}}(U) \) near \( Z(f) \).
\end{theorem}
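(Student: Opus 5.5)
The plan mirrors the proof of Theorem \ref{l10}, with the weight $1/(t(\log t)^2)$ coming out of the coarea formula and part (1) of Theorem \ref{t02}. The integrability on $Z(f)$ itself is irrelevant, since $Z(f)$ has Lebesgue measure zero. So it suffices to control $\int_{U_\varepsilon}|g|$ uniformly in $\varepsilon$, where $U_\varepsilon=\{z\in U:|f|>\varepsilon\}$, and then let $\varepsilon\to0$ by monotone convergence.

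For the integrability, we have $|g| = 4|\partial f|^2/(|f|^2(\log|f|^2)^2)$ off $Z(f)$, using $|\nabla|f||=|\partial f|$. Applying the coarea formula \eqref{eq:coarea-ti} to the Lipschitz function $|f|$ on $U_\varepsilon$ gives
\[
\int_{U_\varepsilon}|g|\,dV = \int_\varepsilon^{1/2}\frac{4}{t^2(\log t^2)^2}\Bigl(\int_{\{z\in U:|f|=t\}}|\partial f|\,dS\Bigr)dt .
\]
Since $|f|<\frac12$ on $\overline U$, the range of $t$ is $(0,\frac12)$. For $t<\varepsilon_0$ the inner integral is $I(t)=O(t)$ by Theorem \ref{t02}(1). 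On $[\varepsilon_0,\frac12]$ the integrand is bounded: $|\partial f|$ is bounded and the weight is bounded there, while the total contribution is at most $C\int_{\{|f|\ge\varepsilon_0\}}|\partial f|^2<\infty$, which can be seen directly without coarea. Hence the integral is bounded by $C+C\int_0^{\varepsilon_0}\frac{dt}{t(\log t)^2}<\infty$, uniformly in $\varepsilon$, and monotone convergence gives $g\in L^1(U)$, hence $g \in L^1_{\mathrm{loc}}(U)$.

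For the failure of $L^p$, $p>1$, near $Z(f)$, I would take any small ball $B_r(z_0)$ with $z_0\in Z(f)$ and invoke Lemma \ref{lem:local_monomial}. This gives a regular point $p\in Z(f)\cap B_r$ and a biholomorphism $\Phi:V\to W$ with $f\circ\Phi^{-1}=w_1^m$. On a smaller relatively compact neighborhood the Jacobians of $\Phi$ are bounded above and below, so $|\partial_z f|\approx|\partial_w (w_1^m)|$ and $dV_z\approx dV_w$. Then
\[
|g|\approx \frac{m^2|w_1|^{2m-2}}{|w_1|^{2m}\,(2m\log|w_1|)^2}\approx\frac{1}{|w_1|^2(\log|w_1|)^2}.
\]
By Fubini over a product polydisc in $W$, the $L^p$ norm is comparable to $\int_0^\delta r^{1-2p}|\log r|^{-2p}\,dr$. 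This diverges precisely when $2p-1\ge1$ with strict inequality in the power, i.e.\ for every $p>1$, since $r^{1-2p}$ with $2p-1>1$ beats any logarithm. Therefore $\int_{B_r}|g|^p=\infty$.

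The main obstacle is the comparison $|\partial_z f|\approx|\partial_w f|$ under the chart change. Here one uses $\partial_w(f\circ\Phi^{-1}) = (\partial_z f)\,D\Phi^{-1}$ together with invertibility of the complex Jacobian on a compact subset of $V$; this is routine once the neighborhood is shrunk. The other delicate point is applying Theorem \ref{t02}(1) on the polydisc versus its use near boundary level sets for $t$ close to $\varepsilon_0$. I would sidestep the latter by splitting at $\varepsilon_0$ as above, so the estimate $I(t)=O(t)$ is only ever used for $t<\varepsilon_0$.
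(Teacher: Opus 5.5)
Your proposal is correct and follows essentially the same route as the paper. For the $L^1$ bound you use the coarea formula together with part (1) of Theorem \ref{t02}, which reduces matters to $\int_0 dt/(t(\log t)^2)<\infty$; for the failure of $L^p$, $p>1$, you use Lemma \ref{lem:local_monomial} and polar coordinates in $w_1$. Your splitting at $\varepsilon_0$, and your comparison of $|\partial_z f|$ with $|\partial_w(w_1^m)|$ instead of transforming the Laplacian as the paper does, are just slightly more careful versions of the same steps.
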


\begin{proof}
We first show that \( g\in L^{1}_{\mathrm{loc}}(U) \). Using the same reasoning as in the proof of Theorem~\ref{l10},
\[
\begin{split}
\int_{U}|g| 
&\approx \int_{0}^{1/2}\Bigl(\int_{\{z\in U:|f|=t\}} 
\frac{|\nabla|f||^{2}}{|f|^{2}(\log|f|^{2})^{2}}\cdot\frac{1}{|\nabla|f||}\,dS\Bigr)dt \\
&\approx\int_{0}^{1/2}\frac{1}{t^{2}(\log t^{2})^{2}}
\Bigl(\int_{\{z\in U:|f|=t\}}|\nabla|f||\,dS\Bigr)dt \\
&\approx\int_{0}^{1/2}\frac{1}{t^{2}(\log t^{2})^{2}}
\Bigl(\int_{\{z\in U:|f|=t\}}|\partial f|\,dS\Bigr)dt \\
&\lesssim  \int_{0}^{1/2}\frac{1}{t(\log t)^{2}}\,dt,\qquad 
\text{by part (1) of Theorem~\ref{t02}}, \\
&< \infty .
\end{split}
\]
Hence \( g\in L^{1}_{\mathrm{loc}}(U) \).

To prove that \( g = \Delta u \notin L^p_{\mathrm{loc}}(U) \) for any \( p > 1 \), we again employ the geometric localization strategy provided by Lemma \ref{lem:local_monomial}. 

Without loss of generality, we may assume $0 \in Z(f)$. By Lemma \ref{lem:local_monomial}, there exists a regular point $p \in Z(f)$ arbitrarily close to the origin and a small open neighborhood $V \subset U$ of $p$, along with a biholomorphism $\Phi: V \to W \subset \mathbb{C}^n$. In the new holomorphic coordinates $w = (w_1, \dots, w_n) = \Phi(z)$, the analytic set $Z(f) \cap V$ corresponds to the hyperplane $\{w_1 = 0\}$, and the function takes the pure monomial form $f \circ \Phi^{-1}(w) = w_1^m$ for some integer $m \ge 1$.

In these local coordinates, the function $u$ can be rewritten as
\[
u(w) = \log\bigl(-\log|f|^2\bigr) = \log\bigl(-m\log|w_1|^2\bigr).
\]
Under the biholomorphism $\Phi$, the standard Laplacian $\Delta_z$ transforms into an elliptic operator with smooth coefficients. For the purpose of determining the $L^p$ growth rate on a sufficiently small, relatively compact neighborhood $V$, the volume measure and the norm of the operators are distorted by strictly positive, bounded smooth factors. Thus, we have the asymptotic equivalence $|g(z)| = |\Delta_z u| \approx |\Delta_w u|$ and $d\lambda_z \approx d\lambda_w$. 

A direct computation of the Laplacian with respect to the first coordinate $w_1$ yields
\[
|\Delta_w u| = \left| 4 \frac{\partial^2 u}{\partial w_1 \partial \bar{w}_1} \right| = \frac{1}{|w_1|^2 \, \bigl(\log |w_1|\bigr)^2}.
\]
Consequently, on the neighborhood $V$, the function $g$ satisfies the point-wise estimate 
$$
|g| \approx |w_1|^{-2} |\log |w_1||^{-2}.
$$ 

To test the $L^p$ integrability, we evaluate the integral over $V$. Applying Fubini's theorem, we can separate the integral over the first coordinate $w_1$ from the remaining smooth variables $w_2, \dots, w_n$. This gives
\begin{align*}
\int_{U} |g|^p \, d\lambda_z \ge \int_{V} |g|^p \, d\lambda_z &\approx \int_{\Phi(V)} \frac{1}{|w_1|^{2p} \, \bigl|\log |w_1|\bigr|^{2p}} \, d\lambda_w \\
&\approx \int_{|w_1| < r_0} \frac{1}{|w_1|^{2p} \, \bigl|\log |w_1|\bigr|^{2p}} \, d\lambda_{w_1}
\end{align*}
for some small radius $r_0 > 0$. Switching to polar coordinates $w_1 = s e^{i\theta}$ on the disk, we obtain
\[
\int_{0}^{r_0} \int_{0}^{2\pi} \frac{1}{s^{2p} \, |\log s|^{2p}} \, s \, d\theta \, ds = 2\pi \int_{0}^{r_0} \frac{1}{s^{2p-1} \, |\log s|^{2p}} \, ds.
\]
For any \( p > 1 \), the exponent in the denominator satisfies \( 2p - 1 > 1 \). Consequently, the integral strongly diverges at $s=0$. This establishes that the integral of $|g|^p$ over $V$ is infinite, and therefore \( g \notin L^p_{\mathrm{loc}}(U) \) for all \( p > 1 \).
\end{proof}

\begin{proof}[Proof of  $3$ of Theorem \ref{t01}]

Let
\begin{align}\label{eq:tidu}
\vec{n}:=\frac{\nabla\rho}{|\nabla\rho|}
 =-\frac{\nabla|f|^{2}}{|\nabla|f|^{2}|}
 =-\frac{\nabla|f|}{|\partial f|}
\end{align}
be the outward unit normal vector, where \( \rho \) is a defining function of  
\( U_{\varepsilon}:=\{z\in U:|f|^{2}>\varepsilon^{2}\} \).

Applying Green’s theorem to each of the finitely many components of \( U_{\varepsilon} \), whose boundaries are Lipschitz by Lemma~\ref{l311} and summing up, we obtain
\[
\begin{aligned}
\int_{U_{\varepsilon}}\bigl(u\Delta\varphi-\varphi\Delta u\bigr)
&= \int_{\partial U_{\varepsilon}}(u\nabla\varphi-\varphi\nabla u)\cdot\vec{n}\,dS \\
&= \int_{\{z \in U: |f| = \varepsilon\}}(u\nabla\varphi-\varphi\nabla u)\cdot\vec{n}\,dS \\
&\qquad +\int_{\partial U\setminus(\partial U\cap\{|f|\le\varepsilon\})}
(u\nabla\varphi-\varphi\nabla u)\cdot\vec{n}\,dS \\
&= \int_{\{z \in U: |f| = \varepsilon\}}(u\nabla\varphi-\varphi\nabla u)\cdot\vec{n}\,dS,
\end{aligned}
\]
the last equality follows because \( \varphi\in C_{0}^{\infty}(U) \) vanishes near \( \partial U \).  
Denote
\[
I_{\varepsilon}= \int_{\{z \in U: |f| = \varepsilon\}} u\,\nabla\varphi\cdot\vec{n}\,dS, \qquad
\Pi_{\varepsilon}= -\int_{\{z \in U: |f| = \varepsilon\}}\varphi\,\nabla u\cdot\vec{n}\,dS .
\]

To conclude, we must verify that both \( I_{\varepsilon} \) and \( \Pi_{\varepsilon} \) tend to zero as \( \varepsilon\to 0 \), the dominated convergence theorem will then yield the desired identity.

 From Theorem~\ref{th:levelsurface} we know that \( |\{z\in U:|f(z)|=\varepsilon\}| = O(\varepsilon^{\gamma}) \) for some \( 0<\gamma\le1 \). Hence,
\[
|I_{\varepsilon}|
\le \max_{U}|\nabla\varphi\cdot\vec{n}|
\int_{\{z \in U: |f| = \varepsilon\}}|u|\,dS
\lesssim \log\!\bigl(-\log\varepsilon\bigr)\,|{\{z \in U: |f| = \varepsilon\}}|
\to 0 \quad (\varepsilon\to 0).
\]

For \( \Pi_{\varepsilon} \) we have
\[
|\Pi_{\varepsilon}|
\le \max_{U}|\varphi|
\int_{\{z \in U: |f| = \varepsilon\}}|\nabla u\cdot\vec{n}|\,dS .
\]
Observe that
\[
|\nabla u\cdot\vec{n}|
\lesssim \frac{|\nabla|f||}{|f|\,|\log|f||}.
\]
Consequently,
\[
\begin{aligned}
|\Pi_{\varepsilon}|
&\lesssim \frac{1}{\varepsilon|\log\varepsilon|}
\int_{\{z \in U: |f| = \varepsilon\}}|\nabla|f||\,dS \\
&\lesssim \frac{1}{\varepsilon|\log\varepsilon|}
\int_{\{z \in U: |f| = \varepsilon\}}|\partial f|\,dS \\
&\lesssim \frac{1}{|\log\varepsilon|}\to 0\qquad(\varepsilon\to 0),
\end{aligned}
\]
where the last estimate uses part (1) of Theorem~\ref{t02}.

\end{proof}

\begin{proof}[Proof of  $4$ of Theorem \ref{t01}]

We argue by contradiction. Assume that $u \in W^{2,1}_{\mathrm{loc}}(U)$. By the definition of Sobolev spaces, this implies that all real second-order weak derivatives of $u$ belong to $L^1_{\mathrm{loc}}(U)$. Since any pure complex second derivative is a finite linear combination of real second derivatives, our assumption necessitates that $\frac{\partial^2 u}{\partial z_i \partial z_j} \in L^1_{\mathrm{loc}}(U)$ for all $1 \le i, j \le n$.  From the calculation in the beginning of this Section, the pure complex second-order derivative for $z \notin Z(f)$ takes the explicit form 
$$
\frac{\partial^2 u}{\partial z_i \partial z_j} = \frac{\partial_{z_i z_j} f}{f \log|f|^2} - \frac{\partial_{z_i} f \, \partial_{z_j} f}{f^2 \log|f|^2} - \frac{\partial_{z_i} f \, \partial_{z_j} f}{f^2 \bigl(\log|f|^2 \bigr)^2}.
$$
As $f \to 0$, the most singular term determining the integrability is precisely $\frac{\partial_{z_i}f\,\partial_{z_j}f}{f^{2}\log|f|^{2}}$. Without loss of generality, we may assume $0 \in Z(f)$. By  Lemma \ref{lem:local_monomial}, we can choose a sufficiently small neighborhood $V \subset U$ intersecting the regular part of $Z(f)$, and perform a biholomorphic coordinate transformation $w = \Phi(z)$. In these new holomorphic coordinates, the analytic set $Z(f) \cap V$ corresponds to the hyperplane $\{w_1 = 0\}$, and locally $f(z(w)) = w_1^m$ for some integer $m \ge 1$. Since the local Sobolev space $W^{2,1}$ is invariant under biholomorphic mappings, the function in the new coordinates, $\tilde{u}(w) := u(z(w))$, must satisfy $\frac{\partial^2 \tilde{u}}{\partial w_1^2} \in L^1_{\mathrm{loc}}(\Phi(V))$. Using $f = w_1^m$, a direct computation of this pure second derivative yields 
$$
\frac{\partial^2 \tilde{u}}{\partial w_1^2} = - \frac{1}{w_1^2 \log|w_1|^2} - \frac{1}{w_1^2 \bigl(\log|w_1|^2\bigr)^2}.
$$
For sufficiently small $|w_1|$, the term containing $|\log|w_1|^2|^{-1}$ strictly dominates. Thus, there exists a constant $C > 0$ such that we have the point-wise lower bound $$\left| \frac{\partial^2 \tilde{u}}{\partial w_1^2} \right| \ge \frac{C}{|w_1|^2 \, \bigl| \log|w_1|^2 \bigr|}$$To test its integrability, we evaluate the integral over a small polydisc $\Delta^n = \Delta_1 \times \Delta' \subset \Phi(V)$, where $\Delta_1 = \{w_1 \in \mathbb{C} \mid |w_1| < 1/2\}$. Applying Fubini's theorem to separate the variables, we obtain $$
\int_{\Phi(V)} \left| \frac{\partial^2 \tilde{u}}{\partial w_1^2} \right| \, dV_w \ge \int_{\Delta'} \left( \int_{\Delta_1} \frac{C}{|w_1|^2 \, \bigl| \log|w_1|^2 \bigr|} \, dV_{w_1} \right) dV_{w'}.
$$
Passing to polar coordinates $w_1 = t e^{i\theta}$ on the disk $\Delta_1$, the inner integral can be evaluated as follows 
$$
\int_{\Delta_1} \frac{1}{|w_1|^2 \, \bigl| \log|w_1|^2 \bigr|} \, dV_{w_1} = 2\pi \int_0^{1/2} \frac{1}{t^2 \, \bigl| 2\log t \bigr|} t \, dt \gtrsim \int_0^{1/2} \frac{1}{t \, |\log t|} \, dt.
$$
The final radial integral yields $[\log(|\log t|)]_0^{1/2}$, which strictly diverges to $\infty$ as $t \to 0$. Consequently, $\int_{\Phi(V)} \left| \frac{\partial^2 \tilde{u}}{\partial w_1^2} \right| \, dV_w = \infty$. This implies that the assumed weak second derivatives cannot all belong to $L^1_{\mathrm{loc}}(U)$, generating a direct contradiction. Hence, $u \notin W^{2,1}_{\mathrm{loc}}(U)$. 
\end{proof}

\subsection{Proof of Theorem \ref{th:maintheorem2}}
We restate  $1$ of Theorem \ref{th:maintheorem2} as the following theorem.
\begin{theorem}\label{lm:lapla}

Let \( f \) be a nonconstant holomorphic function defined in a neighborhood of the unit polydisc \( U \), with \( |f|<\frac12 \) on \( \overline{U} \).  
Set  
\[
v = \frac{1}{\log\!\bigl(-\log|f|^2\bigr)}, \qquad 
Z(f) = \{ z\in U : f(z)=0 \} \neq \emptyset,
\]
and define
\[
 l(z): =
 \begin{cases}
\displaystyle 
\nabla v = \frac{2\,\nabla|f|}{|f|\,\log|f|^{2}\,\bigl(\log(-\log|f|^{2})\bigr)^{2}}, &   z\notin Z(f),  \\[4pt]
0, &   z\in Z(f). 
\end{cases}
\]
Then \( l\in L^{2}_{\mathrm{loc}}(U) \), but for every \( p>2 \) we have \( l\notin L^{p}_{\mathrm{loc}}(U) \) near \( Z(f) \).  
Moreover, the weak derivative \( \nabla v \) exists and satisfies \( \nabla v = l \) on \( U \). Consequently,
\[
v \in W^{1,2}_{\mathrm{loc}}(U), \qquad \text{but} \qquad 
v \notin W^{1,p}_{\mathrm{loc}}(U) \quad \text{for any } p>2,
\]
 near \( Z(f) \).
\end{theorem}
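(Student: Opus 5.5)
The plan is to copy the proof of Theorem \ref{l10}, part by part. The only change is the extra factor $\bigl(\log(-\log|f|^2)\bigr)^{-2}$. It is bounded on $U$, since $|f|<\frac12$ gives $-\log|f|^2>\log 4>1$. It decays only logarithmically, so it cannot affect power-type (non)integrability. Pointwise on $U\setminus Z(f)$ we have
\[
|l|\lesssim \frac{|\nabla|f||}{|f|\,|\log|f||\,\bigl(\log(-\log|f|^2)\bigr)^{2}}\lesssim |h|,
\]
where $h$ is the function of Theorem \ref{l10}. Hence $l\in L^2_{\mathrm{loc}}(U)$ follows at once from $h\in L^2_{\mathrm{loc}}(U)$. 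One can also rerun the coarea computation on $U_\varepsilon$ with part (1) of Theorem \ref{t02}, which gives the even smaller bound $\int_0^{1/2}\frac{dt}{t(\log t)^2(\log(-\log t))^4}$. Since $v$ is bounded, $0\le v\le 1/\log\log 4$, we get $v\in L^p_{\mathrm{loc}}$ for all $p$. Continuity of $v$ across $Z(f)$ holds because $v\to 0$ as $|f|\to 0$.

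For the failure of $L^p$, $p>2$, we use Lemma \ref{lem:local_monomial}. It gives a regular point of $Z(f)$ arbitrarily near $0$ and coordinates in which $f=w_1^m$. There $|\nabla_w v|\approx |w_1|^{-1}|\log|w_1||^{-1}(\log|\log|w_1||)^{-2}$. Fubini and polar coordinates reduce the question to
\[
\int_0^{r_0}\frac{dr}{r^{p-1}|\log r|^{p}(\log|\log r|)^{2p}} .
\]
This integral diverges for $p>2$, since $r^{-(p-1)}$ beats any power of $\log$. The comparabilities $|\nabla_z v|\approx|\nabla_w v|$ and $d\lambda_z\approx d\lambda_w$ hold on a small neighborhood exactly as before.

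For the weak derivative, we fix $\varphi\in C_0^\infty(U)$ and apply the divergence theorem on the finitely many Lipschitz components of $U_\varepsilon=\{|f|>\varepsilon\}$ (Lemma \ref{l311}). As in \eqref{eq:theo1-3huepsilonr}, this leaves the boundary term
\[
\int_{\{z\in U:|f|=\varepsilon\}} v\,\varphi\,\frac{\partial\rho}{\partial\bar z_j}\frac{dS}{|\partial\rho|},
\]
which is bounded by $\frac{\|\varphi\|_\infty}{\log(-\log\varepsilon^2)}\,\mathcal H^{2n-1}(\{|f|=\varepsilon\})\lesssim \varepsilon^{\gamma}$ by Theorem \ref{th:levelsurface}, and so tends to $0$. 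The interior integrals converge by dominated convergence, because $l\varphi\in L^1$ and $v\,\partial\varphi$ is bounded. This yields $\nabla v=l$ weakly, and hence $v\in W^{1,2}_{\mathrm{loc}}\setminus W^{1,p}_{\mathrm{loc}}$ for $p>2$.

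The main obstacle, as in Theorem \ref{l10}, is the global $L^2$ bound. It rests entirely on part (1) of Theorem \ref{t02} through the coarea formula, and here it is inherited directly from the pointwise domination by $h$. The remaining delicate point is checking that the log–log factor never rescues integrability at $p>2$. This is elementary, so I expect no genuine difficulty beyond what Theorem \ref{l10} already handled.
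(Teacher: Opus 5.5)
Your proposal is correct and follows the paper's proof essentially step for step: the $L^2$ bound comes from pointwise domination of $l$ by the function $h$ of Theorem~\ref{l10}, the failure of $L^p$ for $p>2$ comes from Lemma~\ref{lem:local_monomial} and the divergent radial integral $\int_0^{r_0} r^{1-p}|\log r|^{-p}(\log|\log r|)^{-2p}\,dr$, and the weak identity comes from the divergence theorem on $U_\varepsilon$, with the boundary term killed by the boundedness of $v$ and the area bound of Theorem~\ref{th:levelsurface}. Two small remarks: your $|l|\lesssim|h|$ is more accurate than the paper's $|l|\le|h|$, because $(\log(-\log|f|^2))^{-2}>1$ where $|f|>e^{-e/2}$; and in your optional coarea aside the weight must be $\log(-\log t^2)$ rather than $\log(-\log t)$, since the latter vanishes at $t=1/e$ and would make that integral diverge.
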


\begin{proof} 

Since \( |l(z)| \le |h(z)| \) on \( U\setminus Z(f) \) and \( h(z)\in L^{2}_{\mathrm{loc}}(U) \) (see Definition (\ref{df:h1})), we immediately obtain \( l\in L^{2}_{\mathrm{loc}}(U) \).

To see that \( l\notin L^{p}_{\mathrm{loc}}(U) \) for any \( p>2 \) near \( Z(f) \), performing the same holomorphic coordinate transformation $w = \Phi(z)$ as Lemma \ref{lem:local_monomial},  we can choose a sufficiently small neighborhood $V \subset U$ intersecting the regular part of $Z(f)$, such that in these new holomorphic coordinates, the analytic set $Z(f) \cap V$ corresponds to the hyperplane $\{w_1 = 0\}$, and locally $f(z(w)) = w_1^m$ for some integer $m \ge 1$.  Thus, 
\[
\begin{aligned}
\int_{V} |l|^{p}
&\gtrsim \int_{\Phi(V)} 
\frac{1}{|w_{1}|^{p}\,|\log|w_{1}||^{p}\,\bigl|\log(-\log|w_{1}|^{2})\bigr|^{2p}} 
\, dV_{w_1}, \qquad \text{ by Fubini's theorem,} \\
&\gtrsim \int_{0}^{r_{0}} 
\frac{1}{s^{p-1}\,|\log s|^{p}\,\bigl(\log(-\log s^{2})\bigr)^{2p}} \, ds
\end{aligned}
\]
for some \( r_{0}>0 \). For \( p>2 \) the last integral diverges; therefore \( l(z) \notin L^{1,p}_{\mathrm{loc}}(U) \) for any \( p>2 \) near $Z(f)$.

Next we verify that \( v \) is a weak solution of \( \nabla v = l \); i.e., for every test function \( \varphi\in C^{\infty}_{0}(U) \),
\begin{eqnarray}\label{eq:theo1-3hv}
- \int_U v \nabla \varphi   = \int_U l(z) \varphi .
\end{eqnarray}
Proceeding as in the proof of Theorem \ref{l10}, it suffices to show that the boundary term vanishes in the limit:
\begin{align} \label{eq:yaoz}
   \lim _{\varepsilon \rightarrow 0}\int_{\{ z\in U: |f| = \varepsilon \}} v \varphi \frac{\partial \rho }{\partial \overline{z} _j} \frac{dS}{| \partial \rho|}= 0.
\end{align}
where \( \rho = \varepsilon^{2}-|f|^{2} \) is a defining function of \( U_{\varepsilon}=\{  |f|>\varepsilon \} \).

We estimate the left hand side of (\ref{eq:yaoz}) using Theorem \ref{th:levelsurface}, which gives \( |\{ z\in U:|f|=\varepsilon \}| = O(\varepsilon^{\gamma}) \) for some \( 0<\gamma\le1 \):
\[
\begin{aligned}
\Bigl|\int_{\{z \in U: |f| = \varepsilon\}}v\,\varphi\,
\frac{\partial\rho}{\partial\bar z_{j}}\frac{dS}{|\partial\rho|}\Bigr|
&\lesssim \int_{\{z \in U: |f| = \varepsilon\}} |v|\,dS \\
&\lesssim \frac{1}{\log(-\log\varepsilon)}\,
\bigl|\{ z\in U:|f|=\varepsilon \}\bigr| \\
&\lesssim \frac{\varepsilon^{\gamma}}{\log(-\log\varepsilon)} \;\to\;0 \qquad (\varepsilon\to0).
\end{aligned}
\]

Hence (\ref{eq:yaoz}) holds, and by the dominated convergence theorem we obtain $(\ref{eq:theo1-3hv})$.  
Thus, \( \nabla v = l \) weakly, which together with the integrability properties established above and continuous of $v$ yields
\[
v \in W^{1,2}_{\mathrm{loc}}(U), \qquad 
v \notin W^{1,p}_{\mathrm{loc}}(U) \quad \text{for any } p>2,
\]
near \( Z(f) \).
 
\end{proof}

We restate $2$ of Theorem \ref{th:maintheorem2} as the following.
\begin{theorem}\label{lm:lapla}

Let \( f \) be a nonconstant holomorphic function defined in a neighborhood of the unit polydisc \( U \), with \( |f|<\frac12 \) on \( \overline{U} \).  
Set  
\[
v = \frac{1}{\log\!\bigl(-\log|f|^2\bigr)}, \qquad 
Z(f) = \{ z\in U : f(z)=0 \} \neq \emptyset,
\]
and define
\[
 t(z): = 
\begin{cases}
\displaystyle 
\Delta v = \, \frac{4|\nabla|f||^{2}}
{|f|^{2}\,\bigl(\log|f|^{2}\bigr)^{2}\,
\bigl(\log(-\log|f|^{2})\bigr)^{2}}
\Bigl[1+\frac{2}{\log(-\log|f|^{2})}\Bigr], &   z\notin Z(f),  \\[4pt]
0, &   z\in Z(f).  \\  \nonumber 
\end{cases}
\]
Then \( t\in L^{1}_{\mathrm{loc}}(U) \), but for every \( p>1 \) we have \( t\notin L^{p}_{\mathrm{loc}}(U) \) near \( Z(f) \).

\end{theorem}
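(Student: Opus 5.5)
The plan mirrors the proof of part 2 of Theorem \ref{t01} (the statement for $g=\Delta u$), with one extra logarithmic factor.

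\textbf{Integrability.} First record a pointwise comparison. Since $|f|<\frac12$, we have $-\log|f|^2>\log 4>1$, so $\log(-\log|f|^2)\ge \log\log 4>0$ is bounded below. It follows that
\[
1+\frac{2}{\log(-\log|f|^2)}\le C,
\]
and hence, for $z\notin Z(f)$,
\[
|t(z)|\lesssim \frac{|\nabla|f||^2}{|f|^2(\log|f|^2)^2}=\tfrac14|g(z)|,
\]
where $g$ is the function in part 2 of Theorem \ref{t01}. Since $g\in L^1_{\mathrm{loc}}(U)$ has already been proved, $t\in L^1_{\mathrm{loc}}(U)$ follows immediately.

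If one prefers a self-contained argument, repeat the coarea computation directly. Apply the coarea formula to $|f|$, use $|\nabla|f||=|\partial f|$, and invoke part (1) of Theorem \ref{t02}. This reduces $\int_U|t|$ to
\[
\int_0^{1/2}\frac{dt}{t(\log t)^2(\log(-\log t^2))^2},
\]
which is finite.

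\textbf{Failure of $L^p$ for $p>1$.} Assume without loss of generality that $0\in Z(f)$. Apply Lemma \ref{lem:local_monomial} to get a regular point of $Z(f)$ arbitrarily close to $0$, a neighborhood $V$ of it, and a biholomorphism $\Phi$ with $f\circ\Phi^{-1}(w)=w_1^m$.

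In these coordinates $v$ depends only on $w_1$. Since $|\nabla|f||^2/|f|^2=|\partial f|^2/|f|^2$, we have
\[
|\partial_w f|^2/|f|^2=m^2/|w_1|^2 .
\]
On the relatively compact $V$, the factors $|\partial_z f|$ and $|\partial_w f|$ are comparable, because the Jacobian of $\Phi$ is bounded above and below. Also, the bracket $1+2/\log(-\log|f|^2)$ is at least $1$. Together these give the pointwise lower bound
\[
|t|\gtrsim \frac{1}{|w_1|^2(\log|w_1|)^2\bigl(\log(-\log|w_1|^2)\bigr)^2}
\]
near $\{w_1=0\}$. Here I use the explicit formula for $t$ in terms of $|\nabla|f||$ directly, rather than transforming the Laplacian, which avoids that issue.

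Next, $d\lambda_z\approx d\lambda_w$ on $V$. Use Fubini to separate $w_1$ from $w'$, and pass to polar coordinates $w_1=se^{i\theta}$. This gives
\[
\int_V|t|^p\gtrsim\int_0^{r_0}\frac{ds}{s^{2p-1}|\log s|^{2p}(\log(-\log s^2))^{2p}} .
\]

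\textbf{The divergence step.} This last step carries the only real content. For $p>1$ we have $2p-1>1$, and the power $s^{-(2p-1)}$ beats any polylogarithmic factor. Concretely, $s^{-(2p-1)}|\log s|^{-2p}(\log(-\log s^2))^{-2p}\ge s^{-1}$ for $s$ small, so the integral diverges.

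Hence $t\notin L^p(V)$, and since $V$ may be taken inside any ball around $0$, $t\notin L^p_{\mathrm{loc}}(U)$ near $Z(f)$. The only point needing care is to state the comparability of $|\partial_z f|$ and $|\partial_w f|$ under $\Phi$ explicitly.
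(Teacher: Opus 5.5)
Your proposal is correct and follows essentially the same route as the paper. For $L^1_{\mathrm{loc}}$ both use the coarea formula with part (1) of Theorem \ref{t02}; your shortcut of dominating $|t|$ by a multiple of $|g|$, via $\log(-\log|f|^2)\ge\log\log 4>0$, is a valid reduction to the already-proved case of $g=\Delta u$. For the failure of $L^p_{\mathrm{loc}}$ both use Lemma \ref{lem:local_monomial}, the lower bound $t\ge t_1$ (your ``bracket $\ge 1$''), and a polar-coordinate integral in $w_1$, where your explicit comparison with $s^{-1}$ makes the divergence step precise.
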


\begin{proof}
   First we show that $t(z)\in L^1_{\rm{loc}} (U)$.   Note that
   \begin{align}  
  |t(z)| \lesssim 
\frac{|\nabla|f|(z)|^{2}}
{|f(z)|^{2}\,\bigl(\log|f(z)|^{2}\bigr)^{2}\,
\bigl(\log(-\log|f(z)|^{2})\bigr)^{2}} .
   \end{align}
Applying the coarea formula (\ref{eq:coarea-ti}), we obtain
\[
\begin{aligned}
\int_{U}|t|\, dV
&\lesssim \int_{U} 
\frac{|\nabla|f||^{2}}
{|f|^{2}\,(\log|f|^{2})^{2}\,
(\log(-\log|f|^{2}))^{2}}\, dV \\
&\lesssim\int_{0}^{1/2}\Bigl(
\int_{\{z\in U:|f|=t\}}
\frac{|\nabla|f||^{2}}
{|f|^{2}(\log|f|^{2})^{2}(\log(-\log|f|^{2}))^{2}}
\cdot\frac{1}{|\nabla|f||}\, dS\Bigr) dt \\
&\lesssim\int_{0}^{1/2}\frac{1}{t^{2}(\log t^{2})^{2}}
\Bigl(\int_{\{z\in U:|f|=t\}}|\nabla|f||\, dS\Bigr) dt \\
&\lesssim\int_{0}^{1/2}\frac{1}{t^{2}(\log t^{2})^{2}\,
(\log(-\log|t|^{2}))^{2}}
\Bigl(\int_{\{z\in U:|f|=t\}}|\partial f|\, dS\Bigr) dt \\
&\lesssim\int_{0}^{1/2}
\frac{1}{t(\log t)^{2}\,
(\log(-\log|t|^{2}))^{2}}\, dt,
\qquad \text{by part (1) of Theorem \ref{t02}}, \\
&< \infty .
\end{aligned}
\]

Thus \( t\in L^{1}_{\mathrm{loc}}(U) \).

To prove that \( t \notin L^{p}_{\mathrm{loc}}(U) \) for any \( p > 1 \) near \( Z(f) \), we decompose \( t = t_{1} + t_{2} \), where
\[
\begin{aligned}
t_{1}(z) &:=
\frac{4|\nabla|f|(z)|^{2}}
{|f(z)|^{2}\,\bigl(\log|f(z)|^{2}\bigr)^{2}\,
\bigl(\log(-\log|f(z)|^{2})\bigr)^{2}},\\[2mm]
t_{2}(z) &:=
\frac{8\,|\nabla|f|(z)|^{2}}
{|f(z)|^{2}\,\bigl(\log|f(z)|^{2}\bigr)^{2}\,
\bigl(\log(-\log|f(z)|^{2})\bigr)^{3}}.
\end{aligned}
\]
Since both \( t_{1} \) and \( t_{2} \) are strictly positive everywhere outside \( Z(f) \), we have the pointwise lower bound \( t^p \ge t_1^p \). Therefore, it suffices to demonstrate that \( t_1 \notin L^p_{\mathrm{loc}}(U) \).

Without loss of generality, we may assume \( 0 \in Z(f) \). Utilizing Lemma \ref{lem:local_monomial} we can select a sufficiently small neighborhood \( V \subset U \) intersecting the regular part of \( Z(f) \) and perform a biholomorphic change of coordinates \( w = \Phi(z) \) such that locally \( f(z(w)) = w_1^m \) for some integer \( m \ge 1 \). 

Under this smooth diffeomorphism, the measure \( dV_z \) is equivalent to \( dV_w \). Furthermore, the critical singular term transforms asymptotically as \( \frac{|\nabla |f||^2}{|f|^2} \approx \frac{|w_1|^{2m-2}}{|w_1|^{2m}} = \frac{1}{|w_1|^2} \)  up to bounded, strictly positive smooth factors. Thus, in the local coordinates on \( \Phi(V) \), \( t_1 \) obeys the asymptotic lower bound 
\[
t_1(w) \gtrsim \frac{1}{|w_1|^2 \, \bigl|\log|w_1|\bigr|^2 \, \bigl|\log(-\log|w_1|^2)\bigr|^2}.
\]

To test its \( L^p \) integrability, we evaluate the integral over a small polydisc \( \Delta^n = \Delta_1 \times \Delta' \subset \Phi(V) \), where \( \Delta_1 = \{w_1 \in \mathbb{C} \mid |w_1| < r_0\} \). Applying Fubini's theorem to integrate out the tangential variables \( w' \), we obtain 
\[
\begin{aligned}
\int_{V}|t_{1}|^{p}\, dV_z 
&\approx \int_{\Phi(V)} |t_1|^p \, dV_w \\
&\gtrsim \text{Vol}(\Delta') \int_{\Delta_1}
\frac{1}{|w_{1}|^{2p}\,\bigl|\log|w_{1}|\bigr|^{2p}\,
\bigl|\log(-\log|w_{1}|^{2})\bigr|^{2p}}\, dV_{w_1}.
\end{aligned}
\]
Switching to polar coordinates \( w_1 = s e^{i\theta} \) on the disk \( \Delta_1 \), the inner integral evaluates to 
\[
\int_{0}^{2\pi} \int_{0}^{r_{0}}
\frac{1}{s^{2p}\,\bigl|\log s\bigr|^{2p}\,
\bigl|\log(-\log s^{2})\bigr|^{2p}} \, s \, ds \, d\theta 
\gtrsim \int_{0}^{r_{0}}
\frac{1}{s^{2p-1}\,\bigl|\log s\bigr|^{2p}\,
\bigl|\log(-\log s^{2})\bigr|^{2p}}\, ds.
\]
For any \( p > 1 \), the exponent of \( s \) in the denominator satisfies \( 2p - 1 > 1 \). This power-law singularity is strictly stronger than \( s^{-1} \), meaning the logarithmic damping factors in the denominator are entirely insufficient to overcome the singularity at \( s=0 \). Consequently, the integral strongly diverges. 

This establishes that \( \int_V |t_1|^p \, dV_z = \infty \), meaning \( t_1 \notin L^p_{\mathrm{loc}}(U) \). Since \( t \ge t_1 > 0 \), it directly follows that \( t \notin L^{p}_{\mathrm{loc}}(U) \) for any \( p > 1 \) near \( Z(f) \).

\end{proof}

\begin{proof} [Proof of $3$ of Theorem \ref{th:maintheorem2}]

Let \( \vec n \) be the outward unit normal vector as defined in (\ref{eq:tidu}).
Applying Green's theorem to each connected component of \( U_{\varepsilon} = \{ z\in U : |f(z)|>\varepsilon \} \) and summing up, we obtain
\[
\begin{aligned}
\int_{U_{\varepsilon}} \bigl( v\Delta\varphi - \varphi\Delta v \bigr)\, dV
&= \int_{\partial U_{\varepsilon}} ( v\nabla\varphi - \varphi\nabla v )\cdot \vec n \, dS \\
&= \int_{\{z \in U: |f| = \varepsilon\}} ( v\nabla\varphi - \varphi\nabla v )\cdot \vec n \, dS \\
&\quad + \int_{\partial U \setminus (\partial U \cap \{ |f|\le\varepsilon \})}
( v\nabla\varphi - \varphi\nabla v )\cdot \vec n \, dS \\
&= \int_{\{z \in U: |f| = \varepsilon\}} ( v\nabla\varphi - \varphi\nabla v )\cdot \vec n \, dS ,
\end{aligned}
\]
the last equality holding because \( \varphi\in C_{0}^{\infty}(U) \) vanishes near \( \partial U \).  
Denote
\[
\Lambda_{\varepsilon} = \int_{\{z \in U: |f| = \varepsilon\}} v\, \nabla\varphi\cdot\vec n \, dS , \qquad
\Xi_{\varepsilon} = -\int_{\{z \in U: |f| = \varepsilon\}} \varphi\, \nabla v\cdot\vec n \, dS .
\]

To complete the proof we must verify that both \( \Lambda_{\varepsilon} \) and \( \Xi_{\varepsilon} \) tend to zero as \( \varepsilon\to 0 \); the dominated convergence theorem will then yield the desired identity.
From Theorem \ref{th:levelsurface} we have \( |\{ z\in U : |f|=\varepsilon \}| = O(\varepsilon^{\alpha}) \) for some \( \alpha>0 \). Hence
\[
\begin{aligned}
|\Lambda_{\varepsilon}|
&\le \max_{U} |\nabla\varphi\cdot\vec n| \int_{\{z \in U: |f| = \varepsilon\}} |v|\, dS \\
&\lesssim \frac{1}{\log(-\log\varepsilon)}\,
\bigl|{\{z \in U: |f| = \varepsilon\}}\bigr| \\
&\to 0 \qquad (\varepsilon\to 0).
\end{aligned}
\]

For \( \Xi_{\varepsilon} \), note that
\[
|\Xi_{\varepsilon}|
\le \max_{U}|\varphi| \int_{\{z \in U: |f| = \varepsilon\}}|\nabla v\cdot\vec n|\, dS .
\]

Observe that
\[
|\nabla v\cdot\vec n|
\lesssim \frac{|\nabla|f||}{|f|\,\log|f|^{2}\,\bigl|\log(-\log|f|^{2})\bigr|^{2}} .
\]

Consequently,
\[
\begin{aligned}
|\Xi_{\varepsilon}|
&\lesssim \frac{1}{\varepsilon\,|\log\varepsilon|\,\bigl|\log(-\log\varepsilon)\bigr|^{2}}
\int_{\{z \in U: |f| = \varepsilon\}} |\nabla|f||\, dS \\
&\lesssim \frac{1}{\varepsilon\,|\log\varepsilon|\,\bigl|\log(-\log\varepsilon)\bigr|^{2}}
\int_{\{z \in U: |f| = \varepsilon\}} |\partial f|\, dS \\
&\lesssim \frac{1}{|\log\varepsilon|\,\bigl|\log(-\log\varepsilon)\bigr|^{2}}
\to 0 \qquad (\varepsilon\to 0),
\end{aligned}
\]
where the last estimate uses part (1) of Theorem \ref{t02}.

Thus \( \Lambda_{\varepsilon}\to0 \) and \( \Xi_{\varepsilon}\to0 \), which together with the dominated convergence theorem establishes the required weak formulation.

\end{proof}

We restate the 4 of Theorem \ref{th:maintheorem2}.
\begin{theorem}[Second-order Sobolev regularity] \label{thm:W21_regularity}
Let $U \subset \mathbb{C}^n$ be a domain. Suppose $f$ is a non-constant holomorphic function defined in a neighborhood of $\overline{U}$ such that \( |f |<\frac12 \) on  $  U$. Define the real-valued function $v: U \to \mathbb{R}$ by
\begin{equation} \label{eq:v_def}
    v(z) := 
    \begin{cases} 
    \displaystyle \frac{1}{\log(-\log|f(z)|^2)}, & \text{if } z \in U \setminus Z(f), \\[10pt]
    0, & \text{if } z \in Z(f),
    \end{cases}
\end{equation}
where $Z(f) = \{z \in U : f(z) = 0\}$ is the zero set of $f$. Then $v$ belongs to the local Sobolev space $W^{2,1}_{\mathrm{loc}}(U)$. That is, the second-order weak derivatives of $v$ exist across the singular set $Z(f)$ and are locally absolutely integrable.
\end{theorem}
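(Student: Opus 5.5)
Since $v\in W^{1,2}_{\mathrm{loc}}(U)$ with $\nabla v=l$ weakly (the first part of Theorem \ref{th:maintheorem2}, already established), the plan has two parts. First, show that the classical second derivatives of $v$ on $U\setminus Z(f)$ are in $L^1_{\mathrm{loc}}(U)$. Second, show that these functions, extended by $0$ on $Z(f)$, are the weak derivatives of $l$ across $Z(f)$.

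For integrability we use the explicit formulas for $\partial^2 v/\partial z_i\partial z_j$ and $\partial^2 v/\partial z_i\partial\bar z_j$ from the beginning of this section. All mixed terms and most pure terms are bounded by
\[
\frac{|\partial f|^2}{|f|^2(\log|f|^2)^2(\log(-\log|f|^2))^2},
\]
and this is integrable by exactly the coarea computation used for $t\in L^1_{\mathrm{loc}}$, via part (1) of Theorem \ref{t02}.

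The only new pure term is
\[
\frac{\partial_{z_i}\partial_{z_j}f}{f\,\log|f|^2\,(\log(-\log|f|^2))^2}.
\]
Its numerator is bounded, so it is dominated by $\frac{1}{|f|\,|\log|f||\,(\log(-\log|f|))^2}$. By the coarea formula, Lemma \ref{lm:actuallyused}, and the level set bound of Theorem \ref{th:levelsurface}, its integral is at most
\[
\int_0^{1/2}\frac{t^{-\alpha}\,t^{1-\alpha}}{t\,|\log t|\,(\log(-\log t))^2}\,dt,
\]
which is finite precisely when $2\alpha\le 1$.

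This is the main obstacle: the Łojasiewicz exponent $\alpha$ can exceed $1/2$, for instance at cusp singularities. The first remedy I would try is to avoid the bound $1/|\partial f|\lesssim t^{-\alpha}$ altogether. Instead, write the term as $\frac{1}{|f|}\cdot(\text{log factors})$ and use the sublevel volume estimate of Theorem \ref{th:volume_sublevel}. On dyadic shells $\{2^{-k-1}<|f|\le 2^{-k}\}$ the contribution is $\lesssim 2^{k}\,\mathrm{Vol}(\{|f|<2^{-k}\})/(k(\log k)^2)$, which is summable only if $\tau\ge 1$ — not guaranteed either.

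The more robust route is to resolve singularities as in Theorem \ref{l07}. After pulling back by $\pi$ to monomial charts $f\circ\pi=v^A$, we have $|f|^{-1}$ times the Jacobian of $\pi$, and $|\partial_{z_i}\partial_{z_j}f|$ is controlled via the chain rule. Then check integrability of $|v^A|^{-1}|\log|v^A||^{-1}(\log(-\log|v^A|))^{-2}$ against the Jacobian factor $|v^{B}|^2$, which is a computation in the spirit of Lemma \ref{l02}. If this also fails, I would look for a cancellation between $\partial_i\partial_j f/f$ and $\partial_if\,\partial_jf/f^2$ by writing $\partial_i\partial_j v=\partial_i\big(\partial_jf/f\big)\cdot(\ldots)+\ldots$, using $\partial_i(\partial_jf/f)=\partial_i\partial_j\log f$ locally.

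For the weak derivative identity we follow the proofs of part 3 of Theorem \ref{th:maintheorem2} and of Theorem \ref{l10}. Integrate by parts on $U_\varepsilon$, which is legitimate by Lemma \ref{l311} and Lemma \ref{lm:Trans}, to get
\[
\int_{U_\varepsilon}\partial_j l\,\varphi=-\int_{U_\varepsilon}l\,\partial_j\varphi+\int_{\{|f|=\varepsilon\}}l\,\varphi\,n_j\,dS.
\]
The boundary term is at most
\[
\frac{1}{\varepsilon|\log\varepsilon|(\log(-\log\varepsilon))^2}\int_{\{|f|=\varepsilon\}}|\partial f|\,dS\lesssim\frac{1}{|\log\varepsilon|(\log(-\log\varepsilon))^2}\to0
\]
by Theorem \ref{t02}(1). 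Dominated convergence, using $l\in L^2_{\mathrm{loc}}$ and the $L^1$ bound from the first step, then gives $\partial_j l\in L^1_{\mathrm{loc}}$ as a weak derivative. Hence $v\in W^{2,1}_{\mathrm{loc}}(U)$.
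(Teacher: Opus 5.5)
Your treatment of every term except the Hessian term $A_1=\partial_{z_i}\partial_{z_j}f\big/\bigl(f\log|f|^2(\log(-\log|f|^2))^2\bigr)$ is fine. The one-step integration by parts on $U_\varepsilon$ using $\nabla v=l$ is also fine, and matches the paper up to bookkeeping. There is one small slip: the pure term with $\partial_{z_i}f\,\partial_{z_j}f/f^2$ carries only one factor $|\log|f|^2|$, so it is not dominated by the expression you wrote. The same coarea argument still works, since $\int_0 dt/(t|\log t|(\log(-\log t))^2)<\infty$.

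The genuine gap is $A_1$, which is the heart of the theorem. The resolution check you actually propose is to bound $|\partial_{z_i}\partial_{z_j}f|$ by a constant and test $|J_\pi|^2/\bigl(|f\circ\pi|\,L(f\circ\pi)\bigr)$ on monomial charts. That is exactly the question whether $|f|^{-1}$, damped by logarithms, is locally integrable, and this is false in general:
\begin{itemize}
\item For $f=z_1^3$ it gives $\int_0 r\,dr/(r^3|\log r|\cdots)=\infty$.
\item For the reduced $f=z_1^5+z_2^5$ the log canonical threshold is $2/5<1/2$, so $|f|^{-1}\notin L^1_{\mathrm{loc}}$, with a power-type divergence that no logarithm absorbs.
\end{itemize}
Yet $A_1\in L^1_{\mathrm{loc}}$ in both cases; for $z_1^3$, $|A_1|\approx|z_1|^{-2}|\log|z_1||^{-1}(\log|\log|z_1||)^{-2}$. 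So this route cannot succeed: it throws away precisely the vanishing of $\nabla^2 f$ that the paper's remark before Lemma~\ref{lem:monomial_A1} warns about. Your fallback, a cancellation between $\partial_i\partial_jf/f$ and $\partial_if\,\partial_jf/f^2$, is not carried out and is not the mechanism: the two terms are separately integrable.

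The missing idea, which is the content of Proposition~\ref{prop:general_A1}, is to pull back the Hessian itself instead of bounding it.
\begin{itemize}
\item Since $(\partial_{z_i}g)\circ\pi=J_\pi^{-1}\sum_l\mathrm{adj}(D\pi)_{li}\,\partial_{w_l}(g\circ\pi)$, two $z$-derivatives cost at most a factor $|J_\pi|^{-2}$. This cancels the volume factor $|J_\pi|^2$, and what is left is governed by $w$-derivatives of $f\circ\pi=w^Ku$.
\item Apply this formula twice directly to $w^Ku$. In these coordinates $J_\pi$ is also a monomial times a unit, since it vanishes only on $\pi^{-1}(Z(f))$. The result is $|J_\pi|^2\,|(\partial_{z_i}\partial_{z_j}f)\circ\pi|/|f\circ\pi|\lesssim\sum_{l,m}|w_l|^{-1}|w_m|^{-1}$ locally on the resolution. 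This also takes care of the term the paper writes as $\sum_k(\partial_{z_k}f\circ\pi)H_w(\pi_k)$.
\item Against the damping $L$ this bound is integrable exactly as in Lemma~\ref{lem:monomial_A1}. Off-diagonal poles are absorbed by the polar volume element, and diagonal poles $|w_l|^{-2}$ by $\int_0 dr/(r|\log r|(\log|\log r|)^2)<\infty$.
\end{itemize}
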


To prove Theorem \ref{thm:W21_regularity}, we must show that the classical second-order derivatives of $v$ are locally integrable on $U \setminus Z(f)$, and that no Dirac mass is generated across the singular set $Z(f)$ in the weak sense. Direct calculation shows that the most singular term in the complex Hessian of $v$ is dominated by the bounding function $\mathcal{A}_1 := \frac{|\nabla^2 f|}{|f| L(f)}$, where $L(f) := \big| \log|f|^2 \big| \bigl(\log(-\log|f|^2)\bigr)^2$.

\begin{remark} 
To establish the local integrability of the second-order derivative terms, one might naturally attempt to employ the coarea formula and the \L{}ojasiewicz gradient inequality---the very approach that successfully handles the lower-order terms. However, this naive strategy fundamentally fails for the complex Hessian term $\mathcal{A}_1$. If we naively bounded the complex Hessian in the numerator by a constant---thereby ignoring the vanishing behavior of $\nabla^2 f$---and applied the coarea formula alongside the \L{}ojasiewicz inequality $|\partial f| \ge C|f|^\alpha$ for some $0 \le \alpha < 1$, we would obtain
\begin{align*}
\int_{U}|A_{1}|\,dV
&\lesssim\int_{U}
\frac{1}{|f|\,|\log|f||\,\bigl(\log(-\log|f|^{2})\bigr)^{2}}\,dV \\
&\lesssim\int_{0}^{1/10}\Bigl(
\int_{\{z\in U:|f|=t\}}
\frac{1}
{|f|\,|\log|f||\,(\log(-\log|f|^{2}))^{2}}
\cdot\frac{1}{|\nabla|f||}\,dS\Bigr)dt \\
&\lesssim\int_{0}^{1/10}\Bigl(
\int_{\{z\in U:|f|=t\}}
\frac{1}
{|f|\,|\log|f||\,(\log(-\log|f|^{2}))^{2}}
\cdot\frac{1}{|\partial f|}\,dS\Bigr)dt \\
&\lesssim\int_{0}^{1/10}\Bigl(
\int_{\{z\in U:|f|=t\}}
\frac{1}
{|f|\,|\log|f||\,(\log(-\log|f|^{2}))^{2}}
\cdot\frac{1}{t^\alpha}\,dS\Bigr)dt, \qquad  \text{since} \ |\partial f| \ge C |f|
^\alpha,  \\ 
&\lesssim\int_{0}^{1/10}
\frac{1}{t^{1+\alpha}\,|\log t|\,(\log(-\log t^{2}))^{2}}
|\{z\in U:|f|=t\}|dt \\
&\lesssim\int_{0}^{1/10}
\frac{t^{1-\alpha}}{t^{1+\alpha}\,|\log t|\,(\log(-\log t^{2}))^{2}}
dt, \qquad  \text{by Theorem \ref{th:levelsurface},}  \\
&\lesssim\int_{0}^{1/10}
\frac{1}{t^{2\alpha}\,|\log t|\,(\log(-\log t^{2}))^{2}}\,dt.
\end{align*}
This final one-dimensional integral would converge only if $0\leq \alpha < 1/2$, and would strictly diverge for $\alpha \ge 1/2$. Because the \L{}ojasiewicz exponent $\alpha$ can easily exceed $1/2$ for holomorphic functions with higher-order singularities (for example, the cusp singularity $f(z_1, z_2) = z_1^2 - z_2^3$ yields $\alpha = 2/3$), this crude upper bound is unworkable. 

The fundamental flaw in this approach lies in replacing $|\nabla^2 f|$ with a constant, which completely misses the internal vanishing order of the Hessian near the singular set $Z(f)$. To rigorously establish integrability, we must capture the precise algebraic cancellations between the vanishing numerator and the singular denominator. This necessitates the use of Hironaka's Resolution of Singularities.
\end{remark}

Following this resolution strategy, we first establish integrability for the pure monomial case, where the exact algebraic cancellations can be computed explicitly.

\begin{lemma}[Monomial case]\label{lem:monomial_A1}
Let $U = \{z \in \mathbb{C}^n : |z_m| < r_0\}$ be a polydisc centered at the origin, where $r_0 \in (0, 1/e)$ is chosen sufficiently small. Suppose $f(z) = \prod_{i=1}^n z_i^{k_i}$ is a non-constant monomial with non-negative integers $k_i$. Define the logarithmic damping function 
\begin{equation} \label{eq:lf}
L(f) := \big| \log|f|^2 \big| \bigl(\log(-\log|f|^2)\bigr)^2,
\end{equation}
and define the function $\mathcal{A}_1$ associated with the real Hessian norm of $f$ on $U \setminus Z(f)$ by
\begin{equation}\label{eq:A1}
\mathcal{A}_1 := \frac{|\nabla^2 f|}{|f| L(f)}.
\end{equation}
where $|\nabla^2 f|$ is the Frobenius norm of the real Hessian matrix of $f$. Then $\mathcal{A}_1$ is locally integrable on $U$, namely, $\mathcal{A}_1 \in L^1_{\rm{loc}}(U)$.
\end{lemma}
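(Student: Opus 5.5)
The plan is to compute the Hessian of the monomial explicitly. This shows that $|\nabla^2 f|/|f|$ is controlled by inverse powers of the coordinates $|z_i|$ that are at worst quadratic. The logarithmic damping $L(f)$ then has to rescue integrability in each variable.

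\textbf{Step 1: pointwise bound.} Since $f$ is holomorphic, the real Hessian norm is comparable to the complex Hessian norm. So $|\nabla^2 f|\approx \sum_{i,j}|\partial_{z_i}\partial_{z_j} f|$. For $f=z^A$,
\[
\partial_{z_i}\partial_{z_j} f = k_ik_j\frac{f}{z_iz_j}\ (i\neq j),\qquad \partial_{z_i}^2 f = k_i(k_i-1)\frac{f}{z_i^2}.
\]
Hence on $U\setminus Z(f)$,
\[
\mathcal{A}_1\lesssim \sum_{i,j:\,k_i,k_j\ge1,\ (i\neq j \text{ or } k_i\ge2)} \frac{1}{|z_i||z_j|\,L(f)}.
\]
Only indices with $k_i\ge 1$ occur, and variables with $k_i=0$ integrate trivially.

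\textbf{Step 2: reduce to radial integrals.} Pass to polar coordinates $r_i=|z_i|$; the angles contribute a factor $(2\pi)^n$. Set $x_i=-\log r_i\in(-\log r_0,\infty)$, so that $x_i>1$ because $r_0<1/e$. Then $|\log|f|^2|=2\sum k_ix_i=:2S$ and $dV\approx \prod r_i\,dr_i=\prod e^{-2x_i}dx_i$.

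The diagonal term $1/(|z_i|^2L)$ becomes
\[
\int \frac{e^{2x_i}\prod_l e^{-2x_l}}{S(\log 2S)^2}\,dx.
\]
The factor $e^{-2x_i}$ in the measure cancels, leaving $dx_i$ with no exponential decay. Since $S\ge k_ix_i$, the $x_i$-integral is at most $\int_1^\infty \frac{dx_i}{k_ix_i(\log(2k_ix_i))^2}<\infty$. The other variables carry $e^{-2x_l}$ and are integrable.

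The off-diagonal term $1/(|z_i||z_j|L)$ is more delicate, because both $x_i$ and $x_j$ lose their exponential weight. One must show
\[
\int_1^\infty\!\!\int_1^\infty \frac{dx_i\,dx_j}{(k_ix_i+k_jx_j)(\log(k_ix_i+k_jx_j))^2}<\infty .
\]
The plan is to change to $s=k_ix_i+k_jx_j$. The slice $\{s=\text{const}\}$ has length $O(s)$, which gives
\[
\int^\infty \frac{s\,ds}{s(\log s)^2}=\int^\infty\frac{ds}{(\log s)^2}=\infty.
\]
So this bound diverges.

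\textbf{Main obstacle.} The off-diagonal term is where I expect the difficulty. It indicates that the crude bound $|z_i|^{-1}|z_j|^{-1}$ over the whole polydisc is too lossy. I would first retry by checking whether a cancellation has been missed. The Frobenius norm is not improvable, so there is none, and the true integrand for $f=z_1z_2$ genuinely contains $1/(|z_1||z_2|L)$.

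I would therefore recheck carefully. With $s=x_1+x_2$, the slice length is $\approx s$, and the measure $\prod r_i\,dr_i$ has become $dx$ after cancelling $e^{-2x}$ against $e^{2x}$. For $i\neq j$ that cancellation is only $e^{-x_i-x_j}$ against $e^{-2x_i-2x_j}$. So a factor $e^{-x_i-x_j}$ remains, and the integral converges easily.

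Correcting Step 2 accordingly, the off-diagonal terms are harmless. The diagonal term $k_i\ge2$ is the only borderline one, and it is handled by the one-variable bound $\int_1^\infty dx/(x\log^2x)<\infty$. This is exactly where the damping $L(f)$ is used; with only $|\log|f||$ the integral would diverge.

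Summing finitely many terms gives $\mathcal{A}_1\in L^1(U)$, which implies $\mathcal{A}_1\in L^1_{\rm loc}(U)$.
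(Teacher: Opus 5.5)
Your final argument is correct and is essentially the paper's proof. In polar coordinates, the off-diagonal poles $1/(|z_i||z_j|)$ are absorbed by the factor $r_ir_j$ of the volume element, while the diagonal pole $1/|z_i|^2$ leaves a single $dr_i/r_i$ that the damping $L(f)$ makes integrable via $\int^{\infty}du/(u\log^2 u)<\infty$. The paper substitutes $u=-2k_i\log r_i+C_i$ exactly; you instead bound $S\ge k_ix_i$ and use monotonicity of $t(\log t)^2$. In a write-up, delete the retracted off-diagonal ``divergence'' computation: it came from miscounting the exponential weights $e^{-2x_l}$ and plays no role in the correct proof.
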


\begin{proof}
By our notation convention, the pointwise norm of the real Hessian is bounded, up to a dimensional constant, by the sum of the absolute values of the complex second-order derivatives. For the monomial $f(z) = \prod_{i=1}^n z_i^{k_i}$, the first-order complex derivatives are $\frac{\partial f}{\partial z_i} = \frac{k_i}{z_i} f(z)$. We evaluate the second-order complex derivatives in two distinct cases:

\noindent  Case I: Off-diagonal terms  ($i \neq j$). 
We have
\begin{align*}
\frac{\partial^2 f}{\partial z_i \partial z_j} &= \frac{\partial}{\partial z_i} \left( \frac{k_j}{z_j} f(z) \right) = \frac{k_i k_j}{z_i z_j} f(z).
\end{align*}
Taking the absolute value yields
\begin{align*}
\left| \frac{\partial^2 f}{\partial z_i \partial z_j} \right| &= \frac{k_i k_j}{|z_i| |z_j|} |f(z)|.
\end{align*}

\noindent  Case II: Diagonal terms  ($i = j$). 
We have
\begin{align*}
\frac{\partial^2 f}{\partial z_i^2} &= \frac{\partial}{\partial z_i} \left( \frac{k_i}{z_i} f(z) \right) = \frac{k_i(k_i - 1)}{z_i^2} f(z).
\end{align*}
Taking the absolute value gives the bound
\begin{align*}
\left| \frac{\partial^2 f}{\partial z_i^2} \right| &\le \frac{k_i^2}{|z_i|^2} |f(z)|.
\end{align*}

Summing these bounds, the total real Hessian norm satisfies
$$|\nabla^2 f| \le C |f(z)| \sum_{i=1}^n \sum_{j=1}^n \frac{k_i k_j}{|z_i| |z_j|},$$
for some dimensional constant $C > 0$. Dividing by $|f|L(f)$, we obtain the pointwise estimate for $\mathcal{A}_1$:
$$\mathcal{A}_1 \le C \sum_{i=1}^n \sum_{j=1}^n \frac{k_i k_j}{|z_i| |z_j| L(f)}.$$

To establish that $\mathcal{A}_1 \in L^1_{\rm{loc}}(U)$, it suffices to prove that each individual integral 
$$I_{i,j} := \int_U \frac{1}{|z_i| |z_j| L(f)} \, dV$$ 
is finite. Passing to polar coordinates $z_m = r_m e^{i\theta_m}$, the Euclidean volume element becomes $dV = \prod_{m=1}^n r_m \, dr_m \, d\theta_m$. Note that $w := -\log(|f|^2) = -2\sum_{m=1}^n k_m \log r_m$. Since $r_m < r_0 < 1/e$, we have $w > 0$ strictly, which yields $L(f) = w(\log w)^2$. Integrating out the angular variables contributes a constant factor $(2\pi)^n$, reducing the problem to bounding the radial integral:
$$I_{i,j} \lesssim \int_0^{r_0} \cdots \int_0^{r_0} \frac{1}{r_i r_j w(\log w)^2} \left( \prod_{m=1}^n r_m \right) dr_1 \cdots dr_n.$$ 

We analyze this integral according to the two subcases.

  Subcase I: Off-diagonal terms  ($i \neq j$). 
The singular factors $r_i$ and $r_j$ in the denominator are exactly cancelled by the corresponding radial factors in the volume element. The integrand simplifies to
$$\frac{1}{w(\log w)^2} \prod_{m \neq i,j} r_m.$$
Since $r_m \le r_0 < 1/e$, the logarithmic term satisfies $w \ge -2 \log r_0 > 2$. The function $w \mapsto (w \log^2 w)^{-1}$ is strictly decreasing and bounded uniformly away from its asymptote on the interval $[2, \infty)$. Because the domain of integration is bounded and the integrand has no singularities, the integral $I_{i,j}$ converges trivially.

   Subcase II: Diagonal terms  ($i = j$). 
In this scenario, the volume element provides only one factor of $r_i$, leaving a simple pole in the radial variable:
$$I_{i,i} \lesssim \int_0^{r_0} \cdots \int_0^{r_0} \frac{1}{r_i w(\log w)^2} \left( \prod_{m \neq i} r_m \right) dr_1 \cdots dr_n.$$
By Fubini's theorem, we isolate the integration with respect to $r_i$, holding the other variables $r_m$ fixed. Define $C_i := -2 \sum_{m \neq i} k_m \log r_m \ge 0$, so that $w = -2k_i \log r_i + C_i$. Applying the substitution $u = -2k_i \log r_i + C_i$, we have $du = -2k_i \frac{dr_i}{r_i}$. As $r_i \to 0^+$, $u \to +\infty$, and when $r_i = r_0$, the lower bound becomes $u_0 := -2k_i \log r_0 + C_i > 0$. The inner one-dimensional integral transforms into:
$$\int_0^{r_0} \frac{dr_i}{r_i w(\log w)^2} = \frac{1}{2k_i} \int_{u_0}^{\infty} \frac{du}{u(\log u)^2}.$$
Since the antiderivative of $(u \log^2 u)^{-1}$ is $-(\log u)^{-1}$, this improper integral converges strictly to $\frac{1}{2k_i \log(u_0)} < \infty$. Integrating the remaining bounded variables $r_m$ over $[0, r_0]$ ensures that $I_{i,i}$ is finite.

Because both the off-diagonal and diagonal integrals $I_{i,j}$ are bounded, their finite sum converges. We conclude that $\int_U \mathcal{A}_1 \, dV < \infty$, which proves $\mathcal{A}_1 \in L^1_{\rm{loc}}(U)$.
\end{proof}

With the monomial case established, we can now prove the integrability for arbitrary holomorphic functions by employing Hironaka's Theorem. The crux of the proof is tracking the transformation of the complex Hessian under the proper birational pullback, revealing a crucial geometric cancellation.

\begin{proposition}[Integrability of the Hessian term]\label{prop:general_A1}
Let $U \subset \mathbb{C}^n$ be a domain and $f$ be a non-constant holomorphic function defined in a neighborhood of $\overline{U}$ such that \( |f |<\frac12 \) on  $U$. If $L(f)$ and $\mathcal{A}_1$ are defined on $U \setminus Z(f)$ as in \eqref{eq:lf} and \eqref{eq:A1} respectively, then $\mathcal{A}_1 \in L^1_{\rm{loc}}(U)$.
\end{proposition}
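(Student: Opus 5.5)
The plan is to pull the integral of $\mathcal{A}_1$ back to a log resolution, where $f$ becomes a monomial times a unit, and there dominate the pulled-back integrand by the terms already controlled in Lemma \ref{lem:monomial_A1}. Since the statement is local, we fix a point $z_0\in Z(f)$ and a small ball $B\Subset U$ around it; away from $Z(f)$ the integrand is smooth. By Hironaka's theorem (Appendix \ref{Hi}) we take a proper holomorphic map $\pi:X\to\widetilde U$ that is biholomorphic off $\pi^{-1}(Z(f))$, and cover the compact set $\pi^{-1}(\overline B)$ by finitely many charts. In each chart $f\circ\pi = E(u)\,u^{A}$ with $E$ a unit. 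As in the proof of Theorem \ref{l07}, we absorb $E$ into one coordinate, so that $F:=f\circ\pi=v^{A}$ is a pure monomial on a small polydisc. Then
\[
\int_B \mathcal{A}_1\,dV=\int_{\pi^{-1}(B)}\mathcal{A}_1\circ\pi\;|\det J_\pi|^2\,dV_v ,
\]
and $|\det J_\pi|^2$ is bounded on each compact chart. It therefore suffices to bound $(|\nabla^2 f|\circ\pi)/(|F|L(F))$ on each chart, using $L(f)\circ\pi=L(F)$.

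The key step is to express $\nabla^2 f$ through derivatives of $F$. Write $J=D\pi$ and $H=\nabla^2_z f$. The chain rule gives $\partial_v F = J^{T}(\partial_z f\circ\pi)$ and
\[
\nabla^2_v F = J^{T} (H\circ\pi) J + \sum_k (\partial_{z_k} f\circ\pi)\,\nabla^2_v \pi_k .
\]
Away from the exceptional set, $J$ is invertible and $J^{-1}=\operatorname{adj}(J)/\det J$. Hence
\[
H\circ\pi = J^{-T}\Bigl(\nabla^2_v F - \sum_k (\partial_{z_k}f\circ\pi)\nabla^2_v\pi_k\Bigr)J^{-1},
\qquad \partial_z f\circ\pi = J^{-T}\partial_v F .
\]
The $\nabla^2_v F$ and $\partial_v F$ terms are monomial computations: $|\nabla_v^2F|\lesssim |F|\sum_{i,j}|v_i|^{-1}|v_j|^{-1}$ and $|\partial_vF|\lesssim |F|\sum_i|v_i|^{-1}$, which are exactly the bounds Lemma \ref{lem:monomial_A1} integrates. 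Every term therefore has the shape
\[
\frac{|F|\,(\text{monomial factor})}{|F|\,L(F)\,|\det J|^{a}}\,|\det J|^{2},\qquad a\le 2 ,
\]
where the last factor $|\det J|^2$ is the Jacobian of the change of variables.

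The main obstacle is the factors of $|\det J|^{-1}$ coming from $J^{-1}$: on the left and right together $J^{-T}(\cdot)J^{-1}$ contributes up to $|\det J|^{-2}$, and the extra $J^{-1}$ in the $\partial f$ term contributes another factor. My first attempt is to choose the resolution so that $\det J_\pi = w(v)\,v^{B}$ is also monomial with the same normal crossings, which standard embedded resolution permits. The volume factor $|\det J|^2$ then exactly cancels the two $|\det J|^{-1}$ from the Hessian conjugation, leaving the integrand of Lemma \ref{lem:monomial_A1} times bounded adjugate entries.

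For the term involving $\partial_z f\circ\pi$ there is one leftover $|\det J|^{-1}$, and I would try one of two remedies. The first is to bound it directly by $|\partial_zf|\le C$ near $\overline B$, which leaves $|\det J|^{-1}\cdot|\det J|^2\cdot(|F|L(F))^{-1}$. The second is to rewrite it as $J^{-T}\partial_vF$ and use $|F|^{-1}|\partial_vF|\lesssim\sum|v_i|^{-1}$. Either way, the leftover is a single power $|v_i|^{-1}$, possibly times $|v^{B}|$, against $L(F)^{-1}$, and this is integrable exactly as in Subcase II, via the substitution $u=-2k_i\log r_i+C_i$ giving $\int^\infty du/(u\log^2u)<\infty$.

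If the cancellation of determinant powers fails in some chart, the fallback is to avoid inverting $J$. Instead, bound $|\nabla^2f|$ on $B$ by the Cauchy estimate $|\nabla^2 f(z)|\lesssim \sup_{|\zeta-z|<\delta}|\partial f(\zeta)|/\delta$ with $\delta\sim$ the distance to $Z(f)$, and compare with $|f|$ via the Łojasiewicz-type Lemma \ref{lm:actuallyused}. This is the step I expect to need the most care.
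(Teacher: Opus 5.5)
Your passage to a resolution chart and your treatment of the tensorial piece $J^{-T}(\nabla_v^2F)J^{-1}$ are fine and agree with the paper. The two factors $|\det J|^{-1}$ are absorbed by the volume factor $|\det J|^2$, leaving the integrand of Lemma~\ref{lem:monomial_A1}. You do have to keep that volume factor, though. Discarding it as ``bounded'', as your first paragraph suggests, leaves $(|\nabla^2 f|\circ\pi)/(|F|L(F))$, which is not integrable in general: for the cusp below, $|\nabla^2 f|\circ\pi$ stays away from $0$ while $|F|\approx|s|^6$.

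The genuine gap is the inhomogeneous term $\sum_k(\partial_{z_k}f\circ\pi)\nabla_v^2\pi_k$, and your determinant bookkeeping there is off. This term carries exactly the two factors $J^{-1}$ of the conjugation, so after the cancellation you must show
\[
\int \frac{|\partial_{z_k}f\circ\pi|\,|\nabla_v^2\pi_k|}{|F|\,L(F)}\,dV_v<\infty .
\]
\begin{itemize}
\item Your first remedy ($|\partial_z f|\le C$) leaves $|\nabla_v^2\pi_k|/(|F|L(F))$, with no factor $|\det J|$ anywhere. It throws away the vanishing of $\partial f$ near $Z(f)$, much like the naive estimate the paper's Remark warns against.
\item Your second remedy introduces a third $J^{-1}$. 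The leftover is then $|\det J|^{-1}\approx|v^B|^{-1}$ in the denominator, not a factor $|v^B|$ in the numerator.
\end{itemize}
Neither yields ``a single power $|v_i|^{-1}$''.

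Here is a concrete failure, writing $L=L(F)$. For the cusp $f=z_1^2-z_2^3$, a chart of its minimal log resolution is $\pi(s,q)=(s^3q,s^2q)$. There $\det J=s^4q$, $F=s^6q^2(1-q)$ and $\partial_s^2\pi_2=2q$; absorbing the unit $q^2(1-q)$ into $s$ changes none of the orders along $\{s=0\}$. Near $(0,q_0)$ with $q_0\neq0,1$, your two majorants are $\gtrsim|s|^{-6}L^{-1}$ and $\gtrsim|s|^{-5}L^{-1}$ respectively, and neither is integrable. The actual term is $\approx|s|^{-2}L^{-1}$, because $\partial_{z_1}f\circ\pi=2s^3q$ and $\partial_{z_2}f\circ\pi=-3s^4q^2$ vanish to orders $3$ and $4$ along $\{s=0\}$.

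What is missing is quantitative control of the vanishing of $\partial_{z_k}f\circ\pi$ along the exceptional divisors, compared with that of $F$. The paper tries to supply this through $\pi_k\,(\partial_{z_k}f\circ\pi)=(z_k\partial_{z_k}f)\circ\pi$ together with monomiality of $\pi_k$, which gives $|\nabla_w^2\pi_k|\lesssim|\pi_k|\sum_{i,j}|w_i|^{-1}|w_j|^{-1}$. Importing this will not by itself close the gap, for two reasons:
\begin{itemize}
\item Comparing $(z_k\partial_{z_k}f)\circ\pi$ with $f\circ\pi$ is only legitimate along divisors that are monomial in the $z$-coordinates. In the chart above, $(z_2\partial_{z_2}f)\circ\pi/F=-3q/(1-q)$ is unbounded near the strict transform $q=1$.
\item The crude bound $\|\operatorname{adj}D\pi\|\le C$ can itself be too lossy. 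After one further blow-up at $(s,q)=(0,1)$, in the chart $\pi(s,t)=(s^3(1+st),s^2(1+st))$ with $F=-s^7t(1+st)^2$, the exact values of $\partial_{z_k}f\circ\pi$ still give a majorant $\gtrsim|s|^{-3}|t|^{-1}L^{-1}$. The true integrand there is only $O(|s|^3|t|^{-1}L^{-1})$.
\end{itemize}
So a complete argument must either use the vanishing of $\operatorname{adj}D\pi$ or bound the order of $\nabla_z^2f\circ\pi$ along each divisor without splitting.

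Your fallback does not close the gap either. Lemma~\ref{lm:actuallyused} bounds $|\partial f|$ from below, which is the wrong direction for an upper bound on $|\nabla^2 f|/|f|$. Turning a Cauchy estimate on balls of radius $\delta\sim\operatorname{dist}(z,Z(f))$ into a bound by $|f(z)|$ would also need a Harnack-type comparison of $\sup_{B(z,c\delta)}|f|$ with $|f(z)|$, which you have not supplied.
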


\begin{proof}
To establish the local integrability $\int_K \mathcal{A}_1 dV_z < \infty$ over any compact subset $K \subset U$, we invoke Hironaka's Resolution of Singularities theorem. There exists a smooth complex manifold $\tilde{X}$ and a proper, birational holomorphic map $\pi: \tilde{X} \rightarrow U$ such that the pullback $\tilde{f} = f \circ \pi$ has only normal crossings. Crucially, $\pi$ restricts to a biholomorphism from $\tilde{X} \setminus \pi^{-1}(Z(f))$ onto $U \setminus Z(f)$. 

This means that locally on $\tilde{X}$, in coordinates $w = (w_1, \dots, w_n)$, the function behaves exactly like a pure monomial $\tilde{f}(w) = w^K$ (absorbing any non-vanishing holomorphic units into the coordinates via a local biholomorphism).

We pull the integral back to the resolution space $\tilde{X}$ using $\pi$. The volume form transforms as $dV_z = |J_\pi|^2 dV_w$, where $J_\pi = \det(D\pi)$ is the complex Jacobian determinant of $\pi$. By the chain rule, the gradients satisfy $\nabla_w \tilde{f} = (D\pi)^T (\nabla_z f \circ \pi)$, which yields:
\begin{equation}\label{eq:gradient_pullback}
\nabla_z f \circ \pi = (D\pi)^{-T} \nabla_w \tilde{f} \qquad \text{on} \qquad \tilde{X} \setminus \pi^{-1}(Z(f)).
\end{equation}

Let $M = D\pi$ be the complex Jacobian matrix of the resolution map. The inverse matrix is given by Cramer's rule:
\begin{align*}
(D\pi)^{-1} = \frac{1}{\det(D\pi)} \text{adj}(D\pi) \qquad \text{on} \qquad \tilde{X} \setminus \pi^{-1}(Z(f)), 
\end{align*}
where $\text{adj}(D\pi)$ denotes the adjugate matrix of $D\pi$. Because the map $\pi$ is a smooth holomorphic map on the resolution manifold $\tilde{X}$, every entry in $D\pi$ is a smooth, well-defined holomorphic function. Consequently, on the compact preimage $\pi^{-1}(K)$, the norm of the adjugate matrix is uniformly bounded, i.e., $\|\text{adj}(D\pi)\| \le C$. This gives:
\begin{align*} 
\|(D\pi)^{-1}\| \le \frac{C}{|J_\pi|} \qquad \text{on} \qquad \tilde{X} \setminus \pi^{-1}(Z(f)).
\end{align*}

To bound the pulled-back Hessian $|\nabla_z^2 f \circ \pi|$, we differentiate the gradient relation \eqref{eq:gradient_pullback} with respect to the coordinates $w_i$, obtaining  the known relation 
for the (holomorphic) Hessian matrices:
\begin{equation}\label{eq:hessian_relation}
H_w(\tilde{f}) = (D\pi)^T H_z(f) (D\pi) + \sum_{k=1}^n (\partial_{z_k} f \circ \pi) H_w(\pi_k).
\end{equation}
To isolate $H_z(f)$, we multiply from the left by $(D\pi)^{-T}$ and from the right by $(D\pi)^{-1}$:
\begin{eqnarray*}
H_z(f) = (D\pi)^{-T} \left[ H_w(\tilde{f}) - \sum_{k=1}^n (\partial_{z_k} f \circ \pi) H_w(\pi_k) \right] (D\pi)^{-1}  \qquad  \text{on} \qquad   \tilde{X} \setminus \pi^{-1}(Z(f)).
\end{eqnarray*}
Taking the matrix norm and applying the inverse bound $\|(D\pi)^{-1}\| \le C/|J_\pi|$, we pick up a factor of $1/|J_\pi|^2$:
\begin{eqnarray*}
|\nabla_z^2 f \circ \pi| \le \frac{C}{|J_\pi|^2} \left( |\nabla_w^2 \tilde{f}| + \sum_{k=1}^n |\partial_{z_k} f \circ \pi| |\nabla_w^2 \pi_k| \right) \qquad  \text{on} \qquad   \tilde{X} \setminus \pi^{-1}(Z(f)).
\end{eqnarray*}

Substituting this bound into our integral reveals a crucial geometric cancellation: the singular factor $1/|J_\pi|^2$ generated by the inverse Jacobian matrices perfectly cancels the $|J_\pi|^2$ factor originating from the volume element $dV_z$. On the other hand, since $Z(f)$ is of complex dimension at most $n-1$, its real dimension is at most $2n-2$, thus it is a null set with respect to the Lebesgue measure $dV_z$. Therefore, the integral is controlled by two terms:
\begin{align*}
\int_K \frac{|\nabla_z^2 f|}{|f| L(f)} dV_z &= \int_{K\setminus Z(f)} \frac{|\nabla_z^2 f|}{|f| L(f)} dV_z\\
& = \int_{\pi^{-1}(K\setminus Z(f))} \frac{|\nabla_z^2 f \circ \pi|}{|\tilde{f}| L(\tilde{f})} |J_\pi|^2 dV_w \\ 
& = \int_{\pi^{-1}(K)} \frac{|\nabla_z^2 f \circ \pi|}{|\tilde{f}| L(\tilde{f})} |J_\pi|^2 dV_w\\ 
&\le C \int_{\pi^{-1}(K)} \frac{|\nabla_w^2 \tilde{f}|}{|\tilde{f}| L(\tilde{f})} dV_w + C \int_{\pi^{-1}(K)} \frac{\sum_{k=1}^n |\partial_{z_k} f \circ \pi| |\nabla_w^2 \pi_k|}{|\tilde{f}| L(\tilde{f})} dV_w.
\end{align*}

The first term involves only the Hessian of the pullback $\tilde{f}$. Since $\tilde{f}$ is locally a monomial, this term reduces identically to the pure monomial case established in Lemma \ref{lem:monomial_A1}.

For the second term, we bound the derivatives of the coordinate components $\pi_k = z_k \circ \pi$. Since $\pi$ is constructed locally via a finite sequence of blow-ups with smooth centers, its coordinate components take the form of pure monomials in the standard affine charts: $\pi_k(w) = w_1^{m_1} \cdots w_n^{m_n}$. Direct differentiation yields:
$$ |\nabla_w^2 \pi_k| \le C \sum_{i,j} \frac{|\pi_k(w)|}{|w_i| |w_j|}. $$
Applying this to the second integral, the integrand is bounded by:
$$ \frac{|\partial_{z_k} f \circ \pi| |\nabla_w^2 \pi_k|}{|\tilde{f}| L(\tilde{f})} \le \sum_{i,j} \frac{1}{|w_i| |w_j| L(\tilde{f})} \left( \frac{|\partial_{z_k} f \circ \pi| |\pi_k(w)|}{|\tilde{f}|} \right). $$

It remains to bound the ratio inside the parenthesis. Define $g_k(z) = z_k \frac{\partial f}{\partial z_k}$. Expanding $f(z)$ as a Taylor series $f(z) = \sum c_A z^A$, we observe that $g_k(z) = \sum c_A a_k z^A$. The function $g_k(z)$ contains a subset of the exact same monomial terms as $f(z)$, scaled only by the exponent integers $a_k$. 

By the principalization of ideals, the pullback $f(\pi(w)) = w^K u(w)$, where $w^K$ represents the minimum vanishing order along the exceptional divisors for all terms in the infinite sum $\sum c_A (\pi(w))^A$. Because every term in the pullback $g_k \circ \pi$ is proportional to a term in $f \circ \pi$, the exact same monomial $w^K$ must perfectly factor out of $g_k(\pi(w))$. Thus, we can write $g_k(\pi(w)) = w^K h(w)$ for some holomorphic function $h(w)$.

Evaluating the ratio on the resolution space yields:
$$ \frac{|\partial_{z_k} f \circ \pi| |\pi_k|}{|f \circ \pi|} = \frac{|g_k \circ \pi|}{|f \circ \pi|} = \left| \frac{w^K h(w)}{w^K u(w)} \right| = \left| \frac{h(w)}{u(w)} \right| \le C', $$
where the uniform bound holds because $u(w)$ is a non-vanishing unit. 

Consequently, the entire integrand of the second term is controlled by $C'' \sum_{i,j} \frac{1}{|w_i| |w_j| L(\tilde{f})}$, which possesses the exact same integrable pole structure as the pure monomial Hessian. Both terms are therefore locally integrable on $\tilde{X}$, and we conclude that $\mathcal{A}_1 \in L^1_{\rm{loc}}(U)$.
\end{proof}

We are now fully equipped to prove the main regularity theorem.

\begin{proof}[Proof of Theorem \ref{thm:W21_regularity}]
In light of part 3 of Theorem \ref{th:maintheorem2}, it remains to show that the second-order weak derivatives
\[
\frac{\partial^{2}v}{\partial z_{i}\partial z_{j}},\quad 
\frac{\partial^{2}v}{\partial \bar z_{i}\partial \bar z_{j}},\quad 
\frac{\partial^{2}v}{\partial z_{i}\partial \bar z_{j}}
\]
exist on \(U\) and belong to \(L^{1}_{\mathrm{loc}}(U)\) near the zero set \(Z(f)\).  
Because these derivatives are mutually conjugate, it is sufficient to verify the claim for \(\frac{\partial^{2}v}{\partial z_{i}\partial z_{j}}\) and $\frac{\partial^{2}v}{\partial z_{i}\partial \bar z_{j}}$.

First, we define the candidate weak derivatives by
\[
v_{i,j}(z) :=
\begin{cases} 
\displaystyle\frac{\partial^{2}v}{\partial z_{i}\partial z_{j}}, &  z\notin Z(f),  \\[4pt]
0, & z\in Z(f); 
\end{cases}
\]
analogous definitions are made for \(v_{i,\bar j}\) and \(v_{\bar i,\bar j}\). We compute the classical second-order derivative on \(U\setminus Z(f)\) explicitly:
\begin{align} \label{eq:A_1}
\frac{\partial^{2}v}{\partial z_{i}\partial z_{j}}
&= -\frac{\partial_{z_{j}}(\partial_{z_{i}}f)}{f\,\log(|f|^{2})\,\bigl(\log(-\log|f|^{2})\bigr)^{2}} \nonumber \\ 
&\quad + \frac{\partial_{z_{i}}f\,\partial_{z_{j}}f}{f^{2}}\, \frac{1}{\log(|f|^{2})\,\bigl(\log(-\log|f|^{2})\bigr)^{2}} \nonumber \\ 
&\quad +\frac{\partial_{z_{i}}f\,\partial_{z_{j}}f}{ f ^{2}}\, \frac{1}{(\log(|f|^{2}))^{2}\,\bigl(\log(-\log|f|^{2})\bigr)^{2}} \nonumber \\ 
&\quad +2\frac{\partial_{z_{i}}f\,\partial_{z_{j}}f}{ f ^{2}}\, \frac{1}{(\log(|f|^{2}))^{2}\,\bigl(\log(-\log|f|^{2})\bigr)^{3}} \nonumber \\ 
& =: A_{1}+A_{2}+A_{3}+A_{4}.
\end{align}

Since \(v\) is continuous on \(U\), it belongs to \(L^{\infty}_{\mathrm{loc}}(U)\). Consequently, to prove that \(v_{i,j}\in L^{1}_{\mathrm{loc}}(U)\), it is enough to show that each term \(A_{k}\) (\(k=1,2,3,4\)) lies in \(L^{1}_{\mathrm{loc}}(U)\).

For the most singular term \(A_{1}\), Proposition \ref{prop:general_A1} rigorously guarantees that \(A_{1} \in L^{1}_{\mathrm{loc}}(U)\). Note that the terms $A_3$ and $A_4$ are pointwise bounded by:
\[
|A_{3}|, |A_{4}| \le C\left|\frac{\partial f}{f\log|f|^{2}}\right|^{2}.
\]
By the proof of Theorem \ref{l10}, we have already established that \(A_{3}, A_{4} \in L^{1}_{\mathrm{loc}}(U)\).

For \(A_{2}\), we apply the coarea formula together with the energy estimate of the level sets:
\begin{align*}
\int_{U}|A_{2}|\,dV
&\lesssim\int_{U} \frac{|\nabla|f||^{2}}{|f|^{2}\,|\log|f||\,\bigl(\log(-\log|f|^{2})\bigr)^{2}}\,dV \\
&\lesssim\int_{0}^{1/10}\Bigl( \int_{\{z\in U:|f|=t\}} \frac{|\nabla|f||^{2}} {|f|^{2}\,|\log|f||\,(\log(-\log|f|^{2}))^{2}} \cdot\frac{1}{|\nabla|f||}\,dS\Bigr)dt \\
&\lesssim\int_{0}^{1/10} \frac{1}{t^{2}\,|\log t|\,(\log(-\log t^{2}))^{2}} \Bigl(\int_{\{z\in U:|f|=t\}}|\nabla|f||\,dS\Bigr)dt \\
&\lesssim\int_{0}^{1/10} \frac{1}{t^{2}\,|\log t|\,(\log(-\log t^{2}))^{2}} \Bigl(\int_{\{z\in U:|f|=t\}}|\partial f|\,dS\Bigr)dt \\
&\lesssim\int_{0}^{1/10} \frac{1}{t\,|\log t|\,(\log(-\log t^{2}))^{2}}\,dt, \qquad \text{  by part (1) of Theorem \ref{t02}},
\end{align*}
which is a finite integral since its antiderivative is $-(\log(-\log t^2))^{-1}$. Hence \(A_{2}\in L^{1}_{\mathrm{loc}}(U)\). 

Having established that $A_1, A_2, A_3, A_4 \in L^1_{\mathrm{loc}}(U)$, we conclude that the candidate derivative $v_{i,j}$ is locally integrable. The same arguments show that \(v_{i,\bar j}\) and \(v_{\bar i,\bar j}\) also belong to \(L^{1}_{\mathrm{loc}}(U)\).

To complete the proof of $v \in W^{2,1}_{\mathrm{loc}}(U)$, it remains to verify that $v_{i,j}$ satisfies the definition of a weak derivative; that is, for any test function $\varphi \in C_0^\infty(U)$,  
\[
\int_{U} v \frac{\partial^2 \varphi}{\partial z_i \partial z_j } dV= \int_{U} v_{i,j} \varphi dV.
\]

Set \(U_{\varepsilon}:=\{z\in U:|f(z)|>\varepsilon\}\). On \(U_{\varepsilon}\) the function \(v\) is smooth, so by integrating by parts we have
\begin{align} \label{eq:t42}
\int_{U_{\varepsilon}}\varphi\,\frac{\partial^{2}v}{\partial z_{i}\partial z_{j}}\,dV
&= \int_{U_{\varepsilon}}  
\frac{\partial}{\partial z_{i}}\!\Bigl(\frac{\partial v}{\partial z_{j}}\Bigr)\,\varphi\,dV \nonumber \\ 
&= \int_{\{z \in U: |f| = \varepsilon\}}
\frac{\partial v}{\partial z_{j}}\,\varphi\,
\frac{\partial\rho}{\partial z_{i}}\frac{dS}{|\partial\rho|}
- \int_{U_{\varepsilon}}
\frac{\partial v}{\partial z_{j}}\,
\frac{\partial\varphi}{\partial z_{i}}\,dV,  
\end{align}
where \(\rho=\varepsilon^{2}-|f|^{2}\) is a defining function of \(U_{\varepsilon}\).
Similarly, integrating the second term by parts again yields
\begin{align} \label{eq:t41}
\int_{U_{\varepsilon}}\frac{\partial v}{\partial z_{j}}\,
\frac{\partial\varphi}{\partial z_{i}}\,dV
&= \int_{\{z \in U: |f| = \varepsilon\}} v\,
\frac{\partial\varphi}{\partial z_{i}}\,
\frac{\partial\rho}{\partial z_{j}}\frac{dS}{|\partial\rho|}
- \int_{U_{\varepsilon}} v\,
\frac{\partial^{2}\varphi}{\partial z_{i}\partial z_{j}}\,dV.  
\end{align}
Inserting \eqref{eq:t41} into \eqref{eq:t42}, we obtain
\begin{align}\label{eq:weak3}
\int_{U_\varepsilon}\varphi \frac{\partial^2 v}{\partial z_i \partial z_j} dV
&= \int_{\{z \in U: |f| = \varepsilon\}}\varphi \frac{\partial v}{\partial z_j} \frac{\partial \rho }{ \partial z_i} \frac{dS}{|\partial \rho|} 
   - \int_{\{z \in U: |f| = \varepsilon\}} v \frac{\partial \varphi}{\partial z_i} \frac{\partial \rho }{ \partial z_j} \frac{dS}{|\partial \rho|} \nonumber \\
   &\quad + \int_{U_\varepsilon} v \frac{\partial^2 \varphi}{\partial z_i \partial z_j} dV. 
\end{align}
Thus, it remains to show that the boundary integrals vanish as $\varepsilon \to 0$:
\begin{align}\label{eq:weak4}
\lim_{\varepsilon \to 0} \int_{\{z \in U: |f| = \varepsilon\}} \varphi \frac{\partial v}{\partial z_j} \frac{\partial \rho }{ \partial z_i} \frac{dS}{|\partial \rho|} = 0, 
\end{align}
\begin{align}\label{eq:weak5}
\lim_{\varepsilon \to 0} \int_{\{z \in U: |f| = \varepsilon\}} v \frac{\partial \varphi}{\partial z_i} \frac{\partial \rho }{ \partial z_j} \frac{dS}{|\partial \rho|} = 0. 
\end{align}
Letting $\varepsilon \to 0$ in \eqref{eq:weak3} will then yield the desired weak derivative property:
\begin{align*}\label{eq:weak6}
\int_{U} v_{i,j} \varphi dV = \int_{U} v \frac{\partial^2 \varphi}{\partial z_i \partial z_j } dV.
\end{align*}

For \eqref{eq:weak4}, we estimate:
\begin{eqnarray*}\label{eq:weak8}
\left| \int_{\{z\in U: |f|=\varepsilon\}}\varphi \frac{\partial v}{\partial z_j} \frac{\partial \rho }{ \partial z_i} \frac{dS}{|\partial \rho|} \right|
&\lesssim \int_{\{z\in U: |f|=\varepsilon\}} \frac{|\partial f|}{|f| |\log |f| |  (\log(-\log |f|^2))^2} dS \nonumber \\ 
& \lesssim  \frac{1}{\varepsilon |\log \varepsilon| \, (\log(-\log \varepsilon^2))^2} \int_{\{z\in U: |f|=\varepsilon\}}  |\partial f| dS \nonumber \\ 
& \lesssim  \frac{O(\varepsilon)}{\varepsilon |\log \varepsilon| \, (\log(-\log \varepsilon^2))^2},
\end{eqnarray*}
where the last line uses part $(1)$ of Theorem \ref{t02}. This clearly tends to $0$ as $\varepsilon \to 0$.

For \eqref{eq:weak5}, we use Theorem \ref{th:levelsurface}, which gives:
\begin{eqnarray*}\label{eq:weak7}
\left| \int_{\{z\in U: |f|=\varepsilon\}} v \frac{\partial \varphi}{\partial z_i} \frac{\partial \rho }{ \partial z_j} \frac{dS}{|\partial \rho|} \right|   
& \lesssim  \frac{1}{\log(-\log \varepsilon^2)} \left| \{z\in U: |f|=\varepsilon\} \right| \nonumber \\ 
& \lesssim \frac{\varepsilon^{\gamma}}{\log(-\log \varepsilon^2)}, \quad 0 < \gamma \leq 1,
\end{eqnarray*}
which also tends to \(0\) as \(\varepsilon \rightarrow 0\). The same reasoning applies to the derivatives \(\frac{\partial^{2}v}{\partial\bar z_{i}\partial\bar z_{j}}\) and \(\frac{\partial^{2}v}{\partial z_{i}\partial\bar z_{j}}\). Consequently, $v \in W^{2,1}_{\mathrm{loc}}(U)$ near \(Z(f)\).

Finally, we show that \(v\notin W^{2,p}_{\mathrm{loc}}(U)\) for any \(p>1\) near \(Z(f)\).  We proceed by contradiction. Assume that there exists a \( p > 1 \) such that \( v \in W^{2,p}_{\mathrm{loc}}(U) \). By the definition of Sobolev spaces, this implies that all real second-order weak partial derivatives of \( v \) are locally in \( L^p(U) \). Consequently, any pure complex second-order derivative \( \frac{\partial^2 v}{\partial z_i \partial z_j} \), which is a finite linear combination of real second derivatives, must also belong to \( L^p_{\mathrm{loc}}(U) \).

Outside the analytic set \( Z(f) \), we have the exact decomposition as in (\ref{eq:A_1}). To determine the integrability of this expression, we must identify its leading singular term as \( z \) approaches \( Z(f) \). Since the regular points of \( Z(f) \) form a dense open subset of the variety, we can invoke the localization lemma to select a regular point \( p \in Z(f) \) and a sufficiently small neighborhood \( V \subset U \) around it. There exists a biholomorphic coordinate transformation \( w = \Phi(z) \) such that in these local coordinates, the zero locus \( Z(f) \cap V \) is rectified to the hyperplane \( \{w_1 = 0\} \), and the function can be expressed as \( f(z(w)) = w_1 E(w) \), where \( E(w) \) is a non-vanishing holomorphic function (i.e., \( E(0) \neq 0 \)).

Since local Sobolev regularity is invariant under biholomorphic diffeomorphisms, the function in the new coordinates, \( \tilde{v}(w) := v(z(w)) \), must satisfy \( \frac{\partial^2 \tilde{v}}{\partial w_1^2} \in L^p_{\mathrm{loc}}(\Phi(V)) \). 

We now analyze the asymptotic singularity of the terms \( A_1 \) and \( A_2 \) with respect to the normal derivative \( \partial_{w_1} \). Evaluating the derivatives of \( f \), we have \( \partial_{w_1} f = E(w) + w_1 \partial_{w_1} E(w) = O(1) \) (which is strictly non-zero near the origin since \( E(0) \neq 0 \)), and \( \partial_{w_1}^2 f = 2\partial_{w_1} E(w) + w_1 \partial_{w_1}^2 E(w) = O(1) \). Substituting these into the expressions for \( A_1 \) and \( A_2 \), we observe the following behavior as \( w_1 \to 0 \):
\[
|A_1| \approx \left| \frac{O(1)}{w_1 E(w) \log|w_1|^2 \dots} \right| = O\left( \frac{1}{|w_1| \, \bigl|\log|w_1|\bigr|} \right),
\]
\[
|A_2| \approx \left| \frac{(O(1))^2}{(w_1 E(w))^2 \log|w_1|^2 \dots} \right| = O\left( \frac{1}{|w_1|^2 \, \bigl|\log|w_1|\bigr|} \right).
\]
The term \( A_2 \) exhibits a strong pole of order \( O(|w_1|^{-2}) \), whereas \( A_1 \) only possesses a weak pole of order \( O(|w_1|^{-1}) \). Furthermore, \( A_3 \) and \( A_4 \) contain higher powers of the logarithmic damping factor in their denominators, making them strictly subdominant to \( A_2 \). Thus, \( A_2 \) is the absolute leading singular term. There exists a constant \( C > 0 \) such that, within a sufficiently small polydisc \( \Delta^n = \Delta_1 \times \Delta' \subset \Phi(V) \) where \( \Delta_1 = \{w_1 \in \mathbb{C} \mid |w_1| < r_0 < 1/2\} \), the \( p \)-th power of the second derivative satisfies the strict pointwise lower bound 
\[
\left| \frac{\partial^2 \tilde{v}}{\partial w_1^2} \right|^p \ge C \frac{1}{|w_1|^{2p} \, \bigl|\log|w_1|\bigr|^p \, \bigl|\log(-\log|w_1|^2)\bigr|^{2p}}.
\]

To test the condition \( \frac{\partial^2 \tilde{v}}{\partial w_1^2} \notin L^p_{\mathrm{loc}}(\Phi(V)) \), we integrate this lower bound over the polydisc. Applying Fubini's theorem to separate the tangential variables \( w' \), we obtain 
\[
\int_{\Phi(V)} \left| \frac{\partial^2 \tilde{v}}{\partial w_1^2} \right|^p \, dV_w \ge \text{Vol}(\Delta') \int_{\Delta_1} \frac{C}{|w_1|^{2p} \, \bigl|\log|w_1|\bigr|^p \, \bigl|\log(-\log|w_1|^2)\bigr|^{2p}} \, dV_{w_1}.
\]
Passing to polar coordinates \( w_1 = s e^{i\theta} \) on the disk \( \Delta_1 \), the inner integral yields 
\[
\int_0^{2\pi} \int_0^{r_0} \frac{C}{s^{2p} \, |\log s|^p \, \bigl|\log(-\log s^2)\bigr|^{2p}} \, s \, ds \, d\theta \approx \int_0^{r_0} \frac{1}{s^{2p-1} \, |\log s|^p \, \bigl|\log(-\log s^2)\bigr|^{2p}} \, ds.
\]

For any \( p > 1 \), the exponent of \( s \) in the denominator satisfies \( 2p - 1 > 1 \). This constitutes a strong power-law singularity at \( s=0 \). The logarithmic factors in the denominator grow far too slowly to compensate for the strict polynomial divergence of \( s^{1-2p} \). Consequently, the radial integral strictly diverges to \( \infty \).

This establishes that \( \int_{\Phi(V)} \left| \frac{\partial^2 \tilde{v}}{\partial w_1^2} \right|^p \, dV_w = \infty \), directly contradicting our initial assumption that all second-order weak derivatives are locally in \( L^p \). We conclude that \( v \notin W^{2,p}_{\mathrm{loc}}(U) \) for any \( p > 1 \).
 
\end{proof}

It is instructive that we explicitly verify the integrability of $\mathcal{A}_1$ for the cusp singularity $f(z_1, z_2) = z_1^2 - z_2^3$, illustrating the ``magic'' cancellation between the resolution Jacobian and the logarithmic damping factor $L(f)$. In particular, for this case, the Hessian $|\nabla^2 f|$ does not vanish at the origin, causing that the integrability of $\mathcal{A}_1$ fully depends on the the denominator.
\begin{example}[Integrability of $ \mathcal{A}_1$ for the cusp singularity]
Let $U$ be a neighborhood of the origin and define $f(z_1, z_2) = z_1^2 - z_2^3$. Let $Z(f) = \{(z_1,z_2)\in \C^2:f(z_1,z_2)=0\}$. Define for $z \in U \setminus Z(f)$:
\begin{equation*}
    \mathcal{A}_1(z) = \frac{|\nabla^2 f|}{|f(z)| \cdot L(f(z))}
\end{equation*}
where 
\begin{equation*}
    L(f(z)) = \left| \log |f(z)|^2 \right| \cdot \left( \log(-\log |f(z)|^2) \right)^2.
\end{equation*}
Then we have  $\mathcal{A}_1 \in L_{\text{\rm{loc}}}^1(U)$.
\end{example}

\begin{proof}
The proof proceeds via Hironaka's resolution of singularities. We blow up the origin in $\mathbb{C}^2$. The resolution space is covered by two affine charts. By symmetry, it suffices to analyze Chart 1, where we set:
\begin{equation*}
    z_1 = z, \quad z_2 = z_1 t, \quad (z_1,t)\in \C^2.
\end{equation*}
The blow-down map $\pi(z_1, t)=(z_1, z_1t).$  Its Jacobian determinant is

$$J_{\pi}=\det\left(\begin{matrix} 1 & 0\\ t&z_{1}\end{matrix}\right)=z_{1}.$$ 
The pull back of $f$ is
\begin{eqnarray*}
 F(z_{1},t): =f\circ\pi={z_{1}}^{2}-(z_{1}t)^{3}=z_{1}^{2}(1-z_{1}t^{3})
=:{z_{1}}^{2}\cdot\eta(z_{1},t)   
\end{eqnarray*}
where $\eta(z_{1}, t)=1-z_{1}t^{3}$ is a nowhere vanishing holomorphic function near $ z_{1}=0 $. The exceptional divisor is $ E=\{z_{1}=0\} $.
  
A direct calculation gives the first and second derivatives of $F$:
$$\frac{\partial F}{\partial z_1} = 2z_1 - 3z_1^2 t^3, \quad \frac{\partial F}{\partial t} = -3z_1^3 t^2$$
$$\frac{\partial^2 F}{\partial z_1^2} = 2 - 6z_1 t^3, \quad \frac{\partial^2 F}{\partial t^2} = -6z_1^3 t, \quad \frac{\partial^2 F}{\partial z_1 \partial t} = -9z_1^2 t^2$$

As $z_1 \to 0$, we have $|\nabla_w^2 F|$ tends to $2$, i.e., the Hessian of the pullback does not vanish on the exceptional divisor. Using the chain rule  as in Proposition \ref{prop:general_A1}, there exists a constant $C > 0$ such that
\begin{eqnarray}\label{eq:tiduxin}
  |\nabla_z^2 f \circ \pi| \le \frac{C}{|J_\pi|^2} \left( |\nabla_w^2 F| + \sum_{k=1}^2 |\partial_{z_k} f \circ \pi| |\nabla_w^2 \pi_k| \right)  
\end{eqnarray}
 
For the cusp, we have 
\begin{enumerate}
    \item $\partial_{z_1} f = 2z_1$, so $|\partial_{z_1} f \circ \pi| = 2|z_1|$. Moreover $\pi_1(z_1, t) = z_1$, so $|\nabla_w^2 \pi_1| = 0$.
    \item $\partial_{z_2} f = -3z_2^2$, so $|\partial_{z_2} f \circ \pi| = 3|z_1 t|^2 = 3|z_1|^2|t|^2$. The map $\pi_2(z_1, t) = z_1 t$ satisfies $|\nabla_w^2 \pi_2| \le C_0$.
\end{enumerate}
Thus the second term of the right hand side of (\ref{eq:tiduxin}) is bounded by $\mathcal{O}(|z_1|^2)$ and is negligible near $z_1 = 0$. Consequently, for sufficiently small $|z_1|$,
$$|\nabla_z^2 f \circ \pi| \le \frac{C}{|z_1|^2}(2 + \mathcal{O}(|z_1|^2)) \le \frac{C_1}{|z_1|^2}.$$

Now we look at the pointwise bound for the integrand. We have $|F| = |z_1|^2 |\eta|$ with $|\eta|$ bounded above and below near $z_1 = 0$. Hence,
$$
\frac{|\nabla_z^2 f \circ \pi|}{|F|} \le \frac{C_1}{|z_1|^2} \frac{1}{|z_1|^2 |\eta|} \le \frac{C_2}{|z_1|^4}.
$$
Multiplying by the Jacobian factor $|J_\pi|^2 = |z_1|^2$ (which comes from the change of variables $dV_z = |J_\pi|^2 dV_w$), we obtain:
$$\frac{|\nabla_z^2 f \circ \pi|}{|F|} |J_\pi|^2 \le \frac{C_2}{|z_1|^2}$$

Next we look at the asymptotics of the logarithmic damping factor $L(F)$. Since $|F| = |z_1|^2 |1 - z_1t^3|$, near the critical locus where $z=0$, we have:
$$
|F| \sim |z_1|^2, \quad \log|F|^2 = 4\log|z_1| + \log|\eta|^2 \sim 4\log|z_1|\quad \text{as}  \quad  z _1 \rightarrow 0.  
$$
So, $\big|\log|F|^2\big| \sim 4\big|\log|z_1|\big|$ as  $z _1 \rightarrow 0$. 
\begin{equation*}
-\log|F|^2 \sim -4\log|z_1| \quad  \text{as}  \quad  z _1 \rightarrow 0. 
\end{equation*}
Thus,
\begin{equation*}
\log(-\log|F|^2) \sim \log(-4\log|z_1|) \sim \log 4 + \log(-\log|z_1|).
\end{equation*}
Therefore,
\begin{equation*}
L(F) = \big|\log|F|^2\big| \cdot (\log(-\log|F|^2))^2 \sim 4\big|\log|z_1|\big| \cdot (\log|\log|z_1||)^2
\end{equation*}
Now we are ready to integrate in the resolved coordinates. In Chart 1, write $z_1 = r e^{i\theta}$ with $r = |z_1|$.
The Euclidean volume element on $X$ (in the $w$-coordinates) is
$$dV_w = r dr d\theta dV_t$$
where $dV_t$ is the Lebesgue measure in the complex $t$-plane.
Then for any compact $K \subset U$, pulling back from $U \setminus Z(f)$ via the biholomorphism (which is a diffeomorphism away from $Z(f)$) gives 
\begin{eqnarray*}
  \int_K \mathcal{A}_1(z) dV_z & = & \int_{\pi^{-1}(K)} \frac{|\nabla_z^2 f \circ \pi|}{|F| L(F)} |J_\pi|^2 dV_w  \\ \nonumber 
  & \leq &\iint \frac{C_2}{|z_1|^2} \frac{1}{4\big|\log|z_1|\big| (\log|\log|z_1||)^2} r dr d\theta dV_t\\ \nonumber 
  & \approx & \int_0^\delta \frac{C_3}{r |\log r| (\log|\log r|)^2} dr \\ \nonumber 
  &< &  \infty,  \quad \text{for } \quad  \delta < \frac{1}{2},
\end{eqnarray*}
 proving that $\mathcal{A}_1 \in L_{\text{loc}}^1(U)$.
\end{proof}

\section{Weighted singular exponent of holomorphic functions}
In this section we explore some weighted singular exponents of holomorphic functions, especially the important invariant complex singularity exponents, related to our main theorems.

\subsection{Sharp examples for \L{}ojasiewicz exponents}
In this subsection, we compute explicitly the \L{}ojasiewicz exponent $\alpha$, 
the level-set area exponent $\gamma$, and the sublevel-set volume exponent $\tau$ 
for two simple but instructive families of holomorphic functions. 
The calculations illustrate that the exponents $\gamma$ and $\tau$ 
appearing in Theorems~\ref{th:levelsurface} and~\ref{th:volume_sublevel} 
can indeed attain a wide range of values within their theoretical bounds 
$0<\gamma\le1$ and $0<\tau\le2$.

\label{subsec:monomial-example}
Let $k_1,k_2$ be positive integers. Consider the monomials  $f(z_1,z_2)=z_1^{k_1}z_2^{k_2}$ defined in a neighborhood of origin in $\mathbb C^2$. Its gradient is  
	\begin{align*}
		\partial f = \bigl(k_1 z_1^{k_1-1}z_2^{k_2},\; k_2 z_1^{k_1}z_2^{k_2-1}\bigr).
	\end{align*}
	We determine the optimal \L{}ojasiewicz exponent $\alpha$; that is, 
	the smallest $\alpha<1$ for which there exist constants $C>0$ and a neighborhood $V$ of $0$ such that  \begin{align*}
		|\partial f(z)|\ge C|f(z)|^\alpha,  \ \  \text{for any} \  z\in V.
	\end{align*}
	
	To analyse the behavior near the origin, we examine curves of the form  
	\begin{align*}
		z_1(t)=t^{a_1},\quad z_2(t)=t^{a_2}\qquad (t\to0^+),
	\end{align*}
	where $a_1,a_2\ge0$ are not both zero. Along such a curve we have  
	\begin{align*}
		|f(z(t))|=t^{\,k_1a_1+k_2a_2}\bigl(1+o(1)\bigr),
	\end{align*}
	and  
	\begin{align*}
		|\partial f(z(t))|
		= \Bigl(|k_1z_1^{k_1-1}z_2^{k_2}|^2+|k_2z_1^{k_1}z_2^{k_2-1}|^2\Bigr)^{1/2}
		\sim c\,t^{\,m_\nabla(a_1,a_2)},
	\end{align*}
	with  
	\begin{align*}
		m_\nabla(a_1,a_2) : = 
		k_1a_1+k_2a_2-\max\{ a_1, a_2 \}
	\end{align*}
	where $c>0$ is a constant depending on $a_1,a_2$.  
	For the \L{}ojasiewicz inequality to hold uniformly near $0$, we must have  
	\begin{align*}\label{eq:examplein1}
		m_\nabla(a_1,a_2)\leq\alpha\,(k_1a_1+k_2a_2)\qquad\text{for all }a_1,a_2\ge0.
	\end{align*}
	then we get
	\begin{align*}
		\alpha\,(k_1a_1+k_2a_2) \geq k_1a_1+k_2a_2-\max \{ a_1, a_2\}
	\end{align*}
	so 
	\begin{align*}
		\alpha \geq 1-\frac{\max\{ a_1, a_2\}}{k_1a_1+k_2a_2} \qquad\text{for all }a_1,a_2\ge0.
	\end{align*}
	We need to get the maximum of the right of above inequality.
	Denote $M=\max\{ a_1, a_2 \} $, then $k_1a_1+k_2a_2 \leq M(k_1+k_2)$, so
	\begin{align*}
		1-\frac{M}{k_1a_1+k_2a_2} \leq 1-\frac{M}{M(k_1+k_2)}=1-\frac{1}{k_1+k_2}. 
	\end{align*}
	On the other hand taking $a_1=1, a_2=1$, we get 
	\begin{align*}
		1-\frac{M}{k_1a_1+k_2a_2} = 1-\frac{1}{k_1+k_2}. 
	\end{align*}
	So $$\alpha \geq 1-\frac{1}{k_1+k_2}$$
	
	To see that this value is actually attainable, we examine the ``worst'' direction: Take $a_1=1$, $a_2=1$. Then $|f|\sim t^{k_1+k_2}$ and $|\partial f|\sim t^{k_1+k_2-1}$, so $|\partial f|\sim |f|^{1-\frac{1}{(k_1+k_2)}}$. Consequently, the optimal \L{}ojasiewicz exponent for the monomial is  
	
	\begin{align*} 
		\alpha=1-\frac{1}{k_1+k_2}. 
	\end{align*}
    
	\begin{corollary}\label{cor:monomial-exponents}
		For $f(z_1,z_2)=z_1^{k_1}z_2^{k_2}$, the exponents $\gamma$ and $\tau$ 
		defined in Theorem ~\ref{th:levelsurface} and~ Theorem \ref{th:volume_sublevel} are  
		\begin{align*} 
			\gamma=1-\alpha=\frac1{k_1+k_2},\qquad
			\tau=2-2\alpha=\frac2{k_1+k_2} .
		\end{align*}
		In particular, by choosing $k_1$ and $k_2$ appropriately we obtain examples where 
		$\gamma$ takes any value of the form $1,\frac12,\frac13,\dots$ 
		(tending to $0$ as $\min\{k_1,k_2\}\to\infty$), 
		and $\tau$ takes any value of the form $2,1,\frac23,\dots$ (tending to $0$ as well).
	\end{corollary}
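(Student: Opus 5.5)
The plan is to read the corollary directly off the two proofs in Section 2.6, where the exponents were produced as explicit functions of the \L{}ojasiewicz exponent. The proof of Theorem~\ref{th:levelsurface} gives $\mathcal H^{2n-1}(\{|f|=\varepsilon\})\lesssim \varepsilon^{1-\alpha}$ with $\gamma:=1-\alpha$. The proof of Theorem~\ref{th:volume_sublevel} gives $\operatorname{Vol}(\{|f|<\varepsilon\})=O(\varepsilon^{2-2\alpha})$ in \eqref{eq:volume-est}, so $\tau=2-2\alpha$. Since the computation preceding the corollary identified the optimal exponent for $z_1^{k_1}z_2^{k_2}$ as $\alpha=1-\frac1{k_1+k_2}$, substituting yields $\gamma=\frac1{k_1+k_2}$ and $\tau=\frac2{k_1+k_2}$.

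The one point needing care is that Lemma~\ref{lm:actuallyused} is semi-global. The inequality $|\partial f|\ge C|f|^\alpha$ must hold on all of $\{z\in U:|f|<\varepsilon_0\}$, not only near the origin. I will check this directly for the monomial. From $|\partial f|^2=|f|^2\bigl(k_1^2/|z_1|^2+k_2^2/|z_2|^2\bigr)$ we get $|\partial f|\ge |f|\max(k_1/|z_1|,k_2/|z_2|)$. Since $|z_j|<1$, we have $|f|\le |z_j|^{k_j}\le |z_j|$ whenever $k_j\ge1$. Let $j$ be the index with the larger $|z_j|$. Then $|f|\le \max(|z_1|,|z_2|)^{k_1+k_2}$, so $\max(1/|z_1|,1/|z_2|)\ge 1/\max|z_j|\ge |f|^{-1/(k_1+k_2)}$. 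This gives $|\partial f|\ge \min(k_1,k_2)\,|f|^{1-1/(k_1+k_2)}$ on the whole polydisc. So the exponent $\alpha=1-\frac1{k_1+k_2}$ is admissible globally, and the two theorems produce exactly the stated $\gamma$ and $\tau$.

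For the ``in particular'' clause, I take $k_1=1$ and $k_2=k-1$ for $k\ge2$, which gives $\gamma=\frac1k$ and $\tau=\frac2k$. This realizes $\frac12,\frac13,\dots$ and $1,\frac23,\dots$. The value $\gamma=1$, $\tau=2$ corresponds to the case where one exponent $k_j=0$, i.e.\ the linear function $f=z_1$. That case reduces to one variable with $\alpha=0$, consistent with the equivalence part of Theorem~\ref{tC}. Letting $\min\{k_1,k_2\}\to\infty$ sends both exponents to $0$.

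The main obstacle is interpretive. It concerns whether ``the exponents defined in'' those theorems means the exponents produced by the proofs from $\alpha$, or the optimal decay rates of the level and sublevel sets. I will adopt the former reading, which is what the statement $\gamma=1-\alpha$, $\tau=2-2\alpha$ literally asserts. I will also remark that sharpness in the latter sense can be checked from the $n=2$ monomial computations in Section 2.3. For instance, $\operatorname{Vol}(\{|z_1^{k_1}z_2^{k_2}|<\varepsilon\})$ can be evaluated by the same change of variables $u_j=r_j^{k_j}$. I will not rely on that for the corollary as stated.
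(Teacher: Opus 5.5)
Your argument is correct under the reading you adopt, and it follows the paper's route. The corollary amounts to substituting the optimal exponent $\alpha=1-\frac{1}{k_1+k_2}$ into $\gamma=1-\alpha$ and $\tau=2-2\alpha$, as produced by the proofs of Theorems~\ref{th:levelsurface} and~\ref{th:volume_sublevel}. Your pointwise bound $|\partial f|\ge \min(k_1,k_2)\,|f|^{1-1/(k_1+k_2)}$ on the whole bidisc improves on the paper. The paper's test along the curves $(t^{a_1},t^{a_2})$ only shows that no smaller $\alpha$ can work; the diagonal gives this necessity. It never checks, with a constant uniform in $(a_1,a_2)$, that this $\alpha$ holds at every point, which Lemma~\ref{lm:actuallyused} requires.

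You should drop the closing remark, however. Sharpness in the optimal-decay sense is false, and the computation you propose would show the opposite. For $f=z_1z_2$ on the unit bidisc,
\[
\operatorname{Vol}\bigl(\{|f|<\varepsilon\}\bigr)=\pi^2\varepsilon^2\bigl(1+2\log\tfrac{1}{\varepsilon}\bigr),\qquad
\mathcal{H}^{3}\bigl(\{|f|=\varepsilon\}\bigr)=4\pi^2\varepsilon\int_{\varepsilon}^{1}\sqrt{1+\varepsilon^{2}r^{-4}}\,dr\le 8\pi^2\varepsilon,
\]
whereas the corollary assigns $\tau=1$ and $\gamma=\frac12$. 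In general, the substitution $u_j=r_j^{k_j}$ gives $\operatorname{Vol}(\{|z_1^{k_1}z_2^{k_2}|<\varepsilon\})\asymp\varepsilon^{2/\max(k_1,k_2)}$, with an extra factor $\log\frac{1}{\varepsilon}$ when $k_1=k_2$. This matches the log canonical threshold $1/\max(k_1,k_2)$, and $2/\max(k_1,k_2)>2/(k_1+k_2)$ whenever both $k_j\ge 1$.

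So the corollary holds only when $\gamma,\tau$ are read as the exponents $1-\alpha$ and $2-2\alpha$ produced by the proofs, exactly as you chose. Under the optimal-rate reading it fails. The equivalence ``$\gamma=1$ iff $Z(f)$ is smooth'' in Theorem~\ref{tC} would fail too, since the node $z_1z_2$ already has level-set area $O(\varepsilon)$.

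Your use of $f=z_1$, i.e.\ $k_2=0$, to realize $\gamma=1$ and $\tau=2$ is also correct and necessary. With $k_1,k_2\ge 1$, as the paper's subsection assumes, the formula gives $\gamma\le\frac12$.
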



 Let $p,q$ be positive integers with $2 \leq p \leq q$. Consider the holomorphic function  $f(z_1,z_2)=z_1^{p}-z_2^{q}$ defined in a neighborhood of origin in $\mathbb C^2$. Its gradient is  
	\begin{align*}
		\partial f=\bigl(p z_1^{p-1},\, -q z_2^{q-1}\bigr).
	\end{align*}
First we analyse the lower bound of $\alpha$. We examine the behaviour of $f$ along smooth curves of the form$$z_1=t^{a_1},\qquad z_2=t^{a_2}.$$
	where $a_1,a_2 > 0, a_1p \neq a_2q .$ Along such a curve we have 
	\begin{equation*}
		|f(z(t))|
		=\bigl|t^{a_1p}-t^{a_2q}\bigr|
		\sim t^{\min(a_1p,\;a_2q)},
	\end{equation*} 
	and  
	\begin{align*}
		|\partial f(z(t))|
		=\sqrt{p^2\,t^{2a_1(p-1)}+q^2\,t^{2a_2(q-1)}}
		\sim t^{m_\nabla(a_1,a_2)}.
	\end{align*}
	with  
	\begin{align*}
		m_\nabla(a_1,a_2) : = 
		\min\bigl\{a_1(p-1),\;a_2(q-1)\bigr\}.
	\end{align*} 
	
	For the \L{}ojasiewicz inequality to hold uniformly near $0$, it is necessary that  
	
	\begin{align*}\label{eq:examplein1}
		m_\nabla(a_1,a_2)\leq\alpha\,\min\{a_1p,\;a_2q\}\qquad\text{for all }a_1,a_2 > 0, 
	\end{align*}
We now examine two cases.\\

	\noindent {Case 1.}  If $q a_2 < p a_1$, then $\min\{p a_1,q a_2\}=q a_2$. Since
	\begin{equation*}
		m_{\nabla}(a_1,a_2):=\min\big\{a_1(p-1),\,a_2(q-1)\big\}\le a_2(q-1),
	\end{equation*}
	we obtain
	\begin{equation*}
		\frac{m_{\nabla}(a_1,a_2)}{\min\{p a_1,q a_2\}}
		\le
		\frac{(q-1)a_2}{q a_2}
		=
		1-\frac{1}{q}.
	\end{equation*}
	From the necessary condition
	\begin{equation*}
		m_{\nabla}(a_1,a_2)\le \alpha\,\min\{p a_1,q a_2\},
	\end{equation*}
	it follows that $\alpha \ge m_{\nabla}(a_1,a_2)/\min\{p a_1,q a_2\},$ for all $a_1,a_2 > 0$. Hence, in this case it suffices to require
	\begin{equation*}
		\alpha \ge 1-\frac{1}{q}.
	\end{equation*}
	
	\medskip

\noindent {Case 2.}  If $p a_1 > q a_2$, then $\min\{p a_1,q a_2\}=p a_1$. Similarly,
	\begin{equation*}
		m_{\nabla}(a_1,a_2)\le a_1(p-1),
	\end{equation*}
	and therefore
	\begin{equation*}
		\frac{m_{\nabla}(a_1,a_2)}{\min\{p a_1,q a_2\}}
		\le
		\frac{(p-1)a_1}{p a_1}
		=
		1-\frac{1}{p},
	\end{equation*}
	Hence, Hence, in this case it suffices to require
	\begin{equation*}
		\alpha \ge 1-\frac{1}{p}.
	\end{equation*}
	
	By assumption $p \leq q$, we know that $1 - \frac{1}{p} \leq 1 - \frac{1}{q}.$ 
	Hence, it suffices to require $\alpha \geq 1 - \frac{1}{q}.$

	According to the above calculation, the optimal \L{}ojasiewicz exponent for  $f(z_1,z_2)=z_1^{p}- z_2^{q}$ is  
	 $\alpha = 1-\frac1q\ .$
\begin{corollary}\label{cor:binomial-exponents}
For $f(z_1,z_2)=z_1^{\,p}-z_2^{\,q}$ with $p\le q$,  
\[
\gamma=1-\alpha=\frac1q,\qquad \tau=2-2\alpha=\frac2q .
\]
Thus, by choosing $q$ large we obtain examples where $\gamma$ and $\tau$ are arbitrarily close to $0$, 
while for $q=1$ (and necessarily $p=1$) we recover the smooth case $\gamma=1$, $\tau=2$.
\end{corollary}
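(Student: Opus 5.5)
The plan is to read off both exponents from the optimal \L{}ojasiewicz exponent $\alpha=1-\frac1q$ just computed. The proofs of Theorem~\ref{th:levelsurface} and Theorem~\ref{th:volume_sublevel} produce $\gamma=1-\alpha$ and $\tau=2-2\alpha$ directly, so substituting gives $\gamma=\frac1q$ and $\tau=\frac2q$.

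The computation above only shows that $\alpha\ge 1-\frac1q$ is necessary, and the paper then \emph{declares} this value optimal. To make the corollary honest I would first confirm the matching upper bound, namely
\[
|\partial f|\ge C|f|^{1-1/q}
\]
near $0$. Since $p\le q$, we have $|f|\le |z_1|^p+|z_2|^q$.

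\begin{itemize}
\item If $|z_1|^p\ge |z_2|^q$, then $|f|\le 2|z_1|^p$. Hence $|f|^{1-1/q}\lesssim |z_1|^{p-p/q}\le |z_1|^{p-1}$ for $|z_1|<1$, because $p/q\le 1$. This is in turn $\lesssim|\partial f|$.
\item Otherwise $|f|\le 2|z_2|^q$, so $|f|^{1-1/q}\lesssim |z_2|^{q-1}\lesssim|\partial f|$.
\end{itemize}

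Along the curve $z_1=0$, $z_2=t$ we have $|f|=t^q$ and $|\partial f|\sim t^{q-1}$. So the exponent $1-\frac1q$ is attained and cannot be lowered.

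The step I regard as the main obstacle is justifying that $\gamma,\tau$ in the corollary mean the values produced by the proofs, i.e.\ $1-\alpha$ and $2-2\alpha$, rather than sharp growth exponents. As sharp exponents the identification fails. For the smooth-point case $p=1$ we get $\alpha=0$, consistent with Theorem~\ref{tC}. But for $p=q=2$ (a node) the volume of $\{|f|<\varepsilon\}$ is actually of order $\varepsilon^2|\log\varepsilon|$, not merely $O(\varepsilon^{1})$. So I would state the corollary as giving the exponents supplied by the method: $\mathcal H^{2n-1}(\{|f|=\varepsilon\})=O(\varepsilon^{1/q})$ and $\mathrm{Vol}(\{|f|<\varepsilon\})=O(\varepsilon^{2/q})$. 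I would not assert sharpness beyond the sharpness of $\alpha$ itself.

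For the final remark, letting $q\to\infty$ gives exponents tending to $0$. The degenerate reading $q=1$ forces $p=1$, so $f=z_1-z_2$ is linear and the smooth case $\gamma=1$, $\tau=2$ of Theorem~\ref{tC} applies. Note that this lies outside the standing assumption $p\ge2$.
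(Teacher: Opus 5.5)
Your proof is correct, and its skeleton matches the paper's. Once the optimal \L{}ojasiewicz exponent is known to be $\alpha=1-\frac1q$, the corollary is the substitution $\gamma=1-\alpha$, $\tau=2-2\alpha$ read off from the proofs of Theorems~\ref{th:levelsurface} and~\ref{th:volume_sublevel}.

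The difference is in how $\alpha$ is established. The paper only tests the inequality along curves $z=(t^{a_1},t^{a_2})$ with $a_1p\neq a_2q$, and observes that the ratio $m_\nabla/\min\{pa_1,qa_2\}$ never exceeds $1-\frac1q$. That is only a necessary-condition check, and it has two gaps:
\begin{itemize}
\item By construction it omits the cancellation curves $a_1p=a_2q$, where $|f|$ is much smaller than either monomial, so it does not prove $|\partial f|\ge C|f|^{1-1/q}$ at every point.
\item It never exhibits a curve on which $1-\frac1q$ is actually attained.
\end{itemize}
Your case split based on $|f|\le|z_1|^p+|z_2|^q$ is a genuine proof, since cancellation only decreases $|f|$ and so costs nothing. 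It also holds on the whole unit bidisc, which is exactly the semi-global form required in Lemma~\ref{lm:actuallyused}. The curve $z_1=0$ then gives optimality, and there you rightly rely on $p\ge2$; for $p=1$ one has $\alpha=0$ and the formula fails. The paper's curve heuristic is a quick way to guess exponents, but your argument is what certifies the value of $\alpha$.

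Your caveat is also correct, and the node illustrates it for both exponents. Take $p=q=2$ and set $w_1=z_1-z_2$, $w_2=z_1+z_2$, so that $f=w_1w_2$. On the unit $w$-bidisc, which suffices up to constants:
\begin{itemize}
\item the volume is $\mathrm{Vol}(\{|w_1w_2|<\varepsilon\})=\pi^2\varepsilon^2(1+2|\log\varepsilon|)$;
\item the level-set area is $4\pi^2\varepsilon\int_\varepsilon^1\sqrt{1+\varepsilon^2r^{-4}}\,dr\le 8\pi^2\varepsilon$.
\end{itemize}
So the true rates are far better than the corollary's $O(\varepsilon^{1})$ and $O(\varepsilon^{1/2})$. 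The corollary is only meaningful with $\gamma,\tau$ \emph{defined} as $1-\alpha$ and $2-2\alpha$, as in the remark following Theorem~\ref{tC}. The same example shows that the sharpness claim, and the equivalence ``$\gamma=1$ iff $Z(f)$ is smooth'' in Theorem~\ref{tC}, cannot be read as statements about optimal decay exponents.
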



The following table collects the results obtained above.
\[
\begin{array}{c|c|c|c}
\text{Function }f(z_1,z_2) & \alpha\;(\text{\L{}ojasiewicz}) & \gamma=1-\alpha & \tau=2-2\alpha\\ \hline
z_1^{k_1}z_2^{k_2} &1-\frac{1}{k_1+k_2} & \frac{1}{k_1+k_2}&  \frac{2}{k_1+k_2} \\[8pt]
z_1^{\,p}-z_2^{\,q},\;p\le q & 1-\dfrac1q & \dfrac1q & \dfrac2q
\end{array}
\]
Both families demonstrate that the exponents $\gamma$ and $\tau$ appearing in the general theorems 
can indeed attain a continuum of values within the predicted ranges $(0,1]$ and $(0,2]$, respectively. 
In particular, they can be arbitrarily close to $0$ when the zero set of $f$ is sufficiently 
singular at the origin.

\subsection {Integrability and complex singularity exponents}

Let $U$ be a domain in $\C^n$ and $\varphi$ be a plurisubharmonic function on $U$. We say $\varphi$ has singular at $z$ if $\varphi(z) =-\infty$.  A fundamental holomorphic invariant is its \textbf{complex singularity exponent} (also called the \emph{log canonical threshold} in algebraic geometry), defined at a point $z$ as
\[
c_z(\varphi):=\sup\bigl\{\,c>0:e^{-2c\varphi} \ \ \text{is} \ \  L^1_{\mathrm{loc}}\ \ \text{near} \ \   z  \,\bigr\}.
\]
This exponent quantifies the ``strength'' of the singularity of $\varphi$ at $z$.  
For the special but ubiquitous case $\varphi=\log|f|$, where $f$ is holomorphic, this analytic invariant has a precise geometric interpretation.  
Indeed, the condition $e^{-2c\varphi}=|f|^{-2c} $ is $L^1_{\mathrm{loc}}$ near the zero set of $f$, is exactly the local integrability of an inverse power of $|f|$.  
It therefore connects directly to the \L{}ojasiewicz--type exponent $\alpha$ introduced earlier, which satisfies $|\partial f|\gtrsim|f|^{\alpha}$, $0\leq \alpha <1$, near the zero set of $f$.  

Our previous geometric results---Theorem~\ref{tC}, area bounds for the level sets $\{|f|=t\}$  and, volume estimates for the sublevel sets $\{|f|\le t\}$, provide the quantitative link that makes this connection explicit.  
In particular, the volume estimate
\[
\bigl|\{\,z\in U:|f|\le t\,\}\bigr|=O(t^{2-2\alpha})
\]
is, in essence, a geometric characterization of the complex singularity exponent $c_0(\log|f|)$.

Applying the coarea formula together with these geometric estimates, we obtain sharp integrability thresholds for several differential forms naturally attached to $f$.  
The following proposition unifies four interrelated statements; it shows how the exponent $\alpha$ governs the local integrability of the logarithmic derivative $\partial f/f$, of the weighted gradient $|\partial f|^2/|f|^\delta$, and of the pure inverse power $1/|f|^\delta$.

\begin{proposition}\label{th:maintheorem22}
Let $U$ be a domain in $\mathbb{C}^n$ and let $f$ be a nonconstant holomorphic function on $U$ with $f(0)=0$.  
Denote by $\alpha\;(0\le\alpha<1)$ the optimal exponent satisfying $|\partial f|\gtrsim|f|^{\alpha}$ near~$0$.  
Then the following hold locally near the origin:
\begin{enumerate}
    \item $\displaystyle\frac{\partial f}{f}\in L^p_{\mathrm{loc}}$ for every $p<2$, but 
          $\displaystyle\frac{\partial f}{f}\notin L^2_{\mathrm{loc}}$;
    \item $\displaystyle\frac{|\partial f|^2}{|f|^{\delta}}\in L^1_{\mathrm{loc}}$ for every $\delta<2$;
    \item $\displaystyle\frac{1}{|f|^{\delta}}\in L^1_{\mathrm{loc}}$ for every $\delta<2(1-\alpha)$;
    \item  \[
\int_{\{z \in U : |f| < \varepsilon\}} |\partial f(z)|^p \, dV(z) = O(\varepsilon^{\gamma(p)}),
\]
where $\gamma(p)=2(1-\alpha)+\alpha p > 0$ is a constant depending on $p$ and the \L{}ojasiewicz exponent $\alpha$.  In particular, for $\varphi=\log|f|$ we have the inequality
\[
c_0(\log|f|)\geq 1-\alpha.
\]
\end{enumerate}
\end{proposition}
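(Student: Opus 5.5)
The plan is to reduce each item to a one-dimensional integral in $t=|f|$ using the coarea formula \eqref{eq:coarea-ti} for $|f|$, with $|\nabla|f||=|\partial f|$. On each level set $\{|f|=t\}$, $0<t<\varepsilon_0$, we have three estimates available:
\begin{itemize}
\item the energy bound $I(t)=\int_{\{|f|=t\}}|\partial f|\,dS=O(t)$ from Theorem \ref{t02}(1);
\item the area bound $\mathcal H^{2n-1}(\{|f|=t\})=O(t^{1-\alpha})$ from Theorem \ref{th:levelsurface};
\item the pointwise bounds $C t^{\alpha}\le|\partial f|\le C'$, from Lemma \ref{lm:actuallyused} and boundedness on a compact set.
\end{itemize}
Away from $\{|f|<\varepsilon_0\}$ every integrand is bounded, so only $0<t<\varepsilon_0$ matters. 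For a general exponent $s$ we write
\[
\int_{\{|f|<\varepsilon\}}|\partial f|^{s}\,\Psi(|f|)\,dV=\int_0^\varepsilon \Psi(t)\Bigl(\int_{\{|f|=t\}}|\partial f|^{s-1}\,dS\Bigr)dt .
\]
The inner integral is controlled as follows. For $0\le s-1\le 1$, Hölder's inequality interpolating between the first two bullets gives $\int_{\{|f|=t\}}|\partial f|^{s-1}\le I(t)^{s-1}\,\mathcal H^{2n-1}(\{|f|=t\})^{2-s}\lesssim t^{\,s-1+(1-\alpha)(2-s)}$. For $s-1<0$, the Łojasiewicz lower bound gives $|\partial f|^{s-1}\le C t^{\alpha(s-1)}$, and multiplying by the area yields the bound $t^{\alpha(s-1)+1-\alpha}$.

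Items (2) and (3) follow directly. For (2), take $s=2$ and $\Psi(t)=t^{-\delta}$; this gives $\int_0 t^{1-\delta}\,dt<\infty$ for $\delta<2$. For (3), take $s=0$ and $\Psi(t)=t^{-\delta}$; this gives $\int_0 t^{-\delta-2\alpha+1}\,dt$, which is finite exactly when $\delta<2-2\alpha$. Setting $\delta=2c$ in (3) shows $|f|^{-2c}\in L^1_{\rm loc}$ for all $c<1-\alpha$, and hence $c_0(\log|f|)\ge 1-\alpha$.

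Item (1) splits into two parts.
\begin{itemize}
\item Integrability for $p<2$: take $s=p$ and $\Psi(t)=t^{-p}$. For $1\le p<2$ the Hölder bound leaves $\int_0 t^{-1+(1-\alpha)(2-p)}\,dt<\infty$. For $p<1$ the claim follows from the case $p=1$ because the domain is bounded. The lower-bound route gives the same conclusion, with exponent $(2-p)(1-\alpha)-1>-1$.
\item Failure for $p=2$: this cannot come from the level-set estimates and needs a lower bound. We use Lemma \ref{lem:local_monomial}: near a regular point of $Z(f)$ arbitrarily close to $0$ we have $f=w_1^m$ in holomorphic coordinates, so $|\partial f/f|\approx m/|w_1|$. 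Fubini then gives $\int_{|w_1|<r}|w_1|^{-2}\,dV_{w_1}=\infty$, exactly as in the proof of Theorem \ref{l10}.
\end{itemize}

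Item (4) uses the same decomposition with $\Psi\equiv1$:
\begin{itemize}
\item For $p<1$, the Łojasiewicz route gives $\int_0^\varepsilon t^{\alpha(p-1)+1-\alpha}\,dt\simeq\varepsilon^{2-2\alpha+\alpha p}$.
\item For $1\le p\le2$, the Hölder route gives $\int_0^\varepsilon t^{p-1+(1-\alpha)(2-p)}\,dt\simeq\varepsilon^{p+(1-\alpha)(2-p)}=\varepsilon^{2-2\alpha+\alpha p}$.
\end{itemize}
Both cases give precisely the exponent $\gamma(p)$ in the statement.

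I expect the main obstacle to be the range $p>2$ in item (4). There the only available bound is $|\partial f|^{p-1}\le C|\partial f|$, which together with Theorem \ref{t02}(1) yields $O(\varepsilon^2)$. That is weaker than $\varepsilon^{2-2\alpha+\alpha p}$ whenever $\alpha>0$. To close this gap I would first try to obtain an upper bound $|\partial f|\lesssim|f|^{\alpha}$ on a large portion of the level set, but no such bound follows from the earlier results. Failing that, I would state (4) for $0<p\le2$, with $\gamma(p)=\min\{2-2\alpha+\alpha p,\,2\}$ for general $p$.
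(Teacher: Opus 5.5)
Your proof is correct, and for item (1) it differs from the paper's in a way that matters.

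\textbf{Items (2)--(4).} The paper argues on sublevel sets. For (2) and (3) it uses dyadic shells $\{2^{-j}\le|f|<2^{1-j}\}$ together with $J(\varepsilon)=O(\varepsilon^2)$ and $\mathrm{Vol}\{|f|<\varepsilon\}=O(\varepsilon^{2-2\alpha})$. For (4) it applies H\"older on $\{|f|<\varepsilon\}$ between $J(\varepsilon)$ and that volume, which covers $0<p<2$. You instead work level set by level set, interpolating between $I(t)=O(t)$ and $\mathcal H^{2n-1}(\{|f|=t\})=O(t^{1-\alpha})$. The sublevel bounds are integrals of the level-set bounds, so for (2)--(4) this is equivalent and gives the same $\gamma(p)=p+(1-\alpha)(2-p)$. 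Your version also covers $p=2$, and via the \L{}ojasiewicz bound it covers $p\le 0$ whenever $\gamma(p)>0$.

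\textbf{Item (1).} Here the paper's argument does not work. It bounds the inner coarea integral by $t^{-p-\alpha}\,\mathcal H^{2n-1}(\{|f|=t\})$, replacing $|\partial f|^{p}$ by a constant. It then claims $\int_0^{\varepsilon_0}t^{-(p+\alpha)}\,dt<\infty$, which is false since $p+\alpha>1$. Even inserting the area bound $O(t^{1-\alpha})$, that route only reaches $p<2-2\alpha$. Your interpolation keeps the factor $|\partial f|^{p-1}$ and uses the cancellation encoded in $I(t)=O(t)$. This gives the exponent $-1+(1-\alpha)(2-p)>-1$ for every $p<2$ and every $\alpha\in[0,1)$, so your argument is what actually establishes (1). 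Equivalently, one can feed (4) into dyadic shells: $\sum_j 2^{j(p-\gamma(p))}<\infty$ because $\gamma(p)-p=(1-\alpha)(2-p)>0$. Your proof of the $L^2$ failure via Lemma \ref{lem:local_monomial} matches the paper's.

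\textbf{The case $p>2$ in (4).} Your caution is warranted, and restricting $p$ is not just a limitation of the method: the claim is false there. Take $f=z_1z_2$, so $\alpha=\tfrac12$ and $\gamma(p)=1+\tfrac p2>2$ for $p>2$. In the polydisc of radius $r$, the set $\{r/2<|z_1|<r,\ |z_2|<\varepsilon/r\}$ lies in $\{|f|<\varepsilon\}$ and has volume $\simeq\varepsilon^2$. On it $|\partial f|\ge r/2$, so the integral is $\gtrsim\varepsilon^2$, not $O(\varepsilon^{\gamma(p)})$. The paper's own proof of (4) only treats $p<2$, so the statement should be read for $p\le 2$. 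Your fallback exponent $\min\{\gamma(p),2\}$ is correct for all $p\ge 0$ and is sharp in this example.
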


\begin{proof}
First we prove $(1)$ of Proposition  \ref{th:maintheorem22}.
We begin by proving the local integrability for $p<2$. First, consider the case $1 < p < 2$. Let $V$ be a sufficiently small neighborhood of $0$. Applying the coarea formula to the function $\left| \frac{\partial f}{f} \right|^p$, we can find some small sufficient $\varepsilon_0$ such that
\begin{align}\label{eq:coarea12}
    \int_V \left| \frac{\partial f}{f} \right|^p dV &= \int_0^{\varepsilon_0} \left( \int_{\{z \in V : |f| = t\} } \left| \frac{\partial f}{f} \right|^p \frac{dS}{|\nabla |f||} \right) dt \\ \nonumber 
    & = \int_0^{\varepsilon_0} \left( \int_{\{z \in V : |f| = t\}} \left| \frac{\partial f}{f} \right|^p \frac{dS}{|\partial f|} \right) dt \\ \nonumber 
\end{align}
Since $|\partial f| \gtrsim |f|^{\alpha}$ near the zero set for some $0 \le \alpha < 1$, in view of Lemma \ref{l2204}, the inner integral of the last term in \eqref{eq:coarea12} can be bounded by
\[
\int_{\{ |f| = t\}} \left| \frac{\partial f}{f} \right|^p \frac{1}{ |f|^{\alpha} } \, dS \lesssim t^{-p-\alpha} \cdot \mathcal{H}^{2n-1}(\{ |f| = t \} \cap V).
\]
Moreover, by Theorem \ref{tC} we have $\mathcal{H}^{2n-1}(\{ |f| = t \} \cap V) = O(1)$ as $t \to 0^+$. Consequently,
\[
\int_V \left| \frac{\partial f}{f} \right|^p \, dV \lesssim \int_0^{\varepsilon_0} t^{-(p+\alpha)} \, dt < \infty,
\]
because $1< p + \alpha < 2 + \alpha < 3$. For the case $0 < p \le 1$, the result follows from the case $1 < p < 2$ by applying Hölder's inequality. Hence, $\frac{\partial f}{f} \in L^p_{\mathrm{loc}}$ near $0$ for every $p < 2$.\\
 
To prove the failure of local $L^2$-integrability, we use the similar idea as Theorem \ref{l10}, and get
\begin{align*}
    \int_{V} \left| \frac{\partial f}{f} \right|^2 \, dV_z 
    &\gtrsim \int_{f(V)} \frac{1}{|w_1|^2} \, dV_{w_1} \\
    &\gtrsim \int_0^{r_0} \frac{1}{s^2} \cdot s \, ds, \quad \text{ by integration in polar coordinates in the $w_n$-variable,}\\
    &\gtrsim  \int_0^{r_0} \frac{1}{s} \, ds = \infty.
\end{align*}
Therefore, $\frac{\partial f}{f} \notin L^2_{\mathrm{loc}}$ near $Z(f)$.\\
\bigskip

Now we establish the local integrability of $\frac{|\partial f|^2}{|f|^{\delta}}$ for $\delta < 2$. Decompose a sufficiently small neighborhood of $0$ into annular regions:
\begin{align*}
    \int_{U} \frac{|\partial f|^2}{|f|^{\delta}} \, dV 
    &= \sum_{j=1}^{\infty} \int_{\frac{1}{2^j} \le |f| \le \frac{1}{2^{j-1}}} \frac{|\partial f|^2}{|f|^{\delta}} \, dV \\
    &\le \sum_{j=1}^{\infty} 2^{j\delta} \int_{|f| \le \frac{1}{2^{j-1}}} |\partial f|^2 \, dV.
\end{align*}
By applying statement (2) of Theorem~\ref{t02}, which provides an estimate of the form 
$$
\int_{\{ z\in U: |f| \le t\} } |\partial f|^2 \, dV = O(t^2),  \text{ as} \ \   t \to 0,
$$
we obtain
\[
\int_{U} \frac{|\partial f|^2}{|f|^{\delta}} \, dV \lesssim \sum_{j=1}^{\infty} 2^{j\delta} \cdot 2^{-2(j-1)}\lesssim \frac{1}{4} \sum_{j=1}^{\infty} 2^{-j(2-\delta)}.
\]
The series converges if and only if $\delta < 2$, which completes the proof.\\

Next,  we show $(3)$ of Proposition  \ref{th:maintheorem22}.
By Theorem  \ref{tC}, the \L{}ojasiewicz inequality together with coarea formula implies the following volume estimate for the sublevel sets of $|f|$:
\begin{equation}\label{eq:volume_est}
\bigl|\{ z \in U : |f| < \varepsilon \}\bigr| = O(\varepsilon^{\tau}), \qquad \tau = 2-2\alpha,
\end{equation}
where $U$ is a sufficiently small polydisc centered at the origin.
Let $K \subset U$ be any compact neighborhood of $0$. It suffices to show that
\[
\int_{K} \frac{1}{|f(z)|^{\delta}}\, dV(z) < \infty.
\]
Without loss of generality, we may assume $K \subset \{ z \in U : |f(z)| \le r_0 \}$ for some small $r_0>0$ such that  \eqref{eq:volume_est} holds on $K$.
We decompose the set $\{ z \in K : 0 < |f(z)| < r_0 \}$ into dyadic annuli:
\[
E_j := \Bigl\{ z \in K : \frac{1}{2^{j}} \le |f(z)| < \frac{1}{2^{j-1}} \Bigr\}, \qquad j = 1, 2, 3, \dots .
\]
Then $\{ z \in K : 0 < |f(z)| < r_0 \} \subset \bigcup_{j=1}^{\infty} E_j$. For $z \in E_j$, we have $|f(z)|^{-\delta} \le 2^{\,j\delta}$. Hence,
\[
\int_{E_j} \frac{1}{|f(z)|^{\delta}}\, dV(z) \le 2^{\,j\delta} \cdot |E_j|,
\]
where $|E_j|$ denotes the Euclidean volume of $E_j$.

Since $E_j \subset \{ z \in U : |f(z)| < 2^{-(j-1)} \}$, the volume estimate \eqref{eq:volume_est} yields
\[
|E_j| \le \bigl|\{ z \in U : |f(z)| < 2^{-(j-1)} \}\bigr| \le C_1 \bigl(2^{-(j-1)}\bigr)^{\tau} = C_2 \cdot 2^{-j\tau},
\]
for some constants $C_1, C_2 > 0$ independent of $j$, and with $\tau = 2-2\alpha$.
Therefore,
\[
\int_{E_j} \frac{1}{|f(z)|^{\delta}}\, dV(z) \le C_2 \cdot 2^{-j(\tau - \delta)}.
\]
Summing over all $j \ge 1$, we obtain
\[
\int_{\bigcup_{j=1}^{\infty} E_j} \frac{1}{|f(z)|^{\delta}}\, dV(z) \le C_2 \sum_{j=1}^{\infty} 2^{-j(\tau - \delta)}.
\]

The geometric series on the right-hand side converges if and only if $\tau - \delta > 0$, i.e., $\delta < \tau = 2(1-\alpha)$. Under this condition, we conclude
\[
\int_{K \cap \{ 0 < |f| < r_0 \}} \frac{1}{|f(z)|^{\delta}}\, dV(z) < \infty.
\]

The zero set of $f$, being an analytic set, has Lebesgue measure zero and does not affect the integrability. Consequently,
\[
\int_{K} \frac{1}{|f(z)|^{\delta}}\, dV(z) < \infty,
\]
which proves $|f|^{-\delta} \in L^1_{\mathrm{loc}}$ near the origin for every $\delta < 2(1-\alpha)$.

We now estimate the integral  
\[
J_p(\varepsilon):=\int_{\{z\in U:|f|<\varepsilon\}}|\partial f(z)|^{p}\,dV(z)
\]
for \(p<2\), where \(U=\{z\in\mathbb{C}^{n}:|z_{j}|<1,\;j=1,\dots ,n\}\) is the unit polydisc.  
Applying Hölder's inequality with exponents \(\frac{1}{s}+\frac{1}{t}=1\) and choosing \(s\) so that \(sp=2\) (hence \(s=\frac{2}{p}\) and \(t=\frac{2}{2-p}\)), we obtain  
\begin{align*}
J_{p}(\varepsilon)
&\le\Bigl(\int_{\{z\in U:|f(z)|<\varepsilon\}}|\partial f(z)|^{ps}\,dV(z)\Bigr)^{\frac1s}
\Bigl(\int_{\{z\in U:|f(z)|<\varepsilon\}}dV(z)\Bigr)^{\frac1t} \nonumber\\[4pt]
&=\Bigl(\int_{\{z\in U:|f(z)|<\varepsilon\}}|\partial f(z)|^{2}\,dV(z)\Bigr)^{\frac{p}{2}}
\;|\{z\in U:|f(z)|<\varepsilon\}|^{\frac{2-p}{2}} \nonumber\\[4pt]
&\lesssim \bigl(\varepsilon^{2}\bigr)^{\frac{p}{2}}
\Bigl(\varepsilon^{2-2\alpha}\Bigr)^{\frac{2-p}{2}} \label{eq:Jp-bound}\\[4pt]
&\lesssim \varepsilon^{\,p+(1-\alpha)(2-p)}. \nonumber
\end{align*}

In the last two lines we have used the a priori estimate
\[
\int_{\{z\in U:|f(z)|<\varepsilon\}}|\partial f(z)|^{2}\,dV(z)\lesssim\varepsilon^{2},
\]
which follows from the  \(L^{2}\) energy estimate of the sublevel set, together with the volume estimate
\[
|\{z\in U:|f|<\varepsilon\}|=O\bigl(\varepsilon^{2-2\alpha}\bigr)
\]
obtained in \eqref{eq:volume-est}. Thus for every \(p<2\) and every \(\alpha\in[0,1)\),
\[
J_{p}(\varepsilon)=O\!\left(\varepsilon^{\,p+(1-\alpha)(2-p)}\right)\qquad(\varepsilon\to0^{+}).
\]
This completes the proof of statement (4).
\end{proof}

\begin{remark}
The results for $(1)$ and $(2)$ are sharp; for example, the function $f(z)=z_1$ gives $\alpha=0$, attaining the bounds in $(1)$ and $(2)$.

\end{remark}

  



\appendix
\section{Volume of the graph of a holomorphic function}\label{C}
In this section the volume of the graph of a holomorphic function is given.
\begin{proposition}\label{p141}
Let \(\psi : D \subset \mathbb{C}^{n-1} \rightarrow \mathbb{C}\) be holomorphic. Then the volume of its graph is given by
\[
\operatorname{Vol}(\operatorname{graph} \psi) = \int_{D} \left(1 + |\nabla_{z'} \psi(z')|^2 \right) dV_{\mathbb{C}^{n-1}}(z').
\]
\end{proposition}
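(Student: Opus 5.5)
The plan is to parametrize the graph by $\Psi(z')=(z',\psi(z'))$ and compute the $(2n-2)$-dimensional area element $\sqrt{\det(D\Psi^{T}D\Psi)}$. A cleaner route, which exploits holomorphy, uses the Wirtinger identity: a complex submanifold of dimension $n-1$ has volume $\int \omega_0^{n-1}/(n-1)!$, with $\omega_0=\frac{i}{2}\sum_{j=1}^n dz_j\wedge d\bar z_j$. This identity is already invoked in the proof of Theorem \ref{l07}. I will pull $\omega_0$ back by $\Psi$ and compute the resulting top-degree form on $D$.

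Write $a=(\partial\psi/\partial z_1,\dots,\partial\psi/\partial z_{n-1})$. Since $\psi$ is holomorphic, $\Psi^*dz_n=\sum_j a_j\,dz_j$. Hence
\[
\Psi^*\omega_0=\frac{i}{2}\sum_{j,k}\bigl(\delta_{jk}+a_j\bar a_k\bigr)\,dz_j\wedge d\bar z_k .
\]
This is the K\"ahler form of the Hermitian matrix $H=I+a a^{*}$, where the column vector is $a$ and $a^*=\bar a^{T}$. By the standard linear-algebra fact that $\bigl(\frac{i}{2}\sum H_{jk}dz_j\wedge d\bar z_k\bigr)^{m}/m!=\det H\cdot dV$ with $m=n-1$, we get $\Psi^*\omega_0^{n-1}/(n-1)!=\det(I+aa^*)\,dV_{\mathbb{C}^{n-1}}$. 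The matrix determinant lemma then gives $\det(I+aa^*)=1+|a|^2$. With the paper's convention $|\nabla_{z'}\psi|=|\partial_{z'}\psi|$, this is exactly the stated integrand.

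To keep the argument self-contained without the Wirtinger theorem, I would verify directly that the real Gram determinant equals $(1+|a|^2)^2$ rather than its square root. Identify $\mathbb{R}^{2n-2}\cong\mathbb{C}^{n-1}$. Then the real Jacobian $D\Psi$ is $\binom{I}{A}$, where $A$ is the real $2\times(2n-2)$ representation of the complex row $a$. This gives $D\Psi^TD\Psi=I+A^TA$. Because $A$ is complex-linear, $A^TA$ is the realification of the Hermitian rank-one matrix $aa^*$. Its real eigenvalues are $|a|^2$ with multiplicity $2$ and $0$ otherwise, so $\det(I+A^TA)=(1+|a|^2)^2$ and $\sqrt{\det}=1+|a|^2$.

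The main obstacle is this determinant computation: the realification eigenvalue count, or equivalently the determinant form of $\omega^{m}/m!$. I would first handle it by choosing unitary coordinates on $\mathbb{C}^{n-1}$ in which $a=(|a|,0,\dots,0)$, which reduces everything to a one-variable computation. Integrating over $D$ then finishes the proof. If the graph has infinite area, both sides are $+\infty$.
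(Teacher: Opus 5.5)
Your proposal is correct, and its second half is essentially the paper's proof. The paper also starts from the Gram matrix $I_{2n-2}+J^TJ$. Instead of diagonalizing $J^TJ$, it uses $\det(I_{2n-2}+J^TJ)=\det(I_2+JJ^T)$ together with the Cauchy--Riemann relations $|\nabla u|=|\nabla v|$ and $\nabla u\cdot\nabla v=0$, i.e.\ $JJ^T=|a|^2I_2$. This is the same fact as your eigenvalue count, since $A^TA$ and $AA^T$ have the same nonzero spectrum. (A minor slip: with $h_j=\xi_j+i\eta_j$, $A^TA$ is the realification of $\bar a a^{T}=\overline{aa^*}$ rather than of $aa^*$. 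The spectrum is identical, so nothing changes.)

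Your primary route through Wirtinger's theorem is genuinely different. Pulling back $\omega_0$ reduces the problem to the complex determinant $\det(I+aa^*)$, with no square root. It matches the expression $\int\omega_0^{n-1}/(n-1)!$ that the paper itself uses in the proof of Theorem~\ref{l07}. It also extends verbatim to graphs of holomorphic maps into $\mathbb{C}^k$, giving $\det(I+(D\psi)^*D\psi)$. There $JJ^T$ is no longer scalar, and the paper's shortcut would additionally need $\det_{\mathbb{R}}H_{\mathbb{R}}=|\det_{\mathbb{C}}H|^2$ for the realification $H_{\mathbb{R}}$ of a complex matrix $H$. In exchange, the paper's route is completely elementary, using only the Cauchy--Riemann equations and a determinant identity. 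Wirtinger's theorem is essentially that same linear algebra repackaged as a statement about $\omega_0^{m}/m!$.
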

\begin{proof}
Let $z'= (z_1, z_2, \cdots, z_{n-1})$ with $z_j = x_j + i y_j$ and 
w rite \(\psi = u + i v\). The graph is the embedding
\[
\Phi : D \rightarrow \mathbb{C}^n, \quad z' \mapsto (z', \psi(z')).
\]
The Euclidean metric on \(\mathbb{C}^n \cong \mathbb{R}^{2n}\) is
\[
ds^2 = \sum_{j=1}^{n-1} (dx_j^2 + dy_j^2) + du^2 + dv^2.
\]
Because \(\psi\) is holomorphic, its real and imaginary parts satisfy the Cauchy-Riemann equations
\[
\frac{\partial u}{\partial x_j} = \frac{\partial v}{\partial y_j}, \quad \frac{\partial u}{\partial y_j} = -\frac{\partial v}{\partial x_j}, \quad j=1,\dots,n-1.
\]
The Jacobian matrix of \(\Phi\) with respect to the real coordinates \((x_1, y_1, \dots, x_{n-1}, y_{n-1})\) is a \(2n \times (2n-2)\) matrix. The induced metric on the graph is
\[
g_{ij} = \delta_{ij} + \frac{\partial u}{\partial x_i} \frac{\partial u}{\partial x_j} + \frac{\partial v}{\partial x_i} \frac{\partial v}{\partial x_j}, \quad i,j=1,\dots,2n-2.
\]
A direct computation gives
\[
|\nabla u|^2 = |\nabla v|^2, \quad \nabla u \cdot \nabla v = 0,
\]
where \(\nabla\) denotes the gradient with respect to \((x_1, y_1, \dots, x_{n-1}, y_{n-1})\).
Consequently, the $2\times 2$ matrix \(I_{2} + J J^T\) (where \(J\) is the Jacobian of \((u,v)\)) has determinant
\[
\det(I_{2} + J J^T) = (1 + |\nabla u|^2)^2.
\]
Since \(\det g = \det(I_{2n-2} + J^T J)=\det(I_{2} + J J^T)\) by Schur complement identity, we obtain
\[
\sqrt{\det g} = 1 + |\nabla u|^2.
\]
It remains to express \(|\nabla u|^2\) in complex terms. Using again the Cauchy-Riemann equations
\[
|\nabla u|^2 = \sum_{j=1}^{n-1} \left( \left|\frac{\partial \psi}{\partial z_j}\right|^2 +\left |\frac{\partial \psi}{\partial \bar z_j}\right|^2 \right) = \sum_{j=1}^{n-1} \left| \frac{\partial \psi}{\partial z_j} \right|^2.
\]
Thus,
\[
\sqrt{\det g} = 1 + |\nabla_{z'} \psi|^2.
\]
The volume element on the graph in the coordinates \(z'\) is therefore
\[
dV_{\text{graph}} = \left(1 + |\nabla_{z'} \psi|^2 \right) dV_{\mathbb{C}^{n-1}}(z'),
\]
where \(dV_{\mathbb{C}^{n-1}}(z') = \prod_{j=1}^{n-1} dx_j dy_j\) is the standard Lebesgue measure on \(\mathbb{C}^{n-1}\).
The proof is done.
\end{proof}

\section{Hironaka's theorem in resolution of singularities}\label{Hi}

In this section, we discuss the possible technical aspects by applying Hironaka's theorem.

Let $ f: U \to \mathbb{C} $ be a non-zero holomorphic function in a neighborhood of $ U $, the unit polydisc. Let $ Z (f)= \{z \in U: f(z) = 0\} $ be the complex variety defined by $ f $. The Hironaka's theorem guarantees that there exists a complex manifold $ X $ and a holomorphic map $ \pi : X \to U $ satisfying the following properties:

\begin{itemize}
\item The map $ \pi $ is proper. Namely that $ \pi $ is proper means the preimage of any compact set in $ U $ is a compact set in $ X $. This is the most crucial property for analysis. It guarantees that if our integral is confined to compact neighborhood $ K \subset U $ of the singularity, then the entire problem on the resolved space is confined to the compact set $ \pi^{-1}(K) $. This allows us to use a finite cover by coordinate charts, and partition of unity, which is essential for reducing a global problem to a finite number of local ones.

\item The map $ \pi $ is a birational morphism. In the context of manifolds, this means it establishes a biholomorphism between dense open subsets. Specifically, $ \pi $ is a (global) biholomorphism from $ X \setminus \pi^{-1}(Z(f)) $ to $ U \setminus Z(f) $. This ensures that the geometry far away from the singularity is unchanged. Our change of variables is non-trivial only in the neighborhood of the singular set, which is exactly where we need it. For any $ \varepsilon > 0 $, the level set $ \{z \in U : |f(z)| = \varepsilon\} $ lies entirely in $ U \setminus Z(f) $, so the map $\pi$ provides a one-to-one correspondence for the domains of our surface integrals.

\item The total space $X$ is a smooth complex manifold, and the preimage of the singular set, $E = \pi^{-1}(Z(f))$, is now a smooth sub-manifold with normal crossings inside the smooth space $X$. This means that locally, $E$ looks like the union of coordinate hyperplanes, a very simple and well-behaved geometric objects. This is the power of the resolution: it smooths out the object of study itself, allowing us to use the full power of calculus and differential geometry on the manifold $X$ without worrying about the pathological geometry of the original singular set $Z(f)$.

\item The local nature of the pullback function $\pi^*(f)$.  
   For the pullback $F: F = f \circ \pi$, there is a local monomial form. For every point $p \in X$, there exists a local coordinate chart $(V, u)$ centered at $p$ with coordinates $u = (u_1, \ldots, u_n)$. Such that on $V$, $F$ can be written in the form  
   $$F(u) =  \eta(u)u_1^{a_1} u_2^{a_2} \cdots u_n^{a_n}$$
   where $A = (a_1 \cdots a_n)$ is a multi-index of non-negative integers, and $\eta(u)$ is a holomorphic and non-vanishing function on the chart $V$.  
\end{itemize}

The set where at least $a_i > 0$ forms the exceptional divisor in that chart. The condition that the divisor has normal crossings means that locally we can always find coordinates where it is defined by the vanishing of some coordinate functions(i.e. $\{u_i = 0\}$ for some $i$).

As application to our integrals, this is the engine of the simplification . It tells us that, locally, any complicated holomorphic function can be transformed into a simple monomial, which we know how to analyze, time a "benign" invertible factor $ \eta(u) $. The fact that $ \eta(u) $ is non-vanishing is critical, as it allows us to define a further local biholomorphism $ v_i = u_i\eta(u)^{1/a_i} $ to absorb it completely, reducing the problem to the pure monomial case $ v^A = v_1^{a_1} \cdots v_n^{a_n} $.
This local simplification is the key to calculating the asymptotic behavior of the integral. We like to mention that employing Hironaka's theorem to construct Sobolev functions was carried in \cite{PZ} for at least real analytic functions.


\section{Examples of the resolved space}\label{E}
In order to help understand Hironaka's theorem, we compute the resolution space of the theorem. The first example includes the singular case: $f = z_1^2 - z_2^3$ in $\mathbb{C}^2$ restricted to the unit polydisc $U = \{ (z_1,z_2)\in\mathbb{C}^2:|z_1| < 1, |z_2| < 1 \}$.

First we locate the singular locus of the variety
\[
V = \{ (z_1, z_2) \in U : z_1^2 - z_2^3 = 0 \}.
\]
A point is singular if the gradient of the defining function vanishes.
Since $\partial f = (2z_1, -3z_2^2)$, the only point where the gradient is zero is the origin $(0, 0)$. This point also lies on the curve, so the singular locus $S$ is the single point $S = \{ (0, 0) \}$.
Since the singular locus is a smooth submanifold (a point is a $0$-dimensional manifold), Hironaka's theorem tells us to resolve the singularity by blowing up the ambient space $U$ at the point $S$, where $\pi : X \to U$.

Now let's compute the formula for $X$ and $\pi$.
The blow-up of $U \subset \mathbb{C}^2$ at the origin is a submanifold of the product space $U \times \mathbb{P}^1$. Let $(z_1, z_2)$ be the coordinates on $U$ and $[w_1 : w_2]$ be the homogeneous coordinates for $\mathbb{P}^1$. The resolved space $X$, is defined as the set of points $((z_1, z_2), [w_1 : w_2])$ in $U \times \mathbb{P}^1$ that satisfy the relation:
\[
z_1 w_2 = z_2 w_1.
\]
As a formal definition is
\[
X = \{ ((z_1, z_2), [w_1 : w_2]) \in U \times \mathbb{P}^1 : z_1 w_2 = z_2 w_1 \}.
\]
The resolution map $\pi$:
The map $\pi : X \to U$ is the natural projection onto the $U$ factor:
\[
\pi((z_1, z_2), [w_1 : w_2]) = (z_1, z_2).
\]

To see how the singularity is resolved, we examine $X$ in the two standard affine charts of 
$\mathbb{P}^1$.

\textbf{Chart 1:} $w_1 \neq 0$. We can set $w_1 = 1$, making $t = \frac{w_2}{w_1}$ our local coordinate. The defining equation for $X$ becomes $z_1 t = z_2$. We can use $(z_1, t)$ as local coordinates for this part of $X$ and  $\pi(z_1, t) = (z_1, z_1 t)$.

Now, let's lift the original curve's equation $z_1^2 - z_2^3 = 0$ into these coordinates by substituting $z_2 = z_1 t$:
\[
z_1^2 - (z_1 t)^3 = 0 \;\Longleftrightarrow\; z_1^2 (1 - z_1 t^3) = 0.
\]
This equation for the ``total transform'' in $X$ has two parts:
\begin{enumerate}
	\item $z_1^2 = 0$: This corresponds to the Exceptional Divisor $E = \pi^{-1}(0,0)$, which is the line $z_1 = 0$ in this chart. The exponent $2$ indicates that the original curve was singular with multiplicity $2$ at the origin.
	\item $1 - z_1 t^3 = 0$: This is the strict transform $\tilde{V}$, which is properly resolved version of our original curve. This curve is smooth.
\end{enumerate}
\textbf{Chart 2:} $w_2 \neq 0$. Here we set $w_2 = 1$ and use $s = \frac{w_1}{w_2}$ as our coordinate. The defining equation for $X$ is $z_1 = z_2 s$. We use $(z_2, s)$ as local coordinates and  $\pi(z_2, s) = (z_2 s, z_2)$.

Now, lift the curve's equation $z_1^2 - z_2^3 = 0$ using $z_1 = z_2 s$:
\[
(z_2 s)^2 - z_2^3 = 0 \;\Longleftrightarrow\; z_2^2 (s^2 - z_2) = 0.
\]
Again, we have two parts:
\begin{enumerate}
	\item $z_2^2 = 0$: This is the Exceptional Divisor $E$ in these coordinates. $E=\pi^{-1}(0,0)$.
	\item $s^2 - z_2 = 0$: This is the strict transform $\tilde{V}$. The equation $z_2 = s^2$ defines a smooth parabola in the $(s, z_2)$-plane.
\end{enumerate}

After all, the summary of the resolution:
\begin{itemize}
	\item $X$ is the manifold obtained by blowing up the unit polydisc $U \subset \mathbb{C}^2$ at the origin $(0,0)$. It can be explicitly written as
	\[
	X = \{ ((z_1, z_2), [w_1 : w_2]) \in U \times \mathbb{P}^1 : z_1 w_2 = z_2 w_1 \}.
	\]
	\item $\pi$ is the projection map from $X$ back to $U$.
\end{itemize} 	
The process resolves the singularity as follows: 
\begin{enumerate}
	\item The singular point $(0,0)$ in $U$ is replaced in $X$ by a whole projective line $\mathbb{P}^1$, called the exceptional divisor $E$.
	\item The original singular curve $V$ is transformed into a new smooth curve $\tilde{V}$ in $X$ called the strict transform. In local coordinates, its equation is $z_2 = s^2$, which is smooth.
	\item The new smooth curve $\tilde{V}$ intersects the exceptional divisor $E$ at a single point (where $s = 0$ and $z_2 = 0$). This intersection is transverse and the singularity is resolved.
\end{enumerate}

\section*{Acknowledgements}

\thanks{This study was supported by National Key R \& D Program of China,  No. 2024YFA1015200.
Guokuan Shao was supported by National Natural Science Foundation of China No. 12471082, and also acknowledged the support from the Labex CEMPI (ANR-11-LABX-0007-01) \& the project QuaSiDy (ANR-21-CE40-0016) during his visit in Lille. Jianfei Wang was supported by the National Natural Science Foundation of China No. 12571087 and the Natural Science Foundation of Fujian Province No. 2024J01087.  Jujie Wu was supported by the Natural Science Foundation of Guangdong Province, No. 2025A1515011428.}

\bibliographystyle{plain}

\bigskip

\end{document}